\DeclareMathAlphabet{\mathcalligra}{T1}{calligra}{m}{n}
\newtheorem{lemma}{Lemma}[chapter]
\newtheorem{prop}{Proposition}[chapter]
\newtheorem{thm}{Theorem}[chapter]
\newtheorem{cor}{Corollary}[section]
\theoremstyle{definition}
\newtheorem{defn}{Definition}[chapter]
\theoremstyle{remark}
\newtheorem{rem}{\bf Remark}[chapter]
\newtheorem{ex}{\bf Example}[chapter]
\newtheorem{conj}{\bf Conjecture}[chapter]
\newtheorem{exercise}{\bf Exercise}[chapter]
\newcounter{numl}
\newcommand{\labelnuml}{\textup{(\roman{numl})}}
\newenvironment{numlist}{\begin{list}{\labelnuml}%
{\usecounter{numl}\setlength{\leftmargin}{0pt}%
\setlength{\itemindent}{2\parindent}%
\setlength{\itemsep}{\smallskipamount}\def
\makelabel ##1{\hss \llap {\upshape ##1}}}}{\end{list}}
\DeclareSymbolFont{script}{U}{eus}{m}{n}
\DeclareSymbolFontAlphabet{\mathscr}{script}
\DeclareMathSymbol{\Wedge}{0}{script}{"5E}
\DeclareMathAlphabet{\mathrmsl}{OT1}{cmr}{m}{sl}
\newcommand{\R}{{\mathbb R}}
\newcommand{\Sph}{{\mathbb S}}
\newcommand{\C}{{\mathbb C}}
\newcommand{\Z}{{\mathbb Z}}
\newcommand{\Q}{{\mathbb Q}}
\newcommand{\T}{{\mathbb T}}
\newcommand{\cH}{{\mathcal H}}
\newcommand{\cC}{{\mathcal C}}
\newcommand{\cF}{{\mathcal F}}
\newcommand{\cS}{{\mathcal S}}
\newcommand{\cE}{{\mathcal E}}
\newcommand{\cO}{{\mathcal O}}
\newcommand{\trace}{\mathop{\mathrm{tr}}\nolimits}
\newcommand{\tor}{{\mathfrak t}}
\newcommand{\Hess}{\mathop{\mathrm{Hess}}}
\newcommand{\eps}{\varepsilon}
\newcommand{\Proj}{\mathrm P}
\begin{document}

\author[V. Apostolov]{Vestislav Apostolov}
%\address{Vestislav Apostolov \\ D{\'e}partement de Math{\'e}matiques\\
%UQAM\\ C.P. 8888 \\ Succursale Centre-ville \\ Montr{\'e}al (Qu{\'e}bec) \\
%H3C 3P8 \\ Canada}
\email{apostolov.vestislav@uqam.ca}

\title{The K\"ahler geometry of toric manifolds}
%\date{\today}

\maketitle
\tableofcontents

\chapter*{Preface} These notes are written for a mini-course I gave at the CIRM in Luminy  in 2019.  Their intended purpose was to present, in the context of smooth toric varieties,  a relatively self-contained and elementary introduction to the theory of extremal K\"ahler metrics pioneered by E. Calabi in the 1980's and extensively developed in recent years. The framework of toric varieties,  used in both symplectic and algebraic geometry,  offers a fertile testing ground for the general theory of extremal K\"ahler metrics and provides an important class of smooth complex varieties for which the existence theory is now understood in terms of a stability condition of the corresponding Delzant polytope. The notes do not contain any original material nor do they take into account  some more recent developments, such as the non-Archimedean approach to the Calabi problem.  I'm making them available on the arXiv because I continue to get questions about how they can be cited.

\section*{Acknowledgements} 
I am grateful to \'Eveline Legendre, Alexandro Lep\'e, Simon Jubert, Lars Sektnan, Isaque Viza de Souza, and Yicao Wang for their careful reading of previous versions of the manuscript, spotting and correcting typos and errors, and suggesting  improvements. I have benefited from numerous discussions with Paul Gauduchon,  \'Eveline Legendre and Lars Sektnan during the preparation of the notes.  Special thanks are due to David Calderbank for his collaborations on the topic  and for kindly allowing me to reproduce his elegant (unpublished) proof of Proposition~3.3. 

\chapter{Delzant Theory}\label{ch:Delzant}
%\section{Delzant theory}

\section{Hamiltonian group actions on symplectic manifolds} We start with some basic definitions in symplectic geometry.

\smallskip
Let $(M^{2m}, \omega)$ be a {\it symplectic} manifold of real dimension $2m$, i.e. a smooth manifold endowed with a closed $2$-form $\omega$  which is {\it non-degenerate} at each point $p\in M$ meaning that $\omega_p : T_pM \to T^*_pM$ is an isomorphism. A simple linear algebra shows that the latter condition is also equivalent to $\omega_p^{\wedge m} \neq 0\in \wedge^{2m}(T^*_pM).$ 

{\tiny 
\begin{ex}\label{flat-symplectic}
The basic example of a symplectic manifold is $({\mathbb R}^{2m}, \omega_0)$ where, using the identification $$\R^{2m} \cong \C^m =\{z=(z_1, \cdots, z_m) : z_i=x_i + \sqrt{-1} y_i, i=1, \ldots m\}, $$
the symplectic form is
$$\omega_0 := \sum_{i=1}^{m} dx_i \wedge dy_i = \frac{\sqrt{-1}}{2} \sum_{i=1}^n dz_i \wedge d\bar z_i.$$
\end{ex}}

{\tiny 
\begin{ex}\label{e:S2} Let $M= \Sph^2 \subset \R^3$ be the unit sphere endowed with  the atlas of $\Sph^2$ given by the stereographic projections from the north  and the south poles,  $N=(0, 0, 1)$ and $S=(0,0, -1)$, respectively.  We denote by  
\begin{equation*}
\begin{split} 
x & = \frac{2u}{1 + u^2 + v^2}, y= \frac{2v}{1+ u^2 + v^2}, z= -\frac{1-u^2- v^2}{1+ u^2 + v^2},  \ (u, v) \in \R^2, \\
x &=  \frac{2\tilde u}{1 + \tilde u^2 + \tilde v^2}, y= \frac{2\tilde v}{1+ \tilde u^2 + \tilde v^2}, z= \frac{1-\tilde u^2- \tilde v^2}{1+ \tilde u^2 + \tilde v^2},  \ (\tilde u, \tilde v) \in \R^2
\end{split}
\end{equation*}
the corresponding chart patches on $\Sph^2$. Recall that on $\Sph^2\setminus (\{N\} \cup \{ S\})$,  the transition between the coordiantes $(u,v)$ and $(\tilde u, \tilde v)$ is  given by the diffeomorphism
$\tilde u = \frac{u}{u^2 + v^2} , \tilde v = \frac{v}{u^2 + v^2}$ of $\R^2 \setminus \{(0,0)\}$. 
Then the $2$-form
$$\omega_{\Sph^2} := \frac{4 du \wedge dv}{(1+ u^2 + v^2)^2} =- \frac{4 d\tilde u \wedge d\tilde v}{(1+ \tilde u^2 + \tilde v^2)^2}$$
introduces a symplectic structure on $\Sph^2$.
\end{ex}}

\begin{defn} Let $M$ be a smooth manifold and $G$ a Lie group. An {\it action} of $G$ on $M$ is a group homomorphism
$$\rho : G \to {\rm Diff}(M),$$
where ${\rm Diff}(M)$ stands for the diffeomorphism group of $M$. The action of $G$ is {\it smooth} if the evaluation map
$${\rm ev} : G \times M \to M, \ \ {\rm ev} (g, p):= \rho(g)(p), $$
is a smooth map between manifolds.

Let $(M, \omega)$ be a symplectic manifold and $G$ a Lie group acting smoothly on $M$. We say that $G$  acts {\it symplectically} on $(M, \omega)$ if 
$$\rho(g)^*(\omega)= \omega.$$
\end{defn}

{\tiny 
\begin{ex}\label{flat-action}  Consider $(M, \omega)=(\R^{2m}, \omega_0)$ as in Example~\ref{flat-symplectic}. The action of the $m$-dimensional torus 
$$\T^m= (e^{\sqrt{-1}t_1}, \ldots, e^{\sqrt{-1} t_m}),$$ given by
$$\rho(e^{\sqrt{-1}t_1}, \ldots, e^{\sqrt{-1} t_m})(z) := (e^{\sqrt{-1}t_1} z_1, \ldots, e^{\sqrt{-1} t_m} z_m)$$
is symplectic.
\end{ex}}

{\tiny 
\begin{ex}[{\bf Hamiltonian flows}] \label{hamiltonian-flow} Let $f$ be a smooth function on $(M,\omega)$. Using the non-degeneracy of $\omega$, we define a vector field by
$$X_f := - \omega^{-1}(df), $$
called {\it the Hamiltonian} vector field of $f$.  Suppose that  $X_f$ is complete, i.e. its flow $\varphi_t$ is defined for all $t\in \R$ (this always holds if $M$ is compact).
Then,  $\rho(t):=\varphi_t$ defines a symplectic action of $\R$ on $(M, \omega)$. Indeed, we have
\begin{equation*}
\begin{split}
\frac{d}{dt}_{|_{t=s}} (\varphi_t^*\omega) &= \varphi_s^*\Big(\frac{d}{dt}_{|_{t=0}}  (\varphi^*_t\omega)\Big )  \\
                                                        &= \varphi_s^*\Big({\mathcal L}_{X_f} \omega \Big) \\
                                                        &= \varphi_s^*\Big(d \imath _{X_f} + \imath_{X_f} d) (\omega)\Big) \\
                                                        &= \varphi_s^*\Big(d (- d f)\Big) =0,\\
                            \varphi_0^*(\omega) &= \omega.
                            \end{split}
                            \end{equation*}  
                            \end{ex} }                          
The previous two examples show that different groups (in particular $\T^m$ and $\R^m$) can  possibly give rise to ``equivalent'' symplectic actions, in the sense that their images in ${\rm Diff}(M)$ are the same. A way to normalize the situation is to consider
\begin{defn} A symplectic action of a Lie group $G$ on $(M, \omega)$ is {\it effective} if the homomorphism
$\rho: G \to {\rm Diff}(M)$
has trivial kernel.
\end{defn}
Thus, the symplectic action of $\R^m=\{(t_1, \ldots, t_m)\}$ on $(\R^{2m}, \omega_0)$ defined in Example~\ref{flat-action} is not effective,  whereas the action of $\T^m$ on the same manifold is. In the sequel, we shall be interested in effective symplectic actions.

\smallskip Let $G$ be a Lie group acting smoothly on $M$. Denote by $\mathfrak{g} = {\rm Lie}(G)$ its Lie algebra,  i.e.  $\mathfrak{g}$ is the vector space of left-invariant vector fields on $G$. For any $\xi\in \mathfrak{g}$, we denote by  ${\rm exp}(t\xi)$ the corresponding 1-parameter subgroup of $G,$ and by $X_{\xi}$ the vector field on $M$ induced by the one parameter subgroup $\rho(\exp(t\xi))$ in ${\rm Diff}(M)$, i.e.
$$X_{\xi}(p) := \frac{d}{dt}_{|_{t=0}} \Big( \rho(\exp(t\xi))(p)\Big).$$
 The vector field $X_{\xi}$ is called {\it fundamental} vector field of $\xi \in \mathfrak{g}$. 
 
 \smallskip
 We recall that $G$  also acts on itself by conjugation
 $$\rho(g) (h):= g h g^{-1}, \ \ g\in G, h\in G,$$
 fixing the unitary element $e\in G$ and thus inducing a linear action on the vector space $T_eG \cong \mathfrak{g}$, called the {\it adjoint action} 
 $${\rm Ad} : G \to {\bf GL}(\mathfrak{g}).$$
 The induced linear action on the dual vector space $T^*_eG \cong \mathfrak{g}^*$  is given by ${\rm Ad}^*(g) (\alpha):=  \alpha \circ {\rm Ad}(g^{-1})$  and is called the {\it co-adjoint action}.  We are now ready to give the definition of a hamiltonian action on $(M, \omega)$.

\begin{defn}\label{d:hamiltonian} Let $\rho: G \to {\rm Diff}(M)$ be a  smooth action of a Lie group $G$ on $(M, \omega)$. It is said to be   {\it hamiltonian}  if there exists a 
 smooth map $$\mu : M \to \mathfrak{g}^*,$$ called {\it a momentum map}, which satisfies the  following two conditions.
\begin{enumerate}
\item[\rm (i)]  For any $\xi \in \mathfrak{g}$, the fundamental vector field $X_{\xi}$ satisfies
\begin{equation*}\label{pre-hamiltonian}
\omega(X_{\xi}, \cdot ) = - d\langle \mu, \xi \rangle, 
\end{equation*}
where $\langle \cdot, \cdot \rangle$ denotes the natural pairing between $\mathfrak{g}$ and its dual $\mathfrak{g}^*$.
\item[\rm (ii)]  $\mu$ is equivariant with respect to the action $\rho$ of $G$ on $M$ and the co-adjoint action ${\rm Ad}^*$ of $G$ on $\mathfrak{g}^*$, i.e.
\begin{equation*}\label{equivariant}
\mu\big(\rho(g)(p)\big)= {\rm Ad}^*(g)\big(\mu(p)\big).
\end{equation*}

\end{enumerate}
\end{defn}
\begin{rem}\label{r:1} (a) Notice that the condition (i) of Definition~\ref{d:hamiltonian} implies that any fundamental vector field $X_{\xi}$ is hamiltonian (see Example~\ref{hamiltonian-flow}) with respect to the smooth function 
\begin{equation}\label{mu-xi}
\mu_{\xi}(p) := \langle \mu(p), \xi \rangle.
\end{equation}
In particular, hamiltonian actions are symplectic.

(b) By a  standard argument in Lie theory, the condition (ii) of Definition~\ref{d:hamiltonian}  can be equivalently expressed as
\begin{equation*}
d\mu_{\xi}(X_{\eta})= -\mu_{[\eta, \xi]}.
\end{equation*}
For any two smooth functions $f, h$ on $(M, \omega)$ 
$$\{f, h\}_{\omega} : = \omega(X_{f}, X_{h})$$
is the  so-called  {\it Poisson  bracket}. Thus,  the condition (ii) of Definition~\ref{d:hamiltonian} is equivalent to
\begin{equation} \label{equivariant-bracket}
\{\mu_{\xi}, \mu_{\eta}\}_{\omega}  = -\mu_{[\xi, \eta]}.
\end{equation}
\end{rem}

An important example is  the following
{\tiny \begin{ex}\label{e:kirilov} Suppose $G$ is a semi-simple compact Lie group with Lie algebra $\mathfrak{g}$,  and $\T \subset G$ is  a maximal torus with Lie algebra $\mathfrak{t} \subset \mathfrak{g}$.  Complexifying, we also have a semi-simple complex Lie group $G^{\C}$ with Lie algebra $\mathfrak{g}^c{\C}:= \mathfrak{g}\otimes \C$,   and   a Cartan sub-algebra $\mathfrak{h}:=\mathfrak{t}\otimes \C$. A specific example which one can bear in  mind is $G={\bf SU}(n)$, $\T =\{ {\rm diag}(e^{\sqrt{-1} t_1}, \ldots, e^{\sqrt{-1} t_m}) : t_1 + \cdots + t_m=0\}$. Then $G^{\C} = {\bf SL}(n, \C)$.

The general theory of semi-simple Lie algebras  yields that $\mathfrak{g}^{\C}$ admits a {\it root decomposition}
$$\mathfrak{g}^{\C} = \mathfrak{h} \oplus \bigoplus_{\alpha \in R_+}(\mathfrak{g}^{\C}_{-\alpha} \oplus \mathfrak{g}^{\C}_{\alpha}), $$
where $R \subset \mathfrak{h}^*$ denotes the finite subspace of roots of $\mathfrak{g}^{\C}$ and $R= R_+ \cup (-R_+)$ with $R_+ \cap (-R_+)=\emptyset$  is the choice of a subset $R_+$ of {\it positive roots}. For $\alpha \in R$, we have set
$$\mathfrak{g}^{\C}_{\alpha} := \{u \in \mathfrak{g}^{\C} : {\rm ad}_x (u) = \alpha(x) u,  \ \forall x \in \mathfrak{h}\}.$$

We denote by $M$ an orbit in $\mathfrak{g}$ for the adjoint action ${\rm Ad}: G \to {\bf GL}(\mathfrak{g})$ and assume that $M$ is not a point. The general theory tells us that $M$ is aways the orbit of a non-zero element $x\in \mathfrak{t}$, and that such $x$ is unique if we require moreover  that $-i\alpha(x) \ge 0$ for all $\alpha \in R_+$. If  the unique $x \in \mathfrak{t}$ determined as above also satisfies $-i\alpha(x) > 0$ for all $\alpha \in R_+$, then the orbit $M\cong G/\T$ is {\it principal} for the adjoint action of $G$, and $M$ is called a {\it flag manifold}. In general, $M \cong G/G_x$ where $G_x$ is the subgroup of $G$ fixing $x$ (and whose Lie algebra is $\mathfrak{g}_x = {\rm Ker}({\rm ad}_x)$).

In the case $G={\bf SU}(n),$ this corresponds to the basic fact that any hermitian matrix $a\in \sqrt{-1} {\bf su}(n, \C)$ can be diagonalized by a conjugation with an element of ${\bf SU}(n)$.  The diagonal matrix is uniquely determined up to a permutation of its (real) eigenvalues, so  we can normalize it by ordering the eigenvalues in  decreasing order.  A hermitian matrix $a$ determines a regular orbit  for  the action of $G={\bf SU}(n)$ by conjugation precisely when  $a$ has simple spectrum.

Each orbit $M$ admits a natural symplectic form $\omega$,  called the {\it Kirillov--Kostant--Souriau form}, whose  definition we now recall.  As the tangent space $T_yM$  for any $y \in M$ is generated by the fundamental vector fields for the adjoint action of $G$, $T_yM$  can be identified with the image $\mathfrak{m}_y \subset \mathfrak{g}$ of the map
\begin{equation}\label{tangent-map}
u \to \frac{d}{dt}_{|_{t=0}} {\rm Ad}_{\exp(tu)}(y) = -{\rm ad}_y(u).
\end{equation}
For $u,v \in \mathfrak{g}$, we denote by
$$\langle u, v \rangle := {\rm tr} (ad_u \circ ad_v )$$
the Killing form of $\mathfrak{g}$. By assumption $\langle \cdot, \cdot \rangle$ is negative definite ($G$ being compact and semi-simple). 
% and we denote by $\mathfrak{m}_x$ the $\langle \cdot, \cdot \rangle$ orthogonal compliment of $\mathfrak{g}_x$ in $\mathfrak{g}$. Thus, $T_xM$ \cong \mathfrak{m}_x$ via the map \eqref{tangent-map}.  
We then set 
$$\omega_y(u,v) := -\langle y, [u,v]\rangle,$$
where $u,v \in T_y M \cong {\rm Im}({\rm ad}_y)$. Then $\omega$  gives rise to a symplectic structure on $M= G/G_x$.
The main fact is
\begin{prop}[\bf Kirillov--Kostant--Souriau]\label{Souriau} The 2-form  $\omega$ defines a symplectic structure on $M= G/G_x$, the adjoint action of $G$ on $(M, \omega)$ is hamiltonian with momentum  map
$\mu : M \to \mathfrak{g^*}$ identified with the inclusion $\imath: M \subset \mathfrak{g}$ composed with the Killing form $\langle \cdot, \cdot \rangle : \mathfrak{g} \to \mathfrak{g}^*.$
\end{prop}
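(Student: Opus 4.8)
The plan is to reduce the whole statement to three standard properties of the Killing form $\langle\cdot,\cdot\rangle$: its symmetry, its non-degeneracy (here even negative-definiteness, $G$ being compact and semi-simple), and its ${\rm Ad}$-invariance, which in infinitesimal form reads $\langle[\zeta,u],v\rangle=-\langle u,[\zeta,v]\rangle$, i.e. each ${\rm ad}_\zeta$ is skew with respect to $\langle\cdot,\cdot\rangle$. As a preliminary I would observe that skew-symmetry of ${\rm ad}_y$ makes $\mathfrak{m}_y:={\rm Im}({\rm ad}_y)$ the $\langle\cdot,\cdot\rangle$-orthogonal complement of $\mathfrak{g}_y:={\rm Ker}({\rm ad}_y)$, so $\mathfrak{g}=\mathfrak{g}_y\oplus\mathfrak{m}_y$ and, via \eqref{tangent-map}, $T_yM\cong\mathfrak{g}/\mathfrak{g}_y\cong\mathfrak{m}_y$, the tangent vector attached to $\xi\in\mathfrak{g}$ being $X_\xi(y)=-{\rm ad}_y(\xi)$.

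First I would check that $\omega$ is a well-defined smooth $2$-form. Well-definedness comes down to the fact that the skew bilinear form $(\xi,\eta)\mapsto-\langle y,[\xi,\eta]\rangle$ on $\mathfrak{g}$ vanishes as soon as $\xi\in\mathfrak{g}_y$: by invariance and symmetry $\langle y,[\xi,\eta]\rangle=\langle\eta,{\rm ad}_y\xi\rangle$, which is $0$ when $\xi\in{\rm Ker}({\rm ad}_y)$, and by skew-symmetry the same holds for $\eta\in\mathfrak{g}_y$; hence the form descends to $\mathfrak{g}/\mathfrak{g}_y\cong T_yM$. Smoothness is then automatic: the fundamental vector fields $X_\xi$ span $TM$ at every point (the action being transitive on the orbit $M$) and $\omega(X_\xi,X_\eta)$ is the smooth function $z\mapsto-\langle z,[\xi,\eta]\rangle$. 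I would also record that $\omega$ is $G$-invariant, since $d(\rho(g))_y\bigl(X_\xi(y)\bigr)=X_{{\rm Ad}_g\xi}({\rm Ad}_g y)$ together with ${\rm Ad}$-invariance of $\langle\cdot,\cdot\rangle$ gives $(\rho(g)^*\omega)_y=\omega_y$ in one line.

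Next, non-degeneracy: if $X_\xi(y)$ is $\omega_y$-null, then $\langle\eta,{\rm ad}_y\xi\rangle=-\langle y,[\xi,\eta]\rangle=0$ for all $\eta$, so non-degeneracy of $\langle\cdot,\cdot\rangle$ forces ${\rm ad}_y\xi=0$, i.e. $X_\xi(y)=0$. Then I would exhibit the momentum map $\mu(y):=\langle y,\cdot\rangle\in\mathfrak{g}^*$, so that $\mu_\xi(y)=\langle y,\xi\rangle$. Condition (ii) of Definition~\ref{d:hamiltonian} (equivariance) is exactly ${\rm Ad}$-invariance of the Killing form. For condition (i): since $\mu_\xi$ is the restriction to $M\subset\mathfrak{g}$ of a linear functional, $(d\mu_\xi)_y\bigl(X_\eta(y)\bigr)=\langle X_\eta(y),\xi\rangle=-\langle{\rm ad}_y\eta,\xi\rangle$, and one further use of symmetry and invariance identifies this with $-\omega_y\bigl(X_\xi(y),X_\eta(y)\bigr)$; hence $\omega(X_\xi,\cdot)=-d\mu_\xi$, and in particular each $X_\xi$ is hamiltonian.

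It remains to prove $d\omega=0$, and this is the one step where the choice of argument matters. One can grind it out from the Koszul formula applied to $X_\xi,X_\eta,X_\zeta$, using $[X_\xi,X_\eta]=-X_{[\xi,\eta]}$, $\omega(X_\eta,X_\zeta)(z)=-\langle z,[\eta,\zeta]\rangle$ and two uses of the Jacobi identity; but I would instead exploit what is already available. Since $\omega$ is $G$-invariant, ${\mathcal L}_{X_\xi}\omega=\frac{d}{dt}\big|_{t=0}\rho(\exp t\xi)^*\omega=0$; by Cartan's formula and condition (i), $\imath_{X_\xi}d\omega={\mathcal L}_{X_\xi}\omega-d\bigl(\imath_{X_\xi}\omega\bigr)=-d(-d\mu_\xi)=0$; and since the $X_\xi$ span $T_zM$ at every $z$ while $d\omega$ is a $3$-form, this forces $d\omega=0$. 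Assembling the pieces, $\omega$ is a closed non-degenerate $2$-form, hence symplectic, and the adjoint action is hamiltonian with momentum map the inclusion $M\subset\mathfrak{g}$ followed by the Killing-form isomorphism $\mathfrak{g}\cong\mathfrak{g}^*$. The genuine obstacle is nothing conceptual but the sign bookkeeping: one must fix once and for all the conventions in \eqref{tangent-map} and in the invariance identity for $\langle\cdot,\cdot\rangle$, after which every displayed equality above is a one-line verification.
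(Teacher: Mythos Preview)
Your proof is correct and complete. The paper, however, does not actually supply a proof of this proposition: it is stated as a fact inside Example~\ref{e:kirilov} and left to the reader, so there is nothing to compare against. Your argument is the standard one and handles all the points (well-definedness on $\mathfrak g/\mathfrak g_y$, $G$-invariance, non-degeneracy, the momentum map identities, and closedness via Cartan's formula rather than the Koszul computation) cleanly; the ordering you chose---establishing $\imath_{X_\xi}\omega=-d\mu_\xi$ first and then using it together with $G$-invariance to get $d\omega=0$---is the efficient route and avoids the Jacobi-identity bookkeeping entirely.
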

\end{ex}}

\section{Hamiltonian actions of tori} We now specialize to the case when the Lie group  $G=\T^k$ is a $k$-dimensional torus.
\begin{lemma} \label{abelian-hamiltonian-action} Suppose $G=\T^k$ acts symplectically on $(M, \omega)$. Then,  the action is hamiltonian if and only if for any $\xi \in \mathfrak{g}$, there exists a smooth function $\mu_{\xi}$ on $M$,  such that  $\imath_{X_{\xi}} \omega = - d\mu_{\xi}$. In this case, the momentum map $\mu: M \to \mathfrak{g}^*$ is determined up to the addition of a vector in $\mathfrak{g}^*$.
\end{lemma}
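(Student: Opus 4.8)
The plan is to prove both directions. The forward direction is immediate: if the action is hamiltonian with momentum map $\mu$, then for any $\xi \in \mathfrak{g}$, setting $\mu_\xi(p) := \langle \mu(p), \xi\rangle$ gives a smooth function with $\imath_{X_\xi}\omega = \omega(X_\xi, \cdot) = -d\mu_\xi$ directly from condition (i) of Definition~\ref{d:hamiltonian}. So no torus-specific argument is needed there.

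For the converse, suppose for each $\xi \in \mathfrak{g}$ we have a smooth $\mu_\xi$ with $\imath_{X_\xi}\omega = -d\mu_\xi$. First I would exploit that $\mu_\xi$ is determined up to an additive constant, and choose a linear splitting: pick a basis $\xi_1,\dots,\xi_k$ of $\mathfrak{g} = \mathbb{R}^k$, fix functions $\mu_{\xi_j}$, and define $\mu_\xi := \sum_j a_j \mu_{\xi_j}$ when $\xi = \sum_j a_j \xi_j$. Since $\xi \mapsto X_\xi$ is linear and $\omega$-contraction is linear, this $\mu_\xi$ still satisfies $\imath_{X_\xi}\omega = -d\mu_\xi$ for all $\xi$, and it depends linearly on $\xi$, so it defines a smooth map $\mu : M \to \mathfrak{g}^*$ by $\langle \mu(p),\xi\rangle = \mu_\xi(p)$, which establishes condition (i).

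The main obstacle is verifying the equivariance condition (ii), and this is exactly where commutativity of $G = \mathbb{T}^k$ enters. Because the co-adjoint action of an abelian group is trivial, (ii) reduces to $\mu_\xi$ being $G$-invariant for every $\xi$, equivalently (by Remark~\ref{r:1}(b)) to $\{\mu_\xi,\mu_\eta\}_\omega = -\mu_{[\xi,\eta]} = 0$ for all $\xi,\eta$. So I would compute $\{\mu_\xi,\mu_\eta\}_\omega = \omega(X_\xi, X_\eta) = d\mu_\eta(X_\xi)$ and show this function is constant on $M$: differentiating, $d(d\mu_\eta(X_\xi)) = \mathcal{L}_{X_\xi}(d\mu_\eta) - \imath_{X_\xi}d(d\mu_\eta) = \mathcal{L}_{X_\xi}(d\mu_\eta) = d(\mathcal{L}_{X_\xi}\mu_\eta)$, and one checks $\mathcal{L}_{X_\xi}(\imath_{X_\eta}\omega) = \imath_{[X_\xi,X_\eta]}\omega + \imath_{X_\eta}\mathcal{L}_{X_\xi}\omega = \imath_{[X_\xi,X_\eta]}\omega = \imath_{X_{[\xi,\eta]}}\omega = 0$ since $[X_\xi,X_\eta] = X_{[\xi,\eta]} = 0$ by commutativity and $\mathcal{L}_{X_\xi}\omega = 0$ (the action is symplectic). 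Hence $d\mu_\eta(X_\xi)$ is locally constant; if $M$ is connected it is a genuine constant $c(\xi,\eta)$, and one shows $c$ is skew-symmetric and can be absorbed by a further correction of $\mu$ (replacing $\mu$ by $\mu + \lambda$ for a suitable $\lambda \in \mathfrak{g}^*$ chosen so that $\{\mu_\xi,\mu_\eta\}_\omega = 0$ — using that $H^1$ of the abelian Lie algebra together with skew-symmetry of $c$ lets us solve this; concretely $c(\xi,\eta) = \langle \lambda, [\xi,\eta]\rangle$ is automatically $0$, so in fact one must argue $c \equiv 0$ directly, e.g. by noting each $X_\xi$ has a zero or by the averaging/flow argument). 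Finally, the uniqueness-up-to-$\mathfrak{g}^*$ claim follows because two momentum maps $\mu,\mu'$ give $d(\mu_\xi - \mu'_\xi) = 0$ for all $\xi$, so on connected $M$ their difference is a constant element of $\mathfrak{g}^*$; I would remark that connectedness of $M$ is the standing assumption making this clean.
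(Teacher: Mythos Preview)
Your setup for both directions, and the linearization step defining $\mu$ from a basis, match the paper's proof. The gap is in your verification of condition (ii).

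You correctly compute that $d\big(\omega(X_\xi,X_\eta)\big)=0$, so $\omega(X_\xi,X_\eta)$ is a constant $c(\xi,\eta)$ on connected $M$. But then you do not establish $c=0$. Your first suggestion, that ``each $X_\xi$ has a zero'', is false in general (take $\T$ acting on itself by translation); your second suggestion, an ``averaging/flow argument'', is correct but you leave it as a gesture. And as you yourself observe, you cannot kill $c$ by shifting $\mu$ by a constant $\lambda\in\mathfrak g^*$, since $\langle\lambda,[\xi,\eta]\rangle=0$ automatically for abelian $\mathfrak g$; the 2-cocycle $c$ represents a class in $H^2(\mathfrak g)$, which is nonzero for abelian $\mathfrak g$, so there is genuinely something to prove.

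The paper closes this gap using compactness of $G=\T^k$: each orbit $\mathcal O=G(p)$ is compact, so $\mu_\xi\restr{\mathcal O}$ has a critical point, and at that point $\omega(X_\xi,X_\eta)=-d\mu_\xi(X_\eta)=0$. Since you have already shown $\omega(X_\xi,X_\eta)$ is constant, this single zero forces $c\equiv 0$. (Equivalently, integrate $d\mu_\xi(X_\eta)=-c$ over a closed $X_\eta$-orbit of period $2\pi$: the left side is $0$ since $\mu_\xi$ is single-valued, forcing $c=0$.) This is where the hypothesis $G=\T^k$, rather than $G=\R^k$, actually enters; your argument up to constancy uses only that $\mathfrak g$ is abelian.
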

\begin{proof} If the action of $G$ is hamiltonian, the existence of $\mu_{\xi}$ follows from Remark~\ref{r:1}(a). 

Conversely, suppose that each fundamental vector field $X_{\xi}$ is hamiltonian with respect to a smooth function $\mu_{\xi}$ on $M$. Choose a basis $\{\xi_1, \ldots, \xi_k\}$ of $\mathfrak{g}$ and let $X_1, \ldots, X_k$ and $\mu_1, \ldots, \mu_{k}$ be the corresponding fundamental vector fields  and hamiltonian function, respectively. We claim that for $\xi = \sum_{i=1}^k a_i \xi_i$, the function 
\begin{equation}\label{mu-xi-abelian}
\mu_{\xi}:= \sum_{i=1}^k a_i \mu_i
\end{equation}
is a hamiltonian of the induced fundamental vector field $X_{\xi}$. Indeed, since $G$ is abelian, $\exp(t\xi)= \exp(ta_1\xi_1) \circ \cdots \circ \exp(ta_k\xi_k)$ and,  therefore, the induced fundamental vector field  is $X_{\xi}= \sum_{i=1}^k a_i X_{\xi_i}$. The claim then follows trivially. Thus,  we can define $\mu : M \to \mathfrak{g}^*$ by letting
$$\langle \mu(p), \xi \rangle := \mu_{\xi}.$$

It remains to show that the condition \eqref{equivariant-bracket} holds. As the co-adjoint action of an abelian group $G$ is trivial,  this condition reduces to show that  $\mu_{\xi}$ is an invariant function under the action of $G$. Equivalently, by Remark~\ref{r:1}(b), we have to show that for any fundamental vector fields $X_{\xi}, X_{\eta}$, the smooth function $\{\mu_{\xi}, \mu_{\eta}\}_{\omega} =\omega(X_{\xi}, X_{\eta})$ identically vanishes on $M$. Since the action of $G$ is symplectic, for any fundamental vector field $X_{\zeta}, $ $\mathcal{L}_{X_{\zeta}}\omega =0$. It follows (by using that  $G$ is abelian again) that $\mathcal{L}_{X_{\zeta}} (\omega(X_{\xi}, X_{\eta}))=0$. Thus,
$\omega(X_{\xi}, X_{\eta})$ is a constant function on each orbit $\mathcal{O}\subset M$ for the action of $G$ on $M$. It is a standard fact that $\mathcal{O}$ is  a homogeneous manifold $G/G_{p},$ where $p\in \mathcal{O}$ and $G_p$ is the stabilizer of $p$ in $G$, see e.g. \cite{bredon}.  As $G$ is compact, $\mathcal{O}$ is a compact manifold (see \cite{bredon}, Chapter 1, Corollary 1.3), and therefore the restriction of $\mu_{\xi}$  to $\mathcal{O}$ has a critical point. At this point, $\omega(X_{\xi}, X_{\eta})=-d\mu_{\xi}(X_{\eta})=0$, thus showing that the function $\omega(X_{\xi}, X_{\eta})$ is identically zero on $\mathcal{O}$, and hence on $M$. 

The last statement trivially follows as we can  take instead of  $\mu_i$  the hamiltonian  $\mu_i + \lambda_i$   with $\lambda_i \in \R$ or, equivalently,  we can replace in \eqref{mu-xi-abelian} $\mu_{\xi}$ by $\mu_{\xi} + \langle \xi,  \lambda\rangle$ for $\lambda \in \mathfrak{g}^*$. \end{proof}

For the next result, we use the facts that if a compact Lie group $G$ acts effectively on $M$,  then each orbit $\mathcal{O}=G(p)$ is a compact homogeneous manifold of dimension $\le \dim G$ and, furthermore,  that there exist an open dense subset $M^0 \subset M$ of points whose orbits are of dimension $=\dim G$ (called principal orbits of $G$). We refer to \cite{bredon}  for  a proof of these facts.
\begin{lemma}\label{l:toric} Suppose  that $(M^{2m}, \omega)$ admits an effective hamiltonian action of $G=\T^k$. Then $k \le m$.
\end{lemma}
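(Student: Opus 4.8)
The plan is to produce a $k$-dimensional subspace of some tangent space $T_pM$ on which $\omega_p$ vanishes, and then to invoke the elementary fact that an isotropic subspace of a $2m$-dimensional symplectic vector space has dimension at most $m$.

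First I would fix a basis $\xi_1,\dots,\xi_k$ of $\mathfrak{g}=\mathrm{Lie}(\T^k)$ and look at the fundamental vector fields $X_{\xi_1},\dots,X_{\xi_k}$. Since the action is effective, the structural facts recalled just before the statement provide an open dense subset $M^0\subset M$ consisting of points lying on principal orbits of dimension exactly $k$. I would pick $p\in M^0$ and set $\mathcal O=G(p)$, so that $T_p\mathcal O$ has dimension $k$; as $T_p\mathcal O$ is spanned by $X_{\xi_1}(p),\dots,X_{\xi_k}(p)$, these $k$ vectors form a basis of $W:=T_p\mathcal O\subseteq T_pM$.

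Next I would check that $W$ is isotropic for $\omega_p$. By Remark~\ref{r:1}(a) each $X_{\xi_i}$ is the Hamiltonian vector field of the function $\mu_{\xi_i}$, hence $\omega(X_{\xi_i},X_{\xi_j})=\{\mu_{\xi_i},\mu_{\xi_j}\}_{\omega}$; by formula~\eqref{equivariant-bracket} in Remark~\ref{r:1}(b) this equals $-\mu_{[\xi_i,\xi_j]}$, which is identically zero because $\mathfrak g$ is abelian (so $[\xi_i,\xi_j]=0$ and $\mu_0\equiv 0$). Thus $\omega_p\big(X_{\xi_i}(p),X_{\xi_j}(p)\big)=0$ for all $i,j$, i.e. $\omega_p$ restricts to zero on $W$.

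Finally comes the linear-algebra step: for an isotropic subspace $W$ of the symplectic vector space $(T_pM,\omega_p)$ one has $W\subseteq W^{\perp}$, where $W^{\perp}$ denotes the $\omega_p$-orthogonal complement, and non-degeneracy of $\omega_p$ gives $\dim W^{\perp}=2m-\dim W$; hence $\dim W\le 2m-\dim W$, so $\dim W\le m$. Since $\dim W=k$, this yields $k\le m$. The only point requiring care is the first step — that effectiveness forces the principal orbits to have dimension $k$ (without effectiveness the statement is plainly false, e.g. the trivial action of $\T^k$ on a point) — but this is precisely the fact about compact group actions quoted before the lemma, so I do not expect a genuine obstacle.
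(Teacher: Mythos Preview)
Your proof is correct and follows essentially the same approach as the paper: show that the tangent space of a principal orbit is $\omega$-isotropic and then invoke the linear-algebra bound on isotropic subspaces. The only minor difference is in how isotropy is checked: the paper refers back to the critical-point argument in the proof of Lemma~\ref{abelian-hamiltonian-action} (which established $\omega(X_\xi,X_\eta)=0$ from condition~(i) alone), whereas you use the equivariance condition~\eqref{equivariant-bracket} directly---a cleaner route here since the action is assumed hamiltonian from the outset.
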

\begin{proof} By the argument in the proof of Lemma~\ref{abelian-hamiltonian-action}, the tangent space $T_p\mathcal{O}$ of a principal orbit $\mathcal{O}= G(p)$ is $\omega_p$ isotropic subspace of $T_pM$. Its dimension ($=\dim G$), therefore, is $\le m$. \end{proof} 

{\tiny \begin{ex}\label{flat-hamiltonian-action} We return again to Example~\ref{flat-action} and will now show that the symplectic action of $\T^m$ on $(\R^{2m}, \omega_0)$ is hamiltonian. We shall work on the dense open subset  $(\C^m)^0=\{(z_1, \ldots, z_m) : z_i\neq 0, \ \forall i=1, \ldots m\}$ of $\C^m \cong \R^{2m}$ on which we introduce polar coordinates $z_i = r_i e^{\sqrt{-1}\varphi_i}, i=1, \ldots, m$. The symplectic $2$-form $\omega_0$ then becomes
$$\omega_0 = \sum_{i=1}^m r_i dr_i\wedge d\varphi_i,$$
whereas the fundamental vector fields  $\{X_1, \ldots, X_{m}\}$ associated to the standard basis of $\R^m = \{(t_1, \ldots, t_m)\} \cong {\rm Lie}(\T^m)$ are
$X_i = \frac{\partial}{\partial \varphi_i}.$
It  follows that $\imath_{X_i} \omega_0 = -r_i dr_i = -d(\frac{1}{2}r_i^2).$ 
In other words,  the smooth map $\mu: \C^m \to \R^m \cong \Big({\rm Lie}(\T^m)\Big)^*$ defined by
\begin{equation}\label{flat-momentum}
\mu(z):= \frac{1}{2}\big(|z_1|^2, \ldots, |z_m|^2\big)
\end{equation}
is a momentum map for the action of $\T^m$ on $(\C^m)^0$,  and hence on $\C^m$ (by continuity). We also notice that
\begin{equation}\label{flat-polytope} 
{\rm Im}(\mu)=C_m:=\{(x_1, \ldots, x_m) \in \R^m : x_i\ge 0\}.
\end{equation}
\end{ex}}
{\tiny
\begin{exercise} Consider the $\Sph^1$-action on $(\Sph^2, \omega_{\Sph^2})$ corresponding to the rotation around the $z$-axis of $\R^3$, see Example~\ref{e:S2}.  Show that  this is hamiltonian with momentum map given by the $z$-ccordinate.
\end{exercise} }
\bigskip
The central result in the theory is the following convexity result
\begin{thm}[\bf Atiyah~\cite{Atiyah}, Guillemin--Stenberg~\cite{GS}]\label{convexity} Suppose $\T^k$ acts in a hamiltonian way on a  connected compact symplectic manifold $(M, \omega)$, with momentum map $\mu : M \mapsto \R^k$. Then,  
\begin{enumerate}
\item[\rm (i)]  The image of $\mu$ is  the convex hull $\Delta \subset \R^k$ of the images of the fixed points for the $\T^k$-action on $M$.
\item[\rm (ii)] The pre-image of any point of $\Delta$ is connected.
\end{enumerate}
\end{thm}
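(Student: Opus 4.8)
The plan is to proceed by induction, but first I need to set up the main technical tool, which is the following fact about the individual component functions of $\mu$. Pick any $\xi\in\mathfrak{g}=\R^k$ and consider the smooth function $\mu_\xi=\langle\mu,\xi\rangle$ on $M$. I claim $\mu_\xi$ is a \emph{Morse–Bott function whose critical submanifolds all have even index and even coindex}; equivalently, $\mu_\xi$ has no local maxima or minima other than on its critical set, and the critical set has only even-dimensional normal data. The reason is that the Hamiltonian vector field $X_\xi$ generates a circle or torus action (the closure of $\exp(t\xi)$ in $\T^k$ is a subtorus $\T_\xi$), and near a connected component $Z$ of $\mathrm{Crit}(\mu_\xi)=\mathrm{Fix}(\T_\xi)$ one can linearize the action: the normal bundle to $Z$ splits into $\T_\xi$-weight subbundles, on each of which $\mu_\xi$ looks like $\pm$ a sum of $|z_j|^2$ in Darboux-type coordinates, exactly as in Example~\ref{flat-hamiltonian-action}. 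This gives the Morse–Bott property with even-dimensional negative normal bundle. A standard consequence (Morse theory with even indices) is that for generic $\xi$ the level sets $\mu_\xi^{-1}(c)$ are connected and that $M$ itself is connected iff each such critical submanifold contributes no disconnection — the key lemma I will really use is: \emph{if $f$ is a Morse–Bott function on a connected compact manifold with all indices and coindices $\ne 1$, then $f$ has connected level sets and connected sublevel sets, and a unique local max and unique local min.}

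With this in hand, I carry out the induction on $k$. For $k=1$: $\mu=\mu_\xi$ is Morse–Bott with the above index property on the connected compact $M$, so $\mathrm{Im}(\mu)$ is a closed interval $[a,b]$ — automatically the convex hull of the (finitely many) values at $\mathrm{Fix}(\T^1)\supseteq\{$points realizing $a$ and $b\}$ — and each fiber is connected by the lemma. For the inductive step, write $\T^k=\T^{k-1}\times\T^1$ and split $\mu=(\mu',\mu_k)$ accordingly, where $\mu':M\to\R^{k-1}$ is the momentum map for the $\T^{k-1}$-factor. By induction I would like to know $\mathrm{Im}(\mu')$ is a polytope with connected fibers; but the honest route is to restrict: for a fiber $N=\mu_k^{-1}(c)$ over a regular value $c$, $N$ is connected (by the $k=1$ case applied to $\mu_k$), $\T^{k-1}$ acts on it Hamiltonianly with momentum map $\mu'|_N$, and by induction $\mu'(N)$ is a convex polytope with connected preimages. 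One then shows $\mu(\mu_k^{-1}(c))=\mu'(N)\times\{c\}$ varies in such a way that the total image is convex: the set of $c$ for which $\mathrm{Im}(\mu)\cap(\R^{k-1}\times\{c\})$ is a fixed polytope-slice changes only at the finitely many critical values of $\mu_k$, and a connectedness-plus-local-convexity argument (the image is locally convex near every point, using the linear model of the action near an orbit, and connected, hence globally convex — this is the Local Convexity / "local-to-global" principle) finishes (i). For (ii), connectedness of $\mu^{-1}(p)$ follows because it is cut out inside the connected $\mu_k^{-1}(c)$ as $(\mu'|_N)^{-1}(p')$, which is connected by induction, once one checks nothing goes wrong at singular levels — handled again by the Morse–Bott lemma applied to a generic further component.

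The main obstacle, and the step deserving the most care, is the passage from \emph{local} convexity of the image to \emph{global} convexity together with the polytope (finite-vertex) structure: a priori $\mathrm{Im}(\mu)$ is only known to be a compact, connected, locally convex subset with locally polyhedral boundary, and one must rule out pathologies (e.g. non-convex "banana" shapes) by using connectedness of \emph{all} fibers $\mu^{-1}(p)$ — not just generic ones — which is where the even-index Morse–Bott property is used in full strength. Concretely I would prove: (a) for every $\xi$, $\mu_\xi$ attains its max on a connected critical submanifold $Z_{\max}$, and $\mu(Z_{\max})$ lies in a single hyperplane — this pins the supporting hyperplanes of $\mathrm{Im}(\mu)$; (b) the preimage of each face of $\mathrm{Im}(\mu)$ is a connected symplectic submanifold on which a subtorus acts, allowing an induction on faces; (c) assemble (a) and (b) to see $\mathrm{Im}(\mu)$ is the intersection of finitely many half-spaces, hence a polytope, and equals the convex hull of $\mu(\mathrm{Fix}(\T^k))$ since the vertices of that polytope must be images of fixed points. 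Step (b)'s connectedness is the crux and rests entirely on the Morse–Bott lemma of the first paragraph; everything else is bookkeeping with the linear normal-form of Hamiltonian torus actions.
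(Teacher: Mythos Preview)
The paper does not contain a proof of Theorem~\ref{convexity}: it is stated as a cited classical result (attributed to Atiyah and Guillemin--Sternberg) and immediately followed by an application (Sch\"ur's theorem), with no argument given. So there is nothing in the paper to compare your proposal against.

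For what it is worth, your sketch follows the standard Atiyah approach (Morse--Bott theory for the component functions $\mu_\xi$, even-index lemma giving connected level sets, induction on $k$). One point to tighten: in your inductive step you write that $\T^{k-1}$ acts ``Hamiltonianly'' on $N=\mu_k^{-1}(c)$, but $N$ is only coisotropic, not symplectic; Atiyah's actual induction is slightly different --- he proves by induction on $k$ the statement that every $\mu_\xi$ has connected fibers, applies this to $k+1$ generic linear projections to get convexity directly, and handles connectedness of the full fibers separately. Your local-to-global discussion at the end is closer in spirit to the Condevaux--Dazord--Molino / Hilgert--Neeb--Plank reformulation than to the original papers.
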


{\tiny
\begin{ex}[{\bf Sch\"ur's Theorem}]\label{adjoint-orbit}  This is the main application of Theorem~\ref{convexity} given  in \cite{Atiyah}.  In the general setting of Example~\ref{e:kirilov}, let  us take $G={\bf SU}(n)$ and  consider the orbit $M= M_{a}$ of all  conjugated hermitian  matrices  to   a given hermitian matrix $a$ by elements of $G$. We thus have a hamiltonian action on $(M, \omega)$ of the $(n-1)$-dimensional torus $\T\subset {\bf SU}(n)$ of diagonal matrices in ${\bf SU}(n)$. Furthermore, it easily  follows by Proposition~\ref{Souriau}  that the momentum map for this action is 
 $$\mu(b) = {\rm diag}(b_{11}, \ldots, b_{nn}), $$
 where $b$ is any hermitian matrix in $M_a$ and $(b_{11}, \ldots, b_{nn})$ are the diagonal elements  of $b$ (a vector in $\R^n$). Furthermore, the fixed points for the action of $\T$ on $M_a$ are precisely the diagonal hermitian matrices in $M_a$. We thus obtain from Theorem~\ref{convexity} the following classical result due to Sch\"ur.
 \begin{cor}  Let $a$ be a hermitian matrix in $\C^n$ with spectrum $(\lambda_1, \ldots, \lambda_n) \in \R^n$. Then the diagonal elements of the hermitian matrices in the conjugacy class of $a$ by elements of ${\bf U}(n)$ consisting of all points in the convex hull  in $\R^n$ of $\{(\lambda_{\sigma(1)}, \ldots, \lambda_{\sigma(n)}), \sigma \in S_n\},$ where $S_n$ denotes the symmetric group of permutation of $n$-elements.
 \end{cor}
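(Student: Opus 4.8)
The plan is to deduce Sch\"ur's theorem directly from the convexity theorem (Theorem~\ref{convexity}) applied to the torus action described in Example~\ref{adjoint-orbit}; the only real work is to pin down the three ingredients that feed into that theorem: the symplectic manifold, the momentum map, and the fixed-point set.

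First I would set $M=M_a$, the set of hermitian matrices ${\bf U}(n)$-conjugate to $a$. Since conjugation by a scalar unitary is trivial, $M$ coincides with the conjugacy class of $a$ under ${\bf SU}(n)$, i.e. (after multiplying by $\sqrt{-1}$) with an adjoint orbit of the semi-simple group ${\bf SU}(n)$ acting on $\mathfrak{su}(n)$; this is precisely why one passes to ${\bf SU}(n)$. Proposition~\ref{Souriau} then endows $M$ with the Kirillov--Kostant--Souriau form $\omega$ and makes the ${\bf SU}(n)$-action hamiltonian, with momentum map the inclusion $M\hookrightarrow \mathfrak{su}(n)$ followed by an ${\rm Ad}$-invariant identification $\mathfrak{su}(n)\cong \mathfrak{su}(n)^*$ (a positive multiple of minus the Killing form; on hermitian matrices one may take ${\rm tr}(AB)$). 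Restricting the action to the maximal torus $\mathbb{T}$ of diagonal matrices in ${\bf SU}(n)$, the induced torus action is hamiltonian, and its momentum map $\mu$ is the composition of $M\hookrightarrow \mathfrak{su}(n)$ with the dual of the inclusion $\mathfrak{t}\hookrightarrow\mathfrak{su}(n)$, which under the chosen invariant inner product is the orthogonal projection onto the diagonal subalgebra. Computing this projection, one finds that a hermitian $b\in M$ is sent to its diagonal part $(b_{11},\dots,b_{nn})$; since ${\rm tr}(b)={\rm tr}(a)$ is constant on $M$, this vector lies in the affine hyperplane $\{x\in\R^n:\sum_i x_i={\rm tr}(a)\}$, which is the relevant copy of $\mathfrak{t}^*$. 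This is the step demanding the most care: verifying that the orthogonal projection of the KKS momentum map to $\mathfrak{t}^*$ really produces the diagonal entries, and keeping the ${\bf SU}(n)$ versus ${\bf U}(n)$ bookkeeping straight (alternatively one works with ${\bf U}(n)$ and its full $n$-torus, at the cost of $\mathfrak{u}(n)$ not being semi-simple, so that Proposition~\ref{Souriau} must be applied to the ${\bf SU}(n)$-orbit, which is the same set).

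Next I would identify the $\mathbb{T}$-fixed points in $M$. A hermitian $b\in M$ is fixed by conjugation by every ${\rm diag}(e^{\sqrt{-1}t_1},\dots,e^{\sqrt{-1}t_n})$ with $\sum_j t_j=0$ exactly when it commutes with all such diagonal unitaries, which forces $b$ to be diagonal; a diagonal hermitian matrix in $M$ has the same spectrum as $a$, hence equals ${\rm diag}(\lambda_{\sigma(1)},\dots,\lambda_{\sigma(n)})$ for some $\sigma\in S_n$, and conversely each such matrix is conjugate to $a$ by a (signed) permutation matrix and so lies in $M$. Thus the fixed-point set is exactly $\{\,{\rm diag}(\lambda_{\sigma(1)},\dots,\lambda_{\sigma(n)}) : \sigma\in S_n\,\}$, and $\mu$ sends each such point to the vector $(\lambda_{\sigma(1)},\dots,\lambda_{\sigma(n)})\in\R^n$.

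Finally I would check the hypotheses of Theorem~\ref{convexity}: $M$ is connected, being a continuous image of the connected group ${\bf SU}(n)$, and compact, being a closed bounded subset of the space of hermitian matrices (all elements of $M$ share the spectrum of $a$, hence have uniformly bounded norm). Part~(i) of Theorem~\ref{convexity} then states that ${\rm Im}(\mu)$ is the convex hull in $\R^n$ of $\{(\lambda_{\sigma(1)},\dots,\lambda_{\sigma(n)}):\sigma\in S_n\}$. Unwinding the description of $\mu$, ${\rm Im}(\mu)$ is precisely the set of diagonals $(b_{11},\dots,b_{nn})$ of hermitian matrices $b$ in the ${\bf U}(n)$-conjugacy class of $a$, which is the assertion of the corollary. (Part~(ii) of Theorem~\ref{convexity} is not needed.) In short, the substantive content is entirely absorbed by Theorem~\ref{convexity} and Proposition~\ref{Souriau}, and the only genuine subtlety in the reduction is the momentum-map computation of the previous paragraph; the rest is bookkeeping.
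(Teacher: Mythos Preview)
Your proposal is correct and follows essentially the same approach as the paper: identify $M_a$ as an ${\bf SU}(n)$-adjoint orbit with its KKS form via Proposition~\ref{Souriau}, observe that the momentum map for the diagonal torus sends $b$ to its diagonal $(b_{11},\dots,b_{nn})$, note that the $\T$-fixed points are exactly the diagonal matrices in $M_a$, and then invoke Theorem~\ref{convexity}. You have simply fleshed out more of the routine verifications (compactness, connectedness, the ${\bf SU}(n)$ vs.\ ${\bf U}(n)$ bookkeeping) than the paper does in Example~\ref{adjoint-orbit}.
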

\end{ex}}

\subsection{Toric symplectic manifolds and Delzant Theorem} In view of Lemma~\ref{l:toric}, we give the following
\begin{defn}\label{d:toric} A {\it symplectic toric manifold}  is a compact connected symplectic manifold $(M^{2m}, \omega)$ endowed with an effective hamiltonian action $\rho$ of a torus $\T$ with 
$${\rm dim}\  \T = m= \frac{1}{2}{\rm dim}\ M.$$
Two toric symplectic manifolds $(M_i, \omega_i, \T_i, \rho_i), i=1,2$ are  considered  {\it equivalent} if there exist an isomorphism of Lie groups $\phi : \T_1 \to \T_2$ and a diffeomorphism $\Phi: M_1 \to M_2$ with $\Phi^* \omega_2 = \omega_1$, satisfying
$$\Phi(\rho_1 (g)(p))= \rho_2(\phi(g)) (\Phi(p)), \ \forall g\in \T_1, \ \forall \ p\in M_1.$$
By Lemma~\ref{abelian-hamiltonian-action}, we can always assume that in this case the momentum maps $\mu_i$ of $(M_i, \omega_i, \T_i, \rho_i)$ are linked by $\mu_1 = \mu_2 \circ \Phi$.
\end{defn}
By virtue of Theorem~\ref{convexity}, the image of a toric symplectic manifold of dimension $2m$ is a compact convex polytope $\Delta$ in the $m$-dimensional vector space $\mathfrak{t}^*= {\rm Lie}(\T)$. The Delzant theorem provides a classification (up to the  equivalence of Definition~\ref{d:toric}) of symplectic toric manifolds  $(M, \omega, \T)$ in terms of the corresponding polytopes $\Delta \subset \mathfrak{t}^*$. To state it properly, we notice that being Lie algebra of a torus $\T$, the vector space $\mathfrak{t}$  comes equipped with a lattice $\Lambda\subset \mathfrak{t}$ such that $2\pi \Lambda= {\rm  exp}^{-1}(e)$ (where $e$ stands for the identity element of $\T$). In other words,
$$\exp: {\mathfrak{t}}/{2\pi \Lambda} \cong \T.$$
\begin{defn}\label{d:Delzant-polytope} Let $V$ be an $m$-dimensional real vector  space whose dual space is $V^*$, and let  $\Lambda \subset V$ be a lattice. We denote also by $V^*$ the affine space determined by $V^*$. Let $\Delta \subset V^*$ be a  convex polytope written as the intersection of a minimal  number of $d$ linear inequalities 
$$\Delta =\{x \in V^* : L_j(x) =\langle u_j, x\rangle + \lambda_j \ge 0, j=1, \ldots, d\}, $$
where $u_j \in V$  are called (labelled) {\it normals} of $\Delta$ and $\lambda_j \in \R$. We shall refer to the collection ${\bf L}=\{L^1, \ldots, L_d\}$ of affine-linear functions defining $\Delta$ as a {\it labelling} of $\Delta$, and to the couple $(\Delta, {\bf L})$ as a {\it labelled} polytope in $V$.  We say that the data $(\Delta, {\bf L}, \Lambda)$  define a {\it  Delzant polytope} if the following conditions are satisfied:
\begin{enumerate}
\item[\rm (i)] $\Delta$ is {\it compact};
\item[\rm (ii)] $\Delta$ is {\it simple}, meaning that each vertex $x_0$ of $\Delta$ annihilates precisely $m$ of the affine functions in ${\bf L}$ and the corresponding normals form a basis of $V$;
\item[\rm (iii)] $\Delta$ is {\it integral}, meaning that for each vertex $x_0$ of $\Delta$, $${\rm span}_{\Z} \{u_j \in V : L_j(x_0)= \langle u_j, x_0 \rangle + \lambda_j=0\}=\Lambda.$$
\end{enumerate}
We say that $(\Delta, {\bf L}, \Lambda)$ defines  a {\it rational Delzant polytope} if instead of (iii) we require the weaker assumption 
\begin{enumerate}
\item[\rm (iii)'] $\Delta$ is  {\it rational}, meaning that for each $L_j \in {\bf L}$, the normal $u_j\in \Lambda$.
\end{enumerate}
\end{defn}
{\tiny
\begin{exercise}~\label{ex:Delzant-simplified}Show that if the triple $(\Delta, {\bf L}, \Lambda)$ is a Delzant polytope, then $(\Delta, \Lambda)$ determine the labelling ${\bf L}$. Is this true for a rational Delzant polytope $(\Delta, {\bf L}, \Lambda)$?
\end{exercise}}
\begin{thm}[\bf Delzant~\cite{Delzant}]\label{thm:delzant} There exists a bijective correspondence between the equivalence classes of 2m-dimensional toric symplectic manifolds and equivalent Delzant polytopes $(\Delta, {\bf L}, \Lambda)$ in an $m$-dimensional vector space $V^*$, up to the natural action of the affine group  ${\rm Aff}(V^*)$ on the  triples $(\Delta, {\bf L}, \Lambda)$. 
\end{thm}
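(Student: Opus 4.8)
The plan is to establish the two halves of the correspondence separately: \emph{(A) existence} --- every Delzant polytope $(\Delta,\mathbf{L},\Lambda)$ in $V^*$ is the momentum image of a toric symplectic manifold $(M_\Delta,\omega_\Delta,\T_\Delta)$ canonically attached to the data, whose invariants in turn recover $(\Delta,\mathbf{L},\Lambda)$ --- and \emph{(B) uniqueness} --- any toric symplectic manifold $(M,\omega,\T,\rho)$ whose momentum image, labelling and lattice are $(\Delta,\mathbf{L},\Lambda)$ is equivalent, in the sense of Definition~\ref{d:toric}, to $(M_\Delta,\omega_\Delta,\T_\Delta)$; together these give the asserted bijection. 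That the assignment $(M,\omega,\T,\rho)\mapsto(\Delta,\mathbf{L},\Lambda)$ is well defined --- i.e. that $\Delta=\mathrm{Im}(\mu)$ really satisfies the conditions of Definition~\ref{d:Delzant-polytope} --- would follow from Theorem~\ref{convexity} (which already gives a compact convex polytope) together with the equivariant linearization of the $\T$-action near a fixed point $p$: there the action is modelled on the linear $\T^m$-action on $(\C^m,\omega_0)$, whose momentum image near the origin is the orthant $C_m$, so $\Delta$ is simple at the vertex $\mu(p)$; since by Theorem~\ref{convexity}(i) every vertex of $\Delta$ is the image of a fixed point, $\Delta$ is simple, while effectiveness of the linearized action forces the $m$ facet-normals through $\mu(p)$ to be a $\Z$-basis of $\Lambda$, which is integrality. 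Finally, the ambiguity to be quotiented by is exactly $\mathrm{Aff}(V^*)$: translations realize the freedom, granted by Lemma~\ref{abelian-hamiltonian-action}, of adding a constant to $\mu$, and the linear part realizes the freedom in the identification $\T\cong V/2\pi\Lambda$ (equivalently, composing $\rho$ with an automorphism of $\T$).

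For (A) I would build $M_\Delta$ by Marsden--Weinstein reduction of $(\C^d,\omega_0)$ by a subtorus. Write $\Delta=\{x\in V^*: L_j(x)=\langle u_j,x\rangle+\lambda_j\ge 0,\ j=1,\dots,d\}$. Integrality (iii) implies that the normals $u_1,\dots,u_d$ generate $\Lambda$ over $\Z$, so the linear map $\pi:\R^d\to V$, $\pi(e_j)=u_j$, is surjective and carries $\Z^d$ onto $\Lambda$; hence it fits in a short exact sequence $0\to\mathfrak{k}\to\R^d\xrightarrow{\pi}V\to 0$ of Lie algebras which exponentiates to $1\to K\to\T^d\to\T_\Delta\to 1$, where $\T_\Delta:=V/2\pi\Lambda$ and $K$ is a subtorus of $\T^d$ of dimension $d-m$. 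Restricting the Hamiltonian $\T^d$-action of Example~\ref{flat-hamiltonian-action} to $K$ and composing the $\T^d$-momentum map, shifted by the constant $-\lambda:=-(\lambda_1,\dots,\lambda_d)$, with the transpose $\iota^*:(\R^d)^*\to\mathfrak{k}^*$ of the inclusion $\iota:\mathfrak{k}\hookrightarrow\R^d$, one obtains a momentum map $\mu_K$ for the $K$-action; set $M_\Delta:=\mu_K^{-1}(0)/K$. The steps to verify are: (i) conditions (ii)--(iii) are precisely what makes $K$ act \emph{freely} on $\mu_K^{-1}(0)$ --- simplicity kills the identity component of every stabilizer, and integrality upgrades this to triviality of the full stabilizer (with only rationality (iii)' one gets a symplectic toric \emph{orbifold} instead) --- so by Marsden--Weinstein $M_\Delta$ is a smooth symplectic manifold of dimension $2d-2(d-m)=2m$; (ii) the residual torus $\T_\Delta=\T^d/K$ acts effectively and Hamiltonianly on $M_\Delta$ with momentum map $\bar\mu$ obtained by descending $z\mapsto(\pi^*)^{-1}\!\big(\tfrac12(|z_1|^2,\dots,|z_d|^2)-\lambda\big)$ from $\mu_K^{-1}(0)$, and a direct computation gives $\mathrm{Im}(\bar\mu)\subseteq\Delta$, with equality following from Theorem~\ref{convexity} once the $\T_\Delta$-fixed points are identified with the preimages of the vertices of $\Delta$; (iii) $M_\Delta$ is compact because $\mu_K^{-1}(0)$ is closed and bounded in $\C^d$, boundedness using the compactness of $\Delta$. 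Since the construction uses only $(\Delta,\mathbf{L},\Lambda)$ and is plainly natural, it descends to $\mathrm{Aff}(V^*)$-orbits.

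For (B), let $(M,\omega,\T,\rho)$ be toric with momentum map $\mu$ and $\mathrm{Im}(\mu)=\Delta$; after a translation assume $\Delta$ is cut out by $\mathbf{L}$, and form $M_\Delta,\bar\mu$ as above. I would produce a $\T$-equivariant symplectomorphism $\Psi:M\to M_\Delta$ with $\bar\mu\circ\Psi=\mu$ in two stages. Over the interior: on $\mu^{-1}(\mathrm{int}\,\Delta)$ the action is free and $\mu$ is a completely integrable system with Lagrangian torus fibres, so the Arnold--Liouville (action--angle) theorem identifies it, $\T$-equivariantly and symplectically, with $\big(\mathrm{int}\,\Delta\times\T,\ \sum dx_i\wedge d\theta_i\big)$ --- the fibration is trivial since $\mathrm{int}\,\Delta$ is contractible, and any remaining term of the symplectic form, being exact on the convex base, is absorbed by a change of trivialization --- and the same holds for $M_\Delta$, yielding an equivariant symplectomorphism $\Psi_0$ over $\mathrm{int}\,\Delta$. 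Over the boundary: the equivariant Darboux / Marle--Guillemin--Sternberg local normal form shows that a $\T$-invariant neighbourhood of the orbit $\mu^{-1}(p)$, for $p$ in the relative interior of a face of $\Delta$, is equivariantly symplectomorphic to a standard local model that depends only on the affine functions of $\mathbf{L}$ vanishing at $p$, hence only on $(\Delta,\mathbf{L},\Lambda)$, so $M$ and $M_\Delta$ have the same local models near all strata. It then remains to glue the boundary normal-form charts to $\Psi_0$ along their (interior-lying) overlaps: first assemble a $\T$-equivariant diffeomorphism $\Phi:M\to M_\Delta$ over $\Delta$ from the (topological versions of the) local models, then note that $\omega$ and $\Phi^*\omega_\Delta$ are $\T$-invariant symplectic forms with the same momentum map, hence cohomologous (their classes depend only on $\Delta$, e.g. by Duistermaat--Heckman), and run an equivariant Moser argument --- integrating a time-dependent, $\T$-invariant vector field --- to correct $\Phi$ to the sought $\Psi$. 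Compatibility of the whole picture with $\mathrm{Aff}(V^*)$ then finishes the proof.

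The hard part will be (B), and within it the gluing step: fusing the single action--angle chart over $\mathrm{int}\,\Delta$ with the finitely many boundary normal-form charts into one global equivariant symplectomorphism. The local normal form near the deeper strata, where ``$\C^k$-type'' and ``action--angle-type'' behaviour meet, together with the requirement that every deformation in the Moser step remain $\T$-invariant, is what makes this delicate. By contrast, (A) is mostly bookkeeping once its one genuine input --- freeness of the $K$-action, which is exactly the role of Delzant's conditions (ii) and (iii) --- is in hand, and the well-definedness of the forward map is a short consequence of equivariant linearization at the fixed points.
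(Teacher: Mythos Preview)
Your part (A) is essentially the paper's own argument: the paper explicitly says it will ``not develop in these notes a detailed proof of this key result'' and only sketches the Delzant construction in Section~\ref{s:Delzant-construction}, which is exactly your symplectic reduction of $(\C^d,\omega_0)$ by the subtorus $N=\ker(\T^d\to\T)$, with the paper's Claims~1--4 corresponding to your points on surjectivity of $\Z^d\to\Lambda$, connectedness of the kernel, compactness of the level set, and freeness of the $N$-action.

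Your part (B) goes beyond what the paper attempts, and your outline --- action--angle coordinates over $\Delta^0$, equivariant local normal form along the boundary strata, then a gluing/Moser step --- is the standard Delzant-style route. Two small comments. First, the cohomology-class step in your Moser argument can be made cleanly without invoking Duistermaat--Heckman: if $\Phi$ is $\T$-equivariant and intertwines the momentum maps, then $\alpha:=\omega-\Phi^*\omega_\Delta$ is closed, $\T$-invariant, and satisfies $\imath_{X_\xi}\alpha=0$ for all $\xi\in\mathfrak t$, hence is basic for $M\to M/\T\cong\Delta$; since $\Delta$ is contractible, $\alpha$ is exact, and one can choose a $\T$-invariant primitive. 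Second, in Delzant's original argument one often bypasses the Moser step altogether: the action--angle identification over $\Delta^0$ is unique up to a $\T$-translation, and one checks via the local normal form that this identification extends smoothly across each boundary stratum; this avoids having to first build a merely-diffeomorphic $\Phi$ and then correct it. Either route is fine, and you have correctly identified the gluing near the deeper strata as the place where the real work lies.
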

We shall not develop in these notes a  detailed proof of this key result, but in the next section we are going to sketch one direction of it, namely the Delzant construction which associates to a Delzant triple $(\Delta, {\bf L}, \Lambda)$ a toric symplectic manifold $(M, \omega, \T)$.

\section{The complex projective space}\label{s:CPm}
We start with the following basic example of a toric symplectic manifold.

Recall that the (complex) $m$-dimensional projective space
$$\C P^m= \C^{m+1}\setminus\{0\} /\C^*$$ is the quotient of $\C^{m+1}\setminus\{0\}$ by the diagonal  (holomorphic) action of $\C^{*}$ given by 
$$(\lambda, z) \to \lambda z, \  \lambda \in \C^*,  \ z\in \C^{m+1}\setminus\{0\}.$$
This description naturally realizes $M=\C P^m$ as a complex manifold, by passing from the  homogenous coordinates $[z_0, \ldots, z_m]$ (where $(z_0, \ldots, z_m)\in \C^{m+1}\setminus\{0\}$ and $[z_0, \ldots, z_m]$ stands for the equivalence under the diagonal action of $\C^{*}$) to the affine charts $U_i\cong \C^m$ on the open subsets of $\C P^m$ where $z_i\neq 0.$  In order to describe the Fubini--Study metric on $\C P^m$, or its symplectic form, it is more convenient to consider the identification
$$\C P^m = {\mathbb S}^{2m+1}/{\mathbb S}^1, $$
where ${\mathbb S}^{2m+1}\subset \C^{m+1}$ is the unit sphere with respect to the standard euclidean product of $\R^{2m+2}\cong \C^{m+1},$ and ${\mathbb S}^1$ denotes the diagonal circle action on ${\mathbb S}^{2m+1}\subset \C^{m+1}$ by
\begin{equation}\label{S1}
\rho(e^{\sqrt{-1}t})(z_0, \ldots, z_m) = e^{\sqrt{-1}t}(z_0, \ldots, z_m).
\end{equation}

\begin{defn}\label{almost-Kahler} Let $(M, \omega)$ be a symplectic manifold. An $\omega$-{\it compatible} riemannian metric $g$  on $M$ is a riemannian metric such that the field of endomorphisms $J$ defined by
$$g_p(J_p u, v)= \omega_p(u,v), \ \ \forall p\in M, \forall u,v \in T_pM, $$
is an {\it almost-complex structure} on $M$, i.e. satisfies 
$$J_p^2 = - {\rm Id}_{|_{T_pM}}.$$
We say that $(g, J)$ is an $\omega$-compatible  {\it almost-K\"ahler structure} on $M$.

If, furthermore, $J$ is an {\it integrable} almost-complex structure, i.e. satisfies
\begin{equation}\label{integrability}
N^J(X, Y) = [X,Y] -[JX, JY] +  J[JX, Y] + J[X, JY]=0,
\end{equation}
then $(g, J)$ defines a compatible {\it K\"ahler structure} on $(M, \omega)$.
\end{defn}

\begin{rem}\label{r:AK} (a) It is well-known (see e.g. \cite{MS}), and easy to show, that any symplectic manifold admits infinitely many compatible almost-K\"ahler structures. Actually, the space of such structures, denoted ${\mathcal AK}(\omega),$  is a contractible Fr\'echet manifold.  The group ${\rm Symp}(M,\omega)$ of symplectomorphisms of $M$ naturally acts on $\mathcal{AK}(\omega)$ by pull-backs of the riemannian metrics $g$. It thus follows that for any compact subgroup $G\subset {\rm Symp}(M,\omega)$ one can take an average over $G$, and thus produce a $G$-invariant $\omega$-compatible almost-K\"ahler structure $(g, J)$ on $(M,\omega)$.

(b)  By a result of Newlander--Nirenberg~\cite{NN},  the integrability condition \eqref{integrability} is equivalent to the existence of a holomorphic atlas on $M$,  compatible with the almost-complex structure $J$, i.e.  to $(M,J)$ being a complex manifold. There are many examples of symplectic manifolds which do not admit compatible K\"ahler structures.
\end{rem}

In our situation, the standard flat metric of $\C^{m+1} = \R^{2m+2}$
\begin{equation}\label{flat-riemannian}
g_0 = \sum_{i=0}^{m} (dx_i^2 + dy_i^2)
\end{equation}
is $\omega_0$-compatible and the corresponding almost complex structure $J_0$ is simply the standard complex structure on $\C^{m+1}$. As both $\omega_0$ and $g_0$ are preserved by the $\T^{m+1}$-action,  we  thus have a  $\T^{m+1}$-invariant flat K\"ahler structure $(g_0, J_0)$ on $\C^{m+1}$.
The restriction of the flat metric $g_0$ on $\C^{m+1}$ to ${\mathbb S}^{2m+1}$  induces the canonical {\it round} metric $g^{{\mathbb S}^{2m+1}}$ and  the projection map  (known as {\it the Hopf fibration})
$$\pi : {\mathbb S}^{2m+1} \to \C P^m,$$
provides an example of a {\it riemannian submersion} between $({\mathbb S}^{2m+1}, g^{{\mathbb S}^{2m+1}})$ and  the (uniquely determined) riemannian metric $g^{FS}$ on $\C P^m,$  introduced  by the property
$$\pi^*\Big(g^{FS}_{\pi(z)}\Big) = g_z^{{\mathbb S}^{2m+1}}{|_{H_z}},$$
where at each point $z\in {\mathbb S}^{2m+1}$, $H_z= \Big({\rm Ker}(\pi_*)_z\Big)^{\perp}$. 
We use a similar construction in order to endow $\C P^{m}$ with  a symplectic  form $\omega$, using  the pointwise isomorphism $(\pi_*)_z : H_z \cong T_{\pi(z)}\C P^m$ and the restriction of $\omega_0$ to the subspace $H_z$.   As all tensor fields used in this construction are $\T^{m+1}$-invariant, and the ${\mathbb S}^1$-action \eqref{S1} is obtained by restricting the action of $\T^{m+1}$ to its ${\mathbb S}^1$ subgroup $N=(e^{\sqrt{-1}t}, \ldots, e^{\sqrt{-1}t})\subset \T^{m+1}$, there is a natural induced $\T^m = \T^{m+1}/N$-action on $\C P^m = {\mathbb S}^{2m+1}/N$. With respect to this data,  $(\C P^m, \omega^{FS}, \T^m)$ is an example of a symplectic toric manifold. In fact, this is just a very particular case of a general fact in symplectic geometry, known as {\it symplectic reduction}, which we shall present below. 

To this end,  let us summarize the construction of $\C P^m$: by Example~\ref{flat-hamiltonian-action}, $(\C^{m+1}, \omega_0)$  is a symplectic manifold endowed with the hamiltonian action on $\T^{m+1}$ with momentum map
$$\mu_{\T^{m+1}}(z)=\frac{1}{2}(|z_0|^2, \ldots, |z_m|^2).$$
$N \subset \T^{m+1}$ is a circle subgroup and the momentum map for the action of $N$ is  therefore
$$\mu_N (z)= \frac{1}{2}\sum_{i=0}^m |z_i|^2.$$
Thus,  $${\mathbb S}^{2m+1} = \mu_N^{-1}\Big(\frac{1}{2}\Big)$$ is the level set of the momentum map $\mu_N$, and furthermore,  $N$ acts freely  on this level set. We are thus in the situation of the following fundamental result.

\begin{prop}[\bf Marsden--Weinstein~\cite{MW}, Meyers~\cite{M}]\label{kahler-reduction} Let $N\subset G$ be a closed normal  subgroup of a compact group $G$ acting in a hamiltonian way on a symplectic manifold $(\tilde M, \tilde \omega)$.  Let $ {\mu}_G : \tilde M \to \mathfrak{g}^*$ be  a momentum map for the $G$-action,  and 
$ {\mu}_N : \tilde M \to \mathfrak{n}^*$  the corresponding momentum map for  the $N$-action,  obtained from ${\mu}_G$ by composing with the natural projection $\imath^* : \mathfrak{g}^* \to \mathfrak{n}^*$ (which is  adjoint  to the inclusion of the Lie algebras $\imath: \mathfrak{n}\subset \mathfrak{g}$). 

Suppose further that $\tilde c \in \mathfrak{g}^*$ is a fixed point for the co-adjoint action of $G$ on $\mathfrak{g}^*,$ such that $\imath^*(\tilde c)=c$ is a regular point for $\mu_N$, and that the action of $N$ on $\mu_N^{-1}(c)$ is free.  

Then,  the $N$-invariant symplectic $2$-form $\tilde \omega,$ restricted to $\mu_N^{-1}(c)$ defines a symplectic form on the manifold $M:= \Big(\mu_N^{-1}(c)\Big)/N$, and the natural action of $G/N$ on $M$ is hamiltonian with moment map, viewed as an $N$-invariant function on $ \mu^{-1}_N(c)$, given by
$$\mu=  \mu_G - \tilde c.$$
\end{prop}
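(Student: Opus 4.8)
The plan is to proceed in four stages: (1) verify that $\mu_N^{-1}(c)$ is a smooth $N$-invariant submanifold on which $G/N$ acts; (2) show that $\tilde\omega$ restricted to this submanifold is basic for the $N$-action, hence descends to a $2$-form $\omega$ on $M$; (3) check $\omega$ is closed and non-degenerate; (4) identify the momentum map for the residual $G/N$-action. First I would use that $c$ is a regular value of $\mu_N$ (this is part of the hypothesis) to conclude $Z:=\mu_N^{-1}(c)$ is a smooth submanifold of codimension $\dim N$; equivariance of $\mu_N$ together with the fact that $c=\imath^*(\tilde c)$ is fixed by the co-adjoint action of $N$ (since $N$ is abelian, or directly since $\tilde c$ is $G$-coadjoint-fixed) shows $Z$ is $N$-invariant. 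Freeness of the $N$-action on $Z$, combined with compactness of $N$, makes $M=Z/N$ a smooth manifold and $Z\to M$ a principal $N$-bundle. Because $N\triangleleft G$ is normal and both $\tilde\omega$ and $\mu_G$ are $G$-invariant/equivariant, $G$ preserves $Z$ and the $N$-action on $Z$, so the quotient action of $G/N$ on $M$ is well defined and smooth.

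For step (2), the key linear-algebra fact is that at each $z\in Z$, the tangent space $T_zZ=\ker(d\mu_N)_z$ is exactly the $\tilde\omega_z$-orthogonal complement of the tangent space to the $N$-orbit. Indeed, the defining condition $\imath_{X_\xi}\tilde\omega=-d\langle\mu_N,\xi\rangle$ for $\xi\in\mathfrak n$ says that $\tilde\omega_z(X_\xi(z),v)=0$ for all $v\in T_zZ$, i.e. $T_z(N\cdot z)\subseteq (T_zZ)^{\tilde\omega_z}$; a dimension count using non-degeneracy of $\tilde\omega_z$ (and freeness, so $\dim T_z(N\cdot z)=\dim N=\operatorname{codim}T_zZ$) forces equality. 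This has two consequences: first, $\imath_{X_\xi}(\tilde\omega|_Z)=0$ for $\xi\in\mathfrak n$ (horizontality), and second, $\tilde\omega|_Z$ is $N$-invariant because the action is symplectic and $\mathcal L_{X_\xi}\tilde\omega=0$; together these say $\tilde\omega|_Z$ is basic, so there is a unique $2$-form $\omega$ on $M$ with $p^*\omega=\tilde\omega|_Z$, where $p:Z\to M$ is the projection. Closedness of $\omega$ is then immediate since $p^*d\omega=d(p^*\omega)=d(\tilde\omega|_Z)=(d\tilde\omega)|_Z=0$ and $p^*$ is injective on forms. Non-degeneracy follows by a further application of the orthogonality identity: at $z$, the null space of $\tilde\omega|_{T_zZ}$ is $T_zZ\cap (T_zZ)^{\tilde\omega_z}=T_zZ\cap T_z(N\cdot z)=T_z(N\cdot z)$, which is precisely $\ker(dp)_z$, so the induced form on the quotient $T_zZ/T_z(N\cdot z)\cong T_{p(z)}M$ is non-degenerate.

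For step (4), I would show that the $G/N$-action on $(M,\omega)$ is Hamiltonian with the asserted momentum map. Write $\mathfrak g=\mathfrak n\oplus\mathfrak q$ and identify $\operatorname{Lie}(G/N)$-covectors with $\mathfrak n$-annihilating covectors in $\mathfrak g^*$; since $\tilde c\in\mathfrak g^*$ is coadjoint-fixed by $G$, the shifted map $\mu_G-\tilde c$ is still $G$-equivariant and still satisfies $\imath_{X_\xi}\tilde\omega=-d\langle\mu_G-\tilde c,\xi\rangle$, and on $Z$ it takes values in the subspace $\{\alpha:\langle\alpha,\mathfrak n\rangle=c-\imath^*\tilde c=0\}\cong\operatorname{Lie}(G/N)^*$; one checks it is $N$-invariant on $Z$ (equivariance plus $\tilde c$ fixed plus $c$ the fixed value), hence descends to a map $\mu:M\to\operatorname{Lie}(G/N)^*$. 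For $[\xi]\in\operatorname{Lie}(G/N)$ with lift $\xi\in\mathfrak q$, the fundamental vector field $X_{[\xi]}^M$ on $M$ is $p$-related to $X_\xi|_Z$, so $p^*(\imath_{X_{[\xi]}^M}\omega)=\imath_{X_\xi}(\tilde\omega|_Z)=-d\langle\mu_G-\tilde c,\xi\rangle|_Z=-p^*(d\langle\mu,[\xi]\rangle)$, and injectivity of $p^*$ gives $\imath_{X_{[\xi]}^M}\omega=-d\langle\mu,[\xi]\rangle$; equivariance of $\mu$ for the residual coadjoint action descends from that of $\mu_G$.

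The main obstacle is the careful bookkeeping in step (2): one must be precise about the identification $T_zZ=\ker(d\mu_N)_z$, the orthogonality $T_z(N\cdot z)=(T_zZ)^{\tilde\omega_z}$, and the resulting claim that the null directions of $\tilde\omega|_Z$ are exactly the $N$-orbit directions — this dimension-counting argument, using freeness to pin down $\dim T_z(N\cdot z)$ and non-degeneracy of $\tilde\omega_z$ to pin down dimensions of symplectic orthogonals, is the heart of symplectic reduction and is where all the hypotheses (regular value, free action) get used. Everything else (smoothness of the quotient, descent of forms, the momentum-map identification) is then formal once this linear-algebraic picture is in place.
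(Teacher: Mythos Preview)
Your proof is correct and follows the standard ``coisotropic reduction'' route found in most symplectic geometry references: you identify $T_z(N\cdot z)$ as the $\tilde\omega$-orthogonal of $T_zZ$, deduce that $\tilde\omega|_Z$ is basic, and get closedness and non-degeneracy by pure symplectic linear algebra. The paper, however, takes a genuinely different path: it introduces an auxiliary $G$-invariant, $\tilde\omega$-compatible Riemannian metric $\tilde g$ (with associated almost-complex structure $\tilde J$), defines the horizontal space $H_p\subset T_pS$ as the $\tilde g$-orthogonal complement of $V_p=\ker(d\pi)_p$, and proves non-degeneracy of $\omega$ by showing that $T_p\tilde M = V_p\oplus \tilde J V_p\oplus H_p$ is $\tilde g$-orthogonal, so $\tilde J$ preserves $H_p$ and the quotient inherits a compatible almost-K\"ahler structure $(g,J,\omega)$. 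Closedness is then checked via Cartan's formula on horizontal lifts.

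What each approach buys: yours is leaner and uses nothing beyond the symplectic structure and the hypotheses, making the logical dependence on regularity and freeness transparent through the dimension count. The paper's metric-based argument is heavier but is not gratuitous here --- the whole point of the section is K\"ahler reduction, and by building in the compatible metric from the start the paper gets the quotient almost-K\"ahler structure $(g,J)$ for free, which is exactly what is needed immediately afterward to equip $\C P^m$ (and later Delzant's toric manifolds) with their canonical K\"ahler metrics. In your approach the existence of a compatible $J$ on $M$ would have to be argued separately.
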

\begin{proof}  By construction, $S := {\mu}_{N}^{-1}(c) \subset \tilde M$ is a closed submanifold of $\tilde M$ on which $G$ acts smoothly. 
To see that $G$ acts on $S$, notice that, by construction,  $p\in S$ iff 
\begin{equation}\label{definition-S}
\langle \mu_G (p) - \tilde c, \xi \rangle =0, \ \forall \xi \in \mathfrak{n}.
\end{equation}
Using the equivariance of the momentum map, we have for any $g\in G$
\begin{equation*}
\begin{split}
\big\langle (\mu_G(g\cdot p) -\tilde c), \xi \big\rangle &= \big\langle {\rm Ad}^*_g(\mu_G(p))-\tilde c, \xi\big\rangle \\
                                                                            &=\big\langle {\rm Ad}^*_g(\mu_G(p)-\tilde c),  \xi \big\rangle \\
                                                                            &= \big\langle (\mu_G(p) -\tilde c), {\rm Ad}_{g^{-1}}(\xi) \big\rangle \\
                                                                            &= \big\langle (\mu_G(p) -\tilde c), \xi \big\rangle,
                                                                            \end{split}
                                                                            \end{equation*}
 where for passing from the second line to the third we have used that $\tilde c$ is a fixed point for the co-adjoint action, and for passing from the third line to the forth the fact that $N$ is a normal subgroup of $G$.

We denote by 
$$i: S \hookrightarrow \tilde M$$ the inclusion map  and by 
$$\pi : S \to M$$
the projection  map, which is a smooth submersion by the regularity assumption for $c$.  It follows from \eqref{definition-S}
that  ${\mu}_G - \tilde c$ is a smooth function on $S$ with values in the annihilator ${\rm Ann}(\mathfrak{n}) \subset \mathfrak{g}^*$. The latter subspace is canonically identified with $\big(\mathfrak{g}/\mathfrak{n}\big)^*$. We  summarize the construction in the following diagram:
\begin{equation} \label{diag} 
\begin{CD}
\tilde{M} @>{\mu}_G>> \mathfrak{g} ^*\\
@AAiA  @| \\
S @> ({\mu}_G)_{|_S}>>
 \mathfrak{g} ^*\\
@VV\pi V    @AA\ell := j +  \tilde{c} A\\
M @>\mu>>  (\mathfrak{g}/\mathfrak{n})^*
\end{CD}
\end{equation} 
where  $j$ denotes the natural inclusion of 
$(\mathfrak{g}/\mathfrak{n}) ^*$ in  $\mathfrak{g} ^*$,  
and  $\ell = j + \tilde{c}$ denotes the {\it affine} map from 
$(\mathfrak{g}/\mathfrak{n}) ^*$ to $\mathfrak{g} ^*$ obtained by 
composing  $j$ with the translation by $\tilde{c}$, i.e. $\ell (x) = j
(x) + \tilde{c}$, for any $x$ in  $(\mathfrak{g}/\mathfrak{n}) ^*$.

As $\pi : S \to M$ is a smooth submersion, at each $p\in S$ we let $V_p:= {\rm Ker}(\pi_*)_p$ be the vertical distribution. 
By Remark~\ref{r:AK}-(a), we can introduce an $\tilde \omega$-compatible, $G$-invariant riemannian metric $\tilde g$ on $\tilde M$, and use it  to define the horizontal space
$$H_p := V_p^{\perp_{\tilde g}} \subset T_p S. $$
As $\tilde g$ is $G$-invariant, $\pi : (S, \tilde g) \to M$  defines a riemannian submersion, thus giving rise to  a  riemannian quotient metric $g$ on $M$ with the property
$$(\pi^* g)_{|_H} = \tilde g_{|_H}.$$
Similarly, there exists  a $2$-form $\omega$ on $M$, determined by the property
$$(\pi^*\omega)_{|_H} = \tilde \omega_{|_H}.$$
We claim that $\omega$ is non-degenerate and closed.  

To see the non-degeneracy of $\omega$, we observe that $g$ is an $\omega$-compatible riemannian metric.  As $\tilde g$ is $\tilde \omega$-compatible on $\tilde M$, it is enough to show that the corresponding almost complex structure $\tilde J$ preserves the subspace $H_p \subset T_pS \subset T_p\tilde M$.  This will follow if we show that for each $p \in S$,
$$T_p \tilde M = V_p \oplus \tilde JV_p  \oplus H_p$$
is a $\tilde g_p$-orthogonal decomposition. For any $\xi \in \mathfrak{n}$,  the corresponding fundamental vector field $X_{\xi}(p)$  belongs  to $V_p$ and, furthermore, we have 
\begin{equation}\label{orthogonality}
(\imath_{X_{\xi}} \tilde \omega)|_{T_pS} = -d\big(\langle \mu_G- \tilde c, \xi \rangle_{|_S}\big) =0.
\end{equation}
Notice that, as the action of $N$ on $S$ is free, the fundamental vector fields $\{X_{\xi}(p), \xi \in \mathfrak{n}\}$ span $V_p$. Thus, \eqref{orthogonality} reads as $\tilde JV_p \perp_{\tilde g} (V_p \oplus H_p)$. As $V_p \perp_{\tilde g} H_p$ by construction, the claim follows.

We now discuss the closedness of $\omega$. For any vector field $X$ on $M$,  we denote by $\tilde X$ the corresponding horizontal lift to $S$, i.e. the unique section of $H\subset TS$ such that $\pi_*(\tilde X)=X$. For any vector fields $X, Y$ on $M$, we thus have the orthogonal decomposition
\begin{equation}\label{submersion}
\widetilde{[X,Y]}_p = [\tilde X, \tilde Y]_p + \Pi_p([\tilde X, \tilde Y]_p),
\end{equation}  where $\Pi_p( \cdot )$ denotes the orthogonal projection of  $T_pS$ to $V_p$. Using Cartan's formula and the fact that $V_p \subset {\rm Ker}(\tilde \omega_p)$, we compute
\begin{equation*}
\begin{split}
d\omega(X, Y, Z) &= \sigma_{X,Y, Z}  \Big(d(\omega(Y, Z))(X) - \omega([X, Y], Z)\Big)\\
                                &= \sigma_{\tilde X, \tilde Y, \tilde Z} \Big(d(\tilde\omega (\tilde Y, \tilde Z))(\tilde X) - {\tilde \omega}(\widetilde{[X, Y]}, \tilde Z)\Big)\\
                                &=  \sigma_{\tilde X, \tilde Y, \tilde Z} \Big(d(\tilde\omega(\tilde Y, \tilde Z))(\tilde X) - \tilde \omega([\tilde X, \tilde Y], \tilde Z)\Big)\\
                                &= d\tilde \omega(\tilde X, \tilde Y, \tilde Z) = 0.
                                \end{split}
                                \end{equation*}

We finally discuss the induced action of $G/N$ on $M$. The action of $[g]\in G/N$ on a point $\pi(p)\in S/N$ is defined by $\pi(g\cdot p)$. (This is  well-defined as $S$ is invariant under the action of $G$, as we have already shown.)  We have also observed that when restricted to $S$, the function  $\nu:=\mu_G - \tilde c$ takes values in the annihilator $\mathfrak{n}_0 \cong (\mathfrak{g}/\mathfrak{n})^*$ of $\mathfrak n$ in $\mathfrak{g}^*$. Using that $N$ is normal, and that $\mu_G$ is $G$-equivariant,  it follows that $\mu_G$ is $N$-invariant. It then descends to define a smooth,  $G/N$ equivariant
 function $\mu: M \to (\mathfrak{g}/\mathfrak{n})^*$ on $M=S/N$. By construction, for any $[\xi] \in \mathfrak{g}/\mathfrak{n}$, 
\begin{equation*}
\begin{split}
d \langle \mu, [\xi]\rangle_{\pi(p)} &=  d\langle \nu, \xi \rangle_p \\
                                                  &= d \langle \mu_G, \xi \rangle_p \\
                                                  &= - (\imath_{X_{\xi}(p)} \tilde \omega)_{|_{T_pS}}\\
                                                  &= -(\imath_{X_{\xi}(p)} \tilde \omega)_{|_{H_p}} \\
                                                  &= -(\imath_{X_{[\xi]}}\omega)_{\pi(p)},
                                                  \end{split}
                                                  \end{equation*}
where we have used that $V_p \subset {\rm Ker}(\tilde \omega_p)$ and $\pi_*(X_{\xi})= X_{[\xi]}$. \end{proof}

{\tiny \begin{exercise} (1) Show that the induced $2$-form $\omega$ on $M$  in the proof of Proposition~\ref{kahler-reduction} is independent of the choice of  a $G$-invariant $\tilde \omega$-compatible riemannian metric on $\tilde M$.

(2) Show that if $\tilde g$ is a $G$-invariant,  $\tilde \omega$-compatible  riemannian metric on $\tilde M$, which defines a K\"ahler structure on $\tilde M$, then the  almost-K\"ahler structure $(g, \omega, J)$ on $M$  defined in the proof of Proposition~\ref{kahler-reduction} is K\"ahler.

(3) Show that the complex structure  $J$ induced on $M=\mathbb{S}^{2m+1}/\mathbb{S}^1$ makes $(M, J)$ biholomorphic to $\C P^m = (\C \setminus\{0\})/\C^*$ endowed with its atlas of affine charts.
 \end{exercise}}

\bigskip
\noindent
Now we can apply Proposition~\ref{kahler-reduction} in order to conclude that $(\C P^m, \omega^{FS})$ is a toric symplectic manifold under the action of the torus $\T = \T^{m+1}/{\mathbb S}^1,$ and that the induced Fubini--Study metric $g^{FS}$ gives rise to a $\T$-invariant K\"ahler structure on $(\C P^m, \omega^{FS})$. Furthermore,  the image of the induced momentum map
$$\mu : \C P^m \to  (\mathfrak{t}^{m+1}/\mathfrak{t}^1)^* \subset (\mathfrak{t}^{m+1})^*$$
is identified with the intersection of ${\rm Im}(\mu_{\T^{m+1}}) = C_{m+1}= \{x_i \ge 0, i=0, \ldots,  m\}\subset \R^{m+1}$ with the hyperplane  $x_0 + \cdots + x_m =\frac{1}{2}$, which is a simplex in this hyperplane. Alternatively, we can consider the subtorus  $\T^{m} \subset \T^{m+1}$ defined by
$$\T^{m} = (1, e^{\sqrt{-1}t_1}, \cdots, e^{\sqrt{-1}t_m})$$ so that we have an exact sequence of Lie groups 
\begin{equation*} 
\begin{CD}
\{e\} @>>> {\mathbb T}^m @>>> \T^{m+1} @>>>  \T^{m+1}/{\mathbb S}^1 @>>>\{e\},
\end{CD}
\end{equation*}
giving rise to an  isomorphism between $\T=\T^{m+1}/{\mathbb S}^1$ and $\T^m$. Using the induced projection of the dual Lie algebras  $(\mathfrak{t}^{m+1})^* \to (\mathfrak{t}^m)^* \cong \R^m$,  the moment map $\mu$ sends $\C P^m$ onto the simplex 
$$\Delta_m=\Big\{(x_1, \ldots, x_m):  \Big(\frac{1}{2}-\sum_{i=1}^m x_i\Big) \ge 0, x_i\ge 0, i=1,\ldots, m,\Big\}.$$  Finally, the lattice $\Lambda$ of the dual space $\mathfrak{t}^m$ is just the standard lattice $\Z^m \subset \R^m \cong (\R^m)^*$ and  we thus conclude
\begin{lemma} $(\C P^m, \omega^{FS})$ is a toric symplectic manifold under the induced action of $\T^m$, classified by the standard simplex in $\Delta_m \subset \R^m,$ labelled by $${\bf L}=\Big\{L^1(x)=x_1, \ldots, L_m(x)= x_m, L_{m+1}(x)=\Big(\frac{1}{2} - \sum_{i=1}^m x_i\Big)\Big\},$$ and the standard lattice $\Z^m \subset  (\R^m)^*$.
\end{lemma}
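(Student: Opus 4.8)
The plan is to realize $(\C P^m, \omega^{FS}, \T^m)$ as the symplectic reduction of $(\C^{m+1}, \omega_0)$ set up in the preceding paragraphs, and then to read off the momentum polytope, its labelling and its lattice. First I would invoke Proposition~\ref{kahler-reduction} with $\tilde M = \C^{m+1}$, $\tilde\omega = \omega_0$, $G = \T^{m+1}$ acting as in Example~\ref{flat-hamiltonian-action} with momentum map $\mu_{\T^{m+1}}(z) = \frac{1}{2}(|z_0|^2, \ldots, |z_m|^2)$, and $N = \Sph^1 \subset \T^{m+1}$ the diagonal circle \eqref{S1}, with Lie algebra $\mathfrak{n} = \R\cdot(1, \ldots, 1)$ and induced momentum map $\mu_N(z) = \frac{1}{2}\sum_{i=0}^m |z_i|^2$. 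The hypotheses are immediate: $\T^{m+1}$ being abelian, every $\tilde c \in (\mathfrak{t}^{m+1})^*$ is a fixed point of the (trivial) co-adjoint action, and one takes $\tilde c$ with $\imath^*(\tilde c) = \frac12$, e.g. $\tilde c = \frac12(1, 0, \ldots, 0)$; the value $\frac12$ is regular for $\mu_N$ since $d\mu_N$ vanishes only at $0 \notin \mu_N^{-1}(\frac12) = \Sph^{2m+1}$; and $N$ acts freely on $\Sph^{2m+1}$. Proposition~\ref{kahler-reduction} then yields the reduced symplectic form $\omega^{FS}$ on $M = \Sph^{2m+1}/\Sph^1 \cong \C P^m$ together with a hamiltonian action of $\T := \T^{m+1}/\Sph^1$ whose momentum map is $\mu = \mu_{\T^{m+1}} - \tilde c$, taking values in $\mathrm{Ann}(\mathfrak n) \cong (\mathfrak{t}^{m+1}/\mathfrak n)^*$.

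Next I would check that this is a toric datum in the sense of Definition~\ref{d:toric}: $\dim_\R M = 2m$ and $\dim \T = m = \frac12\dim_\R M$, while the action of $\T$ is effective because the kernel of $\T^{m+1} \to \mathrm{Diff}(\C P^m)$ is precisely the diagonal circle $N$ acting by scalars on homogeneous coordinates. Hence $(\C P^m, \omega^{FS}, \T)$ is a symplectic toric manifold, and by Theorem~\ref{convexity} the image $\Delta := \mathrm{Im}(\mu)$ is a compact convex polytope in $(\mathfrak{t}^{m+1}/\mathfrak n)^*$.

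To identify $\Delta$ I would use the section $\T^m = \{(1, e^{\sqrt{-1}t_1}, \ldots, e^{\sqrt{-1}t_m})\} \hookrightarrow \T^{m+1}$, whose composition with $\T^{m+1} \to \T$ is an isomorphism $\T^m \cong \T$; dually, this identifies $(\mathfrak{t}^{m+1}/\mathfrak n)^* \cong \mathrm{Ann}(\mathfrak n)$ with $(\mathfrak{t}^m)^* \cong \R^m$ via the projection $(x_0, \ldots, x_m) \mapsto (x_1, \ldots, x_m)$. With the choice $\tilde c = \frac12(1, 0, \ldots, 0)$ this projection annihilates $\tilde c$, so the image of $\mu$ in $\R^m$ is simply the image of $\mathrm{Im}(\mu_{\T^{m+1}}) \cap \{\textstyle\sum_{i=0}^m x_i = \frac12\} = C_{m+1} \cap \{\textstyle\sum_{i=0}^m x_i = \frac12\}$ (using \eqref{flat-polytope}) under $(x_0, \ldots, x_m) \mapsto (x_1, \ldots, x_m)$, namely $\Delta_m = \{(x_1, \ldots, x_m) : x_i \ge 0,\ \frac12 - \sum_{i=1}^m x_i \ge 0\}$. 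This realizes $\Delta_m$ as the intersection of the $m+1$ half-spaces $\{L^1 \ge 0\}, \ldots, \{L_m \ge 0\}, \{L_{m+1} \ge 0\}$ cut out by the affine functions of ${\bf L}$; and since $\T^m = (\R/2\pi\Z)^m$, the lattice $\Lambda$ of $\T^m$ is the standard lattice $\Z^m$.

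It then remains to note that $(\Delta_m, {\bf L}, \Z^m)$ indeed satisfies Definition~\ref{d:Delzant-polytope} — which is forced by Theorem~\ref{thm:delzant} once one knows the datum comes from an honest toric symplectic manifold, but also follows directly: $\Delta_m$ is compact; it is simple because each of its $m+1$ vertices lies on exactly $m$ facets, with annihilated normals $\{e_1, \ldots, e_m\}$ at the vertex $0$ and $\{e_j : j \ne i\} \cup \{-\sum_k e_k\}$ at the vertex $\frac12 e_i$; and it is integral because each of these sets of normals is a $\Z$-basis of $\Z^m$. The only step demanding any real care is the middle one: one must keep track of the affine translation by $\tilde c$ in $\mu = \mu_{\T^{m+1}} - \tilde c$ and of the successive identifications $(\mathfrak{t}^{m+1})^* \supset \mathrm{Ann}(\mathfrak n) \cong (\mathfrak{t}^{m+1}/\mathfrak n)^* \cong (\mathfrak{t}^m)^*$, so as to be certain that the labelled polytope one ends up with is exactly $(\Delta_m, {\bf L})$ and not merely an affine image of it; everything else is routine.
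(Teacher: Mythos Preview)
Your proposal is correct and follows essentially the same route as the paper: the lemma is stated there as a summary of the preceding discussion, which applies Proposition~\ref{kahler-reduction} to the reduction of $(\C^{m+1},\omega_0,\T^{m+1})$ by the diagonal circle $N=\Sph^1$, then uses the splitting $\T^m\hookrightarrow\T^{m+1}$ to identify $\T^{m+1}/\Sph^1\cong\T^m$ and the projection $(\mathfrak t^{m+1})^*\to(\mathfrak t^m)^*$ to read off the simplex $\Delta_m$ and the lattice $\Z^m$. Your write-up is in fact slightly more explicit than the paper's (fixing a concrete $\tilde c$, checking effectiveness, and verifying the Delzant axioms directly), but the argument is the same.
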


\section{Toric symplectic manifolds from Delzant polytopes}\label{s:Delzant-construction} The discussion in the previous subsection is the main tool of the explicit construction,  proposed by Delzant in \cite{Delzant},  which associates a toric symplectic manifold $(M_{\Delta}, \omega, \T)$ to any Delzant triple $(\Delta, {\bf L}, \Lambda)$, as in Definition~\ref{d:Delzant-polytope}.  We first notice that by the condition (iii) of Definition~\ref{d:Delzant-polytope},  the lattice $\Lambda$ is determined by the labelling ${\bf L}=\{L^1(x), \ldots, L_d(x)\}$. Indeed, $\Lambda$  is the span over $\Z$ of the linearizations $dL_i := u_i \in V^*$ of  the affine-linear functions $L_j(x)= \langle u_j, x\rangle + \lambda_i  \in {\bf L}$.  We denote by $\T = V/2\pi \Lambda$ the corresponding torus,  and by $\mathfrak{t}= V$ its Lie algebra. Thus,  $\Delta \subset \mathfrak{t}^*$.  With this in mind, we consider the linear map 
$\tau : \R^d \to \mathfrak{t},$ defined by
$$\tau ( \xi_1, \ldots, \xi_d) : = \sum_{j=1}^d \xi_j u_j.$$
Using the Delzant condition (i)-(ii)-(iii), one checks that

\noindent
{\bf Claim 1.}  $\tau$ sends the  standard lattice $\Z^d$  of $\R^d$ onto $\Lambda$ and thus defines a homomorphism of tori
$$\tau : \R^d/2\pi \Z^d=\T^d \to \T.$$

\noindent
{\bf Claim 2.}  The kernel $N$ of $\tau: \T^d \to \T$ is a connected subgroup of $\T^d$, i.e. it is a $(d-m)$-dimensional torus.~\footnote{This is no longer true if we consider the weaker conditions (i)-(ii)-(iii)' : then $N$ can be the product of a torus with a finite abelian group.}

\smallskip
We denote by $\mathfrak{n}$ (resp. $\mathfrak{n}^*$) the Lie algebra (resp. its dual) of $N$. We thus have an exact sequence of Lie groups
\begin{equation*} 
\begin{CD}
\{e\} @>>> N @>\imath >> \T^d @>\tau>> \T @>>>\{e\}
\end{CD}
\end{equation*}
and the corresponding exact sequence of Lie algebras
\begin{equation*} 
\begin{CD}
\{0\} @>>> \mathfrak{n} @>\imath >> \R^d @>\tau>> \mathfrak{t} @>>>\{0\},
\end{CD}
\end{equation*} 
and its dual sequence
\begin{equation} \label{sequence*}
\begin{CD}
\{0\} @>>> \mathfrak{t}^* @>\tau^* >> (\R^d)^* @>\imath^*>> \mathfrak{n}^* @>>>\{0\}.
\end{CD}
\end{equation}

We now consider the hamiltonian action of $\T^d$ on $(\C^d, \omega_0)$ with momentum map $\mu_{\T^d}(z) =\frac{1}{2}(|z_1|^2, \ldots, |z_d|^2) \in (\R^d)^*.$ We denote by $\mu_N (z) = \imath^* \circ \mu_{\T^d}(z)$ the momentum map for the action of $N\subset \T^d$. 

By acting  with a translation on $\Delta$, we can assume without loss of generality that the origin of $\mathfrak{t}^*$ is in the interior of $\Delta$. Then, letting  $\lambda:=(\lambda_1, \ldots, \lambda_d) =(L^1(0), \ldots, L_d(0)) \in(\R^d)^*,$  we have

\noindent
{\bf Claim 3.}  $S:=\mu_{N}^{-1}(\imath^*({\lambda}))$  is  a compact submanifold of $\C^d$. 

 As $\lambda_i>0$, $\lambda$ is in the interior of the momentum image  $C_d=\{(\tilde x_1, \ldots, \tilde x_d) \in (\R^d)^*: \tilde x_i\ge 0\}$  of $\C^{d}$, showing that ${\lambda}$ is a regular point of $\mu_{\T^d}$. It  follows that $\imath^*({\lambda})$ is a regular point of $\mu_{N}$. Thus,  $S$ is a closed submanifold of $\C^d$. We still need to show that $S$ is compact. As the momentum map $\mu_{\T^d} : \C^d \to (\R^d)^*$ is manifestly proper, it is enough to show that  $\mu_{\T^d}(S)$ is bounded. 

By the very definition of $S$, $z\in S$  iff $\langle (\mu_{\T^d}(z) - \lambda), \xi \rangle =0, \  \forall \xi \in \mathfrak{n}$. Thus, 
$$\mu_{\T^d}(S)  = C_d \cap \{\tilde x \in (\R^d)^* : \langle (\tilde x-{\lambda}), \xi \rangle =0, \forall \xi \in \mathfrak{n}\}. $$  
Let $\Delta'= \tau^*(\Delta)+ \lambda$ be the (compact) image of $\Delta$ under the inclusion $\tau^*$ composed with the translation $\lambda$, see \eqref{sequence*}.  We claim that $\mu_{\T^d}(S)=\Delta'$. 
Indeed,  for $\tilde x= \tau^*(x)+ \lambda$ with $x\in \Delta$, we have 
\begin{equation}\label{transpose}
\begin{split}
\tilde x_i &= \langle \tau^*(x), e_i\rangle +\lambda_i \\
        &=\langle x, \tau(e_i)\rangle +\lambda_i=\langle x, u_i\rangle + \lambda_i\\
        &= L_i(x)  \ge 0.
        \end{split}
        \end{equation}
As $\tilde x-\lambda = \tau^*(x)$, we have by the exact sequence \eqref{sequence*}, 
\begin{equation}\label{2}
\langle (\tilde x-\lambda), \xi\rangle =0, \ \forall \xi \in \mathfrak{n}. 
\end{equation}
Thus, $\Delta' \subset \mu_{\T^d}(S)$. For the other inclusion, observe that, again using \eqref{sequence*}, the equality \eqref{2} tell us that 
$\tilde x-\lambda = \tau^*(x)$ for some $x\in \mathfrak{t}^*$.  Using $\tilde x_i\ge 0$ and the computation in \eqref{transpose}, we conclude that  $x\in \Delta$.

\noindent
{\bf Claim 4.} $N$ acts freely on $S$.

We first determine the stabilizer group of  a point  $z\in S,$ under the action of $\T^d$. It is a subtorus $\T_z\subset \T^d$ of dimension equal to  the number of vanishing  coordinates $z=(z_1,..., z_d)$, or equivalently, the number of vanishing coordinates $\tilde x=(\tilde x_1, \ldots, \tilde x_d)$ where $\tilde x=\mu_{\T^d}(z) \in (\R^d)^*$ is the momentum image of $z$. By the argument in the proof of Claim 3, $\mu_{\T^d}(S)= \Delta'= \tau^*(\Delta) + \lambda$ and the number of vanishing coordinates  of $\tilde x=\tau^*(x)+\lambda$ equals  the number of vanishing labels $L_j$ at $x$. Thus, the maximum number is $m$ and it is achieved at the images of the vertices  of $\Delta$. Suppose that  $\tilde x=\mu_{\T^d}(z) \in \Delta'$ is a vertex point. Up to a coordinate permutation, it can be written as $\tilde x=(0,...0, \tilde x_{m+1}, \ldots, \tilde x_{d})$. Then,  the stabilizer of $z$ is the torus $\T_z=\T^m=(e^{\sqrt{-1}t_1}, \ldots, e^{\sqrt{-1}t_m}, 1, \ldots, 1)$.  Notice that  $\tau : \T_z \to \T$ is an isomorphism because of the condition (iii) in Definition~\ref{d:Delzant-polytope}. Thus,  
\begin{equation*}
{\rm Stab}_N(z)={\rm Stab}_{\T^d}(z)\cap N = {\rm Ker}(\tau) \cap N = \{e\} \cap N =\{e\}, 
\end{equation*}
showing that $N$ acts freely as the stabilizer of any point of $S$ is contained in the stabilizer of some vertex point. 

\smallskip
As a final step, we let  $(M, \omega)$ be the K\"ahler quotient of $(\C^d, \omega_0, g_0)$ at the  momentum value $\lambda$, associated to $G=\T^d$ and $N$, see Proposition~\ref{kahler-reduction}. The verification that the momentum image of $M$ under the induced momentum map is $\Delta$ follows the above discussion.

 \begin{cor}[\bf Delzant~\cite{Delzant}]\label{kahler-natural} Any symplectic toric manifold $(M, \omega, \T)$ admits an $\omega$-compatible $\T$-invariant K\"ahler structure $(g_0, J_0)$.
 \end{cor}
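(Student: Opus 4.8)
The plan is to combine the Delzant classification (Theorem~\ref{thm:delzant}) with the Delzant construction of Section~\ref{s:Delzant-construction} and the functoriality of K\"ahler reduction. By Theorem~\ref{thm:delzant}, the given symplectic toric manifold $(M, \omega, \T)$ is equivalent, in the sense of Definition~\ref{d:toric}, to the manifold $(M_{\Delta}, \omega_{\Delta}, \T_{\Delta})$ produced in Section~\ref{s:Delzant-construction} from a Delzant triple $(\Delta, {\bf L}, \Lambda)$; in particular there are a Lie group isomorphism $\phi$ and an equivariant symplectomorphism $\Phi : M \to M_{\Delta}$. Since any $\T_{\Delta}$-invariant, $\omega_{\Delta}$-compatible K\"ahler structure pulls back through $\Phi$ to a $\T$-invariant, $\omega$-compatible K\"ahler structure on $M$, it suffices to produce such a structure on $M_{\Delta}$.

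By construction, $(M_{\Delta}, \omega_{\Delta})$ is the symplectic reduction $\big(\mu_N^{-1}(\imath^*(\lambda))\big)/N$ of $(\C^d, \omega_0)$ in the sense of Proposition~\ref{kahler-reduction}, where $G = \T^d$, the subgroup $N = {\rm Ker}(\tau)$ acts freely on the level set (Claims~3 and~4 of Section~\ref{s:Delzant-construction}), and $\tilde c = \lambda$ is a fixed point of the (trivial) co-adjoint action of the torus $\T^d$. Now equip $\C^d$ with its standard flat K\"ahler structure $(g_0, J_0)$; this $\omega_0$-compatible structure is, as noted after~\eqref{flat-riemannian}, invariant under the full action of $\T^d$, hence in particular under $N$. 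Feeding this $G$-invariant, $\tilde\omega$-compatible metric into the construction carried out in the proof of Proposition~\ref{kahler-reduction} yields an $\omega_{\Delta}$-compatible riemannian metric $g_0^{M_{\Delta}}$ on $M_{\Delta}$ together with its compatible almost-complex structure $J_0^{M_{\Delta}}$.

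Two points remain, both already isolated among the exercises following the relevant results. First, by part~(2) of the exercise after Proposition~\ref{kahler-reduction}, because the structure $(g_0, J_0)$ upstairs is K\"ahler (not merely almost-K\"ahler), the reduced structure $(g_0^{M_{\Delta}}, J_0^{M_{\Delta}})$ is again K\"ahler: concretely one checks that the horizontal distribution $H_p$ is $J_0$-invariant and that the Nijenhuis tensor of $J_0^{M_{\Delta}}$ vanishes, or, more cleanly, one identifies $(M_{\Delta}, J_0^{M_{\Delta}})$ with the complex (GIT) quotient of $\C^d$ by the complexification $N^{\C}$. Second, since all the data used in the reduction --- $\omega_0$, $g_0$, $J_0$, and the level set $\mu_N^{-1}(\imath^*(\lambda))$ --- are $\T^d$-invariant, and the residual action of $\T_{\Delta} = \T^d/N$ on $M_{\Delta}$ is induced from the $\T^d$-action on the level set, the reduced K\"ahler structure is automatically $\T_{\Delta}$-invariant. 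Pulling back by $\Phi$ then gives the desired $\T$-invariant, $\omega$-compatible K\"ahler structure $(g_0, J_0)$ on $M$.

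I expect the genuine content to sit in the first of these last two points --- establishing integrability of the reduced complex structure --- since everything else is bookkeeping with invariant tensors together with an appeal to the classification theorem; the cleanest route is the identification of the K\"ahler quotient with the holomorphic quotient, which simultaneously makes the biholomorphism type of $M_{\Delta}$ transparent.
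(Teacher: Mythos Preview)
Your proposal is correct and follows exactly the approach implicit in the paper: the corollary is stated without an explicit proof because it is an immediate consequence of the Delzant construction in Section~\ref{s:Delzant-construction}, where $(M,\omega)$ is obtained as the K\"ahler quotient of $(\C^d,\omega_0,g_0)$, together with Delzant's classification Theorem~\ref{thm:delzant}. Your write-up simply makes explicit the two ingredients the paper leaves to the reader --- the integrability of the reduced almost-complex structure (Exercise part~(2) after Proposition~\ref{kahler-reduction}) and the residual $\T$-invariance --- so there is nothing to correct.
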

{\tiny \begin{exercise} Show that any toric symplectic manifold $(M^{2m}, \omega, \T)$ equipped with a $\T$-invariant  $\omega$-compatible K\"ahler structure $J$ is a projective variety, i.e. $(M^{2m}, J)$ admits an holomorphic embedding into a complex projective space $\C P^N$.
\end{exercise}

\noindent
{\bf Hint.} Show that  the Dolbeault cohomology $H^{2,0}(M, J)=\{0\}$ and use Hodge decomposition theorem  to conclude that $M$ admits a K\"ahler structure $(\omega', J)$ with $[\omega'] \in H^2(M, \Q)$. The conclusion then follows from the Kodaira embedding theorem, see e.g. \cite{GH}.}

\section{Toric complex varieties from  Delzant polytopes. Fans}\label{s:fan} Given a symplectic toric manifold $(M, \omega, \T)$ classified by the Delzant triple $(\Delta, {\bf L}, \Lambda)$, we can associate a complex manifold $M_{\Delta}^{\C}$ of dimension $m$ as follows: to each vertex $v \in \Delta$, we take a copy of $\C_v^m=\{(z_1^v, \ldots, z_m^v)\}$ and consider the identification  $\T \cong \T^m= (\Sph^1)^m$  with respect to the lattice base $\{u_{v_1}, \ldots, u_{v_m}\}$ given by the normals of the $v$-adjacent facets of $\Delta$; we denote this identification by $\T_v^m$ and endow the chart $\C_v^m$ with the standard action of $\T_v^m$, as in Example~\ref{flat-action}.
If $w\in \Delta$ is another vertex,  we consider the respective bases $\{u_{v_1}, \ldots, u_{v_m}\}$ and $\{u_{w_1}, \cdots, u_{w_m}\}$ of $\Lambda$ (corresponding to the normals of the $v$-adjacent  and $w$-adjacent facets of $\Delta$), and let $A=(a_{ij})\in {\bf SL}(m, \Z)$ be the coordinate transition matrix.  We then identify the subset $(\C^*)^m_v \subset \C_v^m$ with the subset $(\C^*)^m_w \subset \C_w^m$ through the identification
\begin{equation}\label{affine-chart}
z_j^w = (z_1^v)^{a_{j1}} \cdots (z_{m}^v)^{a_{jm}}, j=1, \ldots, m.
\end{equation}
It is easily seen that  \eqref{affine-chart} is an equivariant map with respect to the action of $\T_v^m$ on $\C_v^m$ and $\T_w^m$  on $\C_w^n$. This way, one obtains a complex space $M_{\Delta}^{\C}$ endowed with an affective holomorphic  $\T$-action, covered by the affine charts $\C^m_v$ (parametrized by the vertices $v$ of $\Delta$) and  identified at the intersections $\C^m_v \cap \C^m_w\cong (\C^*)^m$  as explained above.  We shal denote by $\T^{\C}$ the induced $(\C^*)^m$-action.

To construct $M_{\Delta}^{\C}$ explicitly, one can use the Geometric Invariant Theory (GIT),  see \cite{kirwan-book}:  Accordingly, $M^{\C}_{\Delta} := (\C^{d})_{ss} / N^{\C}$  is the space of orbits  for the holomorphic action of the  {\it complexified}  $(d-n)$-dimensional torus $N^{\C} \cong (\C^*)^{d-n} \subset (\C^*)^d$ (corresponding to $N \subset \T^d$) on the subset 
$(\C^{d})_{ss} \subset \C^d$ of {\it semi-stable} points for the action of $N^{\C}$ on $\C^d$,  i.e. the points such that the closure of the $N^{\C}$-orbit {\it does not} contain $0\in \C^d$. Considerations similar to the one in the proof of Claim 4 of Section~\ref{s:Delzant-construction}  lead to  the introduction of $(\C^*)^d /N^{\C} \cong (\C^*)^{m}$ equivariant {\it affine charts}  on  $(\C^{d})_{ss} / N^{\C}$, parametrized by the vertices of $\Delta$ (which in turn determine sets of $(d-n)$ non-vanishing affine coordinates of $\C^d$ on which $N^{\C}$ acts transitively).

{\tiny
\begin{exercise} Show that  if we start with the standard Delzant simplex   $\Delta_m \subset \R^m$  and the standard lattice $\Z^m \subset (\R^m)^*$ as in Section~\ref{s:CPm}, the resulting complex manifold $M^{\C}_{\Delta_m}$ constructed  as above is $\C P^m$,  endowed with  its  atlas of affine  charts.
\end{exercise}}

We notice that in order to construct $M_{\Delta}^{\C}$ we did not use the whole data from $(\Delta, {\bf L}, \Lambda)$.  The relevant information is captured by the normals $u_j = d L_j\in \tor$ adjacent to  the set of vertices, and the combinatorial object which describes it is the so called  {\it Fan} associated to $\Delta$.
\begin{defn}[\bf Fan] Let $(\Delta, {\bf L}, \Lambda)$ be a Delzant triple and $\mathcal{P}=\{ F \subset \Delta \}$ the poset of closed facets of $\Delta$, partially ordered by the inclusion. 
The fan $\mathcal{F}(\Delta, {\bf L})$ of $(\Delta, {\bf L})$ is the union of polyhedral cones $\{\mathcal{C}_{F}, F \in {\mathcal P}\}$ in $V^*$, defined by
$$\mathcal{C}_F =\{ dL : L (x) \ge 0  \ \forall x \in \Delta,  L(x)=0 \ \forall x \in F\}.$$
\end{defn}

The central  fact is

\begin{thm}[\bf Lerman--Tolman~\cite{LT}]\label{complex-toric} Suppose $J$ is  an $\omega$-compatible, $\T$-invariant complex structure on the toric manifold $(M, \omega, \T)$. Then,  $(M, J)$  is  $\T$-equivariantly biholomorphic  to the complex manifold $M_{\Delta}^{\C}$ associated to the fan $\mathcal{F}(\Delta, {\bf L})$ of the corresponding Delzant triple $(\Delta, {\bf L}, \Lambda)$.
\end{thm}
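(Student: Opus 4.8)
The plan is to show that the $\T$-invariant complex structure $J$ gives $(M,J)$ a structure which locally, near each $\T$-fixed point, looks like the standard $\C^m$ with the standard torus action, and then to glue these charts equivariantly according to the fan. The main tool is the \emph{equivariant Darboux--Moser--Weinstein} type normal form for $\T$-fixed points, combined with a complex-analytic rigidity argument. I would proceed as follows.

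\textbf{Step 1: Local structure at a fixed point.} By Theorem~\ref{convexity}, the $\T$-fixed points of $M$ are exactly the preimages under $\mu$ of the vertices of $\Delta$; since $\Delta$ is simple, at a vertex $v$ exactly $m$ labels $L_{v_1},\dots,L_{v_m}$ vanish, and their linearizations $u_{v_1},\dots,u_{v_m}$ form a $\Z$-basis of $\Lambda$. Fix a vertex $v$ and let $p_v=\mu^{-1}(v)$. Since $\T$ fixes $p_v$, it acts linearly on $T_{p_v}M$, and $J_{p_v}$ makes this a complex $m$-dimensional representation of $\T$; using the lattice basis $\{u_{v_j}\}$ to identify $\T\cong\T^m$, this representation is (after reordering) the standard one on $\C^m$ because the weights of the $\T$-action on $T_{p_v}M$ are read off from the edges of $\Delta$ emanating from $v$, which are the dual basis to $\{u_{v_j}\}$. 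Then the equivariant version of the holomorphic slice theorem (or Bochner linearization: a compact group action fixing a point of a complex manifold is biholomorphically linearizable near that point) provides a $\T$-equivariant biholomorphism from a neighbourhood of $p_v$ in $(M,J)$ onto a neighbourhood of $0$ in $(\C^m_v,\T^m_v)$ with its standard action. This gives the affine chart $\C^m_v$ of the Delzant construction of $M^{\C}_{\Delta}$.

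\textbf{Step 2: Globalizing the charts and identifying transitions.} Next I would extend each such local chart to the maximal $\T$-invariant open set $U_v\subset M$ on which it is defined; concretely $U_v$ should be $\mu^{-1}(\mathrm{St}(v))$ where $\mathrm{St}(v)$ is the union of open faces of $\Delta$ whose closure contains $v$. On the overlap $U_v\cap U_w$ the open dense orbit is a single free $\T^{\C}$-orbit $\cong(\C^*)^m$, and both charts restrict there to the standard $(\C^*)^m$ coordinates associated to the respective lattice bases $\{u_{v_j}\}$ and $\{u_{w_j}\}$; since these bases differ by the matrix $A\in\mathbf{SL}(m,\Z)$ (integrality of $\Delta$ forces $A$ to be unimodular), the transition on $(\C^*)^m$ is forced to be the monomial map \eqref{affine-chart} — a $\T^{\C}$-equivariant biholomorphism of $(\C^*)^m$ that agrees with the one used to build $M^{\C}_{\Delta}$. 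The charts $\C^m_v$ therefore glue to give a $\T$-equivariant biholomorphism $(M,J)\xrightarrow{\sim} M^{\C}_{\Delta}$, and by construction only the normals $u_j=dL_j$ near the vertices — i.e. the fan $\mathcal{F}(\Delta,{\bf L})$ — entered.

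\textbf{Step 3: The analytic heart, and the main obstacle.} The step requiring real work is verifying that the extended chart really covers all of $U_v$ and that the biholomorphism extends holomorphically across the lower-dimensional orbits (not just on the open dense orbit). Equivalently: a $\T$-equivariant biholomorphism between two toric complex manifolds that agrees on the open orbit automatically agrees everywhere, because each is a $\T^{\C}$-equivariant partial compactification of $(\C^*)^m$ and such compactifications are rigid — a $\T^{\C}$-equivariant map on the open orbit extends to the whole toric variety and is determined by its effect on one-parameter subgroups, which in turn is dictated by which orbits degenerate to which fixed points, data recorded by the fan. This is where I expect the delicate bookkeeping: matching the orbit stratification of $(M,\omega,\T)$ (via the face stratification of $\Delta$, as in the proof of Claim~4 in Section~\ref{s:Delzant-construction}) with the cone stratification of the fan, and checking holomorphicity of the extension across each stratum using Hartogs-type/Riemann extension arguments. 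Once that matching is in place, uniqueness of the $\T^{\C}$-equivariant extension closes the argument.
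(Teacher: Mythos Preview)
Your overall strategy---linearize at each fixed point, then glue along the open orbit via the monomial transition maps---is exactly the paper's approach, and Steps~1 and~2 match the paper's argument closely. The difference lies in how the extension problem you flag in Step~3 is handled.

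The paper sidesteps your ``analytic heart'' entirely by complexifying \emph{first}. Rather than applying Bochner linearization for the compact torus $\T$ (which, as you note, only gives a small $\T$-invariant chart that must then be painfully extended across lower-dimensional strata), the paper observes that the $\T$-action on $(M,J)$ complexifies to a holomorphic $\T^{\C}\cong(\C^*)^m$-action and applies the \emph{$\T^{\C}$-equivariant} holomorphic slice theorem (citing Sjamaar and Heinzner--Loose) at each fixed point $p$. This yields a $\T^{\C}$-equivariant biholomorphism from a $\T^{\C}$-invariant open neighbourhood $V$ of $0$ in $T_pM\cong\C^m$ onto a $\T^{\C}$-invariant neighbourhood of $p$ in $M$. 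But any $\T^{\C}$-invariant open neighbourhood of $0$ in $\C^m$ (for the standard action) is automatically all of $\C^m$: it contains a point with all coordinates nonzero, hence the full open orbit $(\C^*)^m$, and similarly each coordinate subspace. So $V=\C^m$ and you get a global embedding $\C^m_v\hookrightarrow M$ for free. Two such embeddings both contain the dense principal $\T^{\C}$-orbit, on which the transition is forced to be the monomial map~\eqref{affine-chart}, and the gluing is immediate.

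In short: your proposal is correct but leaves the hardest step (extending the local $\T$-chart to a full affine chart) as a programme. The paper's trick---use the non-compact complexified torus in the slice theorem---makes that step trivial, and is the one idea you are missing.
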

\begin{proof}  The idea of the proof is the following. The effective action of $\T$ on $(M, J)$ complexifies to an effective holomorphic action of a complex algebraic torus $\T^{\C} = (\C^*)^m$. The pre-images of the vertices of $\Delta$ are precisely the fixed points for $\T^{\C}$. At each such fixed point $p\in M$, $\T^{\C}$ induces a linear action on the complex space $(T_pM, J_p)$. The holomorphic slice theorem (see e.g. \cite{Sj,HL}) tells us that there exist a $\T^{\C}$-invariant neighbourhood $0\subset  V \subset T_pM$, a $\T^{\C}$-invariant neighbourhood $p\subset U \subset M$ and a $\T^{\C}$-equivariant biholomorphism $f: V \to U$.  Since the action of $\T^{\C}$ is effective, $U$ must be the whole  $T_pM$, so we obtain an embedding $T_pM$ into $M$. This is a $\T^{\C}$-invariant affine chart $\C_p^m$ around $p$. The fan associated to this affine chart is a the simplicial cone $C_m^*$ dual to the Delzant image of $(\C^m, \omega_0)$. Moreover, if $q\in M$ is another fixed point, and $\C_q^m$ the corresponding affine chart, then both $\C_p^m$ and $\C_q^m$ must contain the dense principal orbit  $\T^{\C}(p_0) \cong (\C^*)^m$ of a point $p_0\in M$. It is not difficult to see that on $\T^{\C}(p_0)$, the transition map between $(\C^{*})^m_p$ and $(\C^*)^m_q$  is precisely as described in the construction of $M_{\Delta}^{\C}$ associated to $\mathcal{F}=(C^*(\Delta), \Lambda)$. \end{proof}

\section{Equivariant blow-up} We explain  now  how to blow-up  equivariantly a fixed point of the action of $\T$ on the complex manifold $M^{\C}_{\Delta}$ constructed in Section~\ref{s:fan}.  Recall that $M^{\C}_{\Delta}$ is endowed with a $\T$-equivariant atlas of affine charts $\C^m_v$, parametrized by the the vertices $v$ of $\Delta$, such that  the action $\T$ on each chart $\C^m_v$ is the standard action of $\T^m$ on $\C^m$, as described in Example~\ref{flat-action}. We notice that $\C^m_v= \C^m$ is the complex manifold associated to (the fan of)  the (unbounded) standard cone $C_m \subset \R^m$,  and the standard lattice $\Z^m \subset (\R^m)^*$, via the construction in Section~\ref{s:fan}: indeed, $C_m$ has a unique vertex at the origin, and the inward primitive normals of the adjacent facets form the standard basis of $(\R^m)^*$, which define a single chart $\C_0^m$. 

We now blow-up the origin of $\C^m$ and obtain as a resulting manifold  $\widehat{\C_0^m}=\cO_{\C P^{m-1}}(-1) \to \C P^{m-1},$ the total space of the tautological bundle over $\C P^{m-1}$. The blow-down map $b : \cO_{\C P^{m-1}}(-1) \to \C_0^m$ is explicitly given by
$$b\big([w_1, \ldots, w_m], \zeta (w_1, \ldots, w_m)\big) = ( \zeta w_1, \ldots, \zeta w_m), $$
where $[w_1, \ldots, w_m]$ are homogeneous coordinates on $\C P^{m-1}$,  and $\zeta$ is a fibre-wise coordinate of the tautological bundle (with respect to the generator $(w_1, \ldots, w_m)$).  The inverse map, defined on ${\C}_0^m \setminus \{0\}$, is 
$$ b^{-1} (z_1, \ldots, z_m) = ([z_0, \ldots, z_m], (z_1, \ldots, z_m)), $$
showing that  the action of $\T^m$ on $\C_0^m$  lifts to an action of $\T^m$ on  $\widehat{\C_0^m}$, given in the above  coordinates by
\begin{equation}\label{blow-up-action}
\begin{split}
&(e^{\sqrt{-1}t_1}, \ldots, e^{\sqrt{-1}t_m}) \cdot \big([w_1, \ldots, w_m]; \zeta(w_1, \ldots, w_m)\big) =\\
& \big([e^{\sqrt{-1}t_1}w_1, \ldots, e^{\sqrt{-1}t_m}w_m], \zeta(e^{\sqrt{-1}t_1}w_1, \ldots, e^{\sqrt{-1}t_m}w_m)\big), 
\end{split}
\end{equation}
thus making  $b$ manifestly $\T^m$-equivariant.
\begin{defn}\label{d:equivariant-blow-up}  The (non-compact) manifold $\widehat{\C_0^m}= \cO(-1)_{\C P^{m-1}}$ endowed the action of $\T^m$ defined in \eqref{blow-up-action} will be referred to as {\it the equivariant blow up} of $(\C^m, \T^m)$.
\end{defn}
Introducing affine charts on $\C P^{m-1}$ gives rise to $m$ affine charts $\C_{v_1}^m, \ldots, \C_{v_m}^m$ on $\widehat{\C_0^m}$, such that the $\T^m$ action \eqref{blow-up-action}  becomes the standard action of $\T^m$ on each such chart. Furthermore, by writing down  the transition between these charts  one sees that $\widehat{\C_0^m}= \cO(-1)_{\C P^{m-1}}$ becomes the complex manifold associated to  (the fan of) the unbounded polytope 
$$\hat{C}_m:= \{ L_0(x)=x_1+ \cdots x_m -1 \ge 0, L_j(x)=x_j \ge 0, j=1, \ldots, m\},$$
and standard lattice $\Z^m \subset (\R^m)^*$.  

{\tiny
\begin{figure}[h!]\label{f1}
\captionsetup[subfigure]{labelformat=parens}
\centering
\begin{subfigure}[b]{0.4\textwidth}
\includegraphics[scale=0.23,angle=0]{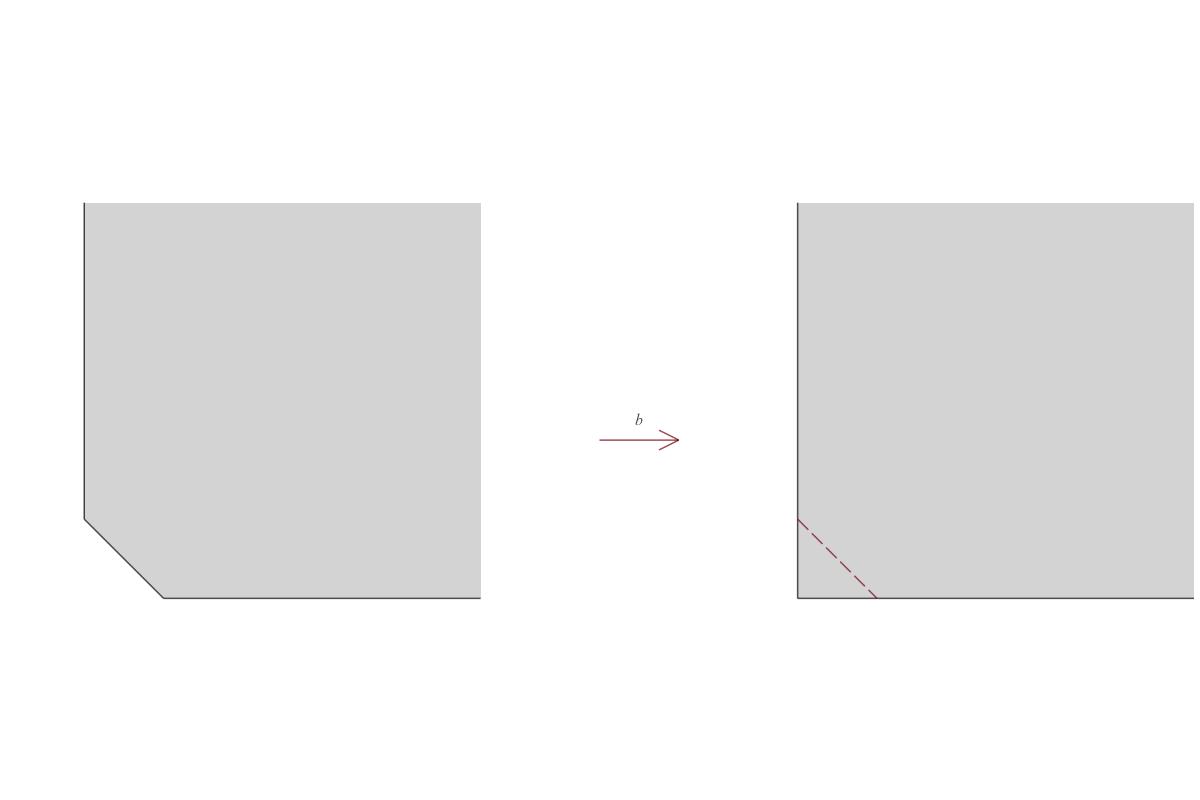}
\end{subfigure}
\caption{The polytopes of ${\widehat \C^2_0}$ and $\C^2$.}
\end{figure}}

We thus get the following
\begin{thm}\label{thm:blow-up}  Let $M_{\Delta}^{\C}$ be the compact complex manifold associated to a Delzant polytope $\Delta\subset \R^m$ with respect to the standard lattice $\Z^m \subset (\R^m)^*$. Let $w_1, w_2, \ldots, w_m$ be primitive inward-pointing edge vectors  at a vertex $v$ of $\Delta$ and $\hat \Delta_{\varepsilon}$ the polytope obtained from $\Delta$ by replacing $v$ with  the $m$-vertices $v+ \varepsilon w_i$, $i=1, \ldots, m$ for some $\varepsilon>0$. Then,  $\hat \Delta_{\varepsilon}$ is Delzant polytope too, and the corresponding complex manifold $M_{\hat \Delta_{\varepsilon}}^{\C}$ is obtained from $M_{\Delta}^{\C}$ by blowing up a fixed point for the torus action.
\end{thm}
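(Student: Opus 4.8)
The plan is to reduce the global statement to the local model computation already carried out in Section~\ref{s:fan}, using the functoriality of the fan construction under restriction to a vertex. First I would recall that by Theorem~\ref{complex-toric} the complex manifold $M_\Delta^{\C}$ is determined by the fan $\mathcal{F}(\Delta,\mathbf{L})$, and that the maximal cones of this fan are indexed by the vertices $v$ of $\Delta$, each such cone $\mathcal{C}_v$ being the simplicial cone spanned by the primitive normals $u_{v_1},\dots,u_{v_m}$ of the facets through $v$. The affine chart $\C^m_v \subset M_\Delta^{\C}$ associated to $v$ depends \emph{only} on this single maximal cone together with the lattice $\Z^m$, and by the discussion preceding Definition~\ref{d:equivariant-blow-up} it is precisely the complex manifold attached to the unbounded standard cone $C_m$ after the unimodular change of basis sending $u_{v_1},\dots,u_{v_m}$ to the standard basis (this change of basis is in $\mathbf{GL}(m,\Z)$ exactly because $\Delta$ is Delzant at $v$, i.e. condition (iii) of Definition~\ref{d:Delzant-polytope}).

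Next I would check that $\hat\Delta_\varepsilon$ is again a Delzant polytope. Compactness is clear since we only move a vertex by a bounded amount; the nontrivial points are simplicity and integrality. For $\varepsilon>0$ small enough the combinatorics of $\Delta$ away from $v$ is unchanged, so the only new vertices to examine are the $v+\varepsilon w_i$ and the new facet $F_0$ cut out by the additional affine function $L_0$. I would compute $L_0$ explicitly: working in the unimodular coordinates at $v$ where the edge vectors $w_i$ are the standard basis of $V$ and the facets through $v$ are $\{x_i\ge 0\}$, the new facet is $\{x_1+\dots+x_m\ge\varepsilon\}$, i.e. $L_0(x)=x_1+\dots+x_m-\varepsilon$, whose linearization is the primitive lattice vector $u_0=u_{v_1}+\dots+u_{v_m}$. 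At the new vertex $v+\varepsilon w_i$ the active facets are $F_0$ and the $F_{v_j}$ for $j\neq i$, with normals $\{u_0\}\cup\{u_{v_j}:j\neq i\}$; one checks this set is a $\Z$-basis of $\Lambda$ (the transition matrix to $\{u_{v_1},\dots,u_{v_m}\}$ is unimodular, being a permutation-type matrix with a single row of ones). Hence $\hat\Delta_\varepsilon$ is simple and integral at all its vertices, so Delzant.

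Then I would identify $M^{\C}_{\hat\Delta_\varepsilon}$ with the blow-up of $M^{\C}_\Delta$ at the fixed point $p_v$ corresponding to $v$. The fan $\mathcal{F}(\hat\Delta_\varepsilon,\hat{\mathbf{L}})$ agrees with $\mathcal{F}(\Delta,\mathbf{L})$ outside the star of $v$: every maximal cone of $\mathcal{F}(\Delta,\mathbf{L})$ other than $\mathcal{C}_v$ is still a maximal cone of $\mathcal{F}(\hat\Delta_\varepsilon,\hat{\mathbf{L}})$, while $\mathcal{C}_v$ gets replaced by the $m$ subcones obtained by subdividing it along the new ray through $u_0=u_{v_1}+\dots+u_{v_m}$. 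By Theorem~\ref{complex-toric} and the chart-by-chart description of $M^{\C}$ from the fan, this means $M^{\C}_{\hat\Delta_\varepsilon}$ and $M^{\C}_\Delta$ are built from literally the same charts and gluing maps \emph{except} that the single chart $\C^m_v$ is replaced by the union of the $m$ charts $\C^m_{v+\varepsilon w_i}$; and I showed in Section~\ref{s:fan} that the complex manifold attached to the subdivided cone (equivalently, to $\hat C_m$) is exactly $\widehat{\C^m_0}=\mathcal{O}_{\C P^{m-1}}(-1)$, the blow-up of $\C^m$ at the origin, with the blow-down map $b$ being $\T^m$-equivariant and restricting to the identity on $(\C^*)^m$. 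Since $b$ is the identity outside $0\in\C^m_v$, the charts glue to a global $\T$-equivariant map $M^{\C}_{\hat\Delta_\varepsilon}\to M^{\C}_\Delta$ which is the blow-down at $p_v$. The main obstacle is not any single computation but organizing the bookkeeping cleanly: one must verify that for small $\varepsilon$ the facet poset of $\hat\Delta_\varepsilon$ is obtained from that of $\Delta$ by the expected local modification (new facet $F_0$, old facets through $v$ truncated, everything else untouched), and that the matching of fans respects the lattice; once that is in place the identification with the blow-up is immediate from the local model of Section~\ref{s:fan}.
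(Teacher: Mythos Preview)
Your proposal is correct and follows essentially the same approach as the paper: reduce to the local model $\C^m_v \cong \C^m$ and invoke the computation preceding Definition~\ref{d:equivariant-blow-up} that identifies $\widehat{\C^m_0}$ with the complex manifold of the truncated cone $\hat C_m$. In fact the paper gives no formal proof at all---the theorem is stated as an immediate consequence of that local discussion (``We thus get the following'')---so your write-up, which explicitly verifies the Delzant condition at the new vertices, describes the fan modification as the stellar subdivision of $\mathcal C_v$ along $u_0=u_{v_1}+\cdots+u_{v_m}$, and explains the global gluing via the equivariant blow-down $b$, supplies exactly the bookkeeping the paper omits.
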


{\tiny
\begin{exercise}\label{ex:test-configuration}
Identify the complex manifolds and the equivariant blowdown maps corresponding to the Delzant polytopes of Figure~2.
\begin{figure}[h!]\label{f2}
\captionsetup[subfigure]{labelformat=parens}
\centering
\begin{subfigure}[b]{0.4\textwidth}
\includegraphics[scale=0.23,angle=0]{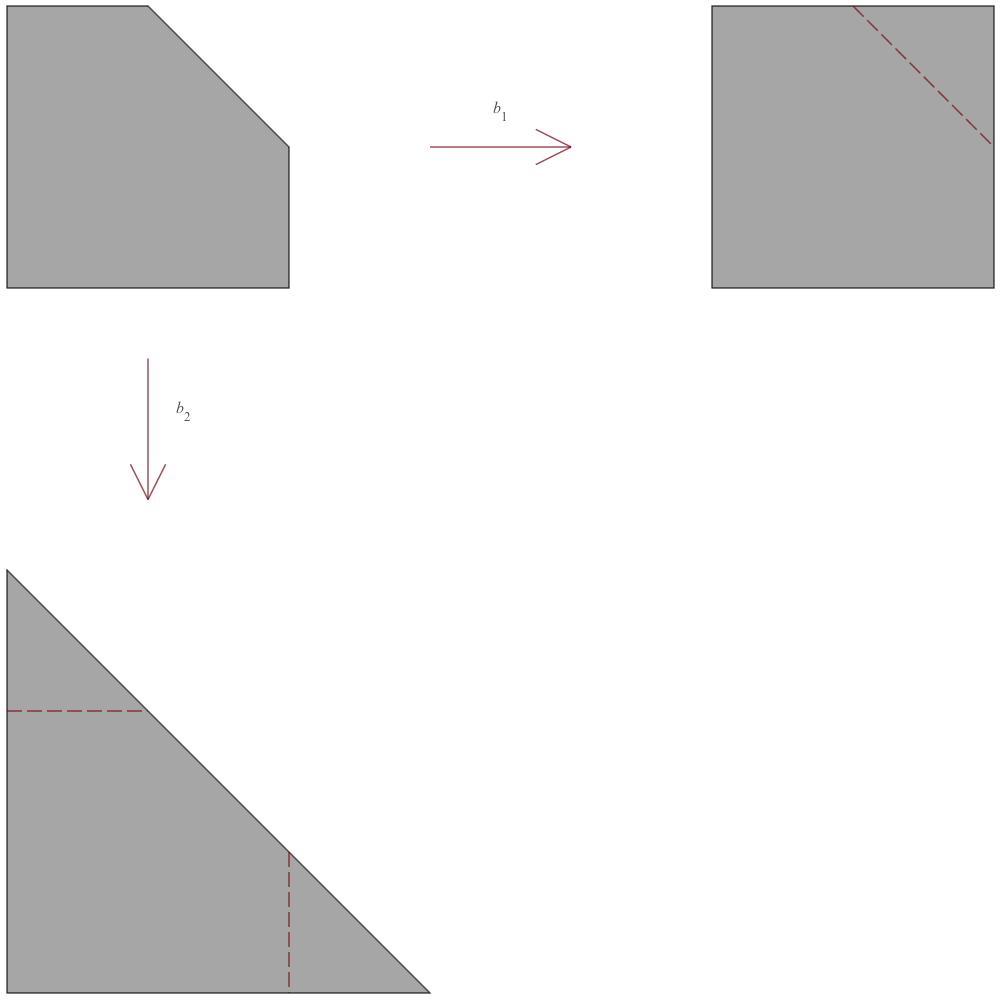}
\end{subfigure}
\caption{}.
\end{figure}
\end{exercise}}

\section{Polarized projective toric varieties} There is a third facet of the story, which relates the smooth  compact toric symplectic manifolds to the theory of {\it polarized projective varieties}.  Recall that a {smooth polarized projective complex variety} is a compact complex manifold $M$ of complex dimension $m$,  endowed with a  very ample holomorphic line bundle $L \to M$, i.e. a holomorphic line bundle such that the Kodaira map~\cite{GH}  $$\kappa : M \longrightarrow \Proj(W^*),$$  where $W=H^0(M, L)$ is the $(N+1)$-dimensional complex vector space of holomorphic sections of $L$,  is a holomorphic embedding and  $L = \kappa^* \cO(1)_{W^*}$. In this case,  one can  identify $(M, L)$ with its embedded image
$\tilde M =\kappa(M) \subset \Proj(W^*)\cong \C P^N,$ by considering  the polarization of $\tilde M$ defined by the  the restriction of the anti-tautological line bundle $\cO(1)$  of $\C P^N$.  In this setting, we have
\begin{defn}\label{toric-polarization} A  {\it toric} polarized projective variety $\tilde M \subset \C P^N$ is  an $m$-dimensional  complex submanifold  of $\C P^N$  which is the (Zariski) closure in $\C P^N$ of a principal orbit  for the action of a complex $m$-dimensional complex torus $\T^{\C} \subset {\bf SL}(N+1, \C)$  on $\C P^N$.
\end{defn}
Denote by  $\T$ the  real $m$-dimensional torus corresponding to $\T^{\C}$,   and let  $\T^N \subset {\bf SL}(N+1, \C)$  be a maximal real torus containing $\T$.  As discussed  in Section~\ref{s:CPm}, $\C P^N$ admits a 
$\T^N$-invariant Fubini--Study K\"ahler metric $\omega^{FS}$  and it is not difficult to show  that $2\omega^{FS}$  is the curvature of a $\T^N$-invariant hermitian metric on $\cO(1)$. Restricting to $\tilde M$, we obtain a $\T$-invariant symplectic form $\omega$ on $\tilde M$,   belonging to the deRham class $2\pi c_1(L)$. The $\T$-action is hamiltonian  with respect to this symplectic structure,  since $\T \subset \T^N \subset {\bf SL}(N+1, \C)$  and  the action of $\T^N$  is hamiltonian  with respect to $2\omega^{FS}$. We thus have
\begin{prop}\label{p:toric-polarization} Any smooth  polarized toric complex variety is a symplectic toric manifold.
\end{prop}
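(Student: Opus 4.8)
The plan is to verify that the symplectic form $\omega$ obtained by restricting $2\omega^{FS}$ to $\tilde M$ is well-defined and non-degenerate, and that the $\T$-action it carries satisfies the axioms of Definition~\ref{d:toric}. First I would note that $\tilde M$ is a closed complex submanifold of $\C P^N$: by Definition~\ref{toric-polarization} it is the Zariski closure of a principal $\T^{\C}$-orbit, hence an irreducible projective variety, and one must invoke the hypothesis (implicit in ``complex variety'' as used throughout these notes, cf. the smoothness assumption in ``smooth polarized projective complex variety'') that $\tilde M$ is smooth, so that $\tilde M$ is a compact complex manifold of complex dimension $m$. Since $\omega^{FS}$ is a K\"ahler form on $\C P^N$, its pullback $\omega := \imath^*(2\omega^{FS})$ along the inclusion $\imath: \tilde M \hookrightarrow \C P^N$ is a closed $2$-form on $\tilde M$, and it is non-degenerate because the complex structure of $\C P^N$ restricts to $\tilde M$ (as $\tilde M$ is a complex submanifold) and $\omega^{FS}$ tames it; concretely, for $0\neq u \in T_p\tilde M$ one has $\omega(u, J u) = 2g^{FS}(u,u) > 0$. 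Thus $(\tilde M, \omega)$ is a compact connected symplectic manifold of real dimension $2m$, and $[\omega] = 2\pi c_1(L)$ as stated.

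Next I would establish that the $\T$-action on $\tilde M$ is symplectic and Hamiltonian. The torus $\T$ sits inside a maximal real torus $\T^N$ of ${\bf SL}(N+1,\C)$, and $\T^N$ preserves the $\T^N$-invariant Fubini--Study form $2\omega^{FS}$ on $\C P^N$; since $\tilde M$ is $\T^{\C}$-invariant, it is in particular $\T$-invariant, so restricting gives a symplectic action of $\T$ on $(\tilde M, \omega)$. For the Hamiltonian property, I would use that the action of $\T^N$ on $(\C P^N, 2\omega^{FS})$ is Hamiltonian --- this follows from the discussion in Section~\ref{s:CPm} (where $(\C P^m, \omega^{FS})$ is realized as a symplectic reduction of $(\C^{m+1}, \omega_0)$, hence carries a momentum map for the residual torus by Proposition~\ref{kahler-reduction}), applied here with $m$ replaced by $N$, together with the fact that a subtorus of a torus acting in a Hamiltonian way acts in a Hamiltonian way (composing the momentum map with the projection of dual Lie algebras $(\mathfrak{t}^N)^* \to \mathfrak{t}^*$ dual to $\mathfrak{t} \hookrightarrow \mathfrak{t}^N$, exactly as in Proposition~\ref{kahler-reduction}). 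Restricting this momentum map to $\tilde M$ and using that $\tilde M$ is $\T$-invariant yields a momentum map $\mu : \tilde M \to \mathfrak{t}^*$ for the $\T$-action on $(\tilde M, \omega)$: condition (i) of Definition~\ref{d:hamiltonian} is inherited from the ambient manifold by restriction of the identity $\omega^{FS}(X_\xi, \cdot) = -d\langle \mu_{\T^N}, \xi\rangle$ along $\imath$, and condition (ii) is automatic since $\T$ is abelian and hence acts trivially by the co-adjoint action.

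Finally I would check effectiveness and the dimension count $\dim \T = m = \tfrac12 \dim \tilde M$. The real dimension of $\tilde M$ is $2m$ since $\tilde M$ is the closure of an $m$-dimensional complex $\T^{\C}$-orbit, and $\dim_{\R} \T = \dim_{\C} \T^{\C} = m$ by Definition~\ref{toric-polarization}. For effectiveness: the action of $\T$ on the principal orbit $\T^{\C}(p_0) \cong (\C^*)^m$ (whose closure is $\tilde M$) is the restriction of the standard $(\C^*)^m$-translation action on itself, on which even $\T \subset (\C^*)^m$ acts freely, hence effectively; since the principal orbit is dense in $\tilde M$, any $g \in \T$ acting trivially on $\tilde M$ acts trivially on the dense open set $\T^{\C}(p_0)$, forcing $g = e$. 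Hence $(\tilde M, \omega, \T)$ is a symplectic toric manifold in the sense of Definition~\ref{d:toric}. The main obstacle --- really the only non-formal point --- is the transfer of the Hamiltonian property from $\C P^N$ to $\tilde M$, i.e.\ checking that restricting the ambient momentum map along the inclusion of a symplectic submanifold genuinely produces a momentum map for the restricted action; this is where one must be careful that $\tilde M$ is $\T$-invariant (so that the fundamental vector fields $X_\xi$ are tangent to $\tilde M$) and that $\imath^*\omega^{FS}$ is itself symplectic, both of which were secured above. Everything else is bookkeeping with the definitions already recorded in the excerpt.
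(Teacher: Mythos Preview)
Your proposal is correct and follows essentially the same approach as the paper: the paper's argument (contained in the paragraph immediately preceding the proposition) also restricts the $\T^N$-invariant Fubini--Study form $2\omega^{FS}$ from $\C P^N$ to $\tilde M$ and inherits the Hamiltonian property from the ambient $\T^N$-action. You have simply filled in the details more carefully than the paper's brief sketch---in particular the non-degeneracy of the restricted form, the effectiveness of the $\T$-action via the dense principal orbit, and the dimension count---none of which the paper spells out explicitly.
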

From the considerations in Section~\ref{s:fan}, we get a correspondence between  smooth toric projective varieties  and complex toric varieties defined  via  the Delzant construction.  It is however not entirely clear how this correspondence translates in terms of Delzant polytopes or,  in other words,  how to construct a polarized toric variety from a given Delzant polytope. To explain this, we  give the following
\begin{defn}[{\bf Lattice polytopes}] \label{d:lattice-polytope} Suppose $(\Delta, {\bf L}, \Lambda)$ is a Delzant polytope as in Definition~\ref{d:Delzant-polytope}. We say that $(\Delta, {\bf L}, \Lambda)$ is  a {\it lattice Delzant polytope} if, moreover, all vertices of $\Delta$ belong to the dual lattice $\Lambda^* \subset V$.
\end{defn}
For any lattice Delzant polytope, we  can take a basis $\{e_1, \ldots, e_m\}$ of $\Lambda$  and consider $\Delta \subset \R^m$ with the standard lattice $\Lambda^* =\Z^m \subset \R^m$;  the fact that $(\Delta, {\bf L}, \Lambda)$ is a lattice Delzant polytope translates to the fact that the vertices of $\Delta$ are in $\Z^m$ (see Exercise~\ref{ex:Delzant-simplified}). We denote by 
$$A(\Delta) := \{\lambda^{(j)}= (\lambda_1^{(j)}, \ldots, \lambda_m^{(j)}), j=0, \ldots, N\}$$ 
all the lattice points in $\Delta$. We then consider the $(\C^*)^m$-action on $\C P^{N}$,  given in homogeneous coordinates by
$$(z_1, \ldots, z_m) \cdot [w_0, \ldots, w_N] := [(z_1^{\lambda_1^{(0)}}\cdots z_m^{\lambda^{(0)}_m})w_0, \ldots, (z_1^{\lambda_1^{(N)}}\cdots z_m^{\lambda_m^{(N)}})w_N].$$
We associate to such a data the toric polarized variety $M_{A(\Delta)} \subset \C P^N$ which is the Zariski closure in $\C P^N$ of the $(\C^*)^m$-orbit of the point $[1, \ldots , 1]\in \C P^N$ for this action. The basic fact is the following
\begin{thm}[\bf Section 6.6 in \cite{daSilva-book}] \label{thm:toric-variety} For any lattice Delzant polytope $\Delta$, $M_{A(\Delta)} \subset \C P^N$ is a smooth polarized toric projective variety whose Delzant polytope with respect to a $\T^m$-invariant Fubini--Study symplectic form  in $2\pi c_1(\cO(1))$ is $\Delta \subset \R^m$ with lattice $\Lambda^*=\Z^m$. In particular, $M_{A(\Delta)}$ is biholomorphic to the complex manifold $M^{\C}_{\Delta}$ defined in Section~\ref{s:fan}.
\end{thm}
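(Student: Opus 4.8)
The strategy is to verify the three assertions in turn --- smoothness and the identification of the Delzant polytope, the projective toric structure, and the biholomorphism with $M_\Delta^\C$ --- by exploiting the affine charts coming from the vertices of $\Delta$, exactly as in Sections~\ref{s:CPm} and \ref{s:fan}. First I would set up notation: fix a basis of $\Lambda$ so that $\Delta\subset\R^m$ has vertices in $\Z^m$, enumerate the lattice points $A(\Delta)=\{\lambda^{(0)},\ldots,\lambda^{(N)}\}$, and consider the monomial map $\Phi\colon(\C^*)^m\to\C P^N$, $\Phi(z)=[z^{\lambda^{(0)}},\ldots,z^{\lambda^{(N)}}]$, whose image closure is $M_{A(\Delta)}$. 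Since $\Delta$ is full-dimensional, the affine span of $A(\Delta)$ is all of $\R^m$, so after translating one may assume some $\lambda^{(j_0)}=0$ and the remaining exponent differences generate $\Z^m$; this makes $\Phi$ an embedding of $(\C^*)^m$ onto a locally closed subvariety, giving the dense principal orbit required by Definition~\ref{toric-polarization}.

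\textbf{Smoothness via vertex charts.} The heart of the argument is local. For each vertex $v$ of $\Delta$ with primitive inward edge vectors $w_1^v,\ldots,w_m^v$ forming a $\Z$-basis of $\Lambda^*=\Z^m$ (Delzant conditions (ii)--(iii)), the $m$ neighbouring lattice points $v+w_1^v,\ldots,v+w_m^v$ lie in $A(\Delta)$; I would show that in the affine chart of $\C P^N$ where the coordinate indexed by $v$ is nonzero, the closure $M_{A(\Delta)}$ is cut out in a way that makes it the image of $\C^m$ under the monomial parametrization with exponents $\lambda^{(j)}-v$ expressed in the basis $(w_i^v)$. Because that basis is unimodular, the exponent vectors of $v+w_i^v$ relative to $v$ are the standard basis vectors, so these $m$ coordinates serve as free affine coordinates and the chart is biholomorphic to $\C^m$ with the standard $\T^m$-action (here one must check that every other lattice point $\lambda^{(j)}$ has $\lambda^{(j)}-v$ a \emph{nonnegative} integer combination of the $w_i^v$, which is precisely the statement that $\Delta$ lies on the correct side of the facets through $v$). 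This exhibits an atlas $\{\C_v^m\}$ on $M_{A(\Delta)}$ with standard torus action, proving smoothness and simultaneously matching it chart-by-chart with the construction of $M_\Delta^\C$ in Section~\ref{s:fan}; the transition maps between $\C_v^m$ and $\C_w^m$ are the monomial maps \eqref{affine-chart} governed by the change-of-basis matrix in $\mathbf{SL}(m,\Z)$, which is the same combinatorial datum (the fan $\mathcal F(\Delta,\mathbf L)$) on both sides.

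\textbf{The polarization and its Delzant polytope.} Once smoothness is in hand, Proposition~\ref{p:toric-polarization} already shows $M_{A(\Delta)}$ is a symplectic toric manifold for a $\T^m$-invariant Fubini--Study form $\omega\in 2\pi c_1(\cO(1))$. It remains to compute its Delzant polytope and see it equals $\Delta$. I would do this by identifying the momentum map explicitly: for the $\T^m$-action the standard Fubini--Study momentum map on $\C P^N$ restricted to the orbit closure sends the point $\Phi(z)$ to the weighted barycentre $\big(\sum_j |z^{\lambda^{(j)}}|^2\,\lambda^{(j)}\big)/\big(\sum_j |z^{\lambda^{(j)}}|^2\big)$, whose image as $z$ ranges over $(\C^*)^m$ is exactly the interior of $\mathrm{Conv}(A(\Delta))=\Delta$; taking closures (using properness/compactness from Theorem~\ref{convexity}) gives image $\Delta$. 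That the momentum image is $\Delta$ \emph{with its given labelling and the standard lattice} then follows because at each vertex the local model is $(\C^m,\omega_0)$ with the $\T_v^m$-action, whose Delzant image is the standard corner cone with normals the $w_i^v$ --- matching condition (iii) of Definition~\ref{d:Delzant-polytope} for $\Delta$. Finally the last sentence, ``$M_{A(\Delta)}$ is biholomorphic to $M_\Delta^\C$,'' follows either directly from the chart-by-chart identification just made, or more conceptually by invoking Theorem~\ref{complex-toric}: $M_{A(\Delta)}$ carries a $\T$-invariant $\omega$-compatible (indeed Fubini--Study, hence K\"ahler) complex structure, and any such must be $\T$-equivariantly biholomorphic to $M_\Delta^\C$.

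\textbf{Main obstacle.} The step I expect to require the most care is the local normal-form computation at a vertex: one must prove that the projection of $A(\Delta)$ to the ``vertex coordinates'' $(w_i^v)$ hits every vector of $\N^m$ that appears, i.e. that the monomials $z^{\lambda^{(j)}-v}$ are polynomials (not Laurent) in the chart and that the $m$ distinguished ones among them generate the coordinate ring --- equivalently, that $\Delta\cap(\text{cone at }v)$ is the full unimodular cone up to the truncation by the opposite facets. This is where the Delzant hypotheses (simplicity and the integrality/unimodularity of the edge vectors) are genuinely used, and where one must be careful that passing to the Zariski closure does not introduce singular points at the ``boundary'' of the chart; the cleanest way to handle it is to observe that the closure in the chart is itself a (normal, hence smooth by unimodularity) affine toric variety $\mathrm{Spec}\,\C[\sigma_v^\vee\cap\Z^m]$ for the smooth cone $\sigma_v$ spanned by $w_1^v,\ldots,w_m^v$, which is $\C^m$, and to cite Section~6.6 of \cite{daSilva-book} for the matching of this chart description with the one arising from the lattice points $A(\Delta)$.
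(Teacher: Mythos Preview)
The paper does not actually prove this theorem: it is stated with attribution to \cite[Section~6.6]{daSilva-book} and no proof is given. Your proposal is a correct and complete outline of the standard argument one finds in that reference, built from the vertex-chart description and the explicit Fubini--Study momentum map, and it meshes well with the surrounding material (Sections~\ref{s:CPm}--\ref{s:fan} and Theorem~\ref{complex-toric}). One small point worth making explicit in your write-up: the fact that $v+w_i^v\in A(\Delta)$ relies on the edges of a lattice polytope having lattice length at least~$1$, which is immediate since both endpoints of each edge are lattice points and $w_i^v$ is primitive; and the equality $\mathrm{Conv}(A(\Delta))=\Delta$ holds because the vertices of $\Delta$ are themselves lattice points (Definition~\ref{d:lattice-polytope}).
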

{\tiny 
\begin{ex}\label{ex:CPm-variety} Let $\Delta \subset \R^m$ be the  lattice Delzant simplex,  defined by  
$$\Delta= \{L_0=(1-\sum_{j=1^n} x_j) \ge 0, L_j(x)=x_j \ge 0,  j=1, \ldots m\}$$
and the standard  lattice $\Z^m \subset (\R^m)$. $\Delta$.  The set of  lattice points of $\Delta$ is 
$$A(\Delta)=\{\lambda^{(0)}=(0, \ldots, 0),  \lambda^{(j)}=e_j, j=1, \ldots, m\},$$
where $e_j$ is the standard basis of $\R^m$. The Delzant construction identifies the corresponding symplectic toric manifold with $(\C P^m, 2\omega^{FS})$,  whereas  Theorem~\ref{thm:toric-variety} realizes $M_{A(\Delta)}$ as the closure in $\C P^{m}$ of a principal $(\C^*)^m$-orbit  for the action
$$(z_1, \ldots, z_m) \cdot [w_0, \ldots, w_{m+1}] =[w_0, z_1w_1, \ldots, z_m w_m],$$
which clearly is $\C P^m$. The induced symplectic structure on this polarized variety is again $2\omega^{FS}$.
\end{ex} }
{\tiny
\begin{exercise}\label{e:weight-decomposition} Let $M_{A(\Delta)} \subset \C P^N$ be a toric polarized projective variety corresponding to a lattice Delzant polytope $\Delta \subset \R^m$ with respect to the lattice $\Z^m \subset \R^m$. Using that the sections of $\cO(1)$ are identified with linear functions in homogeneous coordinates $[w_0, \ldots, w_N]$ on $\C P^N$, show that the $(\C^*)^m$-action on $M_{A(\Delta)}$ defines a  $(\C^*)^m$-action  on the vector space $H^0(M_{A(\Delta)}, L)$ of holomorphic sections  of $L$. Furthermore, show that  here is a basis $\{ s_0, \ldots, s_N\}$ of $H^0(M_{A(\Delta)}, L)$,  parametrized by the lattice points $\{\lambda^{(0)}, \ldots, \lambda^{(N)}\}$ in $A(\Delta)$, such that $(\C^*)^m$ acts on $s_j$ by
$$(z_1, \ldots, z_m) \cdot s_j = (z_1^{\lambda_1^{(j)}} \cdots z_m^{\lambda_m^{(j)}}) s_j.$$
\end{exercise}}

\section{Toric orbifolds} We briefly discuss here what happens with the Delzant construction  reviewed in Section~\ref{s:Delzant-construction} when one starts with a rational Delzant triple $(\Delta, {\bf L}, \Lambda)$, i.e  satisfying  the weaker conditions (i)-(ii)-(iii)' of Definition~\ref{d:Delzant-polytope}. There are two points in the construction which deserve a special attention. 

The first point is {Claim 1}.  Consider for example  the labelled Delzant simplex 
$(\Delta_m, {\bf L})$ with ${\bf L}=\{L_j(x)=x_j, j=1, \ldots m, L_{m+1}(x)= -\sum_{j=1}^m x_j + \frac{1}{2}\}$, corresponding to $(\C P^m, \omega^{FS}),$  and change the standard lattice $\Lambda =\Z_m \subset (\R^m)^*$ with the lattice $\Lambda' = \frac{1}{2}\Lambda$. The condition (iii)'  then holds for the triple $(\Delta_m, {\bf L}, \Lambda')$. However, in this case,  
$$N' = {\rm Ker}(\tau) = \mathbb{S}^1 \times \Z_2^m, $$
where $\Z_2^m:=(\pm 1, \pm 1, \ldots, \pm 1) \subset \T^{m+1}$.
In particular, $N'$ is no longer connected. Following  the construction further, the quotient space will be 
$$M'={\mathbb S}^{2m+1}/N' = {\C P^m} /{\Z}^m_2.$$ Such a space is an example of an {\it orbifold}.

\begin{defn}\label{d:orbifold} An {\it orbifold chart} on a topological space $M$ is a triple $(\hat U, \Gamma, \varphi)$ where:
\begin{enumerate}
\item[$\bullet$] $\hat U \subset \R^n$ is an open subset;
\item[$\bullet$] $\Gamma \subset {\bf GL}(n)$ is a finite subgroup acting on $\hat U$
\item[$\bullet$] $\varphi : \hat U/\Gamma \to M$  is  a homeomorphism between $\hat U/\Gamma$ and an open subset $V\subset M$.
\end{enumerate}
We denote by $\hat \varphi : \hat U \to M$ the induced $\Gamma$-invariant map.

An {\it embedding} between two orbifold charts $(\hat U_1, \Gamma_1, \hat \varphi_1)$ and $(\hat U_2, \Gamma_2, \hat \varphi_2)$ is a smooth embedding $\lambda : \hat U_1 \to \hat U_2$ such that $\hat \varphi_1 = \hat \varphi_2 \circ \lambda$.

Two orbifold charts $(\hat U_1, \Gamma_1, \hat \varphi_1)$ and $(\hat U_2, \Gamma_2, \hat \varphi_2)$  are {\it compatible} if there exists an open subset $V\subset V_1 \cap V_2$ (where we have set $V_i = \hat \varphi_i(\hat U_i)$)  and an orbifold chart $(\hat U, \Gamma, \hat \varphi)$  with $\hat \varphi (\hat U) = V$, and two embeddings $\lambda_i : \hat U \to \hat U_i, i=1,2$.

An {\it orbifold} $M$ of (real) dimension $n$ is a Hausdorff paracompact topological space,  endowed with an open covering of compatible orbifold charts $\{(\hat U_i, \Gamma_i, \hat \varphi_i)\}_{i\in I}$.
A {\it smooth function} $f: M \to \R$ on an orbifold $M$ is defined by the property that on each orbifold chart $(\hat U, \Gamma, \hat \varphi)$, it pulls-back  by $\hat \varphi$ to a $\Gamma$-invariant smooth  function on $\hat U$.  One can define smooth tensors on $M$ is a similar way.
\end{defn}

The above phenomenon can be remedy by taking $\Lambda_{\rm min} = {\rm span}_{\Z}\{u_1, \ldots u_d\},$ which is the minimal (under the inclusion) lattice for which the condition (iii)' is satisfied for the given labelling ${\bf L}$. The subgroup $N_{\rm min}$ will be then a torus and for any other choice of lattice $\Lambda$,  the produced quotient space $S/N$ will be a quotient of $S/N_{\rm min}$ by the finite abelian group  $\Lambda/\Lambda_{\rm min}$. Geometrically, this  is translated to taking orbifold coverings, $S/N_{\rm min}$ being the simply connected orbifold covering all other quotients, see Thurston~\cite{Th}.

The second point is {Claim 4}. When we assume (iii)' instead of (iii), the stabilizers for the $\T^d$-action on $S$ will be finite abelian groups, and in general so be the stabilizers of the action of $N$.  However, once again the quotient space $S/N$ will be an orbifold in the sense of the above definition.

To summarize, we observed that 
\begin{prop} Suppose $(\Delta, {\bf  L}, \Lambda)$ is a rational Delzant triple, i.e.  satisfies the conditions {\rm (i)-(ii)-(iii)'} of Definition~\ref{d:Delzant-polytope}. Then the Delzant construction associates to $(\Delta, {\bf L}, \Lambda)$ a compact symplectic orbifold $(M, \omega)$ endowed with a Hamiltonian action of the torus $\T= V/2\pi\Lambda$ and momentum image $\Delta$.
\end{prop}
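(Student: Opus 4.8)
The plan is to re-run the Delzant construction of Section~\ref{s:Delzant-construction} with condition (iii) of Definition~\ref{d:Delzant-polytope} replaced by the weaker rationality condition (iii)$'$, recording the two places where integrality was used. After translating $\Delta$ so that $0$ lies in its interior, set $\lambda = (L^1(0),\dots,L_d(0)) \in (\R^d)^*$, so that $\lambda_j = L_j(0) > 0$ for all $j$, and define $\tau : \R^d \to \tor = V$ by $\tau(\xi_1,\dots,\xi_d) = \sum_{j=1}^d \xi_j u_j$. Since the normals $u_j$ span $V$ over $\R$ (simplicity) and, by (iii)$'$, all lie in $\Lambda$, the map $\tau$ is onto and carries $\Z^d$ into $\Lambda$; it therefore descends to a surjective homomorphism of compact abelian groups $\tau : \T^d \to \T = V/2\pi\Lambda$. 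Its kernel $N$ is a compact abelian Lie group of dimension $d-m$ with identity component a $(d-m)$-torus, but now possibly disconnected, as the footnote in Section~\ref{s:Delzant-construction} observes. As before put $\mu_{\T^d}(z) = \tfrac12(|z_1|^2,\dots,|z_d|^2)$, $\mu_N = \imath^* \circ \mu_{\T^d}$, and $S := \mu_N^{-1}(\imath^*\lambda) \subset \C^d$.

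The compactness of $S$ is exactly Claim~3 of Section~\ref{s:Delzant-construction}: since all $\lambda_j > 0$, the point $\lambda$ is interior to the momentum image of $\C^d$, hence a regular value of $\mu_{\T^d}$ and $\imath^*\lambda$ a regular value of $\mu_N$; and the computations \eqref{transpose}--\eqref{2} identify $\mu_{\T^d}(S)$ with the compact polytope $\Delta' = \tau^*(\Delta) + \lambda$, so $S$ is a closed, hence compact, submanifold of $\C^d$ on which $\T^d$ acts.

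Next I would replace Claim~4 by a local freeness statement. The $\T^d$-stabilizer of $z \in S$ is the subtorus $\T_z$ whose dimension equals the number of vanishing coordinates of $\mu_{\T^d}(z) = \tau^*(x) + \lambda \in \Delta'$, i.e. the number of labels $L_j$ vanishing at the corresponding point $x \in \Delta$; this is maximal, equal to $m$, precisely at the images of vertices. At a vertex $x_0$ with adjacent normals $u_{j_1},\dots,u_{j_m}$, the torus $\T_z$ is the product of the corresponding circle factors of $\T^d$, and $\tau|_{\T_z} : \T_z \to \T$ is induced by $\{u_{j_1},\dots,u_{j_m}\}$; by simplicity these form a basis of $V$ over $\Q$ and, by (iii)$'$ rather than (iii), they span a sublattice of $\Lambda$ of finite index, so $\tau|_{\T_z}$ is an isogeny with finite kernel. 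Hence $\mathrm{Stab}_N(z) = \T_z \cap \ker\tau = \ker(\tau|_{\T_z})$ is finite, and since the stabilizer of any point of $S$ is contained in the stabilizer of some vertex image, $N$ acts on $S$ with finite stabilizers. By the slice theorem for compact group actions, each $p \in S$ then has an $N$-invariant neighbourhood $N \times_{\Gamma_p} \Sigma$ with $\Gamma_p = \mathrm{Stab}_N(p)$ finite and $\Sigma$ a $\Gamma_p$-invariant transversal to the $N$-orbit through $p$; the triples $(\Sigma, \Gamma_p, \hat\varphi)$ are compatible orbifold charts making $M := S/N$ a compact orbifold in the sense of Definition~\ref{d:orbifold}.

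It remains to descend the symplectic (indeed K\"ahler) structure and the momentum map, which is now a verbatim repetition of the proof of Proposition~\ref{kahler-reduction} carried out on each slice chart. Using the flat $\T^d$-invariant metric $g_0$ on $\C^d$, the orthogonal decomposition $T_p\C^d = V_p \oplus J_0 V_p \oplus H_p$ together with \eqref{orthogonality} yields a closed non-degenerate $2$-form and a compatible K\"ahler metric on $\Sigma$, both $\Gamma_p$-invariant; these patch to an orbifold K\"ahler structure $(\omega, g, J)$ on $M$. The residual torus $\T = \T^d/N$ acts on $M$, and since $\mu_{\T^d}|_S$ is $N$-invariant with values in the annihilator $\mathrm{Ann}(\mathfrak n) \subset (\R^d)^*$, identified with $\tor^*$, it descends to a smooth $\T$-equivariant $\mu : M \to \tor^*$ satisfying $\imath_{X_{[\xi]}}\omega = -d\langle \mu, [\xi]\rangle$; and since $\mu_{\T^d}(S) = \Delta' = \tau^*(\Delta) + \lambda$ with $\tau^*$ injective, the momentum image $\mu(M)$ is $(\tau^*)^{-1}(\Delta' - \lambda) = \Delta$. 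The main obstacle is this last, orbifold, reduction step: one must know that a locally free action of a compact, possibly disconnected, abelian group admits an orbifold quotient in the precise sense of Definition~\ref{d:orbifold}, and that the reduced $2$-form, metric and momentum map are bona fide orbifold tensors --- this rests on the slice theorem and on the compatibility of the resulting charts; everything else is word-for-word Section~\ref{s:Delzant-construction} and the proof of Proposition~\ref{kahler-reduction}.
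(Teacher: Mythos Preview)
Your proposal is correct and follows exactly the approach the paper takes: the paper does not give a formal proof of this proposition but rather presents it as a summary of the preceding discussion, which singles out precisely the two modifications you identify---that $N=\ker\tau$ may be disconnected (the paper's ``first point'', Claim~1) and that the $N$-stabilizers on $S$ become finite rather than trivial (the ``second point'', Claim~4)---so that $S/N$ is an orbifold. Your write-up is in fact more detailed than the paper's sketch, in particular your isogeny argument for finiteness of $\mathrm{Stab}_N(z)$ at vertex images and your explicit invocation of the slice theorem to produce the orbifold charts of Definition~\ref{d:orbifold}.
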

The converse is also true, due to the following extension of Delzant's correspondence to toric orbifolds
\begin{thm}[\bf Lerman--Tolman~\cite{LT}] Compact symplectic orbifolds modulo equivalence are in bijective correspondence with rational Delzant triples modulo the action of the affine group.
\end{thm}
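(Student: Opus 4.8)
The plan is to prove this as the orbifold analogue of Delzant's Theorem~\ref{thm:delzant}: one establishes the two directions of the correspondence and checks that they are mutually inverse. One direction --- assigning to a rational Delzant triple $(\Delta, {\bf L}, \Lambda)$ the compact toric symplectic orbifold $(M_\Delta, \omega, \T)$ --- is already supplied by the Delzant construction for rational triples described in the previous Proposition (symplectic reduction by the possibly disconnected group $N = {\rm Ker}(\tau)$). First I would check that this assignment descends to equivalence classes: a map $\Phi \in {\rm Aff}(V^*)$ sending $(\Delta, {\bf L}, \Lambda)$ to $(\Delta', {\bf L}', \Lambda')$ has linear part in ${\bf GL}(\Lambda)$, hence is induced by a torus automorphism together with a translation of the momentum map; running through the construction --- which depends only on the normals $u_j$ up to this automorphism, while the translation merely shifts the reduction level $\lambda$ --- produces a $\T$-equivariant symplectomorphism $M_\Delta \to M_{\Delta'}$.

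For the reverse assignment, let $(M^{2m}, \omega, \T)$ be a compact toric symplectic orbifold with momentum map $\mu : M \to \mathfrak{t}^*$. By the orbifold version of the convexity theorem (Theorem~\ref{convexity}), $\Delta := \mu(M)$ is the convex hull of the images of the finitely many $\T$-fixed points, and by part (ii) the fibres $\mu^{-1}({\rm pt})$ are connected. The equivariant orbifold normal-form (Darboux) theorem near a fixed point $p$ --- a $\T$-invariant neighbourhood of $p$ is $\T$-equivariantly symplectomorphic to a neighbourhood of $0$ in $\C^m/\Gamma_p$ with $\T \cong \T^m$ acting by its weights --- shows that $\Delta$ is \emph{simple}, that near each vertex it is a rational simplicial cone with inward edge directions dual to the weights, that the facets of $\Delta$ correspond to the codimension-one $\T$-invariant suborbifolds, and that along the relative interior of each facet $F$ the orbifold structure group is cyclic, say $\Z/k_F$. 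I would then \emph{define} the labelling by $u_F := k_F\,\nu_F \in \Lambda$, where $\nu_F$ is the primitive inward normal of $F$ with respect to $\Lambda$, and take $L_F(x) = \langle u_F, x\rangle + \lambda_F$ so that $\Delta = \{L_F \ge 0\}$. Effectiveness of the action forces the $u_F$ meeting at any vertex to span $\mathfrak{t}$, so $(\Delta, {\bf L}, \Lambda)$ is a rational Delzant triple; it is well-defined up to translation of $\mu$ (Lemma~\ref{abelian-hamiltonian-action}) and up to ${\bf GL}(\Lambda)$, i.e. up to ${\rm Aff}(V^*)$.

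It remains to show that the two assignments are mutually inverse; the substantive point is the uniqueness (injectivity) statement. That the triple read off from $M_\Delta$ recovers $(\Delta, {\bf L}, \Lambda)$ is a direct local computation: at the fixed point over a vertex, the local model is precisely $\C^m/\Gamma$ with $\Gamma = N \cap \T^m$ the finite part of the stabilizer, whose order equals the index of the sublattice generated by the facet normals --- matching $k_F$ facet by facet. Conversely, given an arbitrary $(M, \omega, \T)$, one shows that the Delzant construction applied to its triple reproduces $(M, \omega, \T)$ by the orbifold analogue of Delzant's original argument: cover $\Delta$ by the open stars of its vertices; over each star use the normal-form theorem to identify $\mu^{-1}(U_v)$ $\T$-equivariantly and symplectically with the corresponding piece of $M_\Delta$; patch these local identifications, using connectedness of the fibres $\mu^{-1}({\rm pt})$ to match them on overlaps, together with an equivariant Moser argument to absorb the difference between the glued symplectic form and $\omega$; the residual ambiguity in the patching is precisely a torus-translation, which is what the ${\rm Aff}(V^*)$-action allows.

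The main obstacle I expect is this final patching/uniqueness step carried out in the orbifold category: one must ensure the local symplectomorphisms respect the structure groups, that the connectedness-of-fibres input of Theorem~\ref{convexity} is correctly available for orbifolds, and that the Moser interpolation can be performed $\T$-equivariantly on an orbifold. Intertwined with this is the combinatorial bookkeeping that pins the cyclic structure groups $\Z/k_F$ to the non-primitive normals $u_F = k_F\,\nu_F$ and verifies that this correspondence is preserved under both constructions --- which is where most of the technical effort lies.
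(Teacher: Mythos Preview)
The paper does not prove this theorem: it is stated as a result of Lerman--Tolman~\cite{LT} and immediately followed by Example~\ref{wps}, with no proof environment in between. There is therefore no ``paper's own proof'' to compare your proposal against.

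That said, your outline is a faithful sketch of the actual argument in~\cite{LT}. The two directions you describe --- the generalized Delzant construction for rational triples (already supplied by the preceding Proposition in the notes) and the reverse assignment via the orbifold convexity theorem plus the local normal form --- are exactly the ingredients Lerman and Tolman use. Your identification of the essential new point, namely that the orbifold structure group $\Z/k_F$ along a facet $F$ is encoded by taking the inward normal $u_F = k_F\,\nu_F$ to be the appropriate non-primitive multiple of the primitive lattice normal, is precisely the mechanism that distinguishes the rational Delzant labelling from the smooth case. You are also right that the technical weight lies in the uniqueness/patching step in the orbifold category, and that the equivariant Moser argument and connectedness of fibres must be verified there; Lerman--Tolman handle this, and your proposal correctly flags it as the place where care is required rather than attempting to sweep it under the rug.
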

We give below one specific example 
{\tiny \begin{ex}[\bf The weighted projective spaces]\label{wps} Similarly to the way we introduced the complex projective space, we can consider for any $(m+1)$-tuple of positive integers ${\bf a}=(a_0, \ldots, a_m)$  with ${g.c.d}(a_0, \ldots, a_m)=1$ the quotient space
$$\C P^m_{\bf a} = \C^{m+1}\setminus \{0\}/\C^{*}_{\bf a}, $$
where $\C^{*}_{\bf a}$ denotes the $\C^*$ action $\rho_{\bf a}$ on $\C^{m+1}=(z_0, \ldots, z_m)$ by
$$\rho_{\bf a}(\lambda)(z):= (\lambda^{a_0}z_0, \ldots, \lambda^{a_m}z_m).$$
\begin{lemma} $\C P^m_{\bf a}$ is an orbifold, which is diffeomorphic (as an orbifold) to the quotient space
$${\mathbb S}^{2m+1}/{\mathbb S}^1_{\bf a}$$
where ${\mathbb S}^1_{\bf a}$ stands for the circle action $\rho_{\bf a}$ on ${\mathbb S}^{2m+1},$ given by
$$ \rho_{\bf a} (e^{\sqrt{-1}t})(z)= (e^{\sqrt{-1}a_0t}, \ldots, e^{\sqrt{-1}a_mt}).$$
\end{lemma}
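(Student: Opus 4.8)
The plan is to adapt verbatim the construction of $\C P^m=\Sph^{2m+1}/\Sph^1$ from Section~\ref{s:CPm}, keeping track of the weights. Throughout, write $\rho_{\bf a}$ for the weighted $\C^{*}$-action on $\C^{m+1}\setminus\{0\}$ and, by restriction, the weighted $\Sph^{1}$-action on $\Sph^{2m+1}$. The first step is to build a weighted radial retraction $r:\C^{m+1}\setminus\{0\}\to\Sph^{2m+1}$. For a fixed $z\neq 0$ the function $t\mapsto\|\rho_{\bf a}(t)(z)\|^{2}=\sum_{i=0}^{m}t^{2a_{i}}|z_{i}|^{2}$, $t>0$, is continuous, strictly increasing (all $a_{i}\ge 1$), tends to $0$ as $t\to 0^{+}$ and to $\infty$ as $t\to\infty$; hence there is a unique $t_{0}=t_{0}(z)>0$ with $\|\rho_{\bf a}(t_{0})(z)\|=1$, and the implicit function theorem shows that $z\mapsto t_{0}(z)$ is smooth. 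Set $r(z):=\rho_{\bf a}(t_{0}(z))(z)$. Since $\rho_{\bf a}(s)$ scales each $|z_{i}|$ by $s^{a_{i}}$ while $\rho_{\bf a}(e^{\sqrt{-1}\theta})$ preserves $\|z\|$, a direct computation gives $t_{0}(\rho_{\bf a}(se^{\sqrt{-1}\theta})(z))=t_{0}(z)/s$ and therefore $r(\rho_{\bf a}(se^{\sqrt{-1}\theta})(z))=\rho_{\bf a}(e^{\sqrt{-1}\theta})(r(z))$; in particular $r$ is the identity on $\Sph^{2m+1}$ and carries $\rho_{\bf a}(\C^{*})$-orbits into $\rho_{\bf a}(\Sph^{1})$-orbits.

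Dually, the $\Sph^{1}_{\bf a}$-orbit of a point of $\Sph^{2m+1}$ is contained in its $\C^{*}_{\bf a}$-orbit, so the inclusion $\Sph^{2m+1}\hookrightarrow\C^{m+1}\setminus\{0\}$ descends to a continuous map $\Sph^{2m+1}/\Sph^{1}_{\bf a}\to\C P^{m}_{\bf a}$, and $r$ descends to a continuous map in the other direction. These two descended maps are mutually inverse, so $\C P^{m}_{\bf a}$ and $\Sph^{2m+1}/\Sph^{1}_{\bf a}$ are homeomorphic; in particular $\C P^{m}_{\bf a}$ is compact and Hausdorff. Next I would observe that the $\Sph^{1}_{\bf a}$-action on $\Sph^{2m+1}$ has finite stabilizers: for $z\neq 0$ the stabilizer of $z$ is $\{e^{\sqrt{-1}t}:a_{i}t\in 2\pi\Z\ \text{for all } i\text{ with } z_{i}\neq 0\}$, a cyclic group of order $\gcd\{a_{i}:z_{i}\neq 0\}$, hence finite (and the hypothesis $\gcd(a_{0},\ldots,a_{m})=1$ then says exactly that the action is effective). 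Because $\Sph^{1}$ is compact its action is proper, so the slice theorem (see \cite{bredon}) applies: around each $z$ there is an invariant tube $\Sph^{1}\times_{\Gamma_{z}}S_{z}$ with $S_{z}$ a $\Gamma_{z}$-invariant disc transverse to the orbit, and the corresponding neighbourhood in the quotient is $S_{z}/\Gamma_{z}$. These charts are pairwise compatible, so $\Sph^{2m+1}/\Sph^{1}_{\bf a}$ is an orbifold in the sense of Definition~\ref{d:orbifold}; transporting this atlas through the homeomorphism of the previous step equips $\C P^{m}_{\bf a}$ with an orbifold structure for which that homeomorphism is an orbifold diffeomorphism.

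As a cross-check, and to exhibit the complex structure, I would also describe the orbifold charts of $\C P^{m}_{\bf a}$ directly: on $U_{i}=\{[z]\in\C P^{m}_{\bf a}:z_{i}\neq 0\}$ a point $[z]$ has exactly $a_{i}$ representatives $\rho_{\bf a}(\lambda)(z)$ with $\lambda^{a_{i}}z_{i}=1$, so $[z]\mapsto(\lambda^{a_{j}}z_{j})_{j\neq i}$ is well defined up to the action of $\mu_{a_{i}}$ on $\C^{m}$ by $\zeta\cdot(\xi_{j})_{j\neq i}=(\zeta^{a_{j}}\xi_{j})_{j\neq i}$; this yields a chart $\C^{m}/\mu_{a_{i}}\cong U_{i}$ whose transition maps are Laurent-monomial (hence holomorphic), and these agree with the atlas produced above. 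The step I expect to be the main obstacle is the careful use of the slice theorem together with the bookkeeping needed to verify that the two atlases (the one pulled back from $\Sph^{2m+1}/\Sph^{1}_{\bf a}$ and the affine one on $\C P^{m}_{\bf a}$) define the same orbifold — that is, that the homeomorphism is genuinely an isomorphism of orbifolds, not merely a homeomorphism that is smooth on the free locus; the remaining points are elementary and run parallel to Section~\ref{s:CPm}.
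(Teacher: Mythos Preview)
The paper does not actually prove this lemma --- it is stated inside Example~\ref{wps} and immediately followed by ``The proof of this result is left as an exercise'' --- so there is no proof to compare against. Your argument is the natural one and is correct: the weighted radial retraction $r$ (well-defined and smooth by the implicit function theorem, since $\partial_t\sum_i t^{2a_i}|z_i|^2>0$) gives the homeomorphism between the two quotients, and the slice theorem for the proper $\Sph^1_{\bf a}$-action with finite cyclic stabilizers $\Z/\gcd\{a_i:z_i\neq 0\}$ supplies the orbifold atlas on $\Sph^{2m+1}/\Sph^1_{\bf a}$.

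One small suggestion on the point you flag as the main obstacle: rather than building two atlases and checking compatibility, it is cleaner to observe that $r:\C^{m+1}\setminus\{0\}\to\Sph^{2m+1}$ is a smooth submersion intertwining the $\C^*_{\bf a}$-action with the $\Sph^1_{\bf a}$-action, so it identifies smooth $\C^*_{\bf a}$-invariant functions on open sets of $\C^{m+1}\setminus\{0\}$ with smooth $\Sph^1_{\bf a}$-invariant functions on open sets of $\Sph^{2m+1}$; by Definition~\ref{d:orbifold} this is exactly what it means for the induced map on quotients to be an orbifold diffeomorphism.
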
 
The proof of this result is left as an exercise.
We can now easily modify the construction of the toric structure on $\C P^m$ and get the following
\begin{lemma} The orbifold $\C P_{\bf a}^m$ admits a toric K\"ahler structure $(g, \omega, J)$ obtained by the Delzant construction starting with the rational Delzant triple $(\Delta_m, {\bf L}_{\bf a}, \Lambda_{\bf a})$ where $\Delta_m\subset \R^m$ is the standard simplex labelled as
\begin{equation*}
\begin{split}
\Delta_m :=& \Big\{(x_1, \ldots, x_m) :  L_j(x)= \Big(\frac{a_0\cdots a_m}{a_j}\Big) x_j \ge 0, j=1, \ldots, m, \\
              & \ \ \ \ \ L_{m+1}(x) =\Big(\frac{a_0 \cdots a_m}{a_0}\Big)\Big(\frac{1}{2}-\sum_{j=1}^m x_j\Big) \ge 0\Big\},
              \end{split}
              \end{equation*}
and  the lattice $\Lambda_{\bf a}$ is defined by 
$$\Lambda_{\bf a} := {\rm span}_{\Z}\Big\{\Big(\frac{a_0\cdots a_m}{a_j}\Big)e_j^*, \  j=1, \ldots, m, \ \Big(\frac{a_0\cdots a_m}{a_0}\Big)(e_1^* + \cdots + e_m^*)\Big\}$$
where $\{e_j^*\}_{j=1}^m$ stands for the standard basis of $(\R^m)^*$.
\end{lemma}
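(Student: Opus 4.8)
The plan is to apply the (orbifold version of the) Delzant construction of Section~\ref{s:Delzant-construction} to the triple $(\Delta_m,{\bf L}_{\bf a},\Lambda_{\bf a})$ and then to identify the resulting K\"ahler quotient with the space ${\mathbb S}^{2m+1}/{\mathbb S}^1_{\bf a}$ of the previous lemma. First I would check that $(\Delta_m,{\bf L}_{\bf a},\Lambda_{\bf a})$ satisfies the conditions {\rm (i)-(ii)-(iii)'} of Definition~\ref{d:Delzant-polytope}: the set $\Delta_m=\{x_j\ge 0,\ \sum_j x_j\le \tfrac12\}$ is the usual (compact) simplex, since scaling a defining affine function by a positive constant does not change $\Delta_m$; the normals $u_j=dL_j$ are positive multiples of the standard-simplex normals, so the $m$ of them vanishing at any given vertex still form an $\R$-basis of $V=\R^m$, giving simplicity; and rationality holds by construction, as $\Lambda_{\bf a}:=\spns_\Z\{u_1,\dots,u_{m+1}\}$ is precisely the span of the normals (note that $u_1,\dots,u_m$ already span $\R^m$, so $\Lambda_{\bf a}$ is a full-rank lattice). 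After a translation making the origin an interior point of $\Delta_m$ (harmless in view of the affine equivalence in Theorem~\ref{thm:delzant}), one has $d=m+1$, and the map $\tau:\R^{m+1}\to V=\R^m$ of Section~\ref{s:Delzant-construction} is $\tau(e_j)=u_j$.

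Next I would compute $\mathfrak n=\ker\tau$ and $N=\ker(\tau:\T^{m+1}\to\T)$. Writing $c=a_0\cdots a_m$, the equation $\sum_{j=1}^{m+1}\xi_j u_j=0$ reads $\xi_j\,c/a_j=\xi_{m+1}\,c/a_0$ for $j=1,\dots,m$, so $\mathfrak n=\R\cdot v$ with $v=(a_1,\dots,a_m,a_0)$. Since $\gcd(a_0,\dots,a_m)=1$, the vector $v$ is primitive in $\Z^{m+1}$, whence $N=(\R v+2\pi\Z^{m+1})/2\pi\Z^{m+1}\cong \R v/2\pi\Z v$ is a single circle; thus, although we only have the weaker condition {\rm (iii)'}, here $N$ is still connected, the reason being that $d-m=1$. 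Concretely $N$ is the subtorus $\{(e^{\sqrt{-1}a_1 t},\dots,e^{\sqrt{-1}a_m t},e^{\sqrt{-1}a_0 t})\}\subset\T^{m+1}$, i.e. the weighted circle ${\mathbb S}^1_{\bf a}$ of the previous lemma up to a relabeling of the $\C^{m+1}$-coordinates matching them with the facets of $\Delta_m$ (a permutation of the weights, which does not change $\C P^m_{\bf a}$).

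It then remains to recognize the quotient and to promote it to a K\"ahler orbifold. The momentum map of $N$ is $\mu_N(z)=\tfrac12\big(a_1|z_1|^2+\cdots+a_m|z_m|^2+a_0|z_{m+1}|^2\big)$, and, because all $a_j$ and all $\lambda_j=L_j(0)$ are positive, $\imath^*(\lambda)$ is a positive regular value; hence $S=\mu_N^{-1}(\imath^*(\lambda))$ is an ellipsoid in $\C^{m+1}\setminus\{0\}$, which a rescaling of the coordinates carries ${\mathbb S}^1_{\bf a}$-equivariantly onto a round sphere ${\mathbb S}^{2m+1}$. Therefore $M=S/N\cong {\mathbb S}^{2m+1}/{\mathbb S}^1_{\bf a}=\C P^m_{\bf a}$, with $N$ acting locally freely on $S$ (trivial stabilizer where all coordinates are nonzero, finite cyclic stabilizers otherwise). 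Finally, starting from the flat $\T^{m+1}$-invariant K\"ahler structure $(g_0,J_0,\omega_0)$ on $\C^{m+1}$ and applying the orbifold version of the K\"ahler reduction in Proposition~\ref{kahler-reduction} --- whose proof is unchanged once ``submanifold'' and ``submersion'' are read in the orbifold sense --- one obtains a $\T=\T^{m+1}/N$-invariant K\"ahler structure $(g,J,\omega)$ on $M=\C P^m_{\bf a}$; by the Proposition stated above in this section, the induced Hamiltonian $\T$-action has momentum image the labelled polytope $(\Delta_m,{\bf L}_{\bf a})$, which is what the Delzant construction is meant to produce.

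The only real difficulty is bookkeeping: one must keep the identification between the $m+1$ facets of $\Delta_m$ and the $m+1$ circle factors of $\T^{m+1}$ coherent throughout, so that the residual circle $N$ comes out as the prescribed weighted action; and one must observe that the hypothesis $\gcd(a_0,\dots,a_m)=1$ is exactly what prevents $N$ from being a circle times a nontrivial finite abelian group, which is the only way condition {\rm (iii)'} (rather than the full integrality condition {\rm (iii)}) could otherwise enter the picture.
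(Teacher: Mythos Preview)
Your proof is correct and is precisely the argument the paper has in mind: the text does not actually prove this lemma, but only indicates that one should ``easily modify the construction of the toric structure on $\C P^m$'', and you have carried out that modification in full detail. One minor point: the connectedness of $N$ is not really a consequence of $d-m=1$ per se; what makes it work here is that $\Lambda_{\bf a}$ is \emph{by definition} the $\Z$-span of the normals $u_j$, so $\tau(\Z^{m+1})=\Lambda_{\bf a}$ and hence $N=\exp(\mathfrak n)$ automatically --- this is exactly the ``minimal lattice'' remedy $\Lambda_{\rm min}$ discussed in the paragraph preceding the example. Your explicit identification of $N$ with the weighted circle via the primitive vector $v=(a_1,\ldots,a_m,a_0)$ is correct regardless, and the rest (the ellipsoidal level set, the equivariant rescaling to the round sphere, and the orbifold K\"ahler reduction) is exactly right.
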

\end{ex}}

\chapter{Abreu--Guillemin Theory}
\section{The orbit space of a toric manifold} Let $(M, \omega, \T)$ be a toric symplectic manifold and $\mu : M \to \Delta$ the corresponding Delzant polytope. We denote by
$$M_{\rm red} := M/\T$$ the space of orbits for the $\T$-action on $M$, which is a compact Hausdorff space  with respect to the quotient topology, see e.g. \cite{bredon}. The momentum map $\mu$ being $\T$-invariant descends to a continuous map $\check{\mu} : M_{\rm red} \to \Delta$. Some important ingredients in Delzant's theorem are the following

\smallskip
\noindent
{\bf Fact 1.} $\check{\mu}$ is a bijection. 

\smallskip
\noindent
{\bf Fact 2.} Delzant's proof~\cite{Delzant} shows that the pre-image $p=\mu^{-1}(x)$ of a point $x\in \Delta$ situated on an open face $F^0\subset \Delta$ of co-dimension $\ell$  has a stabilizer $\T_p$ which is a torus of dimension $\ell$.  In particular, the pre-image of a point $x\in \Delta^0$ is a principal orbit isomorphic to $\T$. We shall denote $M^0:=\mu^{-1}(\Delta^0)$ the dense open subset of $M$ consisting of points having principal orbits.  As $\T$  acts freely on $M^0$,  $\mu: M^0 \to  \Delta^0$ is a principal $\T$-bundle over $\Delta^0$.

\smallskip
Another fact coming from \cite{Delzant} is 

\smallskip
\noindent
{\bf Fact 3.} $M_{\rm red}$ and $\Delta$ both admit ``differentiable structure'' of manifolds with corners induced from the smooth structure of $M$, and $\check{\mu}$ is a diffeomorphism in this category, (\cite{KL}, Proposition C7). 

{\tiny
\begin{exercise}  Check Facts 1--3 for the symplectic manifold $(M, \omega)$ associated to a Delzant polytope $(\Delta, {\bf L}, \Lambda)$ via the construction in Section~\ref{s:Delzant-construction}.
\end{exercise}}
\smallskip
The differentiable structure on $\Delta \subset \mathfrak{t}^*$ is the one naturally  induced by restricting to $\Delta$ the smooth functions on $\mathfrak{t}^*$. To see how it is related to the smooth structure on $M$, we notice  the following
\begin{lemma}\label{l:Schwarz}{\bf (Schwarz~\cite{schwartz})} A $\T$-invariant function $f(p)$ on $M$ is smooth iff 
$$f(p)= \varphi(\mu(p))$$
for some smooth function $\varphi(x)$ on $\mathfrak{t}^*$.  
\end{lemma}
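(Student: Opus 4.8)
The plan is to reduce the statement to the classical Schwarz theorem on smooth invariant functions for a compact group action (which is assumed), after replacing the $\T$-action by the linear $\T^m$-action on $\C^m$ in suitable local coordinates. The nontrivial direction is, of course, that a smooth $\T$-invariant $f$ has the form $\varphi \circ \mu$; the converse is immediate since $\mu$ is smooth. So I would work locally near a point $p \in M$ and use the Delzant normal form: by Fact~2 and the holomorphic/symplectic slice theorem (as in the construction of Section~\ref{s:Delzant-construction} and Theorem~\ref{complex-toric}), a $\T$-invariant neighbourhood of a point whose orbit has stabilizer a torus of dimension $\ell$ looks like $\C^m$ with the standard linear $\T^m$-action, in coordinates for which $\mu$ reads (up to an affine constant) as $\tfrac12(|z_1|^2,\dots,|z_m|^2)$ composed with a fixed linear map. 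Thus the claim is local and it suffices to prove it for the standard $\T^m$-action on $\C^m$ with $\mu_{\T^m}(z) = \tfrac12(|z_1|^2,\dots,|z_m|^2)$.

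For the model case I would invoke the Schwarz theorem on invariants of compact group actions: the algebra of smooth $\T^m$-invariant functions on $\C^m$ is generated, as a $C^\infty$-module closed under composition with smooth functions, by the polynomial invariants. Here the ring of $\T^m$-invariant polynomials on $\C^m$ is exactly $\R[|z_1|^2,\dots,|z_m|^2] = \R[\rho_1,\dots,\rho_m]$ with $\rho_i = |z_i|^2$, so Schwarz's theorem gives that every smooth $\T^m$-invariant $f$ on $\C^m$ can be written $f(z) = \psi(\rho_1,\dots,\rho_m)$ for some smooth $\psi$ on $\R^m$ (indeed on the orbit space $C_m = \{x_i \ge 0\}$, extended smoothly to $\R^m$). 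Composing with the linear change of coordinates from $\mathfrak t \cong \R^m$ relating $(\rho_1,\dots,\rho_m)$ to the components of $\mu$, we get $f = \varphi \circ \mu$ locally with $\varphi$ smooth on a neighbourhood in $\mathfrak t^*$ of $\mu(p)$.

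Finally I would globalize: choose a finite cover of $M$ by such $\T$-invariant charts $U_\alpha$, obtaining local smooth functions $\varphi_\alpha$ on neighbourhoods $W_\alpha$ of $\mu(U_\alpha)$ in $\mathfrak t^*$ with $\varphi_\alpha \circ \mu = f$ on $U_\alpha$. Since $\mu : M^0 \to \Delta^0$ is surjective with dense image and the $\varphi_\alpha$ agree wherever the domains overlap along $\Delta^0$ (because $\mu$ is surjective onto the relevant open set there and $\varphi_\alpha \circ \mu$ all equal $f$), they patch to a single smooth function $\varphi$ on a neighbourhood of $\Delta$ in $\mathfrak t^*$ with $\varphi \circ \mu = f$; extending $\varphi$ arbitrarily but smoothly off that neighbourhood completes the argument. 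The main obstacle I anticipate is the careful bookkeeping in the patching step near the boundary strata of $\Delta$ — one must be sure that the local presentations $\varphi_\alpha$ genuinely agree as \emph{germs} along faces of all codimensions, which follows from the fact that each point of $\Delta$ is a limit of points of $\Delta^0$ together with continuity, but this limiting argument (and the uniqueness of $\varphi$ as a function on $\Delta$, not just on $\Delta^0$) is the point that needs the most attention; the purely local model-case input is standard once Schwarz's theorem is granted.
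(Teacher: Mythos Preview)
Your approach is essentially the same as the paper's, which does not give a proof at all but simply cites Whitney's even-function theorem for the model case $(\R^2,\omega_0,\mathbb S^1)$ and refers to \cite{Delzant}, Lemme~5, for the general reduction; your outline is exactly a fleshed-out version of that reference chase, with Schwarz's theorem \cite{schwartz} playing the role of Whitney's result in higher rank.

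One point deserves tightening. In the globalization step you worry about whether the local extensions $\varphi_\alpha$ agree \emph{as germs} on overlaps near boundary strata of $\Delta$. They need not, and you should not try to make them: the value of $\varphi_\alpha$ off $\Delta$ is not determined by $f$, so two such extensions will typically differ outside $\Delta$. What \emph{is} determined is the common restriction $\check f := \varphi_\alpha|_{\Delta\cap W_\alpha}$, since by Fact~1 the map $\check\mu: M/\T\to\Delta$ is a bijection and $f$ is constant on fibres. The clean way to globalize is therefore a partition of unity: choose smooth $\chi_\alpha$ on $\mathfrak t^*$ subordinate to $\{W_\alpha\}$ with $\sum\chi_\alpha=1$ on a neighbourhood of $\Delta$, and set $\varphi=\sum_\alpha\chi_\alpha\varphi_\alpha$. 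This is smooth, and on $\Delta$ it equals $\sum_\alpha\chi_\alpha\check f=\check f$, hence $\varphi\circ\mu=f$. No germ-matching or continuity argument along faces is needed.
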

In the case when $(M, \omega, \T)=(\R^{2}, \omega_0, {\mathbb S}^1)$,  this is  a well-know result by Whitney; in general,  Lemma~\ref{l:Schwarz} can be easily derived from (\cite{Delzant}, Lemme 5). Because of this basic  fact, we shall often write $f(x)$  for the composition $f(p) =\varphi (\mu(p))$.

\section{Toric K\"ahler metrics:  local theory}
To simplify the presentation, we fix a basis $\{e_1, \ldots, e_m\}$ of $\tor$  and  denote by $K_j=X_{e_j}$ the induced fundamental vector fields; we shall also identify $\tor^* \cong (\R^m)^*$ by using the dual basis $\{e_1^*, \ldots, e_m^*\}$ and write $x=(x_1, \ldots, x_m)$ for the elements of $\tor^*$. As explained in the previous section, on $M^0$, $K_1, \ldots, K_{m}$ are functionally independent, i.e.  for each point $p\in M^0$, $(K_1\wedge \cdots \wedge K_m)(p)\neq 0$. We shall also identify the coordinate function $x_i=\langle x, e_i\rangle$ with the momentum function $\langle \mu, e_i\rangle$ on $M$, i.e. we write
\begin{equation}\label{x-momenta}
\imath_{K_i} \omega = - dx_i.
\end{equation}

Let $(g, J)$ be a $\T$-invariant $\omega$-compatible K\"ahler structure on $M$. Letting  $$H_{ij} = g(K_i, K_j)$$ we get a $\T$-invariant smooth function on $M$, which will tacitly identify with a smooth function $H_{ij}(x)$ on $\Delta$, see {Fact 2} above. We denote by ${\bf H}_x=(H_{ij}(x))$ the corresponding symmetric matrix with smooth entries over $\Delta$.  In a more intrinsic language, we regard ${\bf H}$ as a smooth function over $\Delta$ with values  in $S^2\tor^*$ by letting
$${\bf H}_x(\xi_1, \xi_2) : = g_p(X_{\xi_1}, X_{\xi_2})$$  for any $\xi_1, \xi_2 \in \tor$ and any $p\in \mu^{-1}(x)$.

On $\Delta^0$, ${\bf H}$ is positive definite and we denote by ${\bf G} := {\bf H}^{-1}$ the inverse matrix; equivalently, ${\bf G}$ is a smooth $S^2\tor$-valued function on $\Delta^0$.

Notice that, by \eqref{x-momenta}, 
\begin{equation}\label{dx}
dx_i(JK_j)= -\omega(K_i, JK_j)=--g(JK_i, JK_j)=-g(K_i,K_j)= -H_{ij}(x).
\end{equation}

We now consider the vector fields $\{K_1, \ldots, K_{m}, JK_{1}, \ldots, JK_m\}$. They form a basis of $T_pM$ at each $p\in M^0$  (because $\{K_1, \ldots, K_m\}$ span an $m$-dimensional space and $\{JK_1, \ldots, JK_m\}$ span its $g$-orthogonal complement) and satisfy
\begin{equation}\label{commuting}
[K_i, K_j] =[K_i, JK_j]=[JK_i, JK_j]=0,
\end{equation}
where for the second identity we used that $\T$ preserves $J$,  whereas for the third identity we used the integrability of $J$, see \eqref{integrability}.
We now denote by $$\{\theta_1, \ldots, \theta_m, J\theta_1, \ldots, J\theta_m\}$$ the {\it dual basis} of $T^*M^0,$  corresponding to $$\{K_1, \ldots, K_m, JK_1, \ldots, JK_m\},$$ where for a $1$-form $\theta$ we set $J\theta(X) = - \theta(JX)$, for any  vector field $X$. The commuting relation \eqref{commuting} is equivalent to 
\begin{equation}\label{commuting1}
d\theta_i=0 = d(J\theta_i), \ \ i=1, \ldots m.
\end{equation}
As the $1$-forms $J\theta_i$ satisfy
$$\imath_{K_j} J\theta_i =0, \ \ \mathcal{L}_{K_j} J\theta_i =0, $$
in terms of the $\T$-bundle structure $\mu : M^0 \to \Delta0$,  each $J\theta_i$ is {\it basic}, i.e. $J\theta_i=\mu^*(\alpha_i)$ of a closed $1$-form $\alpha_i$ on $\Delta^0$. As $\Delta^0$ is contractible, we can write 
\begin{equation}\label{y}
J\theta_i = -dy_i
\end{equation}
for some smooth function $y_i(x),$ defined on $\Delta^0$ up to an additive constant (as usual, we omit the pull-back by $\mu$ in the notation). Furthermore, by \eqref{dx}, we find
\begin{equation}\label{y-x}
\begin{split}
-J\theta_i &=dy_i = \sum_{j=1}^m G_{ij}(x)dx_j, \\
Jdx_i &= \sum_{j=1}^m H_{ij}(x) \theta_j.
\end{split}
\end{equation}

The $1$-forms $\{\theta_1, \ldots, \theta_m\}$ define a $1$-form ${\boldsymbol \theta}$ with values $\tor = {\rm Lie}(\T)$,  by letting
\begin{equation}\label{connection}
{\boldsymbol \theta} = \sum_{i=1}^m \theta_i \otimes e_i, 
\end{equation}
whose curvature is identically zero.  The symplectic $2$-form $\omega$ then becomes 
\begin{equation} \label{omega}
\omega:= \langle d\mu \wedge {\boldsymbol \theta}\rangle=\sum_{i=1}^m dx_i \wedge \theta_i, \end{equation}
whereas the K\"ahler metric  is 
\begin{equation}\label{g}
\begin{split}
g &= \sum_{i,j=1}^m g(K_i, K_j)\Big(\theta_i \otimes \theta_j + J\theta_i \otimes J\theta_j\Big) \\
   &= \sum_{i,j=1}^m H_{ij} \Big( \theta_i \otimes \theta_j + dy_i \otimes dy_j \Big) \\
    &=\sum_{i,j=1}^m \Big( G_{ij} dx_i\otimes dx_j + H_{ij} \theta_i \otimes \theta_j \Big) \\
    &= \langle d\mu, {\bf G}, d\mu \rangle + \langle {\boldsymbol \theta}, {\bf H},  {\boldsymbol \theta}\rangle.
    \end{split}
    \end{equation}

\begin{lemma}[\bf Guillemin~\cite{guillemin}]\label{guillemin} Let $(M, \omega, \T)$ be a symplectic toric manifold with Delzant polytope $\Delta$ and $(g, J)$ an $\omega$-compatible $\T$-invariant  K\"ahler structure. Then, on $M^0$, $(g, \omega)$ can be written in the form \eqref{omega}--\eqref{g}, where $G_{ij} = \frac{\partial^2 u}{\partial x_i \partial x_j}$ for a smooth, strictly convex function $u(x)$ on $\Delta^0$, called {\bf symplectic potential} of $g$. Conversely, for any strictly-convex smooth function $u$ on $\Delta^0$, the riemannian metric on $M^0$ defined by \eqref{g} with ${\bf G} = {\rm Hess} (u),  \  {\bf H}= {\bf G}^{-1}$ and the flat connection $1$-form ${\boldsymbol \theta}$  defines an $\omega$-compatible, $\T$-invariant K\"ahler structure on $M^0$. 
\end{lemma}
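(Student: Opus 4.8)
The plan is to exploit the integrability of $J$, which so far has only been used to derive the commuting relations \eqref{commuting}. The key point is that \eqref{y-x} gives us two coordinate systems on $M^0$: the ``symplectic'' coordinates $(x_i)$ (momenta, defined globally up to the $\T$-action) and the ``holomorphic-type'' coordinates $(y_i)$, and the two are related by $dy_i = \sum_j G_{ij}(x)\,dx_j$. I would first observe that the matrix-valued function $\mathbf G(x) = (G_{ij}(x))$ on $\Delta^0$ is symmetric (being the inverse of the symmetric positive-definite matrix $\mathbf H$), and then argue that the $1$-forms $dy_i = \sum_j G_{ij}\,dx_j$ being closed (which we already know from \eqref{commuting1}, since $J\theta_i$ is closed) forces the symmetry of the partial derivatives $\frac{\partial G_{ij}}{\partial x_k} = \frac{\partial G_{ik}}{\partial x_j}$. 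This is precisely the integrability condition for the existence of a potential: since $\Delta^0$ is contractible (it is convex open), a closed $1$-form $\sum_j G_{ij}\,dx_j$ is exact, so there is a function $u_i$ with $\frac{\partial u_i}{\partial x_j} = G_{ij}$; the symmetry $\frac{\partial u_i}{\partial x_j} = G_{ij} = G_{ji} = \frac{\partial u_j}{\partial x_i}$ then says the $1$-form $\sum_i u_i\,dx_i$ is itself closed, hence exact, yielding $u$ with $\frac{\partial u}{\partial x_i} = u_i$ and therefore $\frac{\partial^2 u}{\partial x_i \partial x_j} = G_{ij}$. That $u$ is strictly convex is immediate because its Hessian is $\mathbf G = \mathbf H^{-1}$, which is positive definite on $\Delta^0$ since $g$ is a Riemannian metric and $\mathbf H = g(K_i,K_j)$ is positive definite there.

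For the converse, I would run the construction in reverse. Starting from a strictly convex smooth $u$ on $\Delta^0$, set $\mathbf G = \Hess(u)$, $\mathbf H = \mathbf G^{-1}$, introduce the (globally defined, flat) connection $1$-form $\boldsymbol\theta$ on the principal $\T$-bundle $M^0 \to \Delta^0$ with $d\theta_i = 0$ and $\imath_{K_j}\theta_i = \delta_{ij}$, and define $g$ by the formula \eqref{g}, namely $g = \langle d\mu, \mathbf G, d\mu\rangle + \langle\boldsymbol\theta, \mathbf H, \boldsymbol\theta\rangle$. I would then check three things: (1) $g$ is a Riemannian metric — positive-definiteness follows from positive-definiteness of both $\mathbf G$ and $\mathbf H$ together with the fact that $\{dx_i\}$ and $\{\theta_j\}$ span complementary pieces of $T^*M^0$; (2) $g$ is $\omega$-compatible, i.e. the endomorphism $J$ determined by $g(J\cdot,\cdot) = \omega(\cdot,\cdot)$ squares to $-\Id$ — here I would compute $J$ explicitly on the frame $\{K_i, \partial/\partial y_i\}$ (where $\partial/\partial y_i$ is dual to $dy_i = \sum_j G_{ij}\,dx_j$), find $JK_i = -\sum_j G_{ij}\,\partial/\partial y_j$-type relations coming from \eqref{y-x}, and verify $J^2 = -\Id$ using $\mathbf G \mathbf H = \Id$; (3) $J$ is integrable. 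For (3) the cleanest route is to produce explicit local holomorphic coordinates: the functions $z_k = y_k + \sqrt{-1}\,t_k$ (where $t_k$ are angular fibre coordinates with $\theta_k = dt_k$ locally, valid on a contractible patch) should be $J$-holomorphic, i.e. $J\,dz_k$ should be a $(1,0)$-combination; equivalently one checks $dz_k \circ J = \sqrt{-1}\,dz_k$ on the frame, which again reduces to the relations in \eqref{y-x}. Alternatively one verifies $N^J = 0$ directly on the commuting frame $\{K_i, JK_i\}$ using \eqref{commuting}, but the holomorphic-coordinate argument is shorter and more transparent.

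The main obstacle, and the only genuinely non-formal step, is the integrability verification in the converse direction — specifically making sure that the $J$ manufactured abstractly from $g$ and $\omega$ really is integrable and not merely almost-complex. Everything else (existence of the potential in the forward direction, positivity, $\omega$-compatibility) is either a contractibility/Poincaré-lemma argument or a finite linear-algebra computation with $\mathbf G$ and $\mathbf H = \mathbf G^{-1}$. I expect the integrability check to hinge on the identity $dy_i = \sum_j G_{ij}\,dx_j$ with $\mathbf G$ a Hessian: the fact that $\mathbf G = \Hess(u)$ is what guarantees the mixed partials match, hence that the would-be holomorphic coordinates $y_k + \sqrt{-1}\,t_k$ satisfy the Cauchy–Riemann-type conditions, and it is exactly here that the hypothesis ``$u$ is a potential'' (rather than ``$\mathbf G$ is an arbitrary positive symmetric matrix field'') is used. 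I would emphasize in the writeup that this is the precise place where convexity of $u$ enters as positivity and where the Hessian structure (as opposed to mere symmetry of $\mathbf G$) enters as integrability.
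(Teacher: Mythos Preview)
Your proposal is correct and follows essentially the same route as the paper. Two minor points worth noting: in the forward direction you introduce auxiliary primitives $u_i$ with $\partial u_i/\partial x_j = G_{ij}$, but these are already in hand---they are precisely the $y_i$ from \eqref{y-x}---so the paper skips straight to the one-form $\beta=\sum_i y_i\,dx_i$, observes $d\beta=\sum_{i,j}G_{ij}\,dx_i\wedge dx_j=0$ by symmetry of $\mathbf G$, and applies Poincar\'e once; in the converse, the paper likewise checks integrability exactly as you suggest, by noting that $-J\theta_i+\sqrt{-1}\,\theta_i = dy_i+\sqrt{-1}\,dt_i$ are closed $(1,0)$-forms (closedness of $dy_i=\sum_j u_{,ij}\,dx_j$ being precisely where the Hessian hypothesis enters), hence furnish local holomorphic coordinates.
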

\begin{proof} We shall denote  with subscript $ f_{,k}$ the partial derivative $\frac{\partial f}{\partial x_k}$  of a smooth  function $f$ on $\Delta$. Letting
$$\beta:= \sum_{i=1}^m y_i dx_i, $$ we have by \eqref{y-x} 
$$d\beta= \sum_{i=1}^m dy_i \wedge dx_i = \sum_{i,j=1}^m G_{ij}dx_i \wedge dx_j =0.$$
By the Poincar\'e Lemma on $\Delta^0$, $\beta = du$ for some smooth function $u$ on $\Delta^0$, i.e. $y_i = u_{,i}$. It follows that
\begin{equation}\label{potential}
G_{ij} = y_{i,j}= u_{, ij} = ({\rm Hess}(u))_{ij}
\end{equation}

Conversely, for the metric \eqref{g}, we let $G_{ij}=g(K_i, K_j)$ and suppose that \eqref{potential}  holds true. It follows that the $1$-forms
$$-J\theta_i= \sum_{j=1}^m G_{ij} dx_j= \sum_{j=1}^{m} u_{,ij}dx_j$$
are closed. As $\theta_i$ are closed too (by the assumption that ${\boldsymbol \theta}$ is flat), we get a basis of $\Lambda^{1,0}(M^0, J)$ of closed $(1,0)$-forms $-J\theta_i  + \sqrt{-1}\theta_i$, so writing locally  $-J\theta_i  + \sqrt{-1}\theta_i= dy_i + \sqrt{-1} dt_i$ we get holomorphic coordinates  $y_i + \sqrt{-1} t_i$ for  $J$,  i.e. $J$ is integrable.  \end{proof}

We have seen in Chapter~\ref{ch:Delzant} that the action of $\T \cong (\mathbb{S}^1)^m$ on $(M, J)$ extends to an (effective) holomorphic action of the complex torus $\T^{\C} \cong (\C^*)^m$. Fixing a point $p_0 \in M^0$, we can identify $M^0$ with  the orbit $\T^{\C}(p_0) \cong (\C^*)^m$. Using the polar coordinates $(r_i, t^0_i)$ on each $\C^*$, this identification gives rise the so-called {\it angular coordinates} 
$$\boldsymbol{t}=(t^0_1, \ldots, t^0_m) : M^0 \to \mathfrak{t}/2\pi \Lambda.$$
If another reference point is chosen, $\boldsymbol{t}$ varies by an additive constant in $\mathfrak{t}$. Writing $\boldsymbol{\theta} = d\boldsymbol{t}$, we have
\begin{defn} For a fixed $\omega$-compatible, $\T$-invariant complex structure $J$ on $(M,\omega, \T)$ (corresponding to a symplectic potential $u(x)$ on $\Delta^0$),  and a base point $p_0 \in M^0$ (giving rise to  angular coordinates ${\bf t} = \sum_{i=1}^m t_i e_i$ with respect to a basis $e=\{e_1, \ldots, e_m\}$ of $\tor$),  the functions $\{x_1, \ldots, x_m; t_1, \ldots, t_m\}$ on $\Delta^0\times \T$ are called {\it momentum-angle coordinates} associated to $(g, J)$.
\end{defn}

\section{The scalar curvature}

Recall that if $(M^{2m}, J, g, \omega)$ is a K\"ahler manifold, the riemannian metric $g$ induces a hermitian metric (still denoted by $g$) on the {\it anti-canonical} complex line bundle $K^{-1}(M) = \wedge^m(T^{1,0} M)$, where $TM \otimes \C= T^{1,0}M \oplus T^{0,1}M$ is the {\it type decomposition} of the complexified tangent bundle of $M$ into $\sqrt{-1}$ and  $-\sqrt{-1}$ eigen-spaces of  $J$. Furthermore, $K^{-1}(M)$ has a canonical holomorphic structure, induced by the holomorphic structure of $T^{1,0}M$ (for which the holomorphic vector fields are the holomorphic sections).  The {\it Ricci-form}  $\rho^g$ of $g$ is defined in terms of the curvature of the Chern connection $\nabla^g$ of $(K^{-1}(M), g)$, by $$R_{X,Y}^{\nabla^g} = \sqrt{-1} \rho_g(X,Y).$$ In the K\"ahler case, $\nabla^g$ coincides with  the induced connection on $K^{-1}(M)$ by the Levi--Civita connection of $g$,  and the $2$-form $\rho_g$ is essentially the Ricci tensor ${\rm Ric}_g$ of $g$, i.e. we have $$\rho_g (X, Y) = {\rm Ric}_g(JX, Y).$$ It thus follows that the scalar curvature $s_g:= {\rm tr}_g ({\rm Ric}_g)$ of $g$ is equivalently given by \begin{equation}\label{scal} s_g = {\rm tr}_{\omega} (\rho_g) := 2m \Big((\rho_g \wedge \omega^{m-1})/\omega^m\Big). \end{equation}

We now consider a $\T$-invariant, $\omega$-compatible K\"aher metric on a symplectic toric manifold $(M, \omega, \T)$, written on $M^0$  in the form \eqref{g} where ${\bf G} = {\rm Hess}(u)$  for a symplectic potential $u \in S(\Delta, {\bf L})$.

\begin{lemma}[\bf Abreu~\cite{abreu0}]\label{l:abreu} The Ricci form of $(g, J)$ is given by  $$\rho_g = -\frac{1}{2} \sum_{i,j,k=1} ^m H_{ij,ik} dx_k \wedge \theta_j $$ whereas the scalar curvature is  $$s_g = -\sum_{i,j=1}^m H_{ij,ij}, $$ where we recall that  for $u\in S(\Delta, {\bf L})$, $H_{ij}= ({\rm Hess}(u))^{-1}_{ij}= u^{,ij}$. \end{lemma}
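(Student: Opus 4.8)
The plan is to reduce the statement to the standard local expression for the Ricci form of a K\"ahler metric, $\rho_g=-\sqrt{-1}\,\partial\bar\partial\log\det(g_{i\bar j})$, and then to rewrite both sides in the momentum--angle coframe $\{dx_i,\theta_i\}$ on $M^0$. By (the proof of) Lemma~\ref{guillemin} the functions $z_j=y_j+\sqrt{-1}\,t_j$, with $y_j=u_{,j}$ and $\theta_j=dt_j$, are holomorphic coordinates for $J$ on $M^0$, and in them \eqref{g} reads $g=\sum_{i,j}H_{ij}\big(dy_i\otimes dy_j+dt_i\otimes dt_j\big)$, so that $\det(g_{i\bar j})$ is a constant multiple of $\det{\bf H}$. (Equivalently, in the spirit of the definition of $\rho_g$ via the Chern connection on $K^{-1}(M)$: since $\T$ preserves $J$, each $K_j$ is a real holomorphic vector field, hence $K_j-\sqrt{-1}JK_j$ is a holomorphic section of $T^{1,0}M^0$, and $s=\bigwedge_{j=1}^m\big(K_j-\sqrt{-1}JK_j\big)$ is a holomorphic frame of the anticanonical bundle with $|s|^2_g=\det\big((2H_{ij})_{i,j}\big)=2^m\det{\bf H}$, so the curvature of $(K^{-1},g)$ is $-\partial\bar\partial\log|s|^2_g$.) Either way, since $\partial\bar\partial$ annihilates additive constants and $-\sqrt{-1}\,\partial\bar\partial=-\tfrac12\,dd^c$ with $d^c:=J\circ d$ (with $J$ acting on $1$-forms by $(J\alpha)(X)=-\alpha(JX)$), one gets
\begin{equation*}
\rho_g=-\tfrac12\,dd^c\!\big(\log\det{\bf H}\big).
\end{equation*}

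The computation of $d^c\log\det{\bf H}$ is where the hypothesis ${\bf G}=\Hess(u)$ really enters. Write $F:=\log\det{\bf H}=-\log\det{\bf G}$. Using $J\,dx_k=\sum_j H_{kj}\theta_j$ from \eqref{y-x}, we have $d^cF=JdF=\sum_j\big(\sum_k F_{,k}H_{kj}\big)\theta_j$. I would then establish the algebraic identity
\begin{equation*}
\sum_k H_{jk,k}=\sum_k H_{jk}\,F_{,k},
\end{equation*}
by differentiating ${\bf H}{\bf G}=I$ to get $H_{ij,k}=-\sum_{a,b}H_{ia}\,u_{,abk}\,H_{bj}$, summing over $k$, and using the total symmetry of $u_{,abk}$ together with $F_{,a}=-\sum_{b,k}H_{bk}\,u_{,abk}$. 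Consequently $d^cF=\sum_j\big(\sum_k H_{jk,k}\big)\theta_j$.

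Since $d\theta_j=0$ by \eqref{commuting1}, applying $d$ gives $dd^cF=\sum_{j,k,l}H_{jk,kl}\,dx_l\wedge\theta_j$, hence (the comma-derivatives commute and ${\bf H}$ is symmetric, so the reindexing is harmless)
\begin{equation*}
\rho_g=-\tfrac12\sum_{i,j,k}H_{ij,ik}\,dx_k\wedge\theta_j ,
\end{equation*}
which is the first assertion. For the scalar curvature I would use \eqref{scal}: with $P_j:=\sum_k H_{jk,k}$, so that $\rho_g=-\tfrac12\sum_{j,l}P_{j,l}\,dx_l\wedge\theta_j$, and with $\omega=\sum_i dx_i\wedge\theta_i$, one checks that $dx_l\wedge\theta_j\wedge\omega^{m-1}=\tfrac1m\,\delta_{lj}\,\omega^m$; therefore $\rho_g\wedge\omega^{m-1}=-\tfrac1{2m}\big(\sum_j P_{j,j}\big)\,\omega^m$ and
\begin{equation*}
s_g=2m\cdot\frac{\rho_g\wedge\omega^{m-1}}{\omega^m}=-\sum_j P_{j,j}=-\sum_{j,k}H_{jk,kj}=-\sum_{i,j}H_{ij,ij}.
\end{equation*}

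I expect the only genuinely non-formal point to be the algebraic identity $\sum_kH_{jk,k}=\sum_kH_{jk}F_{,k}$ — it is precisely the total symmetry of $u_{,abk}$, i.e.\ the fact (established in Lemma~\ref{guillemin}) that ${\bf G}$ is the Hessian of a potential rather than an arbitrary symmetric positive matrix, that makes the two contractions coincide — together with the bookkeeping of the various factors $\tfrac12$ and the signs relating $\rho_g$, $dd^c$, $\partial\bar\partial$ and the normalisation in \eqref{scal}. Once these are settled, the rest is a single exterior differentiation, trivialised by the flatness $d\theta_i=0$ of the angular coframe, plus one linear-algebra contraction.
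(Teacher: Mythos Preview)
Your proof is correct and follows essentially the same route as the paper: both start from $\rho_g=-\tfrac12\,dd^c\log\det{\bf H}$ (the paper obtains this via the holomorphic frame $\bigwedge_j(K_j-\sqrt{-1}JK_j)$ of $K^{-1}(M)$, which you also mention), then compute $d^c\log\det{\bf H}=\sum_{j,\ell}H_{j\ell,j}\theta_\ell$ using $Jdx_k=\sum_\ell H_{k\ell}\theta_\ell$ together with the total symmetry of $u_{,abk}$, and finish with the trace against $\omega$. The only cosmetic difference is that you isolate the identity $\sum_k H_{jk,k}=\sum_k H_{jk}F_{,k}$ explicitly, whereas the paper performs the equivalent index manipulation inline as a chain of equalities; your contraction $dx_l\wedge\theta_j\wedge\omega^{m-1}=\tfrac1m\delta_{lj}\omega^m$ for the scalar curvature is exactly what the paper's one-line ``follows from \eqref{scal}'' amounts to.
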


\begin{proof} As  the fundamental vector fields $K_i$ preserve $J$, 
$$\sigma:= (K_1-\sqrt{-1} JK_1) \wedge \cdots \wedge (K_m- \sqrt{-1}JK_m)$$
 is a holomorphic section on $K^{-1}(M)$ which does not vanish on $M^0$. It is a well-known fact (see e.g. \cite{Wells}) that the (Chern) Ricci form is then given on $M^0$ by
$$\rho_g= - \frac{1}{2} dd^c \log |\sigma|_g^2.$$
In our case,  $|\sigma|_g^2 = 2^m {\rm det} (g(K_i, K_j)) = {\rm det} \ {\bf H}$, so we obtain
\begin{equation*}
\begin{split}
d^c \log \det {\bf H} &= {\rm tr} \Big({\bf H}^{-1} d^c {\bf H}\Big) = \sum_{i,j,k}  G_{ij} H_{ij,k} J dx_k \\
                                   &=\sum_{i,j,k,\ell} G_{ij} H_{ij,k} H_{k\ell} \theta_{\ell} \\
                                   &=-\sum_{i,j,k,\ell} G_{ij,k} H_{ij} H_{k\ell} \theta_{\ell}  \\
                                   &=-\sum_{i,j,k,\ell} G_{ik,j} H_{ij} H_{k\ell} \theta_{\ell}  \\
                                  &=  \sum_{i,j,k,\ell} G_{ik} H_{ij} H_{k\ell,j} \theta_{\ell}  \\
                                  &= \sum_{j, \ell} H_{j\ell, j} \theta_{\ell}.
                                  \end{split}
                                  \end{equation*}
We then compute                                 
\begin{equation}\label{ricci}
\begin{split}
\rho_g &= - \frac{1}{2} dd^c \log \det {\bf H} \\
             &=  -\frac{1}{2} \sum_{i,j,k} H_{ij,ik} dx_k \wedge \theta_j.
             \end{split}
             \end{equation}
The formula for $s_g$ follows  from \eqref{scal}, by using that $\omega = \sum_{i} dx_i \wedge \theta_i$ and \eqref{ricci}. \end{proof}

\section{Symplectic versus complex: the Legendre transform} \label{s:complex} We now turn our attention to the function $y_j = u_{,j}$ on $M^0$ introduced in \eqref{y-x}. For a choice of angular coordinates  $\theta_j = dt_j$  we have $-J\theta_j + \sqrt{-1} \theta_j=  dy_j + \sqrt{-1} dt_j$ and $y_j + \sqrt{-1}t_j$ define  $J$-holomorphic coordinates on $M^0$; exponentiating, we obtain another set of holomorphic coordinates $z_j := e^{y_j} e^{\sqrt{-1}t_j}$ on $M^0$. 

Now, choosing $\{e_1, \ldots, e_m\}$ to be a basis of $\Lambda$, and considering the action of the flows of $\{K_1, \ldots, K_m; JK_1, \ldots, JK_m\}$ around the reference point $p_u \in M^0$  corresponding to  the unique minima $x_u \in \Delta^0$ of $u$ under the momentum map and $t_j(p_0)=0$,  we see that $(z_1, \ldots, z_{m})$ provide an $\T^{\C}$ equivariant  identification
\begin{equation}\label{complex-identification}  \Phi_{p_u, u,e} : (\C^*)^m\cdot p_u  \cong (M^0, J),
\end{equation}
A subtle point in the construction  is that $\Phi_{p_u,u,e}$  depends upon the choice of $u$ {\it and} the basis $e=\{e_1, \ldots, e_m\}$ of $\mathfrak{t}$.

\begin{defn} [\bf Legendre transform] Let $u$ be a strictly convex smooth function on $\Delta^0$.  Letting $y_j(x) := u_{,j}(x)$ and 
$${y}(x)  : = \sum_{i=1}^m y_i(x) e_i = (du)_x \in (T_x \Delta^0)^* \cong (\tor^*)^*=\tor$$  be the map from $\Delta^0$ to $\tor$,  we define 
the {\it Legendre transform} of $u(x)$ to be the function  $\phi(y)=\phi(y_1, \ldots, y_m)$ satisfying
\begin{equation}\label{legendre-transform}
\phi(y(x)) + u(x)= \langle y(x), x\rangle, 
\end{equation}
where ${x}= \sum_{i=1}^{m} x_i e_i^*$ is seen as a smooth function from $\Delta^0$ to $\tor^*$.
\end{defn}

\begin{lemma}[\bf Guillemin~\cite{guillemin}] \label{symplectic-kahler potential} Let $(g,J)$ be an $\omega$-compatible, $\T$-invariant K\"ahler structure on $(M, \omega, \T)$ with symplectic potential $u(x)$ defined on $\Delta^0$. Then, the Legendre transform $\phi(y)$ of $u(x)$ is a K\"ahler potential of the symplectic form $\omega$ on $M^0$, i.e. satisfies
$$\omega = dd^c \phi,$$
where $d^c\phi= J d\phi$.
\end{lemma}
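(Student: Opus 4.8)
The plan is to verify the identity $\omega=dd^{c}\phi$ by a direct computation in the momentum--angle coordinates $(x_1,\dots,x_m;t_1,\dots,t_m)$ on $M^0$, leaning on the three facts recorded in the proof of Lemma~\ref{guillemin}: that $\theta_i=dt_i$, the relation $-J\theta_i=dy_i$ of \eqref{y}, and the flatness $d\theta_i=0$ of \eqref{commuting1}.

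First I would extract from the Legendre duality \eqref{legendre-transform} the elementary identity $\phi_{,j}(y)=x_j$. Since $u$ is strictly convex, the map $x\mapsto y(x)=(du)_x$ is a diffeomorphism of $\Delta^0$ onto an open convex subset of $\tor$, so one may regard $x=x(y)$ as its inverse and differentiate $\phi(y)+u(x(y))=\langle y,x(y)\rangle$ with respect to $y_j$. The chain-rule contributions $\sum_i y_i\,\partial x_i/\partial y_j$ and $\sum_i u_{,i}\,\partial x_i/\partial y_j$ cancel precisely because $u_{,i}=y_i$, leaving $\phi_{,j}(y)=x_j$. Regarding $\phi$ as the $\T$-invariant function $p\mapsto\phi(y(\mu(p)))$ on $M^0$ (well defined up to an additive constant, which is harmless for what follows), this says
\[
d\phi=\sum_{j=1}^{m}x_j\,dy_j.
\]

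Next I would apply $J$. From $-J\theta_i=dy_i$ together with $J^2=-\mathrm{Id}$ on $1$-forms (itself a consequence of the sign convention $J\alpha(X)=-\alpha(JX)$) one gets $J\,dy_i=\theta_i$; alternatively this follows by comparing $J\,dx_i=\sum_j H_{ij}\theta_j$ from \eqref{y-x} with $dx_i=\sum_j H_{ij}\,dy_j$ and inverting the matrix ${\bf H}$. Since $J$ acts tensorially on forms, $d^{c}\phi=J\,d\phi=\sum_{j=1}^{m}x_j\,\theta_j$, and then, using $d\theta_j=0$ and the expression $\omega=\sum_j dx_j\wedge\theta_j$ of \eqref{omega},
\[
dd^{c}\phi=\sum_{j=1}^{m}dx_j\wedge\theta_j+\sum_{j=1}^{m}x_j\,d\theta_j=\sum_{j=1}^{m}dx_j\wedge\theta_j=\omega,
\]
as claimed.

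I do not anticipate a genuine obstacle: the entire content is the two identities $\phi_{,j}=x_j$ and $J\,dy_j=\theta_j$. The only places demanding care are the bookkeeping in the Legendre step --- treating $x$ and $y$ as mutually inverse maps so that the off-diagonal derivatives disappear thanks to $y_i=u_{,i}$ --- and keeping the sign conventions relating $J$, $\theta_i$ and $dy_i$ straight when computing $d^{c}\phi$; once these are in hand, the conclusion is a two-line differentiation.
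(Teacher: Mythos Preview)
Your proof is correct and essentially the same as the paper's: both compute $d^c\phi=\sum_j x_j\,\theta_j$ and then differentiate using $d\theta_j=0$. The only cosmetic difference is that the paper expresses $d\phi$ in the $x$-variables as $\sum_{i,j}x_iG_{ij}\,dx_j$ and then applies $Jdx_j=\sum_k H_{jk}\theta_k$ (so the matrices $G$ and $H$ cancel), whereas you work in the $y$-variables via the standard Legendre identity $\phi_{,j}=x_j$ and $J\,dy_j=\theta_j$; these are the same computation written on the two sides of the Legendre transform.
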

\begin{proof} By its very definition,
\begin{equation*}
\phi(y(x)) = \sum_{i=1}^m y_i x_i - u(x) = \sum_{i=1}^m x_i u_{,i} - u(x),
                 \end{equation*}
                 so that
 \begin{equation*}
d\phi=  \sum_{i=1}^m \Big(d(x_i u_{,i}) - u_{,i} dx_i \Big)= \sum_{i,j=1}^m x_i u_{, ij} dx_j=  \sum_{i,j=1}^m x_i G_{ij} dx_j,
                 \end{equation*}
\begin{equation*}
\begin{split}
dd^c \phi = d J d\phi  &=   d\sum_{i,j=1}^m x_i G_{ij} J dx_j \\
                                       &=  d\sum_{i,j,k=1}^m x_i G_{ij} H_{jk} \theta_k \\
                                       &= \sum_{i=1}^m dx_i \wedge \theta_i = \omega
                                       \end{split}
                 \end{equation*}
                 \end{proof}

\section{The canonical toric K\"ahler metric} We now compute the symplectic potential of the K\"ahler metric $(g_0, J_0)$ induced  by the flat K\"ahler structure $(\tilde g_0, \tilde J_0)$ on $\C^d$ via the Delzant construction, see Section~\ref{s:Delzant-construction}.  We use the general form \eqref{g} of a $\T$-invariant, $\omega$-compatible K\"ahler structure and adopt the following
\begin{defn} The reduced metric  $g_{\rm red}$ associated to $g$ written as \eqref{g} on the fibration $\mu : M^0 \to \Delta^0$ is 
$$g_{\rm red}= \sum_{i,j=1} ^mG_{ij} dx_i \otimes dx_j.$$
Geometrically, $g_{\rm red}$  is the unique metric on $\Delta^0$ such that $$\mu: (M^0, g) \to \Delta^0$$ is a riemannian submersion.
\end{defn}
 \begin{lemma}[\bf Calderbank--David--Gauduchon~\cite{CDG}]\label{l:elementary} In the setting of Proposition~\ref{kahler-reduction} suppose,  moreover,  that $G$ acts freely on $\tilde M$ and $G/N$ acts freely on $M$. Denote respectively  by ${\tilde g}_{\rm red}$ the reduced metric on $\tilde M \cong {\rm Im} \mu_g  \subset \mathfrak{g}^*$ and by $g_{\rm red}$ the reduced metric on $M/(G/N) \cong {\rm Im} \mu \subset (\mathfrak{g}/\mathfrak{n})^*$. Then, 
 $$g_{\rm red} = \ell^* {\tilde g}_{\rm red},$$
 see \eqref{diag}.
 \end{lemma}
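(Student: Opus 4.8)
The plan is to work entirely in momentum-angle coordinates on the principal parts and trace through the Delzant construction, where $\tilde{M} = \C^d$ carries the flat Kähler structure. First I would set up notation: on $(\C^d)^0 = (\C^*)^d$ with polar coordinates $z_k = r_k e^{\sqrt{-1}\varphi_k}$, the flat symplectic form is $\tilde\omega = \sum_k \tilde x_k\, d\varphi_k \wedge \ldots$ — more precisely $\tilde\omega = \sum_k d\tilde x_k \wedge d\varphi_k$ with $\tilde x_k = \tfrac12 r_k^2$, and the flat metric is $\tilde g_0 = \sum_k (dr_k^2 + r_k^2 d\varphi_k^2)$. Rewriting in the momenta $\tilde x_k$ gives $dr_k = \tfrac{1}{\sqrt{2\tilde x_k}} d\tilde x_k$, hence $\tilde g_0 = \sum_k \big(\tfrac{1}{2\tilde x_k} d\tilde x_k^2 + 2\tilde x_k\, d\varphi_k^2\big)$. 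Comparing with the general form \eqref{g}, the reduced metric on $\C^d$ is the diagonal metric $\tilde g_{\rm red} = \sum_{k=1}^d \tfrac{1}{2\tilde x_k}\, d\tilde x_k \otimes d\tilde x_k$ on the positive octant $C_d^0 \subset (\R^d)^*$, i.e. it has $\tilde{\bf G} = {\rm Hess}(\tilde u)$ with the canonical symplectic potential $\tilde u(\tilde x) = \tfrac12 \sum_k \tilde x_k \log \tilde x_k$ (up to affine terms).

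Next I would invoke the geometry of the Kähler reduction. Recall from Proposition~\ref{kahler-reduction} that $S = \mu_N^{-1}(c) \subset \tilde M$ with the metric $\tilde g$ restricted to the horizontal distribution $H \subset TS$ descends to $g$ on $M = S/N$, and that $\pi : (S,\tilde g) \to M$ is a riemannian submersion. The point is to compare the \emph{two} submersions: $\mu_g : \tilde M \to {\rm Im}\,\mu_g \subset \mathfrak{g}^*$ (with fibres the $G$-orbits, here $\T^d$-orbits) and $\mu : M \to {\rm Im}\,\mu \subset (\mathfrak{g}/\mathfrak{n})^*$ (with fibres the $(G/N)$-orbits). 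Since $G$ acts freely on $\tilde M$ and $G/N$ on $M$, both reduced metrics are genuine riemannian metrics on open subsets of the respective momentum polytopes, characterized by being the quotient metrics of $\tilde g$ resp. $g$. I would then observe that the composite submersion $\tilde M \to M \to {\rm Im}\,\mu$ has fibres the full $G$-orbits (combining the $N$-fibres of $\tilde M \to S$... more carefully: a $G$-orbit in $\tilde M$ meets $S$, and its image in $M$ is a $(G/N)$-orbit, whose image in ${\rm Im}\,\mu$ is a point). Via the diagram \eqref{diag}, the momentum image of $M$ sits inside $\mathfrak{g}^*$ as the affine slice $\ell\big({\rm Im}\,\mu\big) = (\mathfrak{g}/\mathfrak{n})^* + \tilde c \subset \mathfrak{g}^*$, which is exactly $({\rm Im}\,\mu_g) \cap \{\text{the affine subspace } \mathfrak{n}_0 + \tilde c\}$ — i.e. $S$ maps under $\mu_g$ onto this slice (this is the content of Claim 3 and equation \eqref{transpose} in Section~\ref{s:Delzant-construction}).

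The key computation is then local and linear-algebraic. The reduced metric $\tilde g_{\rm red}$ on ${\rm Im}\,\mu_g$ is obtained by restricting $\tilde g$ to the $\tilde g$-orthogonal complement of the $G$-orbit directions and pushing down; the reduced metric $g_{\rm red}$ is obtained analogously from $g$ on $M$. Because $\tilde g$ on $H \subset TS$ is what defines $g$, and because the horizontal space for $\pi : S \to M$ together with the $N$-orbit directions together with $\tilde J$(those) span $T\tilde M$ orthogonally (the decomposition $T_p\tilde M = V_p \oplus \tilde J V_p \oplus H_p$ established in the proof of Proposition~\ref{kahler-reduction}), one checks that the $g_{\rm red}$-distance data on ${\rm Im}\,\mu \cong$ the affine slice coincides with the \emph{restriction} of the quadratic form $\tilde g_{\rm red}$ to the tangent directions of that slice. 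Concretely: the tangent space to the slice $\ell({\rm Im}\,\mu)$ at a point is $j\big((\mathfrak{g}/\mathfrak{n})^*\big) = \mathfrak{n}_0 \subset \mathfrak{g}^*$, and $\tilde g_{\rm red}$ restricted to $\mathfrak{n}_0$, pulled back via the isomorphism $j$, gives exactly $g_{\rm red}$; since $\ell = j + \tilde c$ differs from $j$ only by a constant translation, $d\ell = dj$ and we get $g_{\rm red} = \ell^* \tilde g_{\rm red}$. The one thing that genuinely needs verification — and I expect this to be \emph{the main obstacle} — is the claim that taking the quotient metric commutes with restricting to the affine slice, i.e. that the horizontal space $H_p \subset T_p S$ for $\pi$ maps isomorphically (via $d\mu_g$) onto the tangent space of the slice $\ell({\rm Im}\,\mu)$ and that $d\mu_g$ restricted there is an isometry from $(H_p, \tilde g)$ onto $(T\,\text{slice}, \tilde g_{\rm red})$. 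This follows because: (a) $d\mu_g$ kills exactly the $G$-orbit directions $\mathfrak{g}\cdot p \subset T_p\tilde M$, which by Kähler reduction equals $V_p \oplus \tilde J V_p \oplus (\text{the }(G/N)\text{-orbit directions and their }\tilde J)$... I would need to isolate the orthogonal decomposition $T_p \tilde M = (\mathfrak{g}\cdot p) \oplus \tilde J(\mathfrak g \cdot p) \oplus \widetilde{H}_p$ where $\widetilde H_p$ is the lift of $T({\rm Im}\,\mu)$, and note $H_p = \tilde J(\mathfrak{n}\cdot p) \oplus \widetilde H_p$ while the slice tangent space corresponds to $\widetilde H_p \oplus \tilde J(\mathfrak n \cdot p)$-component — and checking these match up requires carefully using $\tilde J$-invariance of the decompositions and the fact that $\mu_g$ is a moment map (so $d\mu_g$ and $\tilde\omega$, hence $\tilde g$ via $\tilde J$, are compatibly related). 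Once that orthogonality bookkeeping is done, the identification $g_{\rm red} = \ell^* \tilde g_{\rm red}$ is immediate, and specializing to $\tilde g_{\rm red} = \sum_k \tfrac{1}{2\tilde x_k} d\tilde x_k^2$ recovers the explicit canonical symplectic potential on $\Delta$ as $u = \ell^*\tilde u$, i.e. $u(x) = \tfrac12 \sum_{j=1}^d L_j(x) \log L_j(x)$ up to an affine function.
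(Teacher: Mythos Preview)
Your proposal conflates the lemma with its application. The explicit computation of the flat reduced metric $\tilde g_{\rm red}=\sum_k \tfrac{1}{2\tilde x_k}\,d\tilde x_k\otimes d\tilde x_k$ on $C_d^0$ is a separate statement (it is Example~\ref{flat-example} in the paper), and the identification $u(x)=\tfrac12\sum_j L_j\log L_j$ is Theorem~\ref{thm:guillemin}. The lemma itself is a general fact about the two reduced metrics under K\"ahler reduction, and your actual argument for it occupies only the last third of the plan.

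That argument contains a genuine error. You claim that the $\pi$-horizontal space $H_p=(T_pN(p))^{\perp}\cap T_pS$ maps \emph{isomorphically} under $d\mu_G$ onto the tangent space of the slice $\ell({\rm Im}\,\mu)$. But $\dim H_p=2\dim(G/N)$ while the slice tangent space has dimension $\dim(G/N)$; indeed $H_p$ still contains the $(G/N)$-orbit directions, which lie in $\ker d\mu_G$. Your subsequent decompositions (e.g.\ $H_p=\tilde J(\mathfrak n\cdot p)\oplus \widetilde H_p$, or $\mathfrak g\cdot p=V_p\oplus \tilde J V_p\oplus\cdots$) do not have consistent dimensions and cannot be made to work as written: $\tilde J V_p$ is not tangent to the $G$-orbit, so it cannot be a summand of $\mathfrak g\cdot p$.

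The paper's proof avoids all of this by choosing the right subspace from the start: decompose $T_pS=T_pG(p)\oplus (T_pG(p))^{\perp_S}$, the orthogonal complement of the \emph{full} $G$-orbit inside $T_pS$. Any $\tilde X$ in this complement is simultaneously (i) orthogonal to $T_pG(p)$ inside $T_p\tilde M$ (since $T_pG(p)\subset T_pS$), so $|\tilde X|_{\tilde g}=|(\mu_G)_*\tilde X|_{\tilde g_{\rm red}}$; (ii) orthogonal to $V_p=T_pN(p)\subset T_pG(p)$, so $|\pi_*\tilde X|_g=|\tilde X|_{\tilde g}$; and (iii) its image $X=\pi_*\tilde X$ is orthogonal to $T_q(G/N)(q)=\pi_*(T_pG(p))$, so $|\mu_*X|_{g_{\rm red}}=|X|_g$. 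Chaining these with the commutative diagram~\eqref{diag} gives $|\mu_*X|_{g_{\rm red}}=|\ell_*\mu_*X|_{\tilde g_{\rm red}}$, which is the claim. No use of $\tilde J$ or the K\"ahler structure is needed beyond what already went into constructing $g$.
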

\begin{proof} Let $q \in M$ and $p\in S$  with $q=\pi(p)$. We decompose,  orthogonally with respect to $\tilde g_p$, 
$T_pS= T_pG(p) \oplus  (T_pG(p))^{\perp}$ and observe that, by the definition of reduced metric, for any $\tilde X \in (T_pG(p))^{\perp}$,  $|\tilde X|_{\tilde g} = |(\mu_G)_* \tilde X|_{{\tilde g}_{\rm red}}$. Let $X:=\pi_* (\tilde X)$. We have, by the definition of $g$,  $|X|_g= |\tilde X|_{\tilde g}$ and as $X$ is orthogonal  to the tangent space at $q$ of $G/N(q)$, we deduce $|\mu_*(X)|_{g_{\rm red}} = |(\mu_G)_* \tilde X|_{{\tilde g}_{\rm red}}$. The claim follows. \end{proof}
We turn again to Example~\ref{flat-hamiltonian-action}.
\begin{ex}[\bf The symplectic potential of the flat K\"ahler structure]\label{flat-example} Let $(\C^d, \tilde g_0, \tilde J_0, \tilde \omega_0)$ be the flat K\"ahler structure of $\C^d$. Then, the reduced metric is written on the interior of the cone $C_d^0=\{\tilde x_i >0\}$  as 
$$(\tilde g_0)_{\rm red}= \frac{1}{2}\sum_{i=1}^d \frac{d\tilde x_i\otimes d\tilde x_i}{\tilde x_i}.$$
To see this, recall that the flat metric $\tilde g_0$ on $\R^{2d}$ can be written in polar coordinates $(r_i, t_i), i=1, \ldots, d$ as
$$\tilde g_0=\sum_{i=1}^d dr_i^2 + r_i^2 dt_i^2.$$
We have already observed in Example~\ref{flat-hamiltonian-action} that the momentum coordinates are $\tilde x_i= r_i^2/2$. 
\end{ex}
\begin{thm}[\bf Guillemin~\cite{guillemin}]\label{thm:guillemin} The symplectic potential of the induced K\"ahler structure $(g_0, J_0)$ via the Delzant construction is 
$$u_0 (x)= \frac{1}{2} \sum_{j=1}^d L_j \log L_j.$$
\end{thm}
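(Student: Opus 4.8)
The plan is to identify the symplectic potential through the \emph{reduced metric} $g_{\rm red}$ on $\Delta^0$, exploiting that $(M,\omega,g_0,J_0)$ is the Kähler quotient of the flat structure $(\C^d,\tilde\omega_0,\tilde g_0,\tilde J_0)$ (Section~\ref{s:Delzant-construction}) and that the reduced metric transforms functorially under symplectic reduction. Concretely, applying Lemma~\ref{l:elementary} over the dense open subsets on which the torus actions are free — i.e. over $(\C^*)^d\subset\C^d$ and over $M^0$ — the reduced metric on $M^0$ is the pull-back $g_{\rm red}=\ell^*(\tilde g_0)_{\rm red}$ of the reduced metric of the flat structure by the affine map $\ell(x)=\tau^*(x)+\lambda$ from $\mathfrak{t}^*\supset\Delta$ into $(\R^d)^*\cong\mathfrak{g}^*$ from \eqref{diag}. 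By the computation \eqref{transpose}, the $j$-th euclidean coordinate $\tilde x_j$ on $(\R^d)^*$ pulls back under $\ell$ to the affine-linear function $L_j(x)$.

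By Example~\ref{flat-example}, $(\tilde g_0)_{\rm red}=\tfrac12\sum_{j=1}^d \tilde x_j^{-1}\, d\tilde x_j\otimes d\tilde x_j$ on $C_d^0$, hence
\begin{equation*}
g_{\rm red}=\frac12\sum_{j=1}^d \frac{dL_j\otimes dL_j}{L_j}\qquad\text{on }\Delta^0 .
\end{equation*}
On the other hand, by \eqref{g} the reduced metric of $(g_0,J_0)$ is $g_{\rm red}=\sum_{i,k}G_{ik}\,dx_i\otimes dx_k$, and by Lemma~\ref{guillemin} one has ${\bf G}=\mathrm{Hess}(u_0)$ for the symplectic potential $u_0$. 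So the theorem reduces to checking that the candidate $u_0(x)=\tfrac12\sum_{j=1}^d L_j\log L_j$ satisfies $\mathrm{Hess}(u_0)=\tfrac12\sum_{j=1}^d L_j^{-1}\, dL_j\otimes dL_j$.

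This last point is a short direct computation. Writing $L_j(x)=\langle u_j,x\rangle+\lambda_j$, so that $\partial L_j/\partial x_i=(u_j)_i$ is constant, one gets $\partial u_0/\partial x_i=\tfrac12\sum_j (u_j)_i(\log L_j+1)$ and therefore $\partial^2 u_0/\partial x_i\partial x_k=\tfrac12\sum_j (u_j)_i(u_j)_k/L_j$, which is exactly the required expression. Moreover $u_0$ is smooth on $\Delta^0$ (all $L_j>0$ there) and strictly convex, since at each $x\in\Delta^0$ the quadratic form $\sum_j L_j(x)^{-1}\,u_j\otimes u_j$ is positive definite: the annihilator of $\{u_1,\dots,u_d\}$ is trivial because these normals span $V$ by condition (ii) of Definition~\ref{d:Delzant-polytope}. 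As Lemma~\ref{guillemin} shows the Kähler structure on $M^0$ determines its symplectic potential up to an affine function, and two potentials with equal Hessians differ by an affine function, $u_0$ is a symplectic potential of $(g_0,J_0)$.

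The main point requiring care is that Lemma~\ref{l:elementary} is stated under the hypothesis that $G$ acts freely on $\tilde M$ and $G/N$ on $M$, which fails globally here along the boundary strata (neither $\T^d$ on $\C^d$ nor $\T$ on $M$ is free there). This is circumvented by restricting the entire reduction picture to the $\T^d$-invariant dense open subset $(\C^*)^d\subset\C^d$, whose intersection with the level set $\mu_N^{-1}(\imath^*\lambda)$ descends onto $M^0$; on these open sets both actions are free and Lemma~\ref{l:elementary} applies verbatim, and the identity of reduced metrics extends by continuity. A secondary bookkeeping point is the factor $\tfrac12$ coming from the normalization $\mu_{\T^d}(z)=\tfrac12(|z_1|^2,\dots,|z_d|^2)$, which is already built into Example~\ref{flat-example} and \eqref{transpose}.
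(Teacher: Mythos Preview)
Your proof is correct and follows exactly the paper's own approach: pull back the flat reduced metric via Lemma~\ref{l:elementary} and Example~\ref{flat-example}, using that $\ell$ sends $x$ to $(L_1(x),\ldots,L_d(x))$, then read off the symplectic potential from the resulting expression for $g_{\rm red}$. Your version is more detailed than the paper's (which leaves the Hessian verification and the freeness caveat implicit), but the strategy is identical.
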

\begin{proof} We have already observed in Section~\ref{s:Delzant-construction}  that the map $\ell=j + \tilde c$ in \eqref{diag} is given by $y_j = L_j(x)$. It follows from Lemma~\ref{l:elementary} and Example~\ref{flat-example} above  that
$$g_{\rm red} = \frac{1}{2} \sum_{j=1}^d \frac{dL_j \otimes dL_j}{L_j}.$$
The claimed expression for the symplectic  potential follows easily. \end{proof}              

\section{Toric K\"ahler metrics: compactification}
We now turn to global questions. To this end we suppose that $(g_0, J_0)$ is a globally defined,  $\T$-invariant, $\omega$-compatible K\"ahler metric on a {\it compact} toric symplectic manifold $(M,\omega)$. For instance,  we can take the metric provided by Corollary~\ref{kahler-natural}. We denote by ${\bf G}_0={\rm Hess}(u_0)$ and $\boldsymbol{\theta}_0$ the hessian of the symplectic potential and the angular coordinates of this fixed reference metric. We can take for instance $u_0$  and $\boldsymbol{\theta}_0$ to be symplectic potential  and connection $1$-form of the Guillemin K\"ahler metric, see Theorem~\ref{thm:guillemin}.

\begin{lemma} \label{abreu1}Suppose $g$ is  an invariant K\"ahler structure,  given on $M^0$ by \eqref{g},  where $\boldsymbol{\theta}=\boldsymbol{\theta}_0$ are the angular coordinates  of $g_0$. Then $g$ extends to a K\"ahler structure on $M$ provided that 
\begin{equation}\label{a1}
\ \ \ \  \ \ \ {\bf G} - {\bf G}_0 \ \ is \  \ smooth \ \ on  \ \ \Delta, 
\end{equation}
\begin{equation}\label{a2}
{\bf G}_0 {\bf G}^{-1} {\bf G}_0 - {\bf G}_0 \ \ is \  \ smooth \ \ on  \ \ \Delta
\end{equation}
\end{lemma}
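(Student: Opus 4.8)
The plan is to reduce the global extension question to a statement about the reference metric $g_0$, whose smooth extension across the corners of $\Delta$ we already know (it is the Guillemin metric from Theorem~\ref{thm:guillemin}, hence comes from the Delzant construction and is globally K\"ahler). The strategy is: since $g$ and $g_0$ share the same connection one-form $\boldsymbol{\theta}=\boldsymbol{\theta}_0$ and the same momentum-angle setup on $M^0$, the difference between $g$ and $g_0$ is entirely encoded by the difference between ${\bf G}$ and ${\bf G}_0$ and between ${\bf H}={\bf G}^{-1}$ and ${\bf H}_0={\bf G}_0^{-1}$ through the formula \eqref{g}, namely
\begin{equation*}
g - g_0 = \langle d\mu, ({\bf G}-{\bf G}_0), d\mu\rangle + \langle \boldsymbol{\theta}, ({\bf H}-{\bf H}_0), \boldsymbol{\theta}\rangle.
\end{equation*}
So the first step is to express the two tensorial pieces $\langle d\mu, \cdot, d\mu\rangle$ (living on the base) and $\langle\boldsymbol{\theta},\cdot,\boldsymbol{\theta}\rangle$ (living along the fibre) in terms of objects whose smoothness on $M$ we can already test via Schwarz's Lemma~\ref{l:Schwarz} and via the known good behaviour of $g_0$.

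The key observation to carry out next is that the fibre term is the delicate one near the boundary: as $x$ approaches a facet, the corresponding circle in $\T$ degenerates, $\boldsymbol\theta$ blows up, and ${\bf H}$ must degenerate in a precisely matching way for $g$ to stay smooth — this is exactly the content of Guillemin's boundary conditions. Since $g_0$ already extends smoothly, the tensor $\langle\boldsymbol\theta,{\bf H}_0,\boldsymbol\theta\rangle$ extends smoothly to $M$. Hence $\langle\boldsymbol\theta,{\bf H}-{\bf H}_0,\boldsymbol\theta\rangle$ extends smoothly to $M$ provided ${\bf H}-{\bf H}_0$, when paired against $\boldsymbol\theta\otimes\boldsymbol\theta$, produces a smooth tensor; the natural sufficient condition is that ${\bf H}-{\bf H}_0$, transported by ${\bf G}_0$ on both sides into the "base" picture, be smooth on $\Delta$. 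Computing ${\bf G}_0({\bf H}-{\bf H}_0){\bf G}_0 = {\bf G}_0{\bf H}{\bf G}_0 - {\bf G}_0 = {\bf G}_0{\bf G}^{-1}{\bf G}_0 - {\bf G}_0$ gives precisely hypothesis \eqref{a2}. Dually, the base term $\langle d\mu,{\bf G}-{\bf G}_0,d\mu\rangle$ is a $\T$-invariant symmetric tensor built from $dx_i\otimes dx_j$ with coefficients $G_{ij}-(G_0)_{ij}$; since the $x_i=\langle\mu,e_i\rangle$ are globally smooth $\T$-invariant functions on $M$, Lemma~\ref{l:Schwarz} shows this tensor is smooth on $M$ exactly when each $G_{ij}-(G_0)_{ij}$ is a smooth function on $\Delta$, which is hypothesis \eqref{a1}.

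Assembling: under \eqref{a1} and \eqref{a2}, the tensor $g-g_0$ extends smoothly to all of $M$; adding back $g_0$, which extends by construction, gives a smooth symmetric tensor $g$ on $M$. It remains to check that this extension is still a Riemannian metric (positive definite) and still K\"ahler for the same $\omega$ and an integrable $J$. Positive-definiteness on $M^0$ is automatic from $u$ being strictly convex there; along the boundary one argues that the extended tensor is non-degenerate because its restriction to each boundary stratum, by the matching with $g_0$'s stratified structure, is again non-degenerate — here one uses that the conditions are symmetric enough to also control ${\bf H}$ and ${\bf G}$ individually along faces via $g_0$. Compatibility with $\omega$ and integrability of $J$ pass to the closure by continuity since they hold on the dense set $M^0$ (by Lemma~\ref{guillemin}) and are closed conditions. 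The main obstacle I expect is the positivity/non-degeneracy of the extended metric transverse to the boundary strata: the algebraic conditions \eqref{a1}--\eqref{a2} are tailored so that the two mutually dual pieces of $g$ (which individually blow up or vanish at the boundary) recombine into something finite and non-degenerate, and verifying this requires a careful stratum-by-stratum comparison with the known local normal form of $g_0$ near a corner, rather than a one-line argument.
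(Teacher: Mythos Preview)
Your overall strategy matches the paper's: write $g-g_0$ in momentum--angle form, show each piece extends smoothly under \eqref{a1}--\eqref{a2}, and then argue the extension is K\"ahler. Two points, however, deserve sharpening.

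First, your treatment of the fibre term is vague. Saying ``${\bf H}-{\bf H}_0$ transported by ${\bf G}_0$ on both sides into the base picture'' is the right intuition but not yet a proof. The precise identity you need is \eqref{y-x} for the reference structure: $J_0\,dx_i = \sum_j (H_0)_{ij}\,\theta_j$, equivalently $\theta_j = \sum_i (G_0)_{ji}\,J_0\,dx_i$. Substituting gives
\[
\langle \boldsymbol{\theta}_0, {\bf H}-{\bf H}_0, \boldsymbol{\theta}_0\rangle
= \langle J_0\,d\mu,\; {\bf G}_0({\bf H}-{\bf H}_0){\bf G}_0,\; J_0\,d\mu\rangle
= \langle J_0\,d\mu,\; {\bf G}_0{\bf G}^{-1}{\bf G}_0-{\bf G}_0,\; J_0\,d\mu\rangle.
\]
Now $J_0$ and $d\mu$ are globally smooth on $M$, so the tensor is smooth on $M$ as soon as the matrix coefficient is smooth on $\Delta$---which is exactly \eqref{a2}. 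This is the missing mechanism behind your phrase ``transported by ${\bf G}_0$''.

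Second, your non-degeneracy discussion is unnecessarily heavy, and you yourself flag it as ``the main obstacle'' requiring a stratum-by-stratum analysis. In fact it is a one-line argument, and the paper uses exactly the shortcut you overlook. Once $g$ extends smoothly to $M$, the field of endomorphisms $J$ defined by $g(\cdot,\cdot)=\omega(J\cdot,\cdot)$ extends smoothly (since $\omega$ is globally non-degenerate). On the dense set $M^0$ you know $J^2=-{\rm Id}$; by continuity this holds on all of $M$, so $J$ is invertible everywhere, hence $g=\omega(J\cdot,\cdot)$ is non-degenerate everywhere (and positive definite by connectedness). Integrability of $J$ likewise passes to the closure by continuity. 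No boundary normal-form comparison is needed.
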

\begin{proof} The key point is that it suffices to show that $g$ extends as smooth tensor on $M$: Indeed, $g$ will be then non-degenerate as the endomorphism it defines with respect to $\omega$ will be, by continuity,  an almost-complex structure $J$ everywhere; the latter will be integrable everywhere by continuity too. For the smoothness of $g$, we compute (on $M^0$):
\begin{equation*}
\begin{split}
g- g_0 &= \langle d\mu, {\bf G} - {\bf G}_0, d\mu\rangle + \langle {\boldsymbol \theta}_0, {\bf H} - {\bf H}_0, {\boldsymbol \theta}_0\rangle \\
             &= \langle d\mu, {\bf G} - {\bf G}_0, d\mu\rangle + \langle J_0 d\mu, {\bf G_0}( {\bf H} - {\bf H}_0) {\bf G}_0, J_0 d\mu \rangle  \\
             &= \langle d\mu, {\bf G} - {\bf G}_0, d\mu\rangle + \langle J_0 d\mu, {\bf G_0}{\bf H} {\bf G}_0 -{\bf G}_0, J_0 d\mu \rangle.  
\end{split}
\end{equation*}
The claim follows. \end{proof}
{\tiny
\begin{exercise}[\bf Abreu's boundary conditions~\cite{abreu}]\label{abreu2} Show that \eqref{a1} and \eqref{a2} are equivalent to
\begin{equation}\label{a3}
\ \ \ \  \ \ \ {\bf G} - {\bf G}_0 \ \ is \  \ smooth \ \ on  \ \ \Delta, 
\end{equation}
\begin{equation}\label{a4}
{\bf G}_0^{-1}{\bf G} \ \ is \  \ smooth \ \  and \ \ nondegenerate \ \ on  \ \ \Delta.
\end{equation}
\end{exercise}}

{\tiny  \begin{exercise}\label{e4}\cite{abreu} Show that the conditions \eqref{a3}-\eqref{a4} are equivalent to
 \begin{equation}\label{a5}
 u- \frac{1}{2}\sum_{j=1}^d L_j \log L_j   \ is   \ smooth  \ on \ \Delta.
 \end{equation}
 \begin{equation}
 {\rm det} ({\rm Hess}(u)) \times \Big(\prod_{j=1}^d L_j(x)\Big) \ is \ positive \ and \ smooth  \ on \ \Delta.
 \end{equation}
 
 \end{exercise}}
 
\bigskip
There is, however,  a subtle point in the above theory (which I believe  is often neglected in the literature): in order to apply  the sufficient conditions \eqref{a1}-\eqref{a2} or \eqref{a3}-\eqref{a4}, we need to use the angular coordinates defined by the initial metric $g_0$. This is the main difficulty to show that the conditions are also necessary. The following criterion is established in \cite{ACGT2}.
\begin{prop}\label{ACGT-boundary} A smooth,  positive definite $S^2(\tor^*)$-valued function ${\bf H}$ on $\Delta^0$ corresponds via \eqref{g}  to a $\T$-invariant $\omega$-compatible almost-K\"ahler structure on $M$ if and only if ${\bf H}$ satisfies the following conditions.
\begin{enumerate}
\item[$\bullet$] {\rm [smoothness] }${\bf H}$ has a smooth extension as a $S^2(\tor^*)$-valued function on $\Delta$;
\item[$\bullet$] {\rm [boundary conditions]} If $x$ belongs to a co-dimension one face $F_j \subset \Delta$ with normal $u_j \in \tor$, then
$${\bf H}_x(u_j, \cdot) =0, \ \ \ d{\bf H}_x(u_j,u_j) = 2u_j.$$
\item[$\bullet$] {\rm [positivity]}  Let $F^0\subset \Delta$ be the interior of a face of $\Delta$ and denote by $\tor_F\subset \tor$ the subspace spanned by the normals of all labels vanishing on $F^0$. Then,  the restriction of ${\bf H}$ to $F^0$, viewed  as a $S^2(\tor_F^0)$-valued function for  $\tor_F^0 ={\rm  ann}(\tor_F) \subset \tor^*$ is positive definite.
\end{enumerate}
\end{prop}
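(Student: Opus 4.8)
The three conditions in the statement are \emph{local} on $\Delta$, so my plan is to analyse the behaviour of ${\bf H}$ near the relative interior $F^0$ of each face $F\subset\Delta$ separately, using a standard local normal form for toric symplectic manifolds: if $x_0\in F^0$ and $F$ has codimension $\ell$, then the equivariant Darboux theorem identifies a $\T$-invariant neighbourhood of the orbit $\mu^{-1}(x_0)$ with a $\T$-invariant neighbourhood of $\{0\}\times\T^{\,m-\ell}$ in the model $\big(\C^{\ell}\times T^*\T^{\,m-\ell},\,\omega_0\oplus\omega_{\mathrm{can}},\,\T\big)$, where the $\C^{\ell}$-factor carries the rotational action with momenta $\tilde x_i=\tfrac12|w_i|^2$; after an affine change of momentum coordinates these $\tilde x_i$, $i=1,\dots,\ell$, are exactly the labels $L_{j_i}$ of the facets whose intersection is $F$. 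In this model the $\T$-bundle $\mu\colon M^0\to\Delta^0$, together with the reference connection $\boldsymbol{\theta}_0$ of the Guillemin metric (Theorem~\ref{thm:guillemin}), is the standard one, and asking whether a $\T$-invariant tensor extends across $\mu^{-1}(F^0)$ becomes asking for smoothness in the Cartesian coordinates $(\mathrm{Re}\,w_i,\mathrm{Im}\,w_i)$.

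\emph{Necessity.} Suppose $(g,J,\omega)$ is a $\T$-invariant $\omega$-compatible almost-K\"ahler structure on $M$, written on $M^0$ as in \eqref{g} (with some connection $\boldsymbol{\theta}$, not necessarily flat). Since the $K_i$ are globally defined and commute with the $\T$-action, $H_{ij}=g(K_i,K_j)$ is a globally smooth $\T$-invariant function on $M$, so Lemma~\ref{l:Schwarz} yields the [smoothness] condition. For the [boundary conditions], fix $x\in F_j^0$ and $p\in\mu^{-1}(x)$; by integrality of the Delzant polytope the stabiliser $\T_p$ is the circle with Lie algebra $\R u_j$, so $X_{u_j}$ vanishes at $p$ and hence ${\bf H}_x(u_j,\cdot)=g_p(X_{u_j},\cdot)=0$, which extends to all of $F_j$ by continuity. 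For the normal derivative I would pass to the model with $u_j=e_1$ and $\tilde x_1=L_j=\tfrac12|w_1|^2$: writing $w_1=r_1e^{\sqrt{-1}\varphi_1}$, a $\T$-invariant metric that is smooth \emph{and} $\omega_0$-compatible near $w_1=0$ has its $d\varphi_1^{\,2}$-coefficient equal to $r_1^2$ times a function of $r_1^2$ that equals $1$ at $r_1=0$ (the value $1$ is forced by $J^2=-\mathrm{Id}$, i.e.\ by the cone angle being $2\pi$), so ${\bf H}_x(u_j,u_j)=2L_j(x)+O\big(L_j(x)^2\big)$ and $d{\bf H}_x(u_j,u_j)=2\,dL_j=2u_j$ along $F_j$. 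Finally, for [positivity], over $F^0$ the tangent space to the $\T$-orbit is $\tor/\tor_F$ and $g$ restricts to a positive-definite inner product there; unwinding \eqref{g}, this restricted form is exactly the one induced by ${\bf H}_x$ on $\tor/\tor_F\cong(\tor_F^0)^*$.

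\emph{Sufficiency.} Conversely, given ${\bf H}$ satisfying the three conditions, set ${\bf G}={\bf H}^{-1}$ on $\Delta^0$ and define $g$ on $M^0$ by \eqref{g} using the reference connection $\boldsymbol{\theta}_0$; then $J:=\omega^{-1}g$ squares to $-\mathrm{Id}$ by the block structure of \eqref{g}, and it remains to show $(g,J)$ extends smoothly and non-degenerately across each $\mu^{-1}(F^0)$. The most economical route is to compare with the Guillemin metric $g_0$, whose ${\bf H}_0$ satisfies the same conditions: the [smoothness] and [boundary] conditions force ${\bf H}-{\bf H}_0$ (and ${\bf G}_0{\bf H}{\bf G}_0-{\bf G}_0$) to be smooth on $\Delta$ and to vanish to the appropriate order at each facet, which is precisely \eqref{a1}--\eqref{a2}, so Lemma~\ref{abreu1} shows $g$ extends to a smooth tensor on $M$. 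In the local model this is transparent: the angular block $\langle\boldsymbol{\theta}_0,{\bf H},\boldsymbol{\theta}_0\rangle$, whose $d\varphi_i\,d\varphi_j$-coefficients vanish like $L_iL_j$ for $i,j\le\ell$, recombines into a Cartesian-smooth tensor, while the singularity of ${\bf G}$ (of order $1/L_j$ along the degenerate directions) is absorbed by $d\mu_j=d\tilde x_j=r_j\,dr_j$ vanishing to order $r_j=\sqrt{2L_j}$. The [positivity] condition is then exactly what keeps the smoothly extended $g$ non-degenerate (hence positive definite) along $\mu^{-1}(F^0)$, so $J$ is a genuine almost-complex structure there and $(g,J,\omega)$ is a global $\T$-invariant $\omega$-compatible almost-K\"ahler structure with the prescribed ${\bf H}$.

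\emph{The main obstacle}, and the reason the criterion is formulated purely in terms of ${\bf H}$, is the subtle point flagged in the text before the statement: the sufficient conditions \eqref{a1}--\eqref{a2} of Lemma~\ref{abreu1} are relative to the \emph{fixed} connection $\boldsymbol{\theta}_0$, whereas a general global structure comes with its own connection $\boldsymbol{\theta}$ in \eqref{g}, and re-expressing $g$ through $\boldsymbol{\theta}_0$ brings in the basic $1$-form $\boldsymbol{\theta}-\boldsymbol{\theta}_0$ on $\Delta^0$, whose behaviour near $\partial\Delta$ is not controlled a priori. The datum ${\bf H}=g(K_\bullet,K_\bullet)$ is by contrast completely independent of the choice of connection, which is why the local-model computation in the necessity part goes through without ever touching $\boldsymbol{\theta}$; the real technical effort is therefore in establishing the local normal form precisely enough to read off the first-order boundary asymptotics of ${\bf H}$, and in the matrix bookkeeping of \eqref{a1}--\eqref{a2} on the sufficiency side.
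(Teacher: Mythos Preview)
The paper does \emph{not} supply a proof of this proposition: it is quoted from \cite{ACGT2}, with only the one--dimensional case left as an exercise. So there is no in--text argument to compare against, and your sketch should be judged on its own terms.

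Your overall strategy --- equivariant Darboux local model near each face, together with Schwarz's lemma for the necessity of [smoothness], the vanishing of $X_{u_j}$ along $F_j$ for the first boundary identity, and a polar--coordinate expansion for the derivative identity --- is exactly the standard route (and is how the result is proved in \cite{ACGT2}). The identification of the obstacle (the connection $\boldsymbol{\theta}$ versus $\boldsymbol{\theta}_0$) is also well spotted and is precisely the reason the criterion is phrased intrinsically in ${\bf H}$.

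There is, however, a genuine gap in your sufficiency argument as written. You assert that ``the [smoothness] and [boundary] conditions force ${\bf H}-{\bf H}_0$ (and ${\bf G}_0{\bf H}{\bf G}_0-{\bf G}_0$) to be smooth on $\Delta$ \ldots\ which is precisely \eqref{a1}--\eqref{a2}''. But \eqref{a1} is the smoothness of ${\bf G}-{\bf G}_0$, not of ${\bf H}-{\bf H}_0$; to pass from one to the other you must invert ${\bf H}$ near $\partial\Delta$, and this is exactly where the [positivity] hypothesis enters, not merely as a non--degeneracy afterthought. Concretely, in adapted coordinates along a codimension--$\ell$ face, ${\bf H}$ has the block form
\[
{\bf H}=\begin{pmatrix} A & B\\ B^{\top} & C\end{pmatrix},
\]
with $A$ (the $\ell\times\ell$ ``normal'' block) vanishing on $F^0$ and $C$ the restriction appearing in [positivity]. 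Without $C$ being invertible on $F^0$ you cannot control the singular part of ${\bf H}^{-1}$ and hence cannot conclude that ${\bf G}-{\bf G}_0$ is smooth; indeed Lemma~\ref{abreu1} (once \eqref{a1}--\eqref{a2} hold) already delivers non--degeneracy by continuity, so your separate invocation of [positivity] for non--degeneracy is a sign that the condition has not been used where it is actually needed. The fix is either to carry out the block inversion carefully (using [positivity] to ensure $C^{-1}$ exists and then checking the Schur--complement singularities match those of ${\bf G}_0$), or to bypass Lemma~\ref{abreu1} altogether and do the full local--model smoothness check directly, which you sketch but do not complete at higher--codimension faces.
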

{\tiny
\begin{exercise} State and prove Proposition~\ref{ACGT-boundary} in the case $m=1$.
\end{exercise}}

\begin{lemma}\label{l:angular-coordinates} Suppose $g, g'$ are two $\T$-invariant, $\omega$-compatible K\"ahler structures on  a compact toric symplectic manifold $(M, \omega,\T)$ which induce the same $S^2(\tor)$-valued function ${\bf G}$ on $\Delta^0$. Then $g$ and $g'$ are isometric under a $\T$-equivariant  symplectomorphism.
\end{lemma}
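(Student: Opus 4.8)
The plan is to build the desired map first over the open dense set $M^0=\mu^{-1}(\Delta^0)$, using momentum--angle coordinates, and then to extend it across $M\setminus M^0$ by a completeness argument for the two Riemannian metrics.

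First I would put both structures in the normal form \eqref{omega}--\eqref{g}. Fix a common basis $e=\{e_1,\dots,e_m\}$ of $\tor$; then the momentum functions $x_i$ of \eqref{x-momenta} depend only on $(M,\omega,\T)$, not on the K\"ahler structure. Choosing base points in $M^0$ gives $\T$-equivariant diffeomorphisms $F=(\mu,\boldsymbol t)$ and $F'=(\mu,\boldsymbol t'):M^0\xrightarrow{\ \sim\ }\Delta^0\times\T$, where $\boldsymbol\theta=d\boldsymbol t$ and $\boldsymbol\theta'=d\boldsymbol t'$ are the flat connection $1$-forms of $g$ and $g'$ via \eqref{connection}. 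Set $\Phi_0:=(F')^{-1}\circ F:M^0\to M^0$. By construction $\Phi_0$ is a $\T$-equivariant diffeomorphism with $\mu\circ\Phi_0=\mu$ and $\boldsymbol t'\circ\Phi_0=\boldsymbol t$. Since $\omega$ equals both $\sum_i dx_i\wedge dt_i$ and $\sum_i dx_i\wedge dt_i'$ on $M^0$, pulling back by $\Phi_0$ gives $\Phi_0^*\omega=\omega$; and since, by hypothesis, the coefficient matrices $\mathbf G=\mathbf H^{-1}$ in \eqref{g} are the same for $g$ and $g'$ and are functions of $\mathbf x=\mu$, which $\Phi_0$ preserves, we get $\Phi_0^*g'=g$. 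Thus $\Phi_0$ is already a $\T$-equivariant symplectomorphism of $M^0$ carrying $g'$ to $g$, and only its extension to $M$ remains.

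For the extension, observe that $M\setminus M^0=\mu^{-1}(\partial\Delta)$ is a finite union of closed submanifolds of $M$ of real codimension at least $2$: the preimage of the relative interior of a codimension-$\ell$ face of $\Delta$ has a $(m-\ell)$-dimensional stabilizer orbit, hence codimension $2\ell\ge 2$ in $M$. Since $M^0$ is connected, removing such a set does not change the induced length metrics, so the geodesic distances $d_g,d_{g'}$ of the compact manifolds $M$ restrict on $M^0$ to the intrinsic distances of $(M^0,g)$ and $(M^0,g')$. Because $\Phi_0^*g'=g$, the map $\Phi_0$ preserves these: $d_{g'}(\Phi_0(p),\Phi_0(q))=d_g(p,q)$ for all $p,q\in M^0$. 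Hence for $p\in M\setminus M^0$ and any sequence $p_n\in M^0$ with $p_n\to p$, the sequence $\{p_n\}$ is $d_g$-Cauchy, so $\{\Phi_0(p_n)\}$ is $d_{g'}$-Cauchy and, $(M,g')$ being compact, convergent; a standard interleaving argument shows the limit depends only on $p$. This yields a continuous extension $\Phi:M\to M$ of $\Phi_0$; applying the same construction to $\Phi_0^{-1}$ produces a two-sided inverse, so $\Phi$ is a homeomorphism, and in the limit it is distance-preserving from $(M,d_g)$ onto $(M,d_{g'})$.

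It remains to upgrade $\Phi$ to a smooth isometric symplectomorphism: by the Myers--Steenrod theorem a distance-preserving surjection between connected Riemannian manifolds is a smooth Riemannian isometry, so $\Phi:(M,g)\to(M,g')$ is smooth with $\Phi^*g'=g$; and $\Phi^*\omega=\omega$ together with the $\T$-equivariance of $\Phi$ hold on the dense set $M^0$, hence on all of $M$ by continuity. This gives the claimed $\T$-equivariant symplectomorphism. The only delicate point in this argument is the passage from $M^0$ to $M$, and the reason it runs smoothly is precisely the codimension-$\ge 2$ bound on $M\setminus M^0$, which lets one trade the incomplete metric geometry of $M^0$ for the complete metric geometry of the compact $M$; everything else is bookkeeping in the normal form \eqref{g}.
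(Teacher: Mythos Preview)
Your proof is correct and follows essentially the same approach as the paper: build the $\T$-equivariant isometric symplectomorphism on $M^0$ via momentum--angle coordinates, then extend to $M$ using completeness and Myers--Steenrod. The paper's version is much terser on the extension step (it simply invokes completeness and cites \cite{KN}), whereas you supply the missing justification---namely the codimension-$\ge 2$ observation that makes the intrinsic distance on $M^0$ coincide with the restriction of $d_g$, which is exactly what is needed to run the Cauchy-sequence extension.
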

\begin{proof} Fix $p_0\in M^0$ and let ${\bf t}$ (resp. ${\bf t}'$) be the angular coordinates determined by $g$ (resp. $g'$). The map $\Psi^0$ which sends $\bf t$ to ${\bf t}'$ and leaves $x$ unchanged defines a $\T$-equivariant symplectomorphism on $M^0,$ which sends $g$ to $g'$.  As $(M, g)$ is complete, we can extend $\Psi^0$ to an isometry of  the metric spaces $(M, g)$ and $(M, g')$, so to a riemannian isometry by a well-known result (see e.g. \cite{KN}). \end{proof}

\begin{defn}\label{d:symplectic-potential-space} For any compact convex labelled polytope $(\Delta, {\bf L})$, we denote by $\cS(\Delta, {\bf L})$ the space of smooth strictly convex functions  $u$ defined on the interior of $\Delta^0$,  such that  ${\bf H}= {\rm Hess}(u)^{-1}$ satisfies the conditions of Proposition~\ref{ACGT-boundary}, or equivalently, the conditions \eqref{a3}-\eqref{a4}.
\end{defn}
Combining Lemma~\ref{abreu1}, Exercise~\ref{abreu2}, Proposition~\ref{ACGT-boundary} and Lemma~\ref{l:angular-coordinates}, we obtain the following key result

\begin{thm}[\bf Abreu~\cite{abreu0, abreu}] \label{thm:symplectic-potential-space} There exists a bijection between the space of $\T$-invariant,  $\omega$-compatible K\"ahler structures $(g, J)$ on $(M, \omega, \T)$, modulo the action of the group  of $\T$-equivariant  symplectomorphisms,  and the space $\cS(\Delta, {\bf L})$ modulo the additive action of the space of affine-linear functions.
\end{thm}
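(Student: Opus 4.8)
The plan is to write down the bijection explicitly, then verify it is well defined on equivalence classes, injective, and surjective, using the normal form of Lemma~\ref{guillemin} together with the boundary analysis of Lemma~\ref{abreu1}, Exercise~\ref{abreu2} and Proposition~\ref{ACGT-boundary}.

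\emph{The map.} Given a $\T$-invariant, $\omega$-compatible K\"ahler structure $(g,J)$ on $(M,\omega,\T)$, restrict it to $M^0$ and write $g$ in the form \eqref{g}. The $S^2\tor$-valued function $\mathbf G$ on $\Delta^0$ depends only on $(g,J)$, being the inverse of $\mathbf H$ with $H_{ij}=g(K_i,K_j)$, and by Lemma~\ref{guillemin} we have $\mathbf G=\mathrm{Hess}(u)$ for a strictly convex function $u$ on $\Delta^0$, unique up to an affine-linear function since $\Delta^0$ is connected. Because $(g,J)$ is globally defined on the compact manifold $M$ and $\omega$-compatible, $\mathbf H=\mathbf G^{-1}$ satisfies the smoothness, boundary and positivity conditions of Proposition~\ref{ACGT-boundary}, i.e. $u\in\cS(\Delta,\mathbf L)$ in the sense of Definition~\ref{d:symplectic-potential-space}. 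Thus $(g,J)$ determines a well-defined class $[u]\in\cS(\Delta,\mathbf L)/(\text{affine-linear functions})$, and conversely the fibre over a prescribed $\mathbf G$ is exactly one such affine-linear coset, so no information is lost in passing to these classes.

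\emph{Descent to equivalence classes and injectivity.} If $\Phi$ is a $\T$-equivariant symplectomorphism of $(M,\omega,\T)$, which we may normalize (using the final remark of Definition~\ref{d:toric}) so that $\mu\circ\Phi=\mu$, then $\Phi_*K_i=K_i$, hence $g'(K_i,K_j)=\big(g(K_i,K_j)\big)\circ\Phi$ is, as a function on $\Delta^0$, unchanged; so $(g,J)$ and $\Phi^*(g,J)$ produce the same $\mathbf G$, hence the same class $[u]$, and the map descends to equivalence classes. Conversely, if two $\T$-invariant $\omega$-compatible K\"ahler structures yield the same $\mathbf G$ on $\Delta^0$, then Lemma~\ref{l:angular-coordinates} shows they are isometric under a $\T$-equivariant symplectomorphism, so they lie in the same equivalence class; hence the descended map is injective.

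\emph{Surjectivity.} Fix the reference metric $g_0$ of Corollary~\ref{kahler-natural} (for instance the Guillemin metric of Theorem~\ref{thm:guillemin}), with flat connection $1$-form $\boldsymbol\theta_0$ on $\mu:M^0\to\Delta^0$ and $\mathbf G_0=\mathrm{Hess}(u_0)$. Given $u\in\cS(\Delta,\mathbf L)$, set $\mathbf G=\mathrm{Hess}(u)$, $\mathbf H=\mathbf G^{-1}$, and define $g$ on $M^0$ by \eqref{g} using $\boldsymbol\theta=\boldsymbol\theta_0$. By the converse part of Lemma~\ref{guillemin} this is a $\T$-invariant, $\omega$-compatible K\"ahler structure on $M^0$; by Definition~\ref{d:symplectic-potential-space} the pair $\mathbf G,\mathbf G_0$ satisfies the equivalent conditions \eqref{a3}--\eqref{a4} of Exercise~\ref{abreu2} (equivalently the conditions of Proposition~\ref{ACGT-boundary}), so Lemma~\ref{abreu1} shows that $g$ extends to a genuine $\T$-invariant, $\omega$-compatible K\"ahler structure on all of $M$, whose symplectic potential is $u$ by construction. (Alternatively, one may feed $\mathbf H$ directly into Proposition~\ref{ACGT-boundary} to get an almost-K\"ahler structure on $M$, integrability then coming from $\mathbf G$ being a Hessian, which avoids committing to $\boldsymbol\theta_0$.) Hence the map is onto, and combined with the previous step it is a bijection.

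\emph{Main obstacle.} The delicate point is not any single computation but the reconciliation of the \emph{choice-dependent} local normal form with the \emph{choice-free} classification: Lemma~\ref{guillemin} attaches to each $(g,J)$ its own flat connection $\boldsymbol\theta$ (dual to the $K_i$ modulo the $JK_i$), whereas the extension criterion of Lemma~\ref{abreu1} and Exercise~\ref{abreu2} is phrased relative to the fixed $\boldsymbol\theta_0$, and a priori $\boldsymbol\theta\neq\boldsymbol\theta_0$. This is exactly why one needs Lemma~\ref{l:angular-coordinates} --- two flat $\T$-connections over the contractible $\Delta^0$ differ by a shift of angular coordinates, i.e. by a $\T$-equivariant symplectomorphism --- together with the intrinsic, connection-free boundary characterization of Proposition~\ref{ACGT-boundary}. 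Once both are available the bijection is forced; everything else is the bookkeeping carried out above.
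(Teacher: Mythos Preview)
Your proof is correct and follows exactly the approach the paper indicates (the paper's proof is the single sentence ``Combining Lemma~\ref{abreu1}, Exercise~\ref{abreu2}, Proposition~\ref{ACGT-boundary} and Lemma~\ref{l:angular-coordinates}''); you have simply spelled out how these ingredients assemble into the map, its well-definedness, injectivity, and surjectivity. One small imprecision worth fixing: the reason $\mu\circ\Phi=\mu$ is not really the remark in Definition~\ref{d:toric} (which concerns adjusting the momentum map, whereas here $\mu$ is fixed), but rather that $\mu\circ\Phi-\mu$ is a constant $c\in\tor^*$ by Lemma~\ref{abelian-hamiltonian-action}, and since $\Phi$ is a diffeomorphism both maps have image $\Delta$, so $\Delta+c=\Delta$, which forces $c=0$ because a compact polytope has no nontrivial translation symmetry.
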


One can   further refine Theorem~\ref{thm:symplectic-potential-space}.
\begin{prop}[\bf Donaldson~\cite{Donaldson2}]\label{p:boundary-donaldson} The functional space $\cS(\Delta, {\bf L})$ of symplectic potentials of globally defined $\T$-invariant, $\omega$-compatible K\"ahler metrics on $(M, \omega, \T$) can be equivalently defined as the sub-space of the space $\cC(\Delta)$ of convex continious functions on $\Delta$, such that 
\begin{enumerate}
\item[$\bullet$] {\rm [convexity]} The restriction of $u$ to the interior of any face of $\Delta$ (including $\Delta^0$) is a smooth strictly convex function.
\item[$\bullet$] {\rm [asymptotic behaviour]} $u- \frac{1}{2}\sum_{j=1}^d L_j \log L_j$ is  smooth on  $\Delta$. 
\end{enumerate}
 \end{prop}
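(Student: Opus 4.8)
The plan is to show that Donaldson's description of $\cS(\Delta,{\bf L})$ agrees with the one from Definition~\ref{d:symplectic-potential-space}. By Exercise~\ref{e4}, the space of Definition~\ref{d:symplectic-potential-space} consists of strictly convex smooth functions $u$ on $\Delta^0$ such that \eqref{a5} holds, i.e. $v:=u-\frac12\sum_{j=1}^d L_j\log L_j$ is smooth on all of $\Delta$, together with the positivity condition that $\det({\rm Hess}(u))\prod_j L_j$ is positive and smooth on $\Delta$. So I must prove that, given the asymptotic condition [asymptotic behaviour], this positivity condition is equivalent to the single requirement that the restriction of $u$ to the interior of every face of $\Delta$ (including $\Delta^0$) be smooth and strictly convex. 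One direction is essentially the content of Exercise~\ref{e4} combined with the boundary analysis of Proposition~\ref{ACGT-boundary}; the real work is the converse, i.e. deriving the nondegeneracy at the boundary from face-wise strict convexity plus the prescribed logarithmic singularity.

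First I would set up the local model near the relative interior $F^0$ of a face of codimension $\ell$. After an $\mathrm{Aff}(V^*)$-change of coordinates one may assume $F^0\subset\{x_1=\cdots=x_\ell=0\}$ with $L_j(x)=x_j$ for $j=1,\dots,\ell$ the labels vanishing on $F^0$, and all other $L_j$ bounded away from $0$ nearby. Writing $u=\frac12\sum_{j=1}^\ell x_j\log x_j + w(x)$ with $w$ smooth (this uses [asymptotic behaviour], absorbing the smooth $\frac12\sum_{j>\ell}L_j\log L_j$ into $w$), one computes ${\rm Hess}(u)$ in block form: the $(\ell\times\ell)$-block is $\frac12\,\mathrm{diag}(1/x_1,\dots,1/x_\ell)$ plus a smooth matrix, the mixed block is smooth, and the lower-right block is $\mathrm{Hess}$ of $u$ along the transverse directions, which is smooth. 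Multiplying by $\prod_{j=1}^\ell x_j$ (equivalently $\prod_j L_j$ up to a smooth positive factor) and taking the determinant, one finds, after a short computation with the Schur complement, that
\begin{equation*}
\det({\rm Hess}(u))\prod_{j=1}^d L_j(x)\ \longrightarrow\ \frac{1}{2^\ell}\,\det\big({\rm Hess}(u|_{F^0})(x')\big)
\end{equation*}
as $x_1,\dots,x_\ell\to 0^+$ with $x'=(x_{\ell+1},\dots,x_m)$ the coordinates on $F^0$, and more precisely that the product on the left extends smoothly to $\Delta$ with this boundary value. Hence this product is positive and smooth on all of $\Delta$ if and only if $\det({\rm Hess}(u|_{F^0}))>0$ on each $F^0$ (which, for a convex function, is equivalent to strict convexity along the face) — giving both implications at once, and also recovering the boundary conditions of Proposition~\ref{ACGT-boundary} via the inverse matrix ${\bf H}={\rm Hess}(u)^{-1}$.

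The main obstacle is the Schur-complement/determinant bookkeeping: one must check not merely the leading asymptotics but that the whole product $\det({\rm Hess}(u))\prod_j L_j$ extends \emph{smoothly} (not just continuously) up to the boundary, including at points where faces of different codimensions meet, so that the equivalence with the face-wise conditions is genuinely an equivalence of the function spaces and not just of their interiors. This requires organizing the expansion of ${\rm Hess}(u)$ uniformly in a neighbourhood of an arbitrary boundary point — stratifying $\Delta$ by its open faces and checking compatibility of the local models on overlaps — and then invoking Proposition~\ref{ACGT-boundary} (equivalently Exercise~\ref{abreu2}) to translate the resulting conditions on ${\bf G}={\rm Hess}(u)$ and ${\bf H}={\bf G}^{-1}$ into the two bullet points in the statement. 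Once the smooth extension is in hand, the positivity clause of Proposition~\ref{ACGT-boundary} applied face by face is exactly the strict convexity of $u$ restricted to each face, and the proof concludes.
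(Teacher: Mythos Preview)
The paper does not actually prove this proposition: it is stated with attribution to Donaldson~\cite{Donaldson2} and immediately followed by the next chapter, with no argument given. So there is no ``paper's own proof'' to compare against.

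Your approach is a reasonable sketch of how such a proof would go, and the key computation is correct. Writing $u=\tfrac12\sum_{j\le\ell}x_j\log x_j+w$ near a codimension-$\ell$ face and multiplying the first $\ell$ rows of $\mathrm{Hess}(u)$ by $x_1,\dots,x_\ell$ does give a matrix that is smooth on $\Delta$ and whose determinant restricts to $2^{-\ell}\det(\mathrm{Hess}(u|_{F^0}))$ on $F^0$, exactly as you claim. The identification of the main obstacle (smoothness rather than continuity, and compatibility at corners where faces of different codimensions meet) is also accurate; this is where the honest work lies and where a full proof would need to be more explicit than your sketch. One small point: you should also verify that the function $u$ itself extends continuously to $\Delta$ (so that it genuinely lies in $\cC(\Delta)$), which follows from $x\log x\to 0$ and the smoothness of $w$, but is worth stating.
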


\chapter{The Calabi Problem and Donaldson's theory}

\section{The Calabi problem on a toric manifold} In \cite{cal},  Calabi introduced the notion of an extremal K\"ahler metric $(g, \omega)$ on a complex manifold $(M,J)$ 
\begin{defn}[\bf Calabi~\cite{cal}] A K\"ahler metric $(g, \omega)$ on a complex manifold $(M, J)$  is called {\it extremal} if the  $\omega$ hamiltonian vector field $X_g:= \omega^{-1} (ds_g)$ preserves the complex structure $J$, i.e. ${\mathcal L}_{X_g} J =0$. 
\end{defn}
Constant scalar curvature (CSC) K\"ahler metrics are examples, and, motivated by the Uniformization Theorem for complex curves, Calabi  proposed the problem of finding an extremal K\"ahler metric $(g, \omega)$ in a given deRham  class $[\omega]\in H^2(M, \R)$. This is known as the {\it Calabi Problem}, and is of greatest interest in current research in K\"ahler geometry.   

We now turn to the toric case, and observe the following ramification  of the Calabi problem.
\begin{lemma}\label{extremal-toric} Suppose $(g, J)$ is a $\T$-invariant, $\omega$-compatible K\"ahler metric on $(M,\omega, \T),$ corresponding to a symplectic potential $u \in \cS(\Delta, {\bf L})$. Then $g$ is extremal iff $s_g$ is the pull back by the momentum map of an affine-linear function $s(x)=\langle \xi, x\rangle + \lambda$  on $\Delta$.
\end{lemma}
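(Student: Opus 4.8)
The plan is to show that extremality of $g$ is equivalent to the fundamental vector field condition $\mathcal{L}_{X_g}J = 0$ translating, through the momentum-angle coordinate description of the previous sections, into the statement that $s_g$ descends to an affine-linear function on $\Delta$. The starting observation is that $s_g$ is a $\T$-invariant function on $M$ (since $g$ is $\T$-invariant), so by Lemma~\ref{l:Schwarz} it is the pull-back of a smooth function $s(x)$ on $\Delta$, and by Lemma~\ref{l:abreu} we have $s(x) = -\sum_{i,j} H_{ij,ij}$ on $\Delta^0$. The content of the lemma is therefore that $\mathcal{L}_{X_g}J = 0$ holds precisely when this function is affine-linear.

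First I would compute $X_g = \omega^{-1}(ds_g)$ on $M^0$ in momentum-angle coordinates. Since $s_g$ is pulled back from $\Delta$, we have $ds_g = \sum_k s_{,k}\, dx_k$, and using $\imath_{K_i}\omega = -dx_i$ together with $\omega = \sum_i dx_i \wedge \theta_i$ (see \eqref{x-momenta} and \eqref{omega}), one finds $X_g = \sum_k s_{,k}\, K_k$, i.e. $X_g$ is a (momentum-dependent) combination of the fundamental vector fields $K_1,\dots,K_m$. The next step is to express $\mathcal{L}_{X_g}J = 0$ concretely: because the $K_k$ are holomorphic ($\mathcal{L}_{K_k}J = 0$) and commute with each other and with the $JK_\ell$, a direct computation of $\mathcal{L}_{X_g}$ applied to $JK_\ell = -\sum_j H_{\ell j}\partial/\partial t_j$-type expressions — or more cleanly, applied to the holomorphic coordinate vector fields — reduces the condition to the vanishing of the derivatives of the coefficients $s_{,k}$ in the directions that would destroy holomorphicity. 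Concretely, $\mathcal{L}_{X_g}J$ measures the failure of $X_g - \sqrt{-1}\,JX_g$ to be holomorphic; writing this out using $dx_i(JK_j) = -H_{ij}$ and $J\theta_i = -dy_i = -\sum_j G_{ij}dx_j$, the obstruction becomes a symmetric expression in $\sum_j G_{jk}\, s_{,\ell}$-type terms whose vanishing is equivalent to $\mathrm{Hess}(s) = 0$ on $\Delta^0$, i.e. to $s$ being affine-linear.

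The cleanest route is probably the following: $X_g$ preserves $J$ iff the real holomorphic vector field condition holds, which (since $X_g$ is already $\T$-invariant and tangent to the $\T$-orbits, hence of the form $\sum_k s_{,k}K_k$) amounts to asking that $X_g$ together with $JX_g$ generate a local holomorphic $\C^*$-type action; equivalently, in the holomorphic coordinates $z_j = e^{y_j}e^{\sqrt{-1}t_j}$ of Section~\ref{s:complex}, that $X_g - \sqrt{-1}JX_g$ be a holomorphic vector field. Since $K_k = \partial/\partial t_k$ and $JK_k$ corresponds to $\partial/\partial y_k$ up to the metric, $X_g - \sqrt{-1}JX_g = \sum_k s_{,k}(\partial/\partial t_k - \sqrt{-1}\,\partial/\partial y_k)\cdot(\text{const})$; holomorphicity in the $z_j$ forces the coefficients, which depend only on $x = x(y)$, to be such that $\partial s_{,k}/\partial \bar z_j = 0$, and chasing through the change of variables $x \leftrightarrow y$ (Legendre transform) this says exactly that each $s_{,k}$ is constant along the complexified orbit directions, i.e. $s_{,k\ell} = 0$. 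I would phrase the final equivalence as: $\mathcal{L}_{X_g}J=0 \iff \mathrm{Hess}(s)\equiv 0$ on $\Delta^0 \iff s(x) = \langle\xi,x\rangle + \lambda$ for some $\xi\in\tor$, $\lambda\in\R$.

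\textbf{Main obstacle.} The routine-but-delicate part is the coordinate bookkeeping in computing $\mathcal{L}_{X_g}J$: one must carefully track how $J$ acts on the mixed frame $\{K_i, JK_i\}$ whose structure functions $H_{ij}(x)$ depend on the base, so $\mathcal{L}_{X_g}$ hits both the coefficients $s_{,k}$ and, through $J$, the entries $H_{ij}$; showing that all the $H$-dependent terms cancel (using $\mathcal{L}_{K_k}J = 0$ and the integrability relations \eqref{commuting}) and only $\mathrm{Hess}(s)$ survives is the crux. An alternative that sidesteps most of this is to invoke that for a Hamiltonian Killing-type potential on a Kähler manifold, the associated vector field is holomorphic iff the potential's differential is "holomorphic-compatible"; but given the self-contained style of the notes, I expect the intended proof simply unwinds the definition in momentum-angle coordinates. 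The positivity and boundary behaviour at $\partial\Delta$ play no role here since everything is checked on the dense open set $M^0$ and $s_g$ is already globally smooth.
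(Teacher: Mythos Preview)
Your approach is correct and coincides with the paper's: write $X_g=\sum_k s_{,k}K_k$ on $M^0$, use that each $K_k$ is holomorphic, and reduce $\mathcal{L}_{X_g}J=0$ to $\mathrm{Hess}(s)=0$ via the nondegeneracy of ${\bf H}$. Your ``main obstacle'' is overstated, however: there are no $H$-dependent cancellations to chase. Since $\mathcal{L}_{K_i}J=0$ and $K_j\cdot s_{,i}=0$ (the $s_{,i}$ being $\T$-invariant), the Leibniz computation $(\mathcal{L}_{fK}J)(V)=-(JV\cdot f)K+(V\cdot f)JK$ applied to each summand and evaluated on $V=K_j$ gives directly $(\mathcal{L}_{X_g}J)(K_j)=-\sum_i(JK_j\cdot s_{,i})K_i$; then one line using $Jdx_k=\sum_\ell H_{k\ell}\theta_\ell$ from \eqref{y-x} yields $JK_j\cdot s_{,i}=-\sum_k s_{,ik}H_{kj}$, and invertibility of ${\bf H}$ on $\Delta^0$ finishes it. The holomorphic-coordinate detour through the $z_j$ is unnecessary.
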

\begin{proof} As $g$ is $\T$ invariant, its scalar curvature is a $\T$-invariant function on $M$, whence is the pull back by the momentum map of a smooth function $s(x)$ on $\Delta$.
It thus follows from \eqref{omega} that on $M$, $X_g= \omega^{-1} (ds)=  \sum_{i} s_{,i} K_i$. As each $K_i$ preserves $J$, the condition $\mathcal{L}_{X_g} J=0$ reads as
\begin{equation*}
\begin{split}
0=JK_j \cdot s_{,i} = (d s_{,i})(JK_j) &=-\sum_{k} s_{,ik} Jdx_k(K_j) =-\sum_{k,\ell} s_{,ik}H_{k\ell} \theta_{\ell}(K_j) \\
                                                               &=-\sum_{k} s_{,ik} H_{kj} 
                                                               \end{split}
                                                               \end{equation*}
                                                               As ${\bf H}$ is non-degenerate on $\Delta^0$, it follows that $s_{,ik}=0$, i.e. $s(x)$ must be an affine-linear function on $\Delta^0$, and hence on $\Delta$.
                                                               
                                                               Conversely, for any affine-linear function $s(x)=\langle \xi, x\rangle + \lambda$, $\omega^{-1} ds=X_{\xi}$, which preserves $J$. \end{proof} 
 
A key observation of Donaldson~\cite{Do2} is that the affine-linear functions $s(x)$ in Lemma~\ref{extremal-toric} is in fact predetermined from the labelled polytope 
$(\Delta, {\bf L})$.  To state the precise result, we need to introduce measures on $\Delta$ and $\partial \Delta$. The standard Lebesgue measure $dv = dx_1\wedge \ldots \wedge dx_m$ on $\tor^* \cong \R^m$ and the labels ${\bf L}=(L^1, \ldots, L_d)$ of $\Delta$ induce a measure $d\sigma$ on each facet $F_i \subset \partial \Delta$ by letting
\begin{equation}\label{boundary-measure}
dL_i \wedge d\sigma = u_i\wedge d\sigma = - dv.
\end{equation}
\begin{prop}[\bf Donaldson~\cite{Do2}] \label{p:Abreu-equation}Suppose $(\Delta, {\bf L})$ is a compact convex simple labelled polytope in $\tor^*$. Then, there exists a unique affine-linear function $s_{(\Delta, {\bf L})}$ on $\tor^*$, called {\rm the extremal affine-linear function} of $(\Delta, {\bf L})$ such that for any affine-linear function $f$
\begin{equation}\label{affine-extremal}
 2 \int_{\partial \Delta} f d\sigma - \int_{\Delta} s_{(\Delta, {\bf L})} f dv  =0.
\end{equation}
Furthermore, if for $u \in \cS(\Delta, {\bf L})$ the metric \eqref{g} is extremal, i.e.  satisfies 
$$-\sum_{i,j=1}^m \big(({\rm Hess}(u))^{-1}\big)_{ij,ij}= s(x)= \langle \xi, x\rangle + \lambda,$$
then the affine-linear function $s(x)$ must be equal to $s_{(\Delta, {\bf L})}$.
\end{prop}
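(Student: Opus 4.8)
I would treat the two assertions separately, the first being a short piece of linear algebra and the second the substantial point. For the first, note that the space $\mathcal A$ of affine-linear functions on $\tor^*$ is $(m+1)$-dimensional, and that $(s,f)\mapsto\int_\Delta sf\,dv$ is a symmetric bilinear form on $\mathcal A$ which is positive definite: since $\Delta$ has non-empty interior, $\int_\Delta s^2\,dv>0$ whenever $s\in\mathcal A$ is not identically zero. As $f\mapsto 2\int_{\partial\Delta}f\,d\sigma$ is a linear functional on $\mathcal A$, it is represented by a unique vector $s_{(\Delta,{\bf L})}\in\mathcal A$ with respect to this inner product, which is exactly what \eqref{affine-extremal} asserts.

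The content is in the second statement, and the key is the integration-by-parts identity: for any $u\in\cS(\Delta,{\bf L})$, writing ${\bf H}=({\rm Hess}(u))^{-1}$ with entries $H_{ij}$, and for any function $f$ smooth on $\Delta$,
\[
-\int_\Delta\Big(\textstyle\sum_{i,j}H_{ij,ij}\Big)f\,dv \;=\; 2\int_{\partial\Delta}f\,d\sigma\;-\;\int_\Delta\textstyle\sum_{i,j}H_{ij}\,f_{,ij}\,dv .
\]
Granting this, if moreover $u$ is extremal, so that $s(x):=-\sum_{i,j}H_{ij,ij}$ is affine-linear, then taking $f$ affine-linear makes $f_{,ij}=0$ and the identity collapses to $\int_\Delta s(x)f\,dv=2\int_{\partial\Delta}f\,d\sigma$ for every affine-linear $f$; comparing with \eqref{affine-extremal} and invoking the uniqueness from the first paragraph gives $s(x)=s_{(\Delta,{\bf L})}$.

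To prove the identity I would integrate by parts twice via the divergence theorem on the polytope $\Delta$, which is legitimate because by Proposition~\ref{ACGT-boundary} the matrix ${\bf H}$ extends smoothly to all of $\Delta$, so the vector fields involved are smooth up to the boundary, and the lower-dimensional faces of $\Delta$ carry no mass. The first integration produces the bulk term $\int_\Delta\sum_{i,j}H_{ij,j}f_{,i}\,dv$ and the boundary term $-\int_{\partial\Delta}\big(\sum_{i,j}H_{ij,j}f\big)n_i\,d\sigma_{\rm euc}$, where $n$ is the outward Euclidean unit normal. On a facet $F_l$ with inward primitive normal $u_l$ set $h_j:=\sum_i H_{ij}(u_l)_i={\bf H}_x(u_l,e_j)$; the boundary condition ${\bf H}_x(u_l,\cdot)=0$ on $F_l$ says $h\equiv 0$ along $F_l$, so $\partial_k h_j=\lambda_j(u_l)_k$ on $F_l$, while contracting $d{\bf H}_x(u_l,u_l)=2u_l$ with $u_l$ gives $\sum_j(u_l)_j\,\partial_k h_j=2(u_l)_k$, hence $\sum_j\lambda_j(u_l)_j=2$ and therefore $\sum_j\partial_j h_j=\sum_{i,j}H_{ij,j}(u_l)_i=2$ along $F_l$. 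Since $n_i=-(u_l)_i/|u_l|$ and the normalization \eqref{boundary-measure} gives $d\sigma_{\rm euc}=|u_l|\,d\sigma$ on $F_l$, the boundary term equals $2\int_{\partial\Delta}f\,d\sigma$. A second integration by parts on the bulk term produces $-\int_\Delta\sum_{i,j}H_{ij}f_{,ij}\,dv$ together with the boundary term $\int_{\partial\Delta}\sum_{i,j}H_{ij}f_{,i}\,n_j\,d\sigma_{\rm euc}$, which vanishes on each $F_l$ because there $\sum_j H_{ij}(u_l)_j=h_i=0$. Adding the pieces gives the identity.

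The one genuinely delicate point — and the part I would write most carefully — is the boundary analysis: justifying the divergence theorem up to the non-smooth strata of $\partial\Delta$ and correctly matching the intrinsic facet measure $d\sigma$ of \eqref{boundary-measure} with the Euclidean hypersurface measure, including the factor $|u_l|$ and the orientation sign. Both are handled by Proposition~\ref{ACGT-boundary}, whose smoothness and boundary conditions are tailored precisely so that the normalization \eqref{boundary-measure} turns $d{\bf H}_x(u_l,u_l)=2u_l$ into the clean constant $2$ that appears in \eqref{affine-extremal}; everything else in the argument is routine.
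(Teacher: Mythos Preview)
Your argument is correct and follows the same route as the paper: the existence/uniqueness of $s_{(\Delta,{\bf L})}$ is the positive-definiteness of the Gram matrix $\big(\int_\Delta x_ix_j\,dv\big)$ on affine-linear functions, and the second assertion is deduced from the integration-by-parts identity (stated separately in the paper as Lemma~\ref{by-parts}) applied to an affine-linear test function, using the boundary conditions of Proposition~\ref{ACGT-boundary}. The only cosmetic differences are that the paper packages the two integrations by parts into a single application of the divergence theorem to the vector field $V_j=\varphi\sum_iH_{ij,i}-\sum_i\varphi_{,i}H_{ij}$, and evaluates the boundary constant $2$ by choosing an adapted basis ($e_1=dL_k$, $e_j^*$ tangent to $F_k$ for $j>1$) rather than via your gradient-of-$h_j$ argument; both approaches are equally valid and yield the same identity.
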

Before we give the proof of this important result, we start with  a technical observation.
\begin{lemma}\label{by-parts}  Let ${\bf H}$ be any smooth $S^2\tor^*$-valued function on $\Delta$ which satisfies the boundary conditions  of Proposition~\ref{ACGT-boundary}, but not necessarily the positivity condition. Then, for any smooth function $\varphi$ on $\Delta$
\begin{equation}\label{Futaki-Hessian}
\int_{\Delta} \Big(\sum_{i,j=1}^m H_{ij,ij}\Big) \varphi \ dv =\int_{\Delta} \Big(\sum_{i,j=1}^m {H}_{ij}\varphi_{,ij} \Big)dv\\
-2\int_{\partial \Delta} \varphi d\sigma .\end{equation}
\end{lemma}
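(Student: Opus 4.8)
The plan is to establish this identity by two integrations by parts on $\Delta$, the only real work being the bookkeeping of the resulting boundary term. First I would integrate by parts twice in the interior, which produces the two volume integrals together with a single integral over $\partial\Delta$; then I would restrict that boundary integral to each facet $F_k$ and evaluate it using the two boundary conditions of Proposition~\ref{ACGT-boundary} together with the normalisation~\eqref{boundary-measure} of $d\sigma$. Since $\Delta$ is a convex polytope, its boundary is smooth off a subset of codimension $\ge 2$, so Stokes' theorem applies to smooth differential forms on $\Delta$ and the faces of codimension $\ge 2$ contribute nothing to surface integrals; I will use this without further comment.

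For the two integrations by parts, let $V$ be the vector field on $\Delta$ with components $V_j=\sum_{i=1}^m H_{ij,i}$ (the affine divergence of ${\bf H}$ in its first slot), so that $\sum_{i,j=1}^m H_{ij,ij}=\mathrm{div}\,V$, and let $W$ be the vector field with components $W_i=\sum_{j=1}^m H_{ij}\varphi_{,j}$, so that $\mathrm{div}\,W=\sum_{i,j}H_{ij,i}\varphi_{,j}+\sum_{i,j}H_{ij}\varphi_{,ij}$. Using the identities $\mathrm{div}(\varphi V)=\varphi\,\mathrm{div}\,V+\langle d\varphi,V\rangle$ and $d(\imath_X dv)=(\mathrm{div}\,X)\,dv$, two applications of Stokes' theorem (with $dv=dx_1\wedge\cdots\wedge dx_m$) give
\[
\int_{\Delta}\Big(\sum_{i,j}H_{ij,ij}\Big)\varphi\,dv=\int_{\Delta}\Big(\sum_{i,j}H_{ij}\varphi_{,ij}\Big)dv+\int_{\partial\Delta}\big(\varphi\,\imath_V dv-\imath_W dv\big),
\]
so the claim reduces to proving $\int_{\partial\Delta}(\varphi\,\imath_V dv-\imath_W dv)=-2\int_{\partial\Delta}\varphi\,d\sigma$.

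Fix a facet $F_k\subset\partial\Delta$ with inward normal $u_k=dL_k\in\tor$. Because $L_k$ vanishes identically on $F_k$, the $1$-form $dL_k$ pulls back to zero there, so~\eqref{boundary-measure} yields, for any vector field $X$, the restriction formula $\imath_X dv|_{F_k}=-\langle u_k,X\rangle\,d\sigma$. Taking $X=W$ and using the first boundary condition ${\bf H}_x(u_k,\cdot)=0$ on $F_k$ gives $\langle u_k,W\rangle=\sum_{i,j}(u_k)_iH_{ij}\varphi_{,j}={\bf H}_x(u_k,d\varphi)=0$. Taking $X=V$, the remaining point is that $\langle u_k,\mathrm{div}\,{\bf H}\rangle=2$ on $F_k$; granting this, $\varphi\,\imath_V dv-\imath_W dv$ restricts to $-2\varphi\,d\sigma$ on each $F_k$, and summing over the facets produces the desired boundary term. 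To see $\langle u_k,\mathrm{div}\,{\bf H}\rangle=2$: this scalar is affine-invariant (it is the trace of the canonical affine covariant derivative of ${\bf H}$, paired with the covector $u_k$), so I may compute it in affine coordinates adapted to $F_k$, namely with $L_k=x_1$, so that $u_k$ is the first coordinate covector and $F_k\subset\{x_1=0\}$. There $\langle u_k,\mathrm{div}\,{\bf H}\rangle=\sum_i H_{i1,i}=H_{11,1}+\sum_{i\ge 2}H_{i1,i}$; the first boundary condition forces $H_{1j}\equiv 0$ on the $(m-1)$-dimensional face $F_k\subset\{x_1=0\}$, so the tangential derivatives $H_{i1,i}$ ($i\ge 2$) vanish on $F_k$, while the second boundary condition $d{\bf H}_x(u_k,u_k)=2u_k$ reads $H_{11,1}=2$ on $F_k$; hence the sum equals $2$.

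The only genuinely delicate point is this last identity $\langle u_k,\mathrm{div}\,{\bf H}\rangle=2$ on $F_k$, whose heart is the observation that $H_{1j}$, being identically zero on the \emph{full} $(m-1)$-dimensional facet, has all of its derivatives in the $m-1$ tangential directions vanishing there, so that only the normal derivative $H_{11,1}$, pinned to $2$ by the second boundary condition, survives. Everything else — the two integrations by parts, and the orientation bookkeeping turning $\imath_V dv|_{F_k}$ into a multiple of $d\sigma$ via~\eqref{boundary-measure} — is routine; one should still take a moment to confirm the signs, for instance against the elementary case $m=1$ of an interval $[a,b]$.
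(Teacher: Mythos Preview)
Your proof is correct and follows essentially the same route as the paper's. The only cosmetic difference is that the paper packages the two integrations by parts into a single application of Stokes' theorem to the combined vector field $V_j=\varphi\sum_i H_{ij,i}-\sum_i\varphi_{,i}H_{ij}$ (i.e.\ your $\varphi V-W$), whereas you apply Stokes separately to $\varphi V$ and to $W$; the boundary computation in adapted coordinates---using that $H_{1j}\equiv 0$ on $F_k$ kills the tangential derivatives and that the second boundary condition pins $H_{11,1}=2$---is identical in both.
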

\begin{proof} The proof is elementary and uses integration by parts: recall that for any smooth $\tor^*$-valued function $V=(V_1, \ldots, V_m)$ on $\tor^*$, Stokes theorem gives
\begin{equation}\label{integration-by-parts}
\int_{\Delta} \sum_{j=1}^m V_{j,j} dv = -\sum_{k=1}^d \int_{F_k} \langle V, dL_k \rangle d\sigma,
\end{equation}
where we have used the convention \eqref{boundary-measure} for $d\sigma$.
We shall use the identity
\begin{equation}\label{basic-formula}
\sum_{i,j=1}^m \varphi_{,ij} {H}_{ij} = \sum_{i,j=1}^m \varphi H_{ij,ij} - \sum_{j=1}^m V_{j,j} ,
\end{equation}
where
\begin{equation}
V_j:=\varphi \sum_{i=1}^m H_{ij,i} - \sum_{i=1}^m \varphi_{,i} H_{ij}.
\end{equation}
It follows by \eqref{basic-formula} and \eqref{integration-by-parts} that
\begin{equation}\label{stokes}
\int_{\Delta} \sum_{i,j=1}^m \varphi_{,ij} H_{ij} dv = \int_{\Delta} \sum_{i,j=1}^m \varphi H_{ij,ij} + \sum_{k=1}^{d}\int_{F_k}\langle V, dL_k\rangle d\sigma.
\end{equation}
On each facet $F_k$ we have, using a basis $\{e_1,\ldots e_m\}$ of $\tor$,
\begin{equation}\label{detailed}
\begin{split}
\langle dL_k, V\rangle  &= \sum_{j=1}^m \langle dL_k, e_j^*\rangle V_j \\
                                          &= \sum_{j=1}^m \langle dL_k, e_j^*\rangle (\varphi \sum_{i=1}^m H_{ij,i} - \sum_{i=1}^m \varphi_{,i}H_{ij})\\
                                          &= \varphi \sum_{i=1}^m \langle d{\bf H}(dL_k, e_i), e_i^*\rangle - \sum_{i=1}^m {\bf H}(dL_k, d\varphi).
\end{split}
\end{equation}
Using the boundary conditions of Proposition~\ref{ACGT-boundary}, we have  ${\bf H}(dL_k, \cdot) =0$ on $F_k$ and (by choosing a basis $e_1 = dL_k, e_2, \ldots, e_m$ with $e_j^*$ tangent to $F_k$ for $j>1$) 
\begin{equation*}
\sum_{i=1}^m  \langle d {\bf H}(dL_k, e_i), e_i^*\rangle  =2 .
\end{equation*}
Substituting back in \eqref{detailed} and \eqref{stokes} completes the proof. \end{proof}  
\begin{rem} Taking $\varphi$ to be an affine-linear function, Lemma~\ref{by-parts} shows that the $L^2$ projection to the space of affine-linear functions of  the expression $-\sum_{i,j=1}^m H_{ij,ij}$ is independent of $H$. When $H_{ij}= H^{u}_{ij}$ corresponds to a toric K\"ahler metric, this observation yields that the $L^2$ projection of the scalar curvature to the space of affine-linear functions is independent of the symplectic potential $u$. \end{rem}                                                           
\begin{proof}[Proof of Proposition~\ref{p:Abreu-equation}]
Writing  $$s_{(\Delta, {\bf L})} = a_0 + \sum_{j=1}^m a_j x_j,$$  the condition \eqref{affine-extremal} gives rise to a linear system  with  positive-definite symmetric matrix
\begin{equation}\label{extremal-affine-system}
\begin{split}
a_0 \int_{\Delta} x_i dv + \sum_{j=1}^m a_j \int_{\Delta} x_j x_i dv &= 2\int_{\partial \Delta} x_i d\sigma \\
a_0  \int_{\Delta} dv + \sum_{j=1}^m a_j \int_{\Delta} x_j dv  \ \ \ \  \ &=  2\int_{\partial \Delta}  d\sigma,
\end{split}
\end{equation}
which therefore determines $(a_0, \ldots, a_m)$ uniquely.

We now suppose ${\bf H}=(H_{ij})$ is a smooth $S^2(\tor^*)$-valued function on $\Delta$ which satisfies the boundary conditions of Proposition~\ref{ACGT-boundary} and 
\begin{equation}\label{matrix-extremal}
-\sum_{i,j=1}^m H_{ij,ij} = s(x)= \langle \xi, x \rangle + \lambda.
\end{equation}
We are going to show that $s(x)= s_{(\Delta, {\bf L})}$. Notice that  to this end we do not assume that ${\bf H}$ satisfies  the positivity conditions of Proposition~\ref{ACGT-boundary} nor  that ${\bf H}= {\rm Hess}(u)^{-1}$ for some $u\in \cS(\Delta, {\bf L})$. Indeed, by Lemma~\ref{by-parts} applied to an affine-linear function $\varphi=f$, we get that $s(x)$ satisfies the defining property \eqref{affine-extremal}. \end{proof}

\begin{defn} Let $(\Delta, {\bf L})$ be a labelled compact convex simple polytope in $\R^m$, $\cS(\Delta, {\bf L})$ the space of strictly convex smooth function on $\Delta^0$ satisfying the conditions of Proposition~\ref{ACGT-boundary} and $s_{(\Delta, {\bf L})}$ the extremal affine function of $(\Delta, {\bf L})$. Then the non-linear PDE
\begin{equation}\label{Abreu-equation}
s(u):=-\sum_{i,j=1}^m (u^{,ij})_{,ij} = s_{(\Delta, {\bf L})}
\end{equation}
is called the {\it Abreu equation}.  If $(\Delta, {\bf L}, \Lambda)$ is a Delzant triple, solutions of   \eqref{Abreu-equation} correspond to extremal $\T$-invariant, $\omega$-compatible K\"ahler metrics on the toric symplectic manifold $(M, \omega, \T)$.
\end{defn}

\section{Donaldson--Futaki invariant}
We now introduce  an  obstruction  to the existence of solutions of \eqref{Abreu-equation},  due to Donaldson~\cite{Do2}.
\begin{defn}[\bf Donaldson-Futaki invariant] Given a labelled compact convex simple polytope $(\Delta, {\bf L})$ in $\R^m$, we define the functional
\begin{equation}\label{DF}
{\mathcal F}_{(\Delta, {\bf L})} (\varphi) := 2\int_{\partial \Delta} \varphi d\sigma -  \int_{\Delta} s_{(\Delta, {\bf L})}  \varphi dv
\end{equation}
acting on the space of continuous functions on $\Delta$. $\mathcal{F}_{(\Delta, {\bf L})}$ is called the Donaldson--Futaki invariant associated to $(\Delta, {\bf L})$
\end{defn}

\begin{prop}[\bf Donaldson~\cite{Do2}]\label{p:semistable} If $(\Delta, {\bf L})$ admits a solution of \eqref{Abreu-equation}, then
$${\mathcal F}_{(\Delta, {\bf L})} (\varphi) >0$$
for any smooth convex function $\varphi$ on $\Delta$ which is not affine-linear.
\end{prop}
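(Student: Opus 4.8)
The plan is to establish positivity of $\mathcal F_{(\Delta,{\bf L})}$ on non-affine convex functions by combining the integration-by-parts identity of Lemma~\ref{by-parts} with the Abreu equation and the positivity of ${\bf H} = ({\rm Hess}(u))^{-1}$. The starting observation is that, since $u$ solves \eqref{Abreu-equation}, i.e. $-\sum_{i,j} H_{ij,ij} = s_{(\Delta,{\bf L})}$, Lemma~\ref{by-parts} applied to any smooth $\varphi$ on $\Delta$ with ${\bf H} = ({\rm Hess}(u))^{-1}$ gives
\begin{equation*}
-\int_\Delta s_{(\Delta,{\bf L})}\,\varphi\,dv = \int_\Delta \Big(\sum_{i,j=1}^m H_{ij}\varphi_{,ij}\Big)dv - 2\int_{\partial\Delta}\varphi\,d\sigma,
\end{equation*}
so that
\begin{equation*}
\mathcal F_{(\Delta,{\bf L})}(\varphi) = 2\int_{\partial\Delta}\varphi\,d\sigma - \int_\Delta s_{(\Delta,{\bf L})}\varphi\,dv = \int_\Delta \sum_{i,j=1}^m H_{ij}\,\varphi_{,ij}\,dv.
\end{equation*}
Since ${\bf H}$ is positive-definite on $\Delta^0$ (being the inverse of the Hessian of a strictly convex function) and $\varphi$ is convex so that $({\rm Hess}(\varphi))_{ij} = \varphi_{,ij}$ is positive-semidefinite, the integrand $\sum_{i,j} H_{ij}\varphi_{,ij} = {\rm tr}({\bf H}\,{\rm Hess}(\varphi))$ is non-negative pointwise; hence $\mathcal F_{(\Delta,{\bf L})}(\varphi) \ge 0$.

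The remaining work is to promote this to strict positivity when $\varphi$ is not affine-linear. The point is that if $\mathcal F_{(\Delta,{\bf L})}(\varphi) = 0$ then ${\rm tr}({\bf H}\,{\rm Hess}(\varphi))$ vanishes identically on $\Delta^0$; since both ${\bf H}$ and ${\rm Hess}(\varphi)$ are positive-semidefinite (the former positive-definite), the trace of their product vanishes only if ${\rm Hess}(\varphi) \equiv 0$ on $\Delta^0$, forcing $\varphi$ to be affine-linear on $\Delta^0$ and hence, by continuity, on $\Delta$. This contradicts the hypothesis. One caveat to handle carefully: $\varphi$ in Proposition~\ref{p:semistable} is only assumed convex and continuous, not smooth, so the identity above is a priori only justified for smooth $\varphi$; I would address this either by first treating smooth convex $\varphi$ and then approximating a general convex $\varphi$ by smooth convex functions (e.g. mollification, which preserves convexity, with control of the boundary integral), or by invoking the formulation of $\mathcal S(\Delta,{\bf L})$ and $\mathcal F_{(\Delta,{\bf L})}$ on merely convex functions as in Proposition~\ref{p:boundary-donaldson}. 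Alternatively, since $\varphi$ convex is automatically smooth (indeed real-analytic away from a small set) on $\Delta^0$ and $\mathcal F$ is defined via integrals, the identity extends by a density argument once the boundary term $\int_{\partial\Delta}\varphi\,d\sigma$ is shown to behave continuously under such approximation.

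The main obstacle is precisely this regularity issue: making rigorous the integration-by-parts step for non-smooth convex $\varphi$, in particular controlling the boundary integral $\int_{\partial\Delta}\varphi\,d\sigma$ and the interior integral $\int_\Delta {\rm tr}({\bf H}\,{\rm Hess}(\varphi))\,dv$ (which involves the distributional Hessian of a convex function, a non-negative matrix-valued measure) simultaneously as one approximates. The cleanest route is to establish the identity $\mathcal F_{(\Delta,{\bf L})}(\varphi) = \int_\Delta {\rm tr}({\bf H}\,{\rm Hess}(\varphi))\,dv$ for smooth convex $\varphi$, observe strict positivity there, and then note that a non-affine convex $\varphi$ can be perturbed to a non-affine \emph{smooth} convex function $\varphi_\epsilon$ with $\mathcal F_{(\Delta,{\bf L})}(\varphi_\epsilon) \to \mathcal F_{(\Delta,{\bf L})}(\varphi)$; since each $\mathcal F_{(\Delta,{\bf L})}(\varphi_\epsilon) > 0$ one only gets $\mathcal F_{(\Delta,{\bf L})}(\varphi) \ge 0$ this way, so the strict inequality for the non-smooth case requires the direct argument that vanishing of $\mathcal F$ forces $\varphi$ affine, using that ${\bf H} > 0$ on the open dense set $\Delta^0$ and that the Hessian measure of a non-affine convex function is non-zero on any open set where $\varphi$ fails to be affine.
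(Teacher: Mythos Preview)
Your core argument is correct and is exactly the paper's proof: apply Lemma~\ref{by-parts} with ${\bf H}=({\rm Hess}(u))^{-1}$, use the Abreu equation to replace $\sum H_{ij,ij}$ by $-s_{(\Delta,{\bf L})}$, and conclude $\mathcal F_{(\Delta,{\bf L})}(\varphi)=\int_\Delta {\rm tr}({\bf H}\,{\rm Hess}(\varphi))\,dv$, which is nonnegative by convexity of $\varphi$ and strictly positive unless ${\rm Hess}(\varphi)\equiv 0$ on $\Delta^0$.

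However, you have misread the hypothesis. The proposition assumes $\varphi$ is a \emph{smooth} convex function on $\Delta$, so Lemma~\ref{by-parts} applies directly and your entire discussion of the ``main obstacle'' (regularity, mollification, distributional Hessians, boundary control under approximation) is unnecessary. The extension to continuous convex functions smooth only on $\Delta^0$ is stated separately as an exercise immediately after the proposition; that is where the approximation argument you sketch would be relevant, but it is not part of the proof of Proposition~\ref{p:semistable} itself.
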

\begin{proof}
Using Lemma~\ref{by-parts} we find
$${\mathcal F}_{(\Delta, {\bf L})} (\varphi) = \int_{\Delta} \sum_{i,j=1}^m H_{ij} \varphi_{,ij} dv \ge 0$$
where we have used the convexity of $\varphi$ for the inequality. Furthermore, as ${\bf H}>0$ on $\Delta^0$, the inequality is strict unless $\varphi_{,ij}=0$, i.e. $\varphi$ is affine-linear. \end{proof}

{\tiny
\begin{exercise}[\bf Donaldson~\cite{Do2}] Show that the statement of Proposition~\ref{p:semistable} holds true for continuous convex functions $\varphi$ which are smooth on the interior $\Delta^0$ and are not affine-linear.
\end{exercise}}

\section{Uniqueness} We show now that the solution of \eqref{Abreu-equation} is unique up to  the addition of affine-linear functions.
\begin{thm}[\bf D. Guan~\cite{Guan}]\label{uniqueness} Any two solutions $u_1, u_2\in \cS(\Delta, {\bf L})$ of \eqref{Abreu-equation} differ by an affine-linear function. In particular, on a compact toric K\"ahler manifold or orbifold $(M, \omega, \T)$, there exists at most one, up to a $\T$-equivariant isometry, $\omega$-compatible $\T$-invariant extremal K\"ahler metric  $(g, J)$.
\end{thm}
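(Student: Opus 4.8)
The plan is to run a convexity argument along the straight segment joining the two symplectic potentials. Let $u_0,u_1\in\cS(\Delta,{\bf L})$ both solve the Abreu equation \eqref{Abreu-equation}, set $\varphi:=u_1-u_0$ and $u_t:=u_0+t\varphi$ for $t\in[0,1]$. First I would check that the whole segment is admissible, i.e. $u_t\in\cS(\Delta,{\bf L})$ for every $t\in[0,1]$: using Proposition~\ref{p:boundary-donaldson}, membership in $\cS(\Delta,{\bf L})$ amounts to $u-\tfrac12\sum_{j=1}^d L_j\log L_j$ being smooth on $\Delta$ together with strict convexity of the restriction of $u$ to the interior of each face of $\Delta$, and both conditions are stable under convex combinations. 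The same decomposition $\varphi=\big(u_1-\tfrac12\sum_j L_j\log L_j\big)-\big(u_0-\tfrac12\sum_j L_j\log L_j\big)$ shows that $\varphi$ is smooth on all of $\Delta$, so every boundary integral appearing below is finite, and for each $t$ the tensor ${\bf H}_t:={\rm Hess}(u_t)^{-1}$ extends smoothly to $\Delta$ and satisfies the boundary conditions of Proposition~\ref{ACGT-boundary}.

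The core of the argument is the following monotonicity. Writing ${\bf G}_t={\rm Hess}(u_t)$, so that $\dot{\bf G}_t={\rm Hess}(\varphi)$ is independent of $t$, I apply Lemma~\ref{by-parts} with the function $\varphi$ and the tensor ${\bf H}_t$ to get
$$F(t):=\int_\Delta\sum_{i,j=1}^m (H_t)_{ij}\,\varphi_{,ij}\,dv \;=\; 2\int_{\partial\Delta}\varphi\,d\sigma-\int_\Delta s(u_t)\,\varphi\,dv .$$
Since $s(u_0)=s(u_1)=s_{(\Delta,{\bf L})}$, the right-hand side at $t=0$ and at $t=1$ both equal the Donaldson--Futaki value $\cF_{(\Delta,{\bf L})}(\varphi)$, hence $F(0)=F(1)$. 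On the other hand, differentiating the left-hand expression and using $\dot{\bf H}_t=-{\bf H}_t\,{\rm Hess}(\varphi)\,{\bf H}_t$,
$$F'(t)=-\int_\Delta{\rm tr}\big({\bf H}_t\,{\rm Hess}(\varphi)\,{\bf H}_t\,{\rm Hess}(\varphi)\big)\,dv=-\int_\Delta\big\|{\bf H}_t^{1/2}\,{\rm Hess}(\varphi)\,{\bf H}_t^{1/2}\big\|^2\,dv\le 0 .$$
Thus $F$ is non-increasing on $[0,1]$ with equal endpoint values, so $F'\equiv 0$; since ${\bf H}_t>0$ on $\Delta^0$, this forces ${\rm Hess}(\varphi)\equiv 0$ on $\Delta^0$, and connectedness of $\Delta^0$ together with continuity of $\varphi$ on $\Delta$ yields that $\varphi=u_1-u_0$ is affine-linear. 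This proves the first assertion.

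For the ``in particular'' statement I would invoke Theorem~\ref{thm:symplectic-potential-space}: $\T$-invariant, $\omega$-compatible K\"ahler structures on $(M,\omega,\T)$ modulo $\T$-equivariant symplectomorphisms correspond bijectively to $\cS(\Delta,{\bf L})$ modulo affine-linear functions, and under this correspondence the extremal ones are exactly the solutions of \eqref{Abreu-equation} (Lemma~\ref{extremal-toric} and Proposition~\ref{p:Abreu-equation}). Two extremal K\"ahler structures therefore give two solutions of \eqref{Abreu-equation}, which by the first part differ by an affine-linear function, hence represent the same class in $\cS(\Delta,{\bf L})$ modulo affine-linear functions; adding an affine-linear function does not change ${\rm Hess}(u)$, so by Lemma~\ref{l:angular-coordinates} the two K\"ahler structures are related by a $\T$-equivariant symplectomorphism, which is then a Riemannian isometry. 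The orbifold case is identical, working with the corresponding rational Delzant triple.

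I expect the only real obstacle to be the bookkeeping at $\partial\Delta$: one must be sure that the linear interpolant $u_t$ stays in $\cS(\Delta,{\bf L})$ and that $\varphi$ is smooth up to the boundary, so that Lemma~\ref{by-parts} (whose proof is an integration by parts over $\Delta$ against the facet measures $d\sigma$) may be applied at every $t$ and $F'$ computed by differentiating under the integral sign; Proposition~\ref{p:boundary-donaldson} and Exercise~\ref{e4} make these points routine. Once past that, the mechanism is nothing more than the strict convexity of $A\mapsto-\log\det A$ on positive-definite symmetric matrices, which here appears in the packaged form ``$F'\le 0$, with equality only when ${\rm Hess}(\varphi)=0$''.
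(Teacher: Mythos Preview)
Your proof is correct and is essentially the same convexity-along-the-segment argument as the paper's: the paper packages the computation via the relative $K$-energy $\cE_{(\Delta,{\bf L})}(u)=\cF_{(\Delta,{\bf L})}(u)-\int_\Delta\big(\log\det\Hess(u)-\log\det\Hess(u_0)\big)\,dv$, shows its critical points are solutions of \eqref{Abreu-equation}, and computes its second variation to be exactly your $-F'(t)$, so that two critical points on the convex set $\cS(\Delta,{\bf L})$ must differ by an affine-linear function. Your $F(t)$ is, up to the additive constant $\cF_{(\Delta,{\bf L})}(\varphi)$, minus $\tfrac{d}{dt}\cE_{(\Delta,{\bf L})}(u_t)$, so the two arguments are the same computation in slightly different dress; the paper's framing has the side benefit of immediately yielding Corollary~\ref{bounded-mabuchi} and feeding into Proposition~\ref{p:uniform-K-stability}.
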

\begin{proof}  Consider the functional
\begin{equation}\label{K-energy}
\mathcal{E}_{(\Delta, {\bf L})}(u) := \mathcal{F}_{(\Delta, {\bf L})}(u)  - \int_{\Delta}\Big(\log \det \Hess (u)- \log \det \Hess (u_0)\Big) \, dv,
\end{equation}
 referred to as the {\it relative $K$-energy} of $(\Delta, {\bf L})$ with respect to $\T$. It is well-defined for elements  $u \in \cS(\Delta, {\bf L})$ by virtue of the equivalent boundary conditions \eqref{a3}-\eqref{a4}. Using the formula $d \log \det {\bf A} = \trace {\bf A}^{-1} d {\bf A}$ for any non-degenerate matrix ${\bf A}$ and  Lemma~\ref{by-parts}, one computes the first variation of  $\mathcal{E}_{(\Delta, {\bf L})}$ at $u$ in the direction of $\dot u$
\begin{equation*}
\begin{split}
\Big(d \mathcal{E}_{(\Delta, {\bf L})}\Big)_{u} (\dot{u})=&  \mathcal{F}_{(\Delta, {\bf L})}(\dot{u}) -\int_{\Delta}\sum_{i,j=1}^m {H}^{u}_{ij} \dot{u}_{,ij} dv \\
                                                                               =& \int_{\Delta}\Big[ \Big( -\sum_{i,j=1}^m {H}^u_{ij}\Big)_{,ij}\Big) - s_{(\Delta, {\bf L})}\Big]  \dot{u} \ dv,
\end{split}
\end{equation*}
showing that the critical points  of $\mathcal{E}_{(\Delta, {\bf L})}$ are precisely the solutions of \eqref{Abreu-equation}. Furthermore, using $d{\bf A}^{-1} = -{\bf A}^{-1} d{\bf A} {\bf A}^{-1}$, the second variation of $\mathcal{E}_{(\Delta, {\bf L})}$ at $u$ in the directions of $\dot{u}$ and $\dot{v}$ is computed to be
\begin{equation*}
\Big(d^2 \mathcal{E}_{(\Delta, {\bf L})}\Big)_{u}(\dot{u}, \dot{v}) =\int_{\Delta} \trace \Big(\big(\Hess(u)\big)^{-1} \Hess(\dot{u}) \big(\Hess(u)\big)^{-1} \Hess(\dot{v})\Big) dv,
\end{equation*}
showing  that $\mathcal{E}_{(\Delta, {\bf L})}$ is convex. In fact, as $\Hess(u)$ is positive definite and $\Hess(\dot{u})$ is symmetric, the vanishing  $\Big(d^2 \mathcal{E}_{\Delta, {\bf L}, f}\Big)_{u}(\dot{u}, \dot{u}) = 0$ is equivalent to $\dot{u}$
being affine-linear.

It follows from \eqref{a3}-\eqref{a4} that for any two elements $u_1, u_2 \in {\cS}(\Delta, {\bf L})$, $u(t) = tu_1 + (1-t)u_2, \ t \in [0,1]$ is a curve in ${\cS}(\Delta, {\bf L})$ with tangent vector $\dot{u}= u_1-u_2$. Using the convexity of $\mathcal{E}_{(\Delta, {\bf L})}$, it follows that that if $u_1$ and $u_2$ are two solutions of \eqref{Abreu-equation} (equivalently, $u_1$ and $u_2$ are critical points of $\mathcal{E}_{(\Delta, {\bf L})}$), then $u_1-u_2$ must be affine-linear. \end{proof}
\begin{cor}\label{bounded-mabuchi} If \eqref{Abreu-equation} admits a solution $u^* \in \cS(\Delta, {\bf L})$,  then the relative K-energy  $\mathcal{E}_{(\Delta, {\bf L})}$  atteints its minimum at $u^*$.
\end{cor}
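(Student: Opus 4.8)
The plan is to deduce the corollary directly from the convexity analysis of the relative $K$-energy $\mathcal{E}_{(\Delta, {\bf L})}$ already performed in the proof of Theorem~\ref{uniqueness}; no new ingredient is required. Three facts are available from there: \emph{(a)} the first variation of $\mathcal{E}_{(\Delta, {\bf L})}$ at $u$ in the direction $\dot u$ is $\big(d\mathcal{E}_{(\Delta, {\bf L})}\big)_{u}(\dot u)=\int_{\Delta}\big(s(u)-s_{(\Delta, {\bf L})}\big)\,\dot u\,dv$, where $s(u)=-\sum_{i,j=1}^m (u^{,ij})_{,ij}$, so the critical points of $\mathcal{E}_{(\Delta, {\bf L})}$ on $\cS(\Delta, {\bf L})$ are exactly the solutions of the Abreu equation~\eqref{Abreu-equation}; \emph{(b)} for any $u_1,u_2\in\cS(\Delta, {\bf L})$ the affine segment $u(t)=(1-t)u_1+tu_2$, $t\in[0,1]$, again lies in $\cS(\Delta, {\bf L})$, by the boundary description \eqref{a3}-\eqref{a4}; \emph{(c)} along such a segment $\mathcal{E}_{(\Delta, {\bf L})}$ is convex, since its second variation $\big(d^2\mathcal{E}_{(\Delta, {\bf L})}\big)_{u(t)}(\dot u,\dot u)=\int_{\Delta}\trace\big((\Hess(u(t)))^{-1}\Hess(\dot u)(\Hess(u(t)))^{-1}\Hess(\dot u)\big)\,dv$ is nonnegative.

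Granting this, I would fix the assumed solution $u^*\in\cS(\Delta, {\bf L})$ of \eqref{Abreu-equation} and an arbitrary $u\in\cS(\Delta, {\bf L})$, set $u(t):=(1-t)u^*+tu$ (a curve in $\cS(\Delta, {\bf L})$ with tangent $\dot u=u-u^*$, which is smooth up to $\partial\Delta$ by \eqref{a5}), and consider $f(t):=\mathcal{E}_{(\Delta, {\bf L})}(u(t))$ on $[0,1]$. By \emph{(c)}, $f$ is convex; by \emph{(a)} together with the fact that $u^*$ solves \eqref{Abreu-equation}, $f'(0)=\big(d\mathcal{E}_{(\Delta, {\bf L})}\big)_{u^*}(\dot u)=\int_{\Delta}\big(s(u^*)-s_{(\Delta, {\bf L})}\big)\,\dot u\,dv=0$. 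A convex $C^1$ function on $[0,1]$ whose derivative vanishes at the left endpoint has nondecreasing derivative, hence is nondecreasing, so $f(1)\ge f(0)$, i.e. $\mathcal{E}_{(\Delta, {\bf L})}(u)\ge\mathcal{E}_{(\Delta, {\bf L})}(u^*)$. As $u$ was arbitrary in $\cS(\Delta, {\bf L})$, this says precisely that $\mathcal{E}_{(\Delta, {\bf L})}$ attains its minimum at $u^*$.

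There is essentially no obstacle in this argument; the only points worth a line of care are routine ones already implicit in the proof of Theorem~\ref{uniqueness}: that differentiation under the integral sign defining $\mathcal{E}_{(\Delta, {\bf L})}$ is legitimate along the segment $u(t)$ (guaranteed by the smoothness up to $\partial\Delta$ of $\log\det\Hess(u(t))$ and of the other integrands under \eqref{a3}-\eqref{a4}), and that $\dot u=u-u^*$ is an admissible test function for the integration-by-parts Lemma~\ref{by-parts} used to obtain the first-variation formula (again immediate from \eqref{a5}). One may also remark that the minimiser is unique only modulo affine-linear functions, which is the complementary statement of Theorem~\ref{uniqueness}.
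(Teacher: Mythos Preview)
Your proposal is correct and follows essentially the same approach as the paper: the paper's proof simply observes that the convexity argument from Theorem~\ref{uniqueness} makes $\mathcal{E}_{(\Delta, {\bf L})}$ convex on $\cS(\Delta,{\bf L})$, and that a critical point of a convex functional is a global minimum. You spell out the same reasoning in slightly more detail along the affine segment, which is fine; the only minor imprecision is that $\log\det\Hess(u(t))$ is not itself smooth up to $\partial\Delta$---it is the \emph{difference} $\log\det\Hess(u(t))-\log\det\Hess(u_0)$ appearing in \eqref{K-energy} that is smooth on $\Delta$ by \eqref{a3}--\eqref{a4}, and this is what justifies differentiating under the integral.
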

\begin{proof} The arguments in the proof of Theorem~\ref{uniqueness} show that $\mathcal{E}_{(\Delta, {\bf L})}$ is convex on $\cS(\Delta, {\bf L})$.  The solution $u^*$ being a critical point of $\mathcal{E}_{(\Delta, {\bf L})}$, it is  therefore a global minima. \end{proof}

\section{K-stability} 
\begin{defn}[\bf Toric K-stability] \label{K-stable} We say that a labelled compact convex simple polytope $(\Delta, {\bf L})$ in $\R^m$ is $K$-{\it semistable} if 
\begin{equation}\label{K-semistable}
{\mathcal F}_{(\Delta, {\bf L})} (\varphi) \ge 0
\end{equation}
for any convex piecewise affine-linear (PL) function $\varphi={\max}(f_1, \ldots, f_k)$ on $\Delta$.  The labelled polytope $(\Delta, {\bf L})$ is $K$-{\it stable}, if moreover, equality in \eqref{K-semistable} is achieved only for the affine-linear functions $\varphi$.
If $(\Delta, {\bf L}, \Lambda)$ is a Delzant triple, we say that the corresponding toric symplectic manifold$(M, \omega, \T)$ is $K$-stable iff $(\Delta, {\bf L})$ is.
\end{defn}
An immediate corollary of Proposition~\ref{p:semistable} is the following 
\begin{cor}[\bf Donaldson~\cite{Do2}]\label{cor:K-stability} If \eqref{Abreu-equation} admits solution, then $(\Delta, {\bf L})$ is K-semistable.
\end{cor}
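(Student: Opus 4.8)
The plan is to deduce this corollary from Proposition~\ref{p:semistable} by approximating convex piecewise affine-linear functions by smooth convex ones, and then appealing to continuity of the Donaldson--Futaki functional $\mathcal F_{(\Delta, {\bf L})}$. Suppose that the Abreu equation \eqref{Abreu-equation} admits a solution $u \in \cS(\Delta, {\bf L})$. By Proposition~\ref{p:semistable}, we know $\mathcal F_{(\Delta, {\bf L})}(\varphi) > 0$ for every smooth convex function $\varphi$ on $\Delta$ that is not affine-linear, and trivially $\mathcal F_{(\Delta, {\bf L})}(\varphi) = 0$ for affine-linear $\varphi$ (this is the defining property \eqref{affine-extremal} of $s_{(\Delta, {\bf L})}$). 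In particular $\mathcal F_{(\Delta, {\bf L})}(\varphi) \ge 0$ for all smooth convex $\varphi$.

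First I would fix a convex PL function $\varphi = \max(f_1, \ldots, f_k)$ on $\Delta$ and produce a sequence of smooth convex functions $\varphi_\varepsilon$ on (a neighbourhood of) $\Delta$ with $\varphi_\varepsilon \to \varphi$ uniformly on $\Delta$ as $\varepsilon \to 0$. The standard device is a regularized maximum: convolve $\max$ with a smooth mollifier to obtain a smooth, convex, non-decreasing-in-each-argument function $M_\varepsilon : \R^k \to \R$ with $|M_\varepsilon - \max| \le \varepsilon$, and set $\varphi_\varepsilon := M_\varepsilon(f_1, \ldots, f_k)$. Since each $f_j$ is affine-linear, $\varphi_\varepsilon$ is a smooth convex function, and $\|\varphi_\varepsilon - \varphi\|_{C^0(\Delta)} \le \varepsilon$. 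Then $\mathcal F_{(\Delta, {\bf L})}(\varphi_\varepsilon) \ge 0$ for every $\varepsilon > 0$ by the previous paragraph. Finally, since $\mathcal F_{(\Delta, {\bf L})}$ as defined in \eqref{DF} is manifestly continuous with respect to uniform convergence on $\Delta$ — it is the sum of the boundary integral $2\int_{\partial\Delta}\varphi\, d\sigma$ and $-\int_\Delta s_{(\Delta, {\bf L})}\varphi\, dv$, both of which are bounded linear functionals on $C^0(\Delta)$ with respect to the sup norm, the measures $d\sigma$ on $\partial\Delta$ and $dv$ on $\Delta$ being finite — we may pass to the limit: $\mathcal F_{(\Delta, {\bf L})}(\varphi) = \lim_{\varepsilon \to 0}\mathcal F_{(\Delta, {\bf L})}(\varphi_\varepsilon) \ge 0$. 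As $\varphi$ was an arbitrary convex PL function, this is precisely the $K$-semistability inequality \eqref{K-semistable}, and hence $(\Delta, {\bf L})$ is $K$-semistable.

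The only mildly delicate point is the construction of the smooth convex approximants $\varphi_\varepsilon$ that simultaneously converge uniformly and stay convex; the regularized-maximum construction handles this cleanly, and one should note that it suffices to have $\varphi_\varepsilon$ defined and smooth on an open neighbourhood of the compact polytope $\Delta$ (which it is, since the $f_j$ are globally defined affine functions), so that no boundary subtlety arises in invoking Proposition~\ref{p:semistable}. Everything else is continuity of a pair of integrals against finite measures. I do not expect any real obstacle here; the corollary is essentially a soft consequence of the smooth case together with the observation that $K$-semistability is tested against a class of functions (convex PL) that lies in the uniform closure of the smooth convex functions on which positivity has already been established.
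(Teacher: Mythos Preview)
Your argument is correct and is precisely the sort of smooth approximation plus continuity reasoning that makes the corollary, in the paper's words, ``immediate'' from Proposition~\ref{p:semistable}; the paper does not spell out a proof. Your regularized-maximum construction is a clean way to guarantee convexity of the approximants, and your observation that $\mathcal F_{(\Delta,{\bf L})}$ is continuous for the sup norm is exactly what is needed (compare Remark~\ref{r:density}).
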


This  can be  improved to

\begin{thm}[\bf Zhou--Zhu~\cite{ZZ}] \label{c:K-stability} If \eqref{Abreu-equation}  admits a solution $u \in {\cS}(\Delta, {\bf L})$ then $(\Delta, {\bf L})$ is K-stable.
\end{thm}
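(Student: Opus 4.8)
The plan is to upgrade the $K$-semistability from Corollary~\ref{cor:K-stability} to $K$-stability by ruling out the case of equality $\mathcal{F}_{(\Delta,{\bf L})}(\varphi)=0$ for a convex PL function $\varphi=\max(f_1,\dots,f_k)$ that is not affine-linear, assuming a solution $u\in\cS(\Delta,{\bf L})$ of the Abreu equation \eqref{Abreu-equation} exists. The starting point is the identity from the proof of Proposition~\ref{p:semistable}: for $u\in\cS(\Delta,{\bf L})$ with Hessian inverse ${\bf H}=(H_{ij})$, Lemma~\ref{by-parts} gives, for any smooth $\varphi$,
\begin{equation*}
\mathcal{F}_{(\Delta,{\bf L})}(\varphi)=\int_\Delta \sum_{i,j=1}^m H_{ij}\varphi_{,ij}\,dv.
\end{equation*}
The difficulty is that a PL function $\varphi$ is not smooth, so this formula does not directly apply, and we must make sense of $\sum H_{ij}\varphi_{,ij}$ as a (positive) distribution or measure and track where it is supported.

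First I would record that for a convex PL function $\varphi=\max(f_1,\dots,f_k)$ on $\Delta$, the distributional Hessian $\mathrm{Hess}(\varphi)$ is a matrix-valued measure supported on the (finite) union of codimension-one ``creases'' where two or more of the linear pieces coincide and are both maximal; on each such crease $\Sigma$, $\mathrm{Hess}(\varphi)$ equals $(\text{jump in the normal derivative})\otimes \nu\otimes\nu\,d\mathcal{H}^{m-1}$ for the unit normal $\nu$ to $\Sigma$, a nonnegative measure. Approximating $\varphi$ by smooth convex functions $\varphi_\eps$ (mollification preserves convexity on the interior, and a separate argument near $\partial\Delta$, or working with $\varphi$ minus an affine function so that the boundary terms in Lemma~\ref{by-parts} are controlled), one gets
\begin{equation*}
\mathcal{F}_{(\Delta,{\bf L})}(\varphi)=\lim_{\eps\to 0}\int_\Delta\sum_{i,j=1}^m H_{ij}(\varphi_\eps)_{,ij}\,dv=\int_\Delta \langle {\bf H},\mathrm{Hess}(\varphi)\rangle
\end{equation*}
where the right side is the integral against the measure $\mathrm{Hess}(\varphi)$ of the continuous positive-definite (on $\Delta^0$) tensor ${\bf H}$; boundary and lower-dimensional-face contributions need to be checked to vanish or to contribute correctly using the boundary conditions of Proposition~\ref{ACGT-boundary} as in Lemma~\ref{by-parts}. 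Since ${\bf H}$ is positive definite on $\Delta^0$ and $\mathrm{Hess}(\varphi)$ is a nonnegative measure, $\mathcal{F}_{(\Delta,{\bf L})}(\varphi)\ge 0$; moreover equality forces $\mathrm{Hess}(\varphi)$ to be supported on $\partial\Delta$ together with the lower-dimensional faces where the argument degenerates.

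The crux is then to show that the only convex PL functions whose distributional Hessian is supported on $\partial\Delta\cup\{\text{lower faces}\}$ — i.e. whose creases meet $\Delta^0$ only in lower-dimensional sets irrelevant to the above pairing — are the affine-linear ones. Here one uses that a crease of $\varphi=\max(f_1,\dots,f_k)$, if it meets the open polytope $\Delta^0$, does so in a genuine $(m-1)$-dimensional piece (since $\Delta^0$ is open and the $f_i$ are distinct affine functions), on which ${\bf H}(\nu,\nu)>0$ — contradicting that this contributes $0$. Hence $\varphi$ has no crease meeting $\Delta^0$, so $\varphi$ is affine-linear on the connected set $\Delta^0$, hence on $\Delta$. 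I expect the main obstacle to be the careful justification of the limiting/integration-by-parts argument all the way up to and along the boundary (the interplay between the singular support of $\varphi$ touching $\partial\Delta$ and the degeneracy of ${\bf H}$ there), which is exactly the subtlety flagged around Exercise~\ref{abreu2} and the remark following Lemma~\ref{by-parts}; this is presumably where the real work of Zhou--Zhu lies, possibly requiring the positivity tensor estimate for ${\bf H}$ near faces and a decomposition of $\varphi$ adapted to the face structure of $\Delta$.
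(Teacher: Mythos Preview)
Your proposal has the right core insight and is essentially correct, but the route differs from the paper's. Both approaches recognize that $\mathcal{F}_{(\Delta,{\bf L})}(\varphi)$ for PL convex $\varphi$ should equal a pairing of the positive-definite tensor ${\bf H}$ against the ``jump'' concentrated on the creases, which is strictly positive whenever a crease meets $\Delta^0$. The difference is in how this pairing is obtained.

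You set up the distributional Hessian and pass through a mollification limit, flagging the boundary issues as the main obstacle. The paper instead works directly and elementarily: for a simple PL function $\varphi=\max(L,0)$ with crease $F=\Delta\cap\{L=0\}$ splitting $\Delta=\Delta'\cup\Delta''$, it applies the integration-by-parts identities \eqref{stokes}--\eqref{detailed} \emph{on the sub-polytope} $\Delta'$ (where $\varphi$ is affine-linear, so $\varphi_{,ij}=0$). On the facets of $\Delta'$ lying in $\partial\Delta$ the boundary conditions of Proposition~\ref{ACGT-boundary} apply verbatim; on the new internal facet $F$ they do not, and the surviving boundary term is exactly $\int_F {\bf H}(dL,dL)\,d\sigma>0$. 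This yields the explicit formula \eqref{Futaki-segment} with no limiting argument. The general PL case follows by iterating over all linear pieces.

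What each approach buys: the paper's sub-polytope computation is shorter and sidesteps entirely the mollification-near-$\partial\Delta$ issue you correctly identify as delicate; it also gives the explicit crease formula. Your distributional framework is conceptually uniform (no case-by-case over pieces) and makes transparent why only affine-linear $\varphi$ can give zero, but to justify the limit rigorously you would likely end up redoing the piecewise integration by parts anyway---which is the paper's proof.
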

\begin{proof} The proof is elementary and uses integration by parts (similar to the proof of Lemma~\ref{by-parts}) in order to obtain a distribution analogue of \eqref{Futaki-Hessian} in the case when $\varphi$ is a PL convex function.  For simplicity we  will check that if \eqref{Abreu-equation} admits a solution $u \in {S}(\Delta, {\bf L})$,  then ${\mathcal F}_{(\Delta, {\bf L})}$ is strictly positive for a {\it simple} convex PL function $\varphi$, i.e. $\varphi = {\rm max}(L, \tilde{L})$ where $L$ and $\tilde{L}$  are affine-linear functions on $\tor^*$  with $L- \tilde{L}$ vanishing in the interior of $\Delta$; as $\mathcal{F}_{(\Delta, {\bf L})}(\tilde{L})=0$ by \eqref{affine-extremal} we can assume without loss of generality that $\tilde L \equiv 0$.

Denote by $F= \Delta \cap \{L=0\}$ the `crease'  of $\varphi$. This introduces a partition $\Delta= \Delta' \cup \Delta''$ of $\Delta$ into $2$ sub-polytopes  with $F$ being a common facet of the two. Notice that $\varphi$ is affine-linear over each $\Delta'$ and $\Delta''$, being zero over $\Delta''$, say. Furthermore, $dL$ defines an inward normal for $\Delta'$ (by convexity) and we use \eqref{boundary-measure} to define a measure $d\sigma$ on $\partial \Delta'$.  Let us write $\partial \Delta' = F \cup \partial \Delta_0$, where $\partial \Delta_0$ is the union of facets of $\Delta'$ which belong to $\partial \Delta$. We then have (using that $\varphi \equiv 0$ on $\Delta''$)
\begin{equation*}
\begin{split}
\mathcal{F}_{(\Delta, {\bf L})} (\varphi) = &\, 2\int_{\partial \Delta} \varphi d\sigma -\int_{\Delta} s_{(\Delta, {\bf L})} \varphi dv \\
= &\, 2\int_{\partial \Delta_0}\varphi d\sigma - \int_{\Delta'} s_{(\Delta, {\bf L})} \varphi dv.
  \end{split}
 \end{equation*}
We now use \eqref{stokes}  and \eqref{detailed} over $\Delta'$ with ${H}_{ij}$ satisfying  the conditions of Proposition~\ref{ACGT-boundary} on $\Delta$  and $-\sum_{ij} H_{ij,ij}= s_{(\Delta, {\bf L})}$.  Noting that $\varphi$ is affine-linear over $\Delta'$, i.e. $\varphi_{,ij}=0$ we have
\begin{equation*}
\begin{split}
- \int_{\Delta'} s_{(\Delta, {\bf L})}\varphi  dv = & \int_{\Delta'} \varphi \Big(\sum_{i,j=1}^m {H}_{ij,ij}\Big) dv \\
                                                                                                                     = &-2\int_{\partial \Delta_0}\varphi d\sigma\\
                                                                                                                        &- \int_{F}\Big(\varphi \sum_{i=1}^m \langle d{\bf H}(dL, e_i), e_i^*\rangle\Big) d\sigma \\
                                                                                                                          & + \int_{F} {\bf H}(dL, dL) d\sigma.
\end{split}
\end{equation*}
As $\varphi$ vanishes on $F$, the term at the third line is zero, so that
\begin{equation}\label{Futaki-segment}
\mathcal{F}_{(\Delta, {\bf L})} (\varphi) =  \int_{F}  {\bf H}(dL, dL) d\sigma >0,
\end{equation}
as ${\bf H}$ is positive definite over $\Delta^0$. \end{proof}

Thus motivated, the central conjecture is

\begin{conj}[\bf Donaldson~\cite{Do2}] \eqref{Abreu-equation} admits a  solution in $\cS(\Delta, {\bf L})$ if and only if $(\Delta, {\bf L})$ is $K$-stable.
\end{conj}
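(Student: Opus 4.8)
The forward implication is already contained in the excerpt: by Corollary~\ref{cor:K-stability}, and more sharply by Theorem~\ref{c:K-stability} of Zhou--Zhu, solvability of the Abreu equation~\eqref{Abreu-equation} forces $(\Delta,{\bf L})$ to be $K$-stable. So the real content of the conjecture is the converse: to produce, out of the $K$-stability hypothesis alone, a symplectic potential $u\in\cS(\Delta,{\bf L})$ solving~\eqref{Abreu-equation}. The plan is to take the variational route suggested by Corollary~\ref{bounded-mabuchi}: by the computation of the first variation in the proof of Theorem~\ref{uniqueness}, the solutions of~\eqref{Abreu-equation} are exactly the critical points of the convex relative $K$-energy $\mathcal{E}_{(\Delta,{\bf L})}$ of~\eqref{K-energy} on $\cS(\Delta,{\bf L})$; since $\mathcal{E}_{(\Delta,{\bf L})}$ is convex, it suffices to show that $K$-stability implies that $\mathcal{E}_{(\Delta,{\bf L})}$ attains its infimum over $\cS(\Delta,{\bf L})$, and then to prove that the minimizer is smooth.

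The first step is to convert $K$-stability into a quantitative coercivity estimate for the linear part $\mathcal{F}_{(\Delta,{\bf L})}$. One normalizes convex functions $\varphi$ on $\Delta$ by subtracting an affine-linear function and a constant, say so that $\varphi\ge 0$ and $\varphi$ vanishes at a fixed interior point; on this slice one wants a bound of the form $\mathcal{F}_{(\Delta,{\bf L})}(\varphi)\ge \lambda\,\|\varphi\|$ for some $\lambda>0$ and a norm like $\int_{\partial\Delta}\varphi\,d\sigma$. This is the toric analogue of ``$K$-stability implies properness of the $K$-energy.'' The delicate point is upgrading the strict inequality $\mathcal{F}_{(\Delta,{\bf L})}(\varphi)>0$ of Definition~\ref{K-stable}, valid a priori only for piecewise affine-linear convex $\varphi$, to a \emph{uniform} linear lower bound over all normalized convex $\varphi$. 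This requires a Minkowski-type compactness argument for the cone of normalized convex functions together with the linearity and lower semicontinuity of $\mathcal{F}_{(\Delta,{\bf L})}$: a destabilizing sequence would subconverge to a convex limit (possibly only piecewise affine-linear, using the approximation of convex functions by their PL truncations) on which $\mathcal{F}_{(\Delta,{\bf L})}$ vanishes, contradicting stability.

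With coercivity in hand, one combines it with the integration-by-parts identity of Lemma~\ref{by-parts} to control the $-\int_\Delta\log\det\Hess(u)\,dv$ term along a minimizing sequence $u_k$ for $\mathcal{E}_{(\Delta,{\bf L})}$, which yields a $C^0$ bound on $u_k$ modulo affine-linear functions (using convexity and the forced boundary singularity of Proposition~\ref{p:boundary-donaldson}), then interior $C^2$ and $C^{2,\alpha}$ estimates for the fourth-order fully nonlinear operator $u\mapsto-\sum_{i,j}(u^{,ij})_{,ij}$, and finally boundary estimates showing the limit $u_\ast$ still carries the Guillemin singularity $\tfrac12\sum_j L_j\log L_j$, i.e. $u_\ast\in\cS(\Delta,{\bf L})$. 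Once $u_\ast$ is known to be a minimizer in $\cS(\Delta,{\bf L})$, ellipticity of the linearized operator at $u_\ast$ (its symbol is the square of the positive-definite operator $\sum_{i,j}H^{u_\ast}_{ij}\partial_i\partial_j$ on $\Delta^0$) bootstraps $u_\ast$ to a smooth solution of $-\sum_{i,j}(u_\ast^{\,,ij})_{,ij}=s$ on $\Delta^0$, and by Proposition~\ref{p:Abreu-equation}, \eqref{affine-extremal}, the function $s$ is automatically the extremal affine function $s_{(\Delta,{\bf L})}$, so~\eqref{Abreu-equation} holds.

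I expect the main obstacle to be the a priori estimates, in particular their interaction with the boundary $\partial\Delta$. The coercive information extracted from $K$-stability is of ``integral against convex functions'' type and essentially only sees the boundary measure $d\sigma$ of~\eqref{boundary-measure}; turning it into pointwise control on $\Hess(u)$ and $\Hess(u)^{-1}$ that matches the prescribed degeneracy of the labels ${\bf L}$ near each facet is the genuinely hard analytic work. (One may equivalently run a continuity method instead of a direct minimization, deforming, say, the extremal affine function or the labelling, with openness from the ellipticity above and closedness from the same estimates; the difficulty is the same.) Donaldson carried out this program for $m=2$ and constant scalar curvature, and the general toric extremal case needs the full interior-and-boundary estimate machinery developed afterwards.
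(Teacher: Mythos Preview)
The statement you are attempting to prove is labelled a \emph{conjecture} in the paper, and the paper offers no proof of it; what follows the conjecture is a survey of partial results (Donaldson's theorem for $m=2$, the Chen--Cheng--He properness criterion, the continuity method). Your write-up is, correspondingly, a strategy outline rather than a proof, and you say as much in your last paragraph. The outline does match the architecture of the known approaches: pass from $K$-stability to a coercivity bound for $\cF_{(\Delta,{\bf L})}$, feed this into Proposition~\ref{p:uniform-K-stability} to get properness of $\cE_{(\Delta,{\bf L})}$, and then either run a continuity method with a priori estimates or invoke Theorem~\ref{strong}.

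There is, however, a genuine gap in your first step that you treat too lightly. You propose to upgrade the strict inequality $\cF_{(\Delta,{\bf L})}(\varphi)>0$ on nonaffine PL convex functions to a uniform bound $\cF_{(\Delta,{\bf L})}(\varphi)\ge\lambda\|\varphi\|$ by compactness: a destabilizing sequence would subconverge to a normalized convex limit on which $\cF_{(\Delta,{\bf L})}$ vanishes, ``contradicting stability.'' But the limit of such a sequence is in general an arbitrary convex function, not a PL one, and $K$-stability in Definition~\ref{K-stable} asserts strict positivity \emph{only} on nonaffine PL functions. Approximating the limit by PL functions gives only $\cF_{(\Delta,{\bf L})}\ge 0$ there, not a contradiction. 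This is exactly why Theorem~\ref{thm:stable-to-iniform} (that $K$-stability implies $*$-uniform $K$-stability) is a substantive theorem, proved in the paper only for $m=2$ and under the extra hypothesis $s_{(\Delta,{\bf L})}>0$. In higher dimensions the implication ``$K$-stable $\Rightarrow$ uniformly $K$-stable'' is itself part of the open conjecture, not a soft lemma; your compactness sketch does not close it.
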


\section{Toric test configurations} In this section we shall explain the notion of {\it toric test configuration}  which gives a geometrical meaning of the convex PL functions appearing in the definition of K-stability. 

We suppose that  $(\Delta, {\bf L}) \subset \R^m$ is a Delzant polytope with respect to (the dual of)  the standard lattice $\Z^m \subset \R^m$, corresponding to a symplectic toric manifold $(M, \omega, \T^m)$,   and $f={\rm max}(f_1, \ldots, f_p)$ is a convex PL function over $\Delta$, with $f_1, \ldots, f_p$ being a minimal set of affine-linear functions with {\it rational coefficients} defining $f$. Multiplying  $f$ by a suitable denominator, we can assume without loss that all $f_i$'s  have integer coefficients. We choose $R>0$ such that $(R- f)$ is strictly positive on $\Delta$,  and consider the labelled polytope in ${\rm P} \subset \R^{m+1}= \R^m \times \R$,  defined by 
\begin{equation}\label{configuration-polytope}
{\rm P} = \Big\{ (x, x_{m+1}) \in \R^{m+1} : x\in \Delta, 0 \le x_{m+1} \le (R- f(x))  \Big\}.
\end{equation}
We further assume that the polytope ${\rm P}$   is  rational Delzant~\footnote{This places a condition on $f$ but the set of convex  rational PL functions defining rational Delzant polytopes is dense in the $C^0(\Delta)$ topology.}  with respect to the labels
$$\Big\{ L_j(x) \ge 0,  \ x_{m+1} \ge 0, \   (R - x_{m+1} -f_i(x)) \ge 0, \    j=1, \ldots, d, \ i=1, \ldots, p\Big\},$$ 
where ${\bf L}=(L_1(x), \ldots, L_{d}(x))$ are the labels of $\Delta$,  and  (the dual of)  the lattice $\Z^{m+1} \subset \R^{m+1}$. For simplicity, we shall assume that ${\rm P}$ is Delzant, i.e. that it gives rise to a {\it smooth} toric symplectic $2(m+1)$-dimensional manifold $(\mathcal{M}, \Omega)$,  but the discussion below  holds  in the general orbifold case.  

We denote by $\T^{m+1} = \T^m \times \Sph^1_{(m+1)}$ the corresponding torus, with $\T^m$ identified with the torus acting on $M$. Notice that $\Delta$ is a facet of ${\rm P}$, whose pre-image  is  a  smooth submanifold $\tilde M\subset \mathcal{M}$.   The Delzant construction identifies the stabilizer of points in $\tilde M^0$ with the circle subgroup $\Sph^1_{(m+1)} \subset \T^{m+1}$,  so that   with  respect to the induced action of $\T^{m+1}/\Sph^1_{(m+1)} \cong \T^m$,   $(\tilde M,  \Omega_{|_{\tilde M}})$, is equivariantly isomorphic to $(M, \omega)$ by Delzant's theorem. We shall thus assume without loss of generality that $M=\tilde M$ and  $\omega=\Omega_{|_{\tilde M}}$.  

Let us now choose an $\Omega$-compatible $\T^{m+1}$-invariant complex structure ${\mathcal J}$ on $\mathcal{M}$, which  induces a $\T^m$-invariant   $\omega$-compatible  complex structure  $J$ on $(M, \omega)$.   Donaldson~\cite[Proposition~4.2.1]{Do2} shows that with respect to the $\C^*$-action  $\rho : \C^* \to {\rm Aut}(\mathcal M)$ induced by $\Sph^1_{(m+1)}$,  the complex $(m+1)$-dimensional manifold  $(\mathcal M, \Omega, \rho)$ is an example of a {\it K\"ahler test configuration}  associated to the K\"ahler manifold $(M, \omega)$, meaning  that the following are satisfied:
\begin{defn}[\bf K\"ahler test configurations]\label{d:kahler-test-configuration} A  smooth K\.ahler test configuration associated to a compact complex $m$-dimensional K\"ahler manifold $(M, \omega)$ is a compact complex $(m+1)$-dimensional K\.ahler manifold $(\mathcal M, \Omega)$ endowed with a $\C^*$-action $\rho$, such that 
\begin{enumerate}
\item[$\bullet$] there is  a surjective holomorphic map $\pi : \mathcal M \to \C P^1$ such that each fibre $M_{\tau} :=\pi^{-1}(\tau) \cong M$ for $\tau \in \C P^1 \setminus \{0\}$;
\item[$\bullet$] the $\C^*$-action $\rho$ on $\mathcal M$ is equivariant with respect to the standard $\C^*$-action on $\C P^1$ fixing $0$ and $\infty$;
\item[$\bullet$] there is a $\C^*$-equivariant biholomorphism $\alpha : \mathcal M \setminus M_0 \to M \times (\C P^1 \setminus \{ 0\})$ with respect to the $\C^*$-action $\rho$ and the standard action of $\C^*$ on the second factor  of the product.
\item[$\bullet$]  $\mathcal M$ is endowed with a K\"ahler metric $\Omega$, which restricts to the  K\"ahler metric $\omega$  on $M \cong M_1$.
\end{enumerate}
\end{defn}
To see  the existence of such equivariant $\pi$ in our toric case,  it is instructive to  think of the blow down map $b_1$  plotted  in Figure~2.  The Delzant polytope at the lhs  is associated to a PL convex function over the  interval whereas  the Delzant polytope at the rhs represents the product $\C P^1 \times \C P^1$. It this case,  $M=\C P^1$ is one of the factors of the product,  and the projection to the other factor $\C P^1$  defines the map $\pi$. Notice that for this example,  $\pi^{-1}(0) = \C P^1 \cup E$ is the union of  a copy of $M$ with the exceptional divisor of the blow-up, which  is also the pre-image of the facets of the polytope satisfying $(R-f)=0$.
\begin{defn}[\bf Toric test configuration]  Let $(M, \omega)$ be a toric K\"ahler manifold with labelled Delzant polytope $(\Delta, {\bf L})$ in $\R^m$ with respect to the lattice $\Z^m$. A  K\"ahler test configuration $(\mathcal M, \Omega, \rho$) for $(M,\omega)$ obtained from a rational PL convex  function $f$ as above is called {\it toric test configuration}. \end{defn}

We shall now express the Donaldson--Futaki invariant \eqref{DF}  of  the PL function $f$ defining  a toric test configuration  in terms of  a differential-geometric quantity on $\mathcal M$.  To simplify notation, we shall denote by ${\rm Scal}(\Omega)$ the scalar curvature of the corresponding K\"ahler metric  on $\mathcal{M}$ (defined by $(\Omega, J)$).  We  also notice that the momentum map of the sub-torus $\T^m \subset \T^{m+1}$ sends $\mathcal M$ onto $\Delta$ (which is the projection  of ${\rm P}$ to $\R^m$) and,   with a slight abuse of notation,  we shall denote by $s_{(\Delta, {\bf L})}$  the smooth function on $\mathcal M$ obtained by pulling back  the extremal affine-linear function  $s_{(\Delta, {\bf L})}$  associated to $(M, \omega)$. We then have 
\begin{lemma}\label{l:Kahler-Futaki}\label{l:KF} In the above setting, the Donaldson--Futaki invariant~\eqref{DF} associated to the convex PL function $f$ is given by
\begin{equation}\label{futaki-toric}
\begin{split}
(2\pi)^{m+1}\cF_{(\Delta, {\bf L})} (f)  = &  -\int_{\mathcal M} \Big({\rm Scal}(\Omega) - s_{(\Delta, {\bf L})}\Big) \frac{\Omega^{m+1}}{(m+1)!} \\
                                                        & + (8\pi) \int_M\frac{\omega^{m}}{m!}.
                                                      \end{split}
                                                      \end{equation}  
\end{lemma}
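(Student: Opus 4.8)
The plan is to reduce both sides of \eqref{futaki-toric} to integrals over the polytopes $\Delta$ and ${\rm P}$, and then to match them using the integration–by–parts identity of Lemma~\ref{by-parts} applied on ${\rm P}$, together with the defining property \eqref{affine-extremal} of $s_{(\Delta,{\bf L})}$.

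\medskip
First I would set up the toric data. Since $\mathcal J$ is $\Omega$-compatible and $\T^{m+1}$-invariant, $(\mathcal M,\Omega,\mathcal J,\T^{m+1})$ is a compact toric K\"ahler manifold with Delzant polytope ${\rm P}$; let $\tilde{\bf H}$ be the $S^2(\tor^{m+1})^*$-valued function on ${\rm P}^0$ attached to it (the inverse Hessian of its symplectic potential). By Lemma~\ref{l:abreu}, ${\rm Scal}(\Omega)$ is the $\T^{m+1}$-invariant function $-\sum_{a,b=1}^{m+1}\tilde H_{ab,ab}$, and $\tilde{\bf H}$ satisfies the boundary conditions of Proposition~\ref{ACGT-boundary}. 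Using that on a $2n$-dimensional toric symplectic manifold $\omega^n/n!$ equals $dx_1\wedge\theta_1\wedge\cdots\wedge dx_n\wedge\theta_n$ up to orientation and that each angular coordinate runs over $\R/2\pi\Z$ (the lattice being $\Z^{m+1}$), I get $\int_M\omega^m/m!=(2\pi)^m{\rm Vol}(\Delta)$ and, for any $\T^{m+1}$-invariant $\psi$ descending to a function on ${\rm P}$, $\int_{\mathcal M}\psi\,\Omega^{m+1}/(m+1)!=(2\pi)^{m+1}\int_{\rm P}\psi\,dV$ with $dV$ Lebesgue measure. In particular $8\pi\int_M\omega^m/m!=4(2\pi)^{m+1}{\rm Vol}(\Delta)$.

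\medskip
Next I would apply Lemma~\ref{by-parts} on ${\rm P}$ with $\varphi\equiv 1$, which gives $\int_{\rm P}{\rm Scal}(\Omega)\,dV=2\int_{\partial{\rm P}}d\sigma$, where $d\sigma$ is the facet measure determined by \eqref{boundary-measure} applied to the labels of ${\rm P}$. I then decompose $\partial{\rm P}$ into: the \emph{bottom} $\{x_{m+1}=0\}\cong\Delta$; the \emph{walls} $\{L_j=0\}$, which fibre over $F_j\subset\partial\Delta$ with fibre $[0,R-f(x)]$; and the \emph{top} facets $\{R-x_{m+1}-f_i(x)=0\}$, which project bijectively onto the pieces $\Delta_i=\{x\in\Delta:f(x)=f_i(x)\}$. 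Checking the measure conventions on each, the bottom contributes ${\rm Vol}(\Delta)$, the $j$-th wall contributes $\int_{F_j}(R-f)\,d\sigma$ (the facet measure of $\Delta$), and the top facets together contribute $\sum_i{\rm Vol}(\Delta_i)={\rm Vol}(\Delta)$, so that
\[
\int_{\partial{\rm P}}d\sigma=2\,{\rm Vol}(\Delta)+R\!\int_{\partial\Delta}\!d\sigma-\int_{\partial\Delta}\! f\,d\sigma .
\]
Separately, since $s_{(\Delta,{\bf L})}$ is pulled back from $\Delta$ by the $\T^m$-momentum map, $\int_{\mathcal M}s_{(\Delta,{\bf L})}\,\Omega^{m+1}/(m+1)!=(2\pi)^{m+1}\int_\Delta s_{(\Delta,{\bf L})}(x)(R-f(x))\,dv$, and evaluating \eqref{affine-extremal} at the constant function $1$ yields $\int_\Delta s_{(\Delta,{\bf L})}\,dv=2\int_{\partial\Delta}d\sigma$, whence this integral equals $(2\pi)^{m+1}\big(2R\int_{\partial\Delta}d\sigma-\int_\Delta s_{(\Delta,{\bf L})}f\,dv\big)$. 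Substituting these three computations into the right-hand side of \eqref{futaki-toric}, the terms proportional to $R$ and to ${\rm Vol}(\Delta)$ cancel, leaving exactly $(2\pi)^{m+1}\big(2\int_{\partial\Delta}f\,d\sigma-\int_\Delta s_{(\Delta,{\bf L})}f\,dv\big)=(2\pi)^{m+1}\cF_{(\Delta,{\bf L})}(f)$.

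\medskip
The main obstacle is the careful bookkeeping of the facet measures $d\sigma$ on $\partial{\rm P}$ in the normalization \eqref{boundary-measure}: in particular one must check that the affine normal of each top facet $\{R-x_{m+1}-f_i=0\}$ is a primitive integral vector (this is where integrality of the coefficients of the $f_i$ is used) and that the induced measure there pushes forward to Lebesgue measure on $\Delta_i$. Once these identifications are in place, the statement follows formally from Lemma~\ref{by-parts}, Lemma~\ref{l:abreu} and the defining identity of $s_{(\Delta,{\bf L})}$.
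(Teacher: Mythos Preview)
Your proposal is correct and follows essentially the same route as the paper: reduce the volume and scalar-curvature integrals to integrals over ${\rm P}$ via the Abreu formula and Lemma~\ref{by-parts} with $\varphi\equiv 1$, decompose $\partial{\rm P}$ into bottom, walls and top, and combine with the defining identity \eqref{affine-extremal} for $s_{(\Delta,{\bf L})}$ (the paper leaves the last cancellation implicit in ``follows easily by combining''). One small correction: the facet measure on a top face does not depend on primitivity of the normal but only on the label $R-x_{m+1}-f_i$ via \eqref{boundary-measure}, so the integrality of the $f_i$ is irrelevant to this computation; the push-forward to Lebesgue measure on $\Delta_i$ follows directly from $dL\wedge d\sigma=-dV$ with $dL=-df_i-dx_{m+1}$.
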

\begin{proof}  We write $(x, x') \in \R^m \times \R$ for the linear  coordinates on $\R^{m+1}$ and   let $dx$ and $dx\wedge dx'$ denote the standard Lebesgue measures on $\R^m$ and $\R^{m+1}$, respectively.  In what follows, ${\mathrm P}$ (resp. $\partial {\rm P})$ denotes the Delzant polytope (resp. its boundary) in $\R^{m} \times \R$ corresponding tp $(\mathcal M, \Omega, \T^{m+1})$, $\Delta \subset {\rm P}$ being the facet corresponding to  $x'=0$. Using the description \eqref{omega} of the K\"ahler forms  $\omega$ and $\Omega$ in momentum/angular coordinates, we get
\begin{equation}\label{a}
 (8\pi) \int_M \frac{\omega^m}{m!} = 4(2\pi)^{m+1} \int_{\Delta} dx
 \end{equation}
 \begin{equation}\label{b}
 \begin{split}
 \int_{\mathcal M}s_{(\Delta, {\bf L})} \frac{\Omega^{m+1}}{(m+1)!}  &= (2\pi)^{m+1} \int_{\rm P} s_{(\Delta, {\bf L})}(x) dx \wedge dx' \\
         &=(2\pi)^{m+1}  \int_{\Delta} s_{(\Delta, {\bf L})}(x) (R - f(x) dx.
         \end{split}
 \end{equation}
Using  Lemma~\ref{l:abreu} for ${\rm Scal}(\Omega)$  and \eqref{Futaki-Hessian} with $\varphi=1$ on ${\rm P}$, we compute
\begin{equation}\label{c}
\begin{split}
& \frac{1}{(2\pi)^{m+1}}\int_{\mathcal M} {\rm Scal}(\Omega) \frac{\Omega^{m+1}}{(m+1)!} = 2 \int_{\partial P} d\sigma_{\partial \rm P} \\
 &= 2\Big( \int_{\Delta} dx + \int_{(R-f)(\Delta)} d\sigma_{(R-f)(\Delta)} + \int_{\partial \Delta}(R-f) d \sigma_{\partial \Delta}\Big) \\
 &= 2 \Big( 2 \int_{\Delta} dx  + \int_{\partial \Delta} (R-f) d \sigma_{\partial\Delta}\Big),
 \end{split}
 \end{equation}
 where for passing from the second line to the third  we have used that $d\sigma_{(R-f)(\Delta)}$ is defined by the equality
 $$df \wedge d\sigma_{(R-f)(\Delta)} = dx\wedge dx'.$$
 Lemma~\ref{l:KF} now follows easily  by combining \eqref{a}, \eqref{b} and \eqref{c}.  \end{proof}
 
 \begin{rem} (a)  In the special case when $s_{(\Delta, {\bf L})} = \lambda$ is a constant, which means that  $(M, \omega, J)$ has vanishing Futaki invariant, one can further specify the expression  in the rhs of \eqref{futaki-toric} as follows. By \eqref{affine-extremal} with $f=1$ we  obtain  $\lambda = 2\frac{\int_{\partial \Delta} d\sigma}{\int_{\Delta} dv}$ whereas \eqref{Futaki-Hessian} yields  $\int_{\Delta} s(u) = 2\int_{\partial \Delta} d\sigma$. It thus follows
that 
\begin{equation*}
\begin{split}
\lambda & = \frac{\int_{\Delta} s(u) dv}{\int_{\Delta} dv}  \\
            &  = \frac{\int_M {\rm Scal}(\omega) \omega^{m}}{\int_M \omega^m}  \\
             &= 4\pi  m  \frac{c_1(M) \cdot [\omega]^{m-1}[M]}{[\omega]^m[M]},
             \end{split}
             \end{equation*}
where ${\rm Scal}(\omega)$ stands for the induced toric K\"ahler metric on $M$ and $u$ for its symplectic potential.  Substituting in \eqref{futaki-toric}, we obtain the following co-homological expression for the Donaldson--Futaki invariant
\begin{equation*}
\begin{split}
  \cF(\mathcal M, \Omega)  &:=(2\pi)^{m}\cF_{(\Delta, {\bf L})} (f)   \\
      &= - 2\Big[\Big(\frac{c_1(\mathcal M) \cdot [\Omega]^m[\mathcal M]}{m!}\Big) - \Big(\frac{c_1(M) \cdot [\omega]^{m-1}(M)}{(m-1)!}\Big) \Big(\frac{{\rm Vol}(\mathcal M, \Omega)}{{\rm Vol}(M, \omega)}\Big)\Big] \\
                                                         &+ 4{\rm Vol}(M, \omega).
                                                         \end{split}
                                                         \end{equation*}
This formula  makes sense for any K\"ahler test configuration (see Definition~\ref{d:kahler-test-configuration})  and  only depends upon the deRham classes $[\Omega]$ on $\mathcal M$ and $[\omega]$ on $M$. By Corollary~\ref{cor:K-stability},   $\cF(\mathcal M, \Omega) \ge 0$ on any toric K\"ahler test configuration $({\mathcal M}, \Omega)$ associated to $(M, \omega)$, with equality if and only if $\mathcal M$ corresponds to a single  rational  affine-linear function $f$. This is the original notion of {\it K-stability} going back to Tian~\cite{Tian1}. In this integral form,  the Futaki invariant of a test configuration was first used by Odaka~\cite{odaka1,odaka2}  and  Wang~\cite{wang}   to study   (possibly singular) polarized projective  test configurations. 
 
  (b) It turns out that the expression  at  the rhs of \eqref{futaki-toric}  makes sense for any $\T$-invariant K\"ahler test configuration associated to a K\"ahler manifold $(M, \omega)$ endowed with a maximal compact torus $\T$   in its reduced group of complex automorphisms, and it merely depends upon the deRham classes $[\omega]$ and $[\Omega]$ and the momentum image $\Delta$ of $M$ for that action of $\T$.  This leads to the notion of a $\T$-relative Donaldson--Futaki invariant $\cF^{\T}(\mathcal M, [\Omega])$ of a compatible test configuration.  This point of view has been taken and developed in \cite{DR,D, Z1,Z2,lahdili} for  a general  K\"ahler manifold, where an extension of Corollary~\ref{cor:K-stability} is  also obtained. 
 \end{rem}

\section{Uniform K-stability}
Let $\cC(\Delta)$ denote the set of continuous convex functions on $\Delta$ (continuity
follows from convexity on the interior of $\Delta$), $\cC_\infty(\Delta) \subset \cC(\Delta)$ the subset
of those functions which are smooth on the interior  $\Delta^0$, %  of each face of $\Delta$ (including $\Delta^0$), 
and $\cS(\Delta, {\bf L})\subset\cC_\infty(\Delta)$ the set of
symplectic potentials. Note that, by virtue of Proposition~\ref{p:boundary-donaldson},  if $u\in\cS(\Delta, {\bf L})$ and $f\in\cC_\infty(\Delta)$, with $f$
smooth on all of $\Delta$, then $u+ f\in\cS(\Delta, {\bf L})$. Conversely,  the difference of any two functions in  $\cS(\Delta, {\bf L})$ is a function in  $\cC_{\infty}(\Delta)$ which is smooth on $\Delta$.

\smallskip
The affine-linear functions act on $\cC(\Delta)$ and $\cC_\infty(\Delta)$ by translation.  Let $\cC^*(\Delta)$ be a slice for the action on $\cC(\Delta)$ which is  closed under positive linear combinations, and 
$\cC_\infty^*(\Delta)$  the induced slice   in $\cC_\infty(\Delta)$. Then any $f$ in $\cC(\Delta)$ can be written uniquely as $f=\pi(f)+g$, where $g$ is affine-linear and
$\pi(f)\in\cC^*(\Delta)$ for a linear projection $\pi$. Functions in $\cC^*(\Delta)$ are sometimes said to be {\it normalized}.
{\tiny \begin{ex}\label{e:normalization-donaldson}\cite{Do2} If $x_0 \in \Delta^0$ is  a fixed interior point,  a slice as above  is given by
\begin{equation*}
\cC^*(\Delta):=\Big\{f \in \cC(\Delta) : f(x) \ge f(x_0)=0\Big\}.
\end{equation*}
\end{ex}}
{\tiny \begin{ex}\label{e:normalization-gabor}\cite{Sz}  Another natural choice for the slice is given by 
\begin{equation*}
\cC^*(\Delta):=\Big\{f \in \cC(\Delta) : \int_{\Delta}f(x) g(x) \, dv =0\ \ \forall  \ g \ \text{affine-linear} \Big\}.
\end{equation*}
\end{ex}}

\begin{defn}\label{d:norm}
Let $||\cdot||$ be  a semi-norm on $\cC(\Delta)$ which indices a norm (in the
obvious sense) on $\cC^*(\Delta)$. We say that $||\cdot ||$ is {\it tamed} if there exists $C>0$ such that
$$ \frac{1}{C} || \cdot ||_1 \le || \cdot || \le  C || \cdot ||_{\infty},$$
where $|| \cdot ||_1:=\int_\Delta | \cdot | \,dv$  is the  $L^1$-norm  and $|| \cdot ||_{\infty}$ is the $C^0$-norm  on $\cC(\Delta)$. 
\end{defn}

{\tiny \begin{ex}\label{ex:L^p}  The $L^p$ norm  
$||f||_p : = \Big(\int_{\Delta} |f|^p \, dv\Big)^{1/p}$
defines a  tamed norm on the slices defined in Examples~\ref{e:normalization-donaldson} and \ref{e:normalization-gabor} for any $p\ge 1$.
\end{ex}}

{\tiny \begin{exercise}\label{ex:norm-donaldson}  Donaldson  considers  in \cite{Do2} the slice $\cC^*(\Delta)$ of Example~\ref{e:normalization-donaldson}  with the norm $||f||_* := \int_{\partial \Delta} f d\sigma$. Show that $||\cdot ||_*$ is tamed.
\noindent{\bf Hint.} Use that $f$ is a positive convex function.
\end{exercise}}
\begin{rem}\label{r:density}
(a) For any  tamed norm $||\cdot ||$,   both the  spaces of PL convex functions  and  smooth convex functions on the whole of $\Delta$ are dense in $\cC^*(\Delta)$.

(b) With respect to a tamed norm $|| \cdot ||$,   $\mathcal{F}_{(\Delta, {\bf L})} $ is well defined and continuous on $\cC^*(\Delta)$.   
\end{rem}

%Donaldson also shows that the $L^1$ boundary integral on $\Delta$ satisfies the required assumptions, where $\cC_\infty^*$ consists of those functions in $\cC_\infty$ which vanish to first order at a chosen bas point in the interior of $\Delta$. However, we find it more flexible to abstract the setting as above.

In this general situation arguments of Donaldson~\cite{Do2}
together with an enhancement by Zhou--Zhu~\cite{ZZ1} can be used to prove  the following key result.

\begin{prop}[\bf Donaldson~\cite{Do2}, Zhou--Zhu~\cite{ZZ1}]\label{p:uniform-K-stability} For any $\lambda>0$ the following are equivalent\textup:
\begin{numlist}
\item $\mathcal{F}_{(\Delta, {\bf L})} (f)\geq \lambda ||\pi(f)||$ for all $f\in\cC(\Delta)$, where $\mathcal{F}_{(\Delta, {\bf L})}$ is the Donaldson--Futaki linear functional \eqref{DF}\textup;
\item for all $0\leq \delta<\lambda$ there exists $C_\delta$ such that
${\mathcal E}_{(\Delta, {\bf L})}(u)\geq \delta||\pi(u)||+C_\delta$ for all $u\in \cS(\Delta, {\bf L})$, where $\mathcal E_{(\Delta, {\bf L})}$ is the relative K-energy introduced in \eqref{K-energy}.
\end{numlist}
\end{prop}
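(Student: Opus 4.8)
The plan is to prove the two implications separately. Fix the reference potential $u_0\in\cS(\Delta, {\bf L})$ appearing in \eqref{K-energy} — say the Guillemin potential $\tfrac12\sum_{j=1}^d L_j\log L_j$ of Theorem~\ref{thm:guillemin} — so that, with ${\bf H}_0:=\Hess(u_0)^{-1}$,
\[
\mathcal{E}_{(\Delta, {\bf L})}(u)=\mathcal{F}_{(\Delta, {\bf L})}(u)-\int_\Delta\log\det\big({\bf H}_0\,\Hess(u)\big)\,dv ,
\]
the integrand being smooth and bounded on $\Delta$ by \eqref{a3}--\eqref{a4}. Since $\Hess(u)$ — hence the last integral — depends only on $\pi(u)$, and since $\mathcal{F}_{(\Delta, {\bf L})}$ annihilates affine-linear functions by \eqref{affine-extremal}, both sides descend to functionals of $\pi(u)\in\cC^*(\Delta)$. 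The whole problem is then to show that the ``entropy'' term $\int_\Delta\log\det({\bf H}_0\Hess(u))\,dv$ is negligible relative to $\|\pi(u)\|$.

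For \textup{(i)}$\Rightarrow$\textup{(ii)}: I would bound the entropy term from above. The matrix ${\bf H}_0\Hess(u)$ has positive eigenvalues on $\Delta^0$, so the arithmetic--geometric mean inequality gives $\det({\bf H}_0\Hess(u))\le m^{-m}\big(\trace({\bf H}_0\Hess(u))\big)^m$, and the elementary bound $\log a\le\varepsilon a-\log\varepsilon-1$ then yields, for every $\varepsilon>0$,
\[
\int_\Delta\log\det\big({\bf H}_0\Hess(u)\big)\,dv\ \le\ \varepsilon\int_\Delta\trace\big({\bf H}_0\Hess(u)\big)\,dv+C_\varepsilon .
\]
Writing $u=u_0+\phi$ with $\phi$ smooth on $\Delta$, one has $\trace({\bf H}_0\Hess(u))=m+\sum_{i,j}({\bf H}_0)_{ij}\phi_{,ij}$, so Lemma~\ref{by-parts} — now applicable because $\phi$ is smooth on all of $\Delta$ — converts $\int_\Delta\trace({\bf H}_0\Hess(u))\,dv$ into $\mathcal{F}_{(\Delta, {\bf L})}(u)+\int_\Delta\big(s_{(\Delta, {\bf L})}-s_0\big)u\,dv$ plus a constant, where $s_0:=-\sum_{i,j}({\bf H}_0)_{ij,ij}$ is the scalar curvature of the reference metric. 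Because $s_{(\Delta, {\bf L})}$ is the $L^2$-projection of $s_0$ onto the affine-linear functions (the Remark after Lemma~\ref{by-parts} together with \eqref{affine-extremal}), $\int_\Delta(s_{(\Delta, {\bf L})}-s_0)u\,dv=\int_\Delta(s_{(\Delta, {\bf L})}-s_0)\pi(u)\,dv\le C\|\pi(u)\|$ by tamedness ($\|\cdot\|_1\le C\|\cdot\|$), and likewise $\mathcal{F}_{(\Delta, {\bf L})}(u)=\mathcal{F}_{(\Delta, {\bf L})}(\pi(u))\le C\|\pi(u)\|$ by continuity of $\mathcal{F}_{(\Delta, {\bf L})}$ (Remark~\ref{r:density}(b)). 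Hence the entropy term is $\le\varepsilon C\|\pi(u)\|+C_\varepsilon$. Feeding hypothesis \textup{(i)} (with $f=u$) into the displayed identity for $\mathcal{E}_{(\Delta, {\bf L})}$ gives $\mathcal{E}_{(\Delta, {\bf L})}(u)\ge(\lambda-\varepsilon C)\|\pi(u)\|-C_\varepsilon$; given $0\le\delta<\lambda$, taking $\varepsilon=(\lambda-\delta)/C$ produces the desired $C_\delta$.

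For \textup{(ii)}$\Rightarrow$\textup{(i)}: since $\mathcal{F}_{(\Delta, {\bf L})}$ and $\|\cdot\|$ are continuous and smooth convex functions are dense in $\cC^*(\Delta)$ (Remark~\ref{r:density}(a)), it suffices to prove the inequality for $f$ smooth and convex on all of $\Delta$. For such $f$ and $t>0$, the function $u_t:=u_0+tf$ lies in $\cS(\Delta, {\bf L})$ by Proposition~\ref{p:boundary-donaldson}, and $\pi(u_t)=\pi(u_0)+t\,\pi(f)$ because $\cC^*(\Delta)$ is closed under positive linear combinations. As $\Hess(f)\ge0$ we get $\Hess(u_t)\ge\Hess(u_0)$, hence $\det({\bf H}_0\Hess(u_t))\ge1$, so the entropy term is $\ge0$ and therefore
\[
\mathcal{E}_{(\Delta, {\bf L})}(u_t)\ \le\ \mathcal{F}_{(\Delta, {\bf L})}(u_t)\ =\ \mathcal{F}_{(\Delta, {\bf L})}(u_0)+t\,\mathcal{F}_{(\Delta, {\bf L})}(\pi(f)) .
\]
On the other hand, \textup{(ii)} gives for each $\delta<\lambda$ that $\mathcal{E}_{(\Delta, {\bf L})}(u_t)\ge\delta\|\pi(u_t)\|+C_\delta\ge\delta\big(t\|\pi(f)\|-\|\pi(u_0)\|\big)+C_\delta$. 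Combining the two, dividing by $t$ and letting $t\to\infty$ yields $\delta\|\pi(f)\|\le\mathcal{F}_{(\Delta, {\bf L})}(\pi(f))=\mathcal{F}_{(\Delta, {\bf L})}(f)$; as $\delta<\lambda$ is arbitrary, \textup{(i)} follows, and by density it then holds on all of $\cC(\Delta)$.

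The step I expect to be the main obstacle is the entropy estimate in \textup{(i)}$\Rightarrow$\textup{(ii)}: the naive inequality $\det\Hess(u)\le m^{-m}\big(\sum_k u_{,kk}\big)^m$ is useless near $\partial\Delta$, since the Euclidean Laplacian of a symplectic potential fails to be integrable there; the point is precisely to carry out the arithmetic--geometric step \emph{after} conjugating by ${\bf H}_0$, so that the singular boundary behaviour of $u_0$ cancels and Lemma~\ref{by-parts} can be applied to the genuinely smooth function $\phi=u-u_0$. The remaining points are bookkeeping: checking that every integral in the splitting $u=u_0+\phi$ is finite — exactly what \eqref{a3}--\eqref{a4} and the boundary conditions of Proposition~\ref{ACGT-boundary} guarantee — and observing that the only property of the slice $\cC^*(\Delta)$ one actually uses is closure under positive linear combinations, the projection $\pi$ itself need not be linear.
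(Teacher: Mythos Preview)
Your argument for \textup{(ii)}$\Rightarrow$\textup{(i)} is essentially identical to the paper's: take a fixed symplectic potential, add $tf$ for $f$ smooth convex, use that the determinant ratio is $\ge 1$, and send $t\to\infty$.

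Your \textup{(i)}$\Rightarrow$\textup{(ii)} is correct but follows a genuinely different route from the paper. The paper introduces the modified functionals $\cF_a$, $\cE_a$ with $a=s(u_0)$, uses the bound $|\cF_a(f)-\cF_{(\Delta,{\bf L})}(f)|\le C\|f\|$ to write $\cF_{(\Delta,{\bf L})}(f)\ge\eps\,\cF_a(f)+\delta\|f\|$, and then observes the scaling identity
\[
-\int_\Delta\log\det\Hess(u)\,dv+\eps\,\cF_a(u)=\cE_a(\eps u)+m\,{\rm Vol}(\Delta)\log\eps,
\]
reducing the problem to the boundedness from below of $\cE_a$ on $\cC_\infty(\Delta)$, which is quoted as a black box from \cite[Prop.~3.3.4]{Do2}. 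Your approach avoids this external input: you control the entropy directly via the arithmetic--geometric mean and the inequality $\log a\le\eps a-\log\eps-1$, then turn $\int_\Delta\trace({\bf H}_0\Hess(u))\,dv$ into $\cF_{(\Delta,{\bf L})}(u)+\int_\Delta(s_{(\Delta,{\bf L})}-s_0)u\,dv+{\rm const}$ via Lemma~\ref{by-parts} applied to the \emph{smooth} function $\phi=u-u_0$. This is more elementary and self-contained. One small simplification: you do not actually need the upper bound $\cF_{(\Delta,{\bf L})}(\pi(u))\le C\|\pi(u)\|$. Keeping the $\cF_{(\Delta,{\bf L})}(u)$ term rather than bounding it, the entropy estimate gives $\cE_{(\Delta,{\bf L})}(u)\ge(1-\eps)\cF_{(\Delta,{\bf L})}(u)-\eps C\|\pi(u)\|-C_\eps$, and now hypothesis \textup{(i)} alone yields $(1-\eps)\lambda-\eps C\ge\delta$ for $\eps$ small. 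This sidesteps any worry about whether the boundary integral in $\cF_{(\Delta,{\bf L})}$ is dominated by the tamed norm.
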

\begin{proof}

As $\cF_{(\Delta, {\bf L})}(f)$ and and ${\mathcal E}_{(\Delta, {\bf L})}(u)$ are unchanged by
the addition of an affine-linear function, it suffices to prove the
equivalence for normalized $f$ and $u$.

\smallbreak (i)$\Rightarrow$(ii)
For any bounded function $a$ on $\Delta$, one can define a modified Futaki
invariant $\cF_a$ by replacing the second integral (over $\Delta$) in the
formula  \eqref{DF}  by $-\int_\Delta a(x)f(x) \, dv$. Similarly,  one can
define a modified K-energy ${\mathcal E}_a$ using $\cF_a$ instead of $\cF_{(\Delta, {\bf L})}$ in the formula \eqref{K-energy}.  Donaldson~\cite{Do2}  shows that  $\cE_a$  (which is introduced on the space $\cS(\Delta, {\bf L})$) can be extended to $\cC_\infty(\Delta)$ (in fact on a slightly larger space) taking values in $(-\infty,+\infty]$. 

For any bounded functions $a,b$, there is a constant $C=C_{a,b}>0$ with
$|\cF_a(f)-\cF_b(f)|\leq C||f||$ for all $f\in\cC^*(\Delta)$, because
$||\cdot||$ bounds the $L^1$ norm on $\cC^*(\Delta)$ by assumption. Let us write
$C=(1+k)C-kC$ for an arbitrary $k\geq 0$ and take $b$ to be the extremal affine-linear
function $b= s_{(\Delta,{\bf L})}$, so that $\cF_b=\cF_{(\Delta, {\bf L})}$, whereas  we can take $a= s(u_0)$ be the scalar curvature of the canonical potential $u_0\in \cS(\Delta, {\bf L})$. Thus, $u_0$ trivially solves the  equation $s(u_0)=a$.

By assumption, $|\cF_a(f)-\cF_{(\Delta, {\bf L})}(f)|\leq
C\lambda^{-1}(1+k)\cF_{(\Delta, {\bf L})}(f)-k C||f||$ for all $f\in\cC_\infty^*(\Delta)$ and so
$\cF_a(f)\leq (1+C\lambda^{-1}(1+k))\cF_{(\Delta, {\bf L})}(f)-kC||f||$. Turning this
around,
\begin{equation*}
\cF_{(\Delta, {\bf L})}(f)\geq \eps\cF_a(f)+\delta||f||,
\end{equation*}
where $0<\eps:=(1+C\lambda^{-1}(1+k))^{-1}<1$ and
$\delta:=kC\lambda(\lambda+C(1+k))^{-1}$. Notice that $\delta$ is an injective
function of $k\in[0,\infty)$ with range $[0,\lambda)$. For any normalized $u\in \cS(\Delta, {\bf L})$ now we estimate
\begin{align*}
\cE_{(\Delta, {\bf L})}(u) &=-\int_\Delta\Big(\log\det(\mathrm{Hess}\, u)- \log\det(\mathrm{Hess}\, u_0)\Big) \,dv
+\cF_{(\Delta, {\bf L})}(u)\\
&\geq - \int_\Delta \Big(\log\det(\mathrm{Hess}\, u)- \log\det(\mathrm{Hess}\, u_0)\Big)\,dv
+\eps\cF_a(u)+\delta||u||\\
&=\cE_a(\eps u) + \delta||u|| + m {\rm Vol}(\Delta)\log \eps = \cE_a(\eps u) + \delta||\pi(u)|| + m {\rm Vol}(\Delta)\log \eps .
\end{align*}
It is shown in~\cite{Do2}, Proposition~3.3.4 that $\cE_a$ is bounded below on the space 
$\cC_\infty(\Delta)$.~\footnote{It follows along the lines of Corollary~\ref{bounded-mabuchi} that $\cE_a$ is  convex on $\cS(\Delta, {\bf L})$,  and  that $u_0$  being a critical point is  a global minima. Donaldson's argument extends this property to the larger domain $\cC_{\infty}(\Delta)$} Letting  $C_{\delta}$ be a lower bound of $\cE_a$ plus $m {\rm Vol}(\Delta)\log \eps $, the claim follows.
%Since $\cE_{(\Delta, {\bf L})}(u)$ is unchanged by the addition of an affine-linear function to $u$, we  can assume without loss $u= \pi (u)$ and the claim follows.

\smallbreak (ii)$\Rightarrow$(i) Suppose $\cE_{(\Delta, {\bf L})}(u)\geq
\delta||u||+C_\delta$ for all normalized $u\in\cS(\Delta, {\bf L})$. We shall fix one such $u$. By density and continuity (see Remark~\ref{r:density}),
it suffices to prove (i)  for $f\in\cC_\infty^*(\Delta)$ which are smooth on
$\Delta$.  Then for all $k>0$, $u+kf\in\cS(\Delta, {\bf L})$ and so $\cE_{(\Delta, {\bf L})}(u+kf)\geq
\delta||u+kf||+C_\delta$. We thus find 
\begin{align*}
k\cF_{(\Delta, {\bf L})} (f) &=  \cE_{(\Delta, {\bf L})}(u+ kf)- \cE_{(\Delta, {\bf L})}(u)  + \int_\Delta \log\Big(\frac{\det\mathrm{Hess}\,(u+kf)}
{\det\mathrm{Hess}\, u}\Big)\,dv \\
&\geq \delta||u+kf|| + C_\delta+
\int_\Delta \log\Big(\frac{\det\mathrm{Hess}\,(u+kf)}
{\det\mathrm{Hess}\, u}\Big)\,dv
-\cE_{(\Delta, {\bf L})}(u)\\
&\geq \delta||u+kf|| + \tilde C_\delta
\end{align*}
with $\tilde C_\delta=C_\delta-\cE_{(\Delta, {\bf L})}(u)$ (for the  fixed $u\in\cS(\Delta, {\bf L})$), since the ratio of the
determinants is at least one for $k$ sufficiently large. Dividing by $k$ and letting $k\to\infty$ we obtain $\cF_{(\Delta, {\bf L})}(f)\geq \delta ||f||$. Since this is
true for all $0\leq \delta<\lambda$ and  all smooth functions in $\cC_{\infty}(\Delta)$, we have (see Remark~\ref{r:density})  that $\cF_{(\Delta, {\bf L})}(f)\geq\lambda||f||$
for all $f\in\cC(\Delta)$.
\end{proof}
\begin{rem} In the case $|| \cdot || = || \cdot ||_*$ of  Example~\ref{e:normalization-donaldson} and Exercise~\ref{ex:norm-donaldson}, the implication (i)$\Rightarrow$(ii) of Proposition~\ref{p:uniform-K-stability}
is due to Donaldson~\cite{Do2} when $\delta=0$, and to Zhou--Zhu~\cite{ZZ1} for
some $\delta>0$. The proof presented here is due to David Calderbank~\cite{Cal}. 
\end{rem}

\begin{defn}[\bf Uniform K-stability~\cite{Sz}]\label{d:uniform K-stability} A compact convex simple labelled polytope $(\Delta, {\bf L})$ for which the condition (i) of Proposition~\ref{p:uniform-K-stability} is satisfied for some constant $\lambda>0$ is called {\it uniformly K-stable}  with respect to the chosen slice $\cC_{\infty}^*(\Delta)$ and norm $||\cdot ||$.  We say that $(\Delta, {\bf L})$  is {\it $L^p$ uniformly K-stable} if it is uniformly K-stable with respect to the slice introduced in Example~\ref{e:normalization-gabor} and the $L^p$ norm with $p\ge 1$, see Example~\ref{ex:L^p}. We say that $(\Delta, {\bf L})$ is  {\it $*$-uniformly K-stable} if it is uniformly K-stable with respect to the slice introduced in Example~\ref{e:normalization-donaldson} and norm in Exercise~\ref{ex:norm-donaldson}.
\end{defn}
Using that convex PL functions are dense in $\cC(\Delta)$ (see Remark~\ref{r:density}), uniform K-stability can be equivalently introduced  by requiring that  the condition (i) of Proposition~\ref{p:uniform-K-stability} is satisfied on convex PL functions. Thus, uniform K-stability  (with respect to any tamed norm) is a stronger condition on $(\Delta, {\bf L})$ than the K-stability.~\footnote{For a polarized smooth toric variety $(M, L)$, T. Hisamoto~\cite{Hisamoto} showed that the uniform K-stability of $(\Delta, {\bf L})$ with respect to $||\cdot ||_*$ agrees with a notion of equivariant uniform K-stability of $(M, L)$  with respect to the complex torus $\T^{\C}$.}

\bigskip
A key result in \cite{Do2} is the following
\begin{thm}[\bf Donaldson~\cite{Do2}]\label{thm:stable-to-iniform} If $(\Delta, {\bf L})$ is a compact convex labelled polygone in $\R^2$ such that the corresponding extremal affine-linear function $s_{(\Delta, {\bf L})}$ is strictly positive on $\Delta$, then $(\Delta, {\bf L})$ is K-stable if and only if it is $*$-uniformly K-stable.
\end{thm}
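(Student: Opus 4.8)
One direction is soft. Suppose $(\Delta,{\bf L})$ is $*$-uniformly $K$-stable, so that $\mathcal{F}_{(\Delta,{\bf L})}(f)\ge\lambda\|\pi(f)\|_*$ for all $f\in\cC(\Delta)$ and some $\lambda>0$, and let $\varphi=\max(f_1,\dots,f_k)$ be a convex PL function that is not affine-linear. Then $\pi(\varphi)\in\cC^*(\Delta)$ is a non-zero non-negative convex function; since a non-negative convex function vanishing on $\partial\Delta$ would attain its maximum on $\partial\Delta$ and hence vanish identically, we get $\|\pi(\varphi)\|_*=\int_{\partial\Delta}\pi(\varphi)\,d\sigma>0$. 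As $\mathcal{F}_{(\Delta,{\bf L})}$ annihilates affine-linear functions by \eqref{affine-extremal}, $\mathcal{F}_{(\Delta,{\bf L})}(\varphi)=\mathcal{F}_{(\Delta,{\bf L})}(\pi(\varphi))\ge\lambda\|\pi(\varphi)\|_*>0$, which is exactly $K$-stability in the sense of Definition~\ref{K-stable}. This uses neither the dimension nor the positivity of $s_{(\Delta,{\bf L})}$.

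For the converse I argue by contradiction. By density of convex PL functions in $\cC^*(\Delta)$ and continuity of $\mathcal{F}_{(\Delta,{\bf L})}$ for the tamed norm $\|\cdot\|_*$ (Remark~\ref{r:density}, Exercise~\ref{ex:norm-donaldson}), $*$-uniform $K$-stability is equivalent to $\inf\{\mathcal{F}_{(\Delta,{\bf L})}(f):f\in\cC^*(\Delta),\ \|f\|_*=1\}>0$. Moreover $\mathcal{F}_{(\Delta,{\bf L})}\ge0$ on $\cC^*(\Delta)$: $K$-stability implies $K$-semistability, and the inequality passes from PL functions to all of $\cC^*(\Delta)$ by density and continuity. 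Assuming the infimum is $0$, choose $f_j\in\cC^*(\Delta)$ with $\|f_j\|_*=1$ and $\mathcal{F}_{(\Delta,{\bf L})}(f_j)\to0$; since $\mathcal{F}_{(\Delta,{\bf L})}(f)=2\int_{\partial\Delta}f\,d\sigma-\int_\Delta s_{(\Delta,{\bf L})}f\,dv$, this means $\int_\Delta s_{(\Delta,{\bf L})}f_j\,dv\to2$.

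The heart of the analysis is a compactness step, which uses only convexity. A radial-projection estimate from a fixed interior point $x_0$ (the same estimate underlying the tamedness of $\|\cdot\|_*$) gives $f_j(x)\le f_j(y(x))$ with $y(x)\in\partial\Delta$ on the ray from $x_0$ through $x$, whence $\int_\Delta f_j\,dv\le C\|f_j\|_*=C$; together with $f_j(x_0)=0$ this makes $\{f_j\}$ locally uniformly bounded and equi-Lipschitz on compacts of $\Delta^0$, so after passing to a subsequence $f_j\to f_\infty$ locally uniformly on $\Delta^0$, with $f_\infty\ge0$ convex and $f_\infty(x_0)=0$. A refinement of the same estimate bounds $\int_{\{\mathrm{dist}(\cdot,\partial\Delta)<\eta\}}f_j\,dv$ by a quantity tending to $0$ with $\eta$, uniformly in $j$ — and it is here that planarity of $\Delta$ enters, in the elementary bookkeeping along its edges and at its finitely many vertices — so no mass of the $f_j$ escapes to $\partial\Delta$. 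Hence $f_\infty\in\cC^*(\Delta)$, $\int_\Delta s_{(\Delta,{\bf L})}f_\infty\,dv=2$, and by lower semicontinuity of the boundary integral $\mathcal{F}_{(\Delta,{\bf L})}(f_\infty)\le\liminf_j\mathcal{F}_{(\Delta,{\bf L})}(f_j)=0$; with $\mathcal{F}_{(\Delta,{\bf L})}\ge0$ this forces $\mathcal{F}_{(\Delta,{\bf L})}(f_\infty)=0$ and $\|f_\infty\|_*=1$, so $f_\infty$ is convex but not affine-linear.

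It remains to contradict the existence of such an $f_\infty$, and this is where both the dimension being $2$ and $s_{(\Delta,{\bf L})}>0$ are indispensable; I expect it to be the main obstacle. Two sub-cases indicate the mechanism. If $f_\infty$ were PL, $K$-stability would already give $\mathcal{F}_{(\Delta,{\bf L})}(f_\infty)>0$. If $f_\infty$ had $\Hess(f_\infty)\ge\varepsilon\,\Id$ on some ball $B\subset\Delta^0$, one could add a small smooth non-negative function supported in $B$ while keeping convexity and the values near $\partial\Delta$ unchanged; since $s_{(\Delta,{\bf L})}>0$ this strictly decreases $-\int_\Delta s_{(\Delta,{\bf L})}f_\infty\,dv$ and hence makes $\mathcal{F}_{(\Delta,{\bf L})}$ negative, contradicting $\mathcal{F}_{(\Delta,{\bf L})}\ge0$. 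The general ("degenerate") case is precisely Donaldson's surface argument in \cite{Do2}: a finer analysis of $f_\infty$ along its creases, localized over the sub-polytopes they determine and run through integration-by-parts estimates in the spirit of the proof of Theorem~\ref{c:K-stability}, by which the strict positivity of $s_{(\Delta,{\bf L})}$ turns the convexity of $f_\infty$ into a definite positive lower bound for $\mathcal{F}_{(\Delta,{\bf L})}(f_\infty)$ unless $f_\infty$ is affine-linear. This contradiction finishes the proof. Equivalently, via Proposition~\ref{p:uniform-K-stability} the statement may be recast as: $K$-stability of a planar $(\Delta,{\bf L})$ with $s_{(\Delta,{\bf L})}>0$ implies $\|\cdot\|_*$-coercivity of the relative $K$-energy.
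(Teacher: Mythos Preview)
The paper does not actually prove this theorem: it is stated with attribution to Donaldson~\cite{Do2} and immediately followed by commentary, with no proof environment. So there is no ``paper's own proof'' to compare against; I can only assess your outline against what is known from~\cite{Do2}.

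Your overall architecture --- easy direction, then a minimising-sequence/compactness argument producing a limit $f_\infty$ with $\mathcal F_{(\Delta,{\bf L})}(f_\infty)=0$, then a structure theorem for such minimisers forcing them to be (generalised) PL and hence excluded by $K$-stability --- is indeed Donaldson's strategy. The perturbation heuristic you give (strict convexity somewhere would allow an interior bump lowering $\mathcal F$) is also the right mechanism showing that any minimiser must have everywhere-degenerate Hessian, which in dimension~$2$ is what eventually pins down its piecewise-linear character.

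There is, however, a genuine gap at the compactness step. You assert that ``no mass of the $f_j$ escapes to $\partial\Delta$'' and conclude $f_\infty\in\cC^*(\Delta)$ with $\|f_\infty\|_*=1$. This is precisely the delicate point: the local-uniform limit of normalised convex functions with $\|f_j\|_*=1$ can a~priori blow up along $\partial\Delta$, and the boundary integral is only lower semicontinuous, not continuous, under such convergence. Donaldson's argument does \emph{not} establish that $f_\infty$ is bounded on $\Delta$; rather, he works in a larger class allowing $+\infty$ boundary values, and it is the combination of the $2$-dimensional degeneracy analysis with the strict positivity of $s_{(\Delta,{\bf L})}$ that ultimately controls the boundary behaviour and yields the contradiction. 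Your ``elementary bookkeeping along edges and vertices'' does not supply this, and the final paragraph openly defers the substantive case to~\cite{Do2}. So what you have is a correct roadmap with the two hard lemmas (boundary control of the limit; structure of degenerate-Hessian minimisers in the plane) left as black boxes rather than proved.
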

Notice that the above result applies in particular  to labelled polygons $(\Delta, {\bf L})$ for which $s_{(\Delta, {\bf L})}$ is constant.  In this case, we also have
\begin{thm}[\bf Sz\'ekelyhidi~\cite{Sz}]  If $(\Delta, {\bf L})$ is a compact convex labelled polygone in $\R^2$ such that the corresponding extremal affine-linear function $s_{(\Delta, {\bf L})}$ is constant, then $(\Delta, {\bf L})$ is K-stable if and only if it is $L^2$-uniformly K-stable.
\end{thm}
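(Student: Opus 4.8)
The plan is to reduce the statement to a general theorem about uniform K-stability. Specifically, I would observe that for $m=2$ the relevant polytope $\Delta$ is a polygon, and that the case when $s_{(\Delta,{\bf L})}$ is constant is precisely the case treated in the previous theorem of Donaldson (Theorem~\ref{thm:stable-to-iniform}), since a constant function is in particular strictly positive. So one direction --- that $*$-uniform K-stability implies K-stability --- already follows from Corollary~\ref{cor:K-stability} together with the elementary fact that uniform K-stability with respect to \emph{any} tamed norm is stronger than K-stability; this uses only that convex PL functions are dense in $\cC(\Delta)$ (Remark~\ref{r:density}). The content to be proven is thus: K-stability $\Rightarrow$ $L^2$-uniform K-stability when $s_{(\Delta,{\bf L})}$ is constant.

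First I would set up the comparison between the two notions of normalization and the two norms. By Proposition~\ref{p:uniform-K-stability}, it suffices to establish a linear lower bound $\cF_{(\Delta,{\bf L})}(f) \ge \lambda\|\pi(f)\|_2$ on $\cC(\Delta)$, where now $\pi$ is the $L^2$-orthogonal projection onto the slice of Example~\ref{e:normalization-gabor}. The strategy, following Donaldson and Sz\'ekelyhidi, is to argue by contradiction: if no such $\lambda$ works, one extracts a sequence of normalized convex functions $f_k$ with $\|f_k\|_2 = 1$ (or $\|f_k\|_* = 1$) and $\cF_{(\Delta,{\bf L})}(f_k) \to 0$. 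The key step is a compactness/normal-families argument for convex functions: after passing to a subsequence, $f_k$ converges (uniformly on compact subsets of $\Delta^0$, say, and in $L^1$) to a convex function $f_\infty$ which is still normalized and nonzero, with $\cF_{(\Delta,{\bf L})}(f_\infty) = 0$ by continuity of $\cF$ in a tamed norm (Remark~\ref{r:density}(b)). Then K-stability (extended to convex functions smooth on $\Delta^0$, as in the exercise after Proposition~\ref{p:semistable}) forces $f_\infty$ to be affine-linear, hence zero after normalization --- a contradiction, \emph{provided} one can rule out the failure of lower semicontinuity of the boundary term $2\int_{\partial\Delta}\varphi\, d\sigma$ in $\cF_{(\Delta,{\bf L})}$, i.e. the possibility that $f_k$ ``escapes to the boundary''.

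The main obstacle is precisely this boundary behaviour. The functional $\cF_{(\Delta,{\bf L})}(\varphi)=2\int_{\partial\Delta}\varphi\,d\sigma - \lambda\int_\Delta \varphi\,dv$ (using $s_{(\Delta,{\bf L})}=\lambda$ constant) is not continuous on $\cC(\Delta)$ in the $L^1$ topology because the boundary integral is only lower semicontinuous under $L^1$-convergence of convex functions; a normalized sequence can concentrate its ``mass'' near $\partial\Delta$ so that $\int_{\partial\Delta} f_k\,d\sigma$ stays bounded while $\|f_k\|_2$ stays bounded but $f_k$ does not converge to anything useful in the interior. This is exactly why the constant-scalar-curvature (constant $s_{(\Delta,{\bf L})}$) hypothesis matters: when $s_{(\Delta,{\bf L})}=\lambda>0$, the linear functional $\varphi\mapsto \int_\Delta\varphi\,dv$ has a sign that can be exploited, and one shows --- this is the crux, carried out in Sz\'ekelyhidi's paper --- that a normalized sequence with $\cF_{(\Delta,{\bf L})}(f_k)\to 0$ must in fact have uniformly bounded $\|f_k\|_*$, at which point Donaldson's estimates (Theorem~\ref{thm:stable-to-iniform}) apply and the $\|\cdot\|_*$-uniform bound can be converted into an $L^2$-uniform bound by comparing the two tamed norms on the slice, using that for convex \emph{normalized} functions $\|\cdot\|_2$ and $\|\cdot\|_*$ are mutually bounded (Example~\ref{ex:L^p}, Exercise~\ref{ex:norm-donaldson}). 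I would expect the bulk of the work, and the only genuinely non-formal input, to be this reduction to a priori bounded boundary norm, which rests on a careful analysis of extremal rays in the cone of convex functions and the positivity of $\lambda$.

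In summary, the proof proceeds in three steps: (1) the easy implication via Corollary~\ref{cor:K-stability} and density of PL functions; (2) a contradiction/compactness argument reducing $L^2$-uniform K-stability to a uniform a priori bound on the $\|\cdot\|_*$-norm of a hypothetical destabilizing sequence; (3) invoking Donaldson's two-dimensional estimate (Theorem~\ref{thm:stable-to-iniform}, applicable since the constant $s_{(\Delta,{\bf L})}$ is in particular positive) together with the equivalence of the two tamed norms on the normalized slice to conclude. The dimension hypothesis $m=2$ enters only through Step (3), where Donaldson's delicate polygon estimates are available.
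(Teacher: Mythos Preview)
The paper does not give a proof of this theorem; it is simply stated with a citation to \cite{Sz} and the text moves on immediately. So there is no ``paper's own proof'' to compare your proposal against.

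As a standalone sketch your outline is broadly in the right spirit---the contradiction/compactness strategy on normalized convex functions, with boundary escape as the essential difficulty, is indeed how this kind of result is proved. But step~(3) contains an unjustified assertion that does real work. You claim that $\|\cdot\|_*$ and $\|\cdot\|_2$ are ``mutually bounded'' on the normalized slice of convex functions, citing only that both are tamed. Tamedness means each norm satisfies $c\|\cdot\|_1 \le \|\cdot\| \le C\|\cdot\|_\infty$ on its slice; it does \emph{not} say two tamed norms are comparable to each other, and indeed $\|\cdot\|_\infty$ is not controlled by $\|\cdot\|_*$ on Donaldson's slice (take $g=\max(0,x+y-2+\eps)$ on $[0,1]^2$: then $\|g\|_\infty/\|g\|_*\sim 1/\eps$). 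What you actually need is the specific inequality $\|\pi'(f)\|_2 \le C\,\|\pi(f)\|_*$ for convex $f$ on a polygon, and this is a genuine two-dimensional estimate that must be proved, not a formal consequence of the definitions. If that inequality holds, your reduction via Donaldson's Theorem~\ref{thm:stable-to-iniform} would work directly and step~(2) would be superfluous; if it fails, step~(3) collapses and the compactness argument of step~(2)---whose ``crux'' you yourself defer to \cite{Sz}---must carry the entire load. Either way, the proposal as written leaves the decisive estimate unproved.
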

We end this section by mentioning the following generalization of Theorem~\ref{c:K-stability}.
\begin{thm}[\bf Chen--Li--Sheng~\cite{CLS0}]\label{thm:existence-to-uniform} If $(\Delta, {\bf L})$ is a compact convex simple labelled polytope in $\R^m$ such that the Abreu equation \eqref{Abreu-equation} admits a solution in $\cS(\Delta, {\bf L})$,  then $(\Delta, {\bf L})$ is $*$-uniformly K-stable.
\end{thm}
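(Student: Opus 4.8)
The plan is to combine the variational picture from Theorem~\ref{uniqueness} and Corollary~\ref{bounded-mabuchi} with the equivalence of Proposition~\ref{p:uniform-K-stability}. Concretely, I would fix the slice $\cC^*_\infty(\Delta)$ and the norm $\|\cdot\|_*$ of Exercise~\ref{ex:norm-donaldson}, which is tamed by the hint there, and aim to verify condition (ii) of Proposition~\ref{p:uniform-K-stability} for some $\lambda>0$; Proposition~\ref{p:uniform-K-stability} then upgrades this to condition (i), which is exactly $*$-uniform K-stability by Definition~\ref{d:uniform K-stability}. So the task reduces to a coercivity estimate for the relative K-energy $\cE_{(\Delta,{\bf L})}$ along $\cS(\Delta,{\bf L})$: there exist $\delta>0$ and $C_\delta\in\R$ with $\cE_{(\Delta,{\bf L})}(u)\ge \delta\|\pi(u)\|_* + C_\delta$ for all $u\in\cS(\Delta,{\bf L})$.

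First I would record that, by Corollary~\ref{bounded-mabuchi}, the existence of a solution $u^*\in\cS(\Delta,{\bf L})$ of the Abreu equation makes $u^*$ a global minimum of $\cE_{(\Delta,{\bf L})}$; in particular $\cE_{(\Delta,{\bf L})}$ is bounded below. The point of the theorem is that boundedness below is not enough — one needs the linear growth $\delta\|\pi(u)\|_*$. The natural route is to normalise and work along rays: for $u\in\cS(\Delta,{\bf L})$ normalised in the slice, write $u = u^* + \psi$ with $\psi$ convex and smooth on $\Delta$ (up to an affine-linear function, using Proposition~\ref{p:boundary-donaldson}), and study $t\mapsto \cE_{(\Delta,{\bf L})}(u^* + t\psi)$ for $t\in[0,\infty)$. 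By the convexity computation in the proof of Theorem~\ref{uniqueness}, this function is convex in $t$ with vanishing derivative at $t=0$, and its derivative at $t=1$ equals, after an integration by parts as in Lemma~\ref{by-parts},
\begin{equation*}
\frac{d}{dt}\Big|_{t=1}\cE_{(\Delta,{\bf L})}(u^*+t\psi) = \cF_{(\Delta,{\bf L})}(\psi) - \int_\Delta \sum_{i,j=1}^m H^{u^*+\psi}_{ij}\,\psi_{,ij}\,dv + \int_\Delta \sum_{i,j=1}^m H^{u^*+\psi}_{ij}\,\psi_{,ij}\,dv,
\end{equation*}
so one really wants to control $\cF_{(\Delta,{\bf L})}(\psi)$ from below; here Theorem~\ref{c:K-stability} already gives strict positivity of $\cF_{(\Delta,{\bf L})}$ on non-affine convex functions, and the remaining work is to make this \emph{quantitative}, i.e.\ $\cF_{(\Delta,{\bf L})}(\psi)\ge \delta\|\pi(\psi)\|_*$. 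The mechanism is the boundary identity \eqref{Futaki-segment}: for a simple crease PL function supported on a chord $F$, $\cF_{(\Delta,{\bf L})}(\varphi) = \int_F {\bf H}^{u^*}(dL,dL)\,d\sigma$, and since ${\bf H}^{u^*}$ is a fixed positive-definite tensor on $\Delta^0$ one gets a uniform positive lower bound on such "extremal" directions; one then has to propagate this to all convex functions by the density of PL functions and a compactness/convexity argument on the (suitably normalised) space of directions.

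The hard part will be precisely this quantitative step — turning the strict but non-uniform positivity of $\cF_{(\Delta,{\bf L})}$ into a genuine lower bound $\delta\|\pi(\varphi)\|_*$ uniformly over the infinite-dimensional cone of normalised convex functions. The subtlety is that the "bad" directions are those supported near $\partial\Delta$, where ${\bf H}^{u^*}$ degenerates in the normal direction by the boundary conditions of Proposition~\ref{ACGT-boundary}; one must check that the boundary measure $d\sigma$ in $\|\cdot\|_*$ is calibrated exactly so that the contribution $\int_{F}{\bf H}^{u^*}(dL,dL)\,d\sigma$ of a crease still dominates the $\|\cdot\|_*$-mass it carries. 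Following \cite{CLS0}, I would isolate, for a general convex PL $\varphi=\max(f_1,\dots,f_p)$, the decomposition of $\cF_{(\Delta,{\bf L})}(\varphi)$ into crease contributions over the successive subpolytopes, bound each below using the fixed ${\bf H}^{u^*}$, and combine with the geometry of the decomposition to recover a multiple of $\int_{\partial\Delta}\varphi\,d\sigma$; then extend to all of $\cC^*(\Delta)$ by Remark~\ref{r:density}. Once condition (i) of Proposition~\ref{p:uniform-K-stability} is in hand, the conclusion is immediate.
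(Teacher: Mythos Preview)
The paper does not actually prove this theorem; it is stated with a citation to \cite{CLS0} and no argument is given. So there is no ``paper's own proof'' to compare against, and your proposal should be read as an attempt to reconstruct the Chen--Li--Sheng argument.

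Your overall mechanism is the right one: once a solution $u^*\in\cS(\Delta,{\bf L})$ exists, Lemma~\ref{by-parts} gives $\cF_{(\Delta,{\bf L})}(\varphi)=\int_\Delta \sum_{i,j} H^{u^*}_{ij}\varphi_{,ij}\,dv$ for smooth $\varphi$, and for a simple PL crease this reduces to \eqref{Futaki-segment}. This is indeed the starting point of \cite{CLS0}. However, several points in your write-up are either wrong or do not carry content. First, for $u\in\cS(\Delta,{\bf L})$ the difference $\psi=u-u^*$ is smooth on $\Delta$ by Proposition~\ref{p:boundary-donaldson}, but it is \emph{not} convex in general; so the ray $t\mapsto u^*+t\psi$ need not stay in $\cS(\Delta,{\bf L})$ for $t>1$, and your appeal to convexity of $\psi$ is unfounded. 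Second, the displayed formula for the derivative at $t=1$ is, as written, the identity $\cF_{(\Delta,{\bf L})}(\psi)=\cF_{(\Delta,{\bf L})}(\psi)$; the actual first variation is $\cF_{(\Delta,{\bf L})}(\psi)-\int_\Delta\sum H^{u^*+\psi}_{ij}\psi_{,ij}\,dv$, which does not reduce to $\cF_{(\Delta,{\bf L})}(\psi)$. Third, the detour through condition (ii) of Proposition~\ref{p:uniform-K-stability} is unnecessary: what you want, and what you end up aiming for, is condition (i) directly, namely $\cF_{(\Delta,{\bf L})}(\varphi)\ge\lambda\|\pi(\varphi)\|_*$ for all convex $\varphi$.

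The genuine gap is the one you yourself flag: you do not supply the quantitative estimate. The whole difficulty of the theorem is precisely that ${\bf H}^{u^*}$ degenerates in the normal direction along $\partial\Delta$, so for a crease $F$ sliding towards a facet the integral $\int_F{\bf H}^{u^*}(dL,dL)\,d\sigma$ tends to zero, and one must show it does so no faster than the corresponding $\|\cdot\|_*$-mass. Your proposal to ``decompose a general PL $\varphi$ into crease contributions and bound each below using the fixed ${\bf H}^{u^*}$'' is exactly the point that requires proof, and a vague compactness argument on an infinite-dimensional cone will not work (the unit sphere in $\cC^*(\Delta)$ is not compact in any useful topology). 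The argument in \cite{CLS0} hinges on exploiting the precise first-order boundary condition $d{\bf H}^{u^*}_x(u_j,u_j)=2u_j$ from Proposition~\ref{ACGT-boundary} to control the degeneration rate and match it against the boundary measure $d\sigma$; until you carry out that comparison, the proposal remains a restatement of the problem rather than a proof.
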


\section{Existence: an overview} 

\subsection{The Chen--Cheng and He results} We review here a recent breakthrough in the  general existence  theory of extremal K\"ahler metrics,   due to Chen--Cheng~\cite{CC1,CC3} in the constant scalar curvature case,  with an enhancement  by He~\cite{He} to cover the extremal K\"ahler metric case.  The key notion  for these results to hold is the {\it properness} of the relative K-energy with respect to a  reductive complex Lie group, first introduced by Tian~\cite{Tian} in the Fano case,  and adapted by Zhou--Zhu~\cite{ZZ1} to the general K\"ahler case.   We start by explaining the general setting.

\bigskip 

Let $(M, J)$ be a compact $m$-dimensional complex manifold admitting a K\"ahler metric $g_0$ with K\"ahler form, $\omega_0$.  It is well known (see e.g. \cite{gauduchon-book}) that the group of complex automorphisms ${\rm Aut}(M)$ of $(M, J)$  is a complex Lie group with  Lie algebra identified with the real smooth vector fields on $M$ whose flow preserves $J$. We denote by ${\rm Aut}_0(M)$ its connected component to the identity.

\smallskip
We let
\begin{equation*}
{\mathcal H}(M) =\{ \phi \in \cC^{\infty}(M)  : \omega_{\phi} = \omega_0 + dd^c \phi >0\}
\end{equation*}
be the space of smooth K\"ahler potentials  relative to $\omega_0$.   As $\R$ acts on  ${\mathcal H}(M)$ by translation, preserving  the K\"ahler metric  $\omega_{\phi}$, it is convenient to choose a slice (or normalization) $\mathcal H^0(M),$  establishing a bijection between K\"ahler metrics with K\"ahler forms in the deRham class $[\omega_0]$ and elements of $\cH^0(M)$. One popular normalization is
$\cH^0(M) = \big\{ \phi \in \cH(M)  :  \, \int_M \phi  \, \omega_{\phi}^m =0 \big\}$
but  we shall use below another choice for $\cH^0(M)$.  For  any chosen normalization $\cH^0(M)$, we  will write 
$$\phi = \phi^0 + const, $$
where $\phi\in \cH(M)$ and $\phi^0 \in \cH^0(M)$. Furthermore, for any $\sigma \in {\rm Aut}_0(M)$, we denote by $\sigma[\phi]  \in \cH^0(M)$ the  unique $\omega_0$-relative K\"ahler potential  in $\cH^0(M)$ associated to the K\"ahler form $\sigma^*(\omega_{\phi})$.

\smallskip
We now fix a (real) connected compact subgroup $K \subset {\rm Aut}_0(M)$ and denote by $G=K^{\C}\subset {\rm Aut}_0(M)$  its complexification, i.e. the smallest closed complex subgroup in  ${\rm Aut}(M)$ containing $K$. By a standard averaging argument, we can assume that the initial K\"ahler structure $(g_0, J, \omega_0)$ is invariant under the action of $K$,  and consider the subspace ${\mathcal H}(M)^K  \subset \cH(M)$ of  $K$-invariant K\"ahler potentials  in ${\mathcal H}(M)$.  Thus, with a chosen normalisation $\cH^0(M)$, the space ${\mathcal H}^0(M)^K:= \cH^0(M) \cap \cH(M)^K$ parametrizes the $K$-invariant K\"ahler metrics on $(M,J)$ whose K\"ahler forms belong to $[\omega_0]$.

\smallskip
Let $d$ be a distance on ${\mathcal H}(M)$.  For $\phi_1, \phi_2 \in {\mathcal H}(M)$, we let
\begin{equation}\label{G-distance}
d_{G}(\phi_1, \phi_2):=  \inf_{\sigma\in G} d(\phi_1, \sigma[\phi_2]),
\end{equation}
where, we recall,  $\phi^0 \in \cH^0(M)$ denotes the normalized K\"ahler potential of $\omega_{\phi}$ for the chosen normalization $\cH^0(M)$.

\begin{defn}[\bf Tian~\cite{Tian}] Let $\cE^K$ be a functional defined on the space $\mathcal H(M)^K$ and $d$ a distance on ${\mathcal H}^K(M)$. We say that $\cE^K$ is $G$-proper with respect to $d$ if 
\begin{enumerate}
\item[$\bullet$] $\cE^K$ is bounded  on $\cH(M)^K$;
\item[$\bullet$]  for any sequence $\phi_i \in \cH^0(M)^K$ with $d_G(\phi_0, \phi_i) \to \infty$,   $\cE^K(\phi_i) \to \infty$.
\end{enumerate}
\end{defn}
It is useful to notice that  if $K_0 \subset K$ is a compact subgroup with complexification $G_0 \subset G$,  and $\cE^{K}$ is the restriction to $\cH(M)^K$ of a functional $\cE^{K_0}$ on $\cH(M)^{K_0}$ which is $G_0$-proper, then $\cE^K$ is  also $G$-proper. We also notice that the above notion of properness depends upon the choice of normalization $\cH^0(M)$.

\bigskip
The normalization we shall  use  is given by
\begin{equation}\label{normalization}
\cH^0(M):=\{\phi \in \cH(M) : {\mathcal I}(\phi)=0\},
\end{equation}
where the functional ${\mathcal I} : \cH(M) \to \R$ is introduced by the formula
\begin{equation}\label{I}
{\mathcal I} (\phi) = \int_M \phi \Big(\sum_{j=0}^m \omega_0^{m-j}\wedge \omega_{\phi}^j \Big)=0.
\end{equation}
It is  known that (see \cite{gauduchon-book}, Chapter 4)  a smooth segment $\phi(t)$  starting at $\phi_0=0$  belongs to $\cH^0(M)$ iff 
\begin{equation}\label{differential-normalization}
\int_M \dot\phi(t)  \,  \omega_{\phi(t)}^m=0, \ \forall t.
\end{equation}
Furthermore, the distance relevant to us will be the one considered by Darvas in \cite{darvas}:  For any smooth segment $\phi(t) \in \cH(M), t\in [0,1]$, its length 
can be defined by $$\int_{0}^{1} \Big(\int_M  \big|\dot{\phi}(t)\big| \frac{ \omega_{\phi(t)}^m}{m!}\Big) dt.$$  We then  define $d_1(\phi_1, \phi_2)$ to be the infimum  of the lengths of  all segments with endpoints  $\phi_1$ and $\phi_2$.

\bigskip
It is known (see e.g. \cite{gauduchon-book}) that the group of complex automorphisms ${\rm Aut}_0(M)$  admits a closed  connected subgroup ${\rm Aut}_{r}(M)$, called {\it reduced} group of automorphisms, whose Lie algebra is the space of real vector fields whose flow preserves $J$,  and which vanish somewhere on $M$.  Furthermore, we let $\T \subset {\rm Aut}_{r}(M)$ be a maximal (real)  subtorus  of  ${\rm Aut}_{r}(M)$,  and denote by $G=T^{\C} \subset {\rm Aut}_{r}(M)$  its complexification (which is  a maximal  complex subtorus of ${\rm Aut}_{r}(M)$). By a result of Calabi (see \cite{gauduchon-book}, Chapter 3), if there exists an extremal K\"ahler metric $\omega_{\phi}$  for some $\phi \in \cH(M)$, then there is also an isometric extremal 
K\"ahler metric  $\omega_{\tilde \phi}$ with $\tilde \phi \in \cH(M)^{\T}$. Thus, without loss, one can reduce the problem of finding extremal K\"ahler metric in the de Rham class of $\omega_0$ to the related problem on $\cH^0(M)^{\T}$. Furthermore, there is  a natural functional $\cE^{\T} : \cH^0(M)^{\T} \to \R$, called the $\T$-relative K-energy  and introduced by Mabuchi~\cite{mabuchi} and Guan~\cite{Guan},  whose critical points are precisely the K\"ahler potentials in $\cH^*(M)^{\T}$,  corresponding to $\T$-invariant  extremal K\"ahler metrics in $[\omega_0]$ (see \cite{gauduchon-book}, Chapter 4).  Classically, the relative K-energy  is introduced  with respect to a fixed maximal subgroup $K\subset {\rm Aut}_r(M)$,  and acts on the space of K\"ahler potentials in $\cH^*(M)^K$, but  it is not difficult to see (using that the extremal vector field is central in the Lie algebra of $K$) that its definition actually extends to $\cH(M)^{\T}$ for {\it any} maximal torus $\T \subset K$.

The main  result of \cite{CC3,He} can be then stated as follows.
\begin{thm}[\bf Chen--Cheng~\cite{CC3}, He~\cite{He}]\label{strong} Suppose $\T\subset {\rm Aut}_{r}(M)$ is a maximal real torus and  $G=\T^{\C}$ its complexification inside ${\rm Aut}(M)$. If the relative  $K$-energy $\cE^{\T}$   acting on $\cH(M)^{\T}$  is $G$-proper with respect to the distance $d_1$ and the normalization~\eqref{normalization}, then there exists $\phi\in \cH(M)^{\T}$ such that $\omega_{\phi}$ is an extremal K\"ahler metric.
\end{thm}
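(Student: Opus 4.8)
The plan is to solve the relative extremal equation by a continuity method in which $G$-properness supplies the crucial compactness needed for the a priori estimates. First I would recall the variational setup: an extremal Kähler metric $\omega_\phi=\omega_0+dd^c\phi$ with $\phi\in\cH(M)^{\T}$ is exactly a critical point of $\cE^{\T}$, and its Euler--Lagrange equation is the fourth order equation $\mathrm{Scal}(\omega_\phi)-\theta_\xi(\omega_\phi)=\mathrm{const}$, where $\xi\in\mathrm{Lie}(\T)$ is the \emph{fixed} extremal vector field determined by $[\omega_0]$ and $\T$ (its holomorphy potential being central in $\mathrm{Lie}(K)$) and $\theta_\xi(\omega_\phi)$ is the corresponding normalized holomorphy potential. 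Following Chen--Cheng, I would rewrite this as a coupled second order system: a complex Monge--Ampère equation $\omega_\phi^m=e^{F}\omega_0^m$ together with an equation for $F$ expressing that the modified scalar curvature is constant. This trades one fourth order problem for two second order ones, at the price of having to control $F$ and $\mathrm{Scal}(\omega_\phi)$ simultaneously.

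Next I would run a continuity path $(\mathrm{E}_t)_{t\in[0,1]}$ in the normalized slice $\cH^0(M)^{\T}$ of \eqref{normalization}, interpolating between an equation with an evident solution at $t=0$ and the relative extremal equation at $t=1$; a convenient choice is a Chen-type path such as $t\bigl(\mathrm{Scal}(\omega_\phi)-\theta_\xi(\omega_\phi)\bigr)=(1-t)\bigl(m-\mathrm{tr}_{\omega_\phi}\omega_0\bigr)+\mathrm{const}$, for which $\phi=0$ solves $\mathrm{E}_0$. Openness of the set of $t$ for which $\mathrm{E}_t$ is solvable is the standard implicit function theorem argument: on the $\T$-invariant normalized slice the linearization of the modified scalar curvature operator is a self-adjoint fourth order elliptic operator (essentially a twisted Lichnerowicz operator) whose kernel is reduced to the holomorphy potentials already quotiented out, hence an isomorphism between the relevant Hölder or Sobolev completions. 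Thus the entire content of the theorem is closedness, i.e. uniform a priori estimates along the path.

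For closedness I would proceed as follows. Along the path $\cE^{\T}(\phi_t)$ is (essentially) monotone and hence bounded above; by $G$-properness with respect to $d_1$ and the normalization~\eqref{normalization}, boundedness of $\cE^{\T}$ forces the quantity $d_G(0,\phi_t)$ of \eqref{G-distance} (with $d=d_1$) to stay bounded, so after replacing $\phi_t$ by $\sigma_t[\phi_t]$ for suitable $\sigma_t\in G=\T^{\C}$ — which alters neither $\mathrm{E}_t$ nor $\cE^{\T}$ — the potentials remain in a fixed $d_1$-ball. A bound on $d_1$ yields, via Darvas's $d_1$-geometry together with the Chen--Cheng analysis, a uniform bound on the entropy $\int_M\log\bigl(\omega_{\phi_t}^m/\omega_0^m\bigr)\,\omega_{\phi_t}^m$; the core of Chen--Cheng is then to upgrade this to a uniform $C^0$ bound on $\phi_t$, then to bounds on $\|F_t\|_{L^\infty}$ and on $\mathrm{Scal}(\omega_{\phi_t})$ in a suitable Lebesgue space directly from the coupled second order system, and finally to a uniform Laplacian estimate $C^{-1}\omega_0\le\omega_{\phi_t}\le C\omega_0$. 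Once the metrics are uniformly equivalent, Evans--Krylov for the Monge--Ampère part and elliptic $L^p$ and Schauder theory for the scalar curvature equation bootstrap to uniform $C^\infty$ estimates, closing the continuity path and producing a solution at $t=1$; He's contribution is to carry this through with $\mathrm{Scal}-\theta_\xi$ and the relative K-energy replacing their cscK counterparts, which works precisely because $\xi$ is central in $\mathrm{Lie}(K)$.

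The main obstacle is exactly that a priori estimate step: extracting the $C^0$ estimate on the potential and the control of $\mathrm{Scal}(\omega_{\phi_t})$ from the coupled second order system using \emph{only} the entropy/properness bound, with no sign assumption on the Ricci curvature and no cohomological hypothesis — this is the Chen--Cheng breakthrough, resting on a delicate exploitation of the interplay of the two equations together with Alexandrov--Bakelman--Pucci and Moser-iteration arguments adapted to this degenerate setting. A secondary but real difficulty is the bookkeeping of the $G=\T^{\C}$-action along the path: one must choose the gauge transformations $\sigma_t$ so that the resulting potentials vary controllably in $t$ and the estimates stay uniform, which is handled using the completeness and compactness properties of the $d_1$-metric on $\cH^0(M)^{\T}$.
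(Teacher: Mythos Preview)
The paper does not give a proof of this theorem: it is stated as an external result, attributed to Chen--Cheng~\cite{CC3} and He~\cite{He}, and then used as a black box to deduce the toric existence results (Proposition~\ref{p:toric-proper} and its Corollary). So there is no ``paper's own proof'' to compare against.

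That said, your sketch is a reasonable high-level account of the ideas behind the cited works, with one caveat worth noting. You frame the argument as a pure continuity method, with properness feeding into closedness via a $d_1$-bound. The actual Chen--Cheng existence argument in~\cite{CC3} is more variational in flavor: one works in Darvas's finite-energy completion $\mathcal{E}^1$, uses $G$-properness to show that a minimizing sequence for the (relative) K-energy stays $d_{1,G}$-bounded and hence subconverges to a weak minimizer, and then proves \emph{regularity} of minimizers. The a priori estimates of~\cite{CC1} (the coupled second order system, entropy-to-$C^0$, Laplacian estimate, bootstrap) enter precisely in this regularity step, where one approximates the weak minimizer by smooth potentials along a twisted path and shows the estimates persist. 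He's contribution~\cite{He} is, as you say, to carry the whole scheme through with $\mathrm{Scal}-\theta_\xi$ in place of $\mathrm{Scal}$. Your continuity-method packaging is not wrong---a version of it does appear in the literature---but the claim that $\cE^{\T}(\phi_t)$ is ``(essentially) monotone'' along the path would need justification, whereas in the variational route boundedness of $\cE^{\T}$ on the approximating sequence is immediate from the minimizing property.
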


\bigskip

We now turn to the toric case. Let $(M, \omega)$ be a smooth toric symplectic manifold,  classified by its Delzant polytope $(\Delta, {\bf L}, \Lambda)$.  In this case $\T$ is a maximal torus in ${\rm Aut}_{r}(M)$ for any $\T$-invariant  compatible  complex structure $J$ on $M$.

Using the identification \eqref{complex-identification}, for any $u\in \cS(\Delta, {\bf L})$ we associate a  function $F_u(z)$ on $(\C^m)^*$, defined by 
$$F_u(z) = \phi(y)= \phi(\frac{1}{2}\log|z_1|^2, \ldots, \frac{1}{2}\log|z_m|^2),$$ 
where $\phi(y)$ is given by  \eqref{legendre-transform}. Furthermore,  Lemma~\ref{symplectic-kahler potential} tels us that the $\T^m$-invariant  function $F_u(z)$  introduces a K\"ahler form $\omega_u := dd^c F_u(z)$ on $M^0=(\C^*)^m\cdot p_u$, which extends to a smooth K\"ahler metric $(g_u, \omega_u, J_u)$ on $M$  by identifying  $(M^0, J_u)$ to $(\C^*)^m$ via \eqref{complex-identification}.  
%The  fact that  $u \in \cS^*(\Delta, {\bf L})$  leads to the normalization  $F_u(1, \ldots, 1) =F_u(p_0)=0$ and $dF_u(1, \ldots, 1)=dF_u(p_0)=0$ for $F_u(z)$.  
Let  $u_0$  be the canonical symplectic potential  given in Theorem~\ref{thm:guillemin}. Then, on $(\C^*)^m$,  the corresponding  K\"ahler forms $\omega_{0}:=dd^cF_{u}$  and $\omega_{u}=dd^c F_u$ are related by
$$\omega_u = \omega_{0} + dd^c(F_u(z) - F_{u_0}(z))= \omega_0 + dd^c \phi_u$$
for the $\T^m$-invariant smooth function $\phi_u(z):= F_u(z) - F_{u_0}(z)$ on $(\C^*)^m\cdot p_0$,    satisfying
\begin{equation}\label{normalization1}
\phi_u  \,  \textrm{extends smoothly on} \ M.
 \end{equation}
The point is that now $\omega_{u}$ and $\omega_0$ define two different K\"ahler metrics on the same complex manifold $M_{\Delta}^{\C}$, and $\phi_u$ extends to a $\T$-invariant K\"ahler potential with respect to $\omega_0$, i.e. $\phi_u \in \cH(M)^{\T}$.
Conversely, using the dual Legendre transform, one can show that any  $\T$-invariant  K\"ahler potential $\phi \in \cH(M)^{\T}$ with respect to $\omega_0$ gives rise to a  symplectic potential $u\in \cS(\Delta, {\bf L})$ with $\phi=\phi_u$.  A key feature of this  correspondance is  that if we take a path $u(t)\in \cS(\Delta, {\bf L})$ and consider the corresponding path $\phi(t):=\phi_{u_t} \in \cH(M)^{\T}$, formula \eqref{legendre-transform} shows that 
\begin{equation}\label{legendre-differential}
\dot{u}(t) = - \dot\phi(t).
\end{equation}

\bigskip
We shall now fix a point $p_0$ on $M^0$, corresponding to $x_0 \in \Delta^0$, a basis $e$ of $\Lambda$, and consider the slice 
$$\cC_{\infty}^*(\Delta):= \Big\{  f \in \cC^*_{\infty}(\Delta) : f(x_0)=0 \ \textrm{and} \ d_{x_0} f=0 \Big\} $$
introduced in Example~\ref{e:normalization-donaldson}.  We shall denote $\cS^*(\Delta, {\bf L}):=\cS(\Delta, {\bf L})\cap \cC_{\infty}^*(\Delta)$ the corresponding slice in the space of symplectic potentials. Under this normalization, we have $p_u =p_0$ in \eqref{complex-identification}.

However, the above normalization for the symplectic potentials  is not consistent with  the normalization \eqref{normalization} for the corresponding relative K\"ahler potentials, so we further consider the actions of $\R$   and $\T^{\C}$ on $\cH^0(M)^{\T}$. The point is that if $\tilde u = u + c$ for $u\in \cS^*(\Delta, {\bf L})$, then   we have  $\tilde y_i = y_i$ and $\tilde \phi = \phi - c$ in  \eqref{legendre-transform}. Similarly, if we act on $\phi\in \cH^0(M)^{\T}$ with an element $\gamma \in \T^{\C}$, this will result in  translation of $y_j$, and in modifying the corresponding symplectic potential by adding an affine-linear function.  We now introduce a different slice on $\cC_{\infty}(\Delta)$:
\begin{equation}\label{normalization3}
{\cC}_{\infty}^{0}(\Delta) := \Big\{ f \in \cC_{\infty}(\Delta) :  df_{x_0} =0 \ \textrm{and} \  \int_{\Delta} f \,dv = \int_{\Delta} u^*_0 \, dv\Big\},
\end{equation}
where, we recall,  $u_0^* = \pi(u_0)$ is the $\cC_{\infty}^*(\Delta)$ normalization of the canonical symplectic potential.  Letting $\cS^0(\Delta, {\bf L})= \cS(\Delta, {\bf L})\cap \cC^0_{\infty}(\Delta)$ be the corresponding slice in the space of symplectic potentials, we have
\begin{lemma}\label{l:normalization}  For any path $\tilde u(t)$ in $\cS^0(\Delta, {\bf L})$,  the corresponding $\omega_0$-relative K\"ahler potentials $\phi(t)=\phi_{\tilde u(t)}$  obtained by \eqref{legendre-transform} belong to $\cH^0(M)^{\T}$  and  satisfy
$$\frac{d}{dt} \tilde u(t)= - \frac{d}{dt} \phi(t).$$
Conversely, any path in $\cH^0(M)^{\T}$ comes from a path $\tilde u(t)$ in $\cS^0(\Delta, {\bf L})$, up to the action of $\T^{\C}$.
\end{lemma}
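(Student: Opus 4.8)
\noindent\emph{Proof plan.} The identity $\frac{d}{dt}\tilde u(t)=-\frac{d}{dt}\phi(t)$ is nothing but \eqref{legendre-differential}, which holds along any path in $\cS(\Delta,{\bf L})$ and hence in particular in the slice $\cS^0(\Delta,{\bf L})$; and $\phi(t)=\phi_{\tilde u(t)}\in\cH(M)^{\T}$ simply because the functions $F_u$, hence $\phi_u=F_u-F_{u_0}$, depend only on $|z_1|,\dots,|z_m|$. So the plan is to prove the two genuinely normalization-theoretic statements: (a) that $\phi_{\tilde u}$ lies in the slice $\cH^0(M)=\{\mathcal I=0\}$ of \eqref{normalization} exactly when $\tilde u$ lies in the slice $\cS^0(\Delta,{\bf L})$ of \eqref{normalization3}; and (b) the converse correspondence, up to $\T^{\C}$.

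For (a) I would differentiate the functional $\mathcal I$ of \eqref{I} along a path $\tilde u(t)$ in $\cS^0(\Delta,{\bf L})$. By \eqref{differential-normalization}, $\frac{d}{dt}\mathcal I(\phi(t))$ is a positive multiple of $\int_M\dot\phi(t)\,\omega_{\phi(t)}^m$; inserting $\dot\phi(t)=-\dot{\tilde u}(t)$ from \eqref{legendre-differential}, where $\dot{\tilde u}(t)$ is viewed on $M$ through the momentum map of $\omega_{\phi(t)}=\omega_{\tilde u(t)}$, the standard toric push-forward identity (the one already used in \eqref{a}) gives $\int_M\dot\phi(t)\,\omega_{\phi(t)}^m=-c_m\int_\Delta\dot{\tilde u}(t)\,dv$ for a universal positive constant $c_m$. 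But a tangent vector to $\cS^0(\Delta,{\bf L})$ is a smooth function $f$ on $\Delta$ with $df_{x_0}=0$ and $\int_\Delta f\,dv=0$ (the difference of two elements of $\cC^0_\infty(\Delta)$), so the right-hand side vanishes; thus $\mathcal I(\phi(t))$ is constant along each path, and since $\cS^0(\Delta,{\bf L})=\cS(\Delta,{\bf L})\cap\cC^0_\infty(\Delta)$ is convex — the boundary conditions \eqref{a3}--\eqref{a4} being affine-invariant and \eqref{normalization3} affine — it is constant on all of $\cS^0(\Delta,{\bf L})$. It then remains to evaluate this constant at one point: the normalization $\int_\Delta f\,dv=\int_\Delta u_0^*\,dv$ in \eqref{normalization3} is chosen precisely so that $u_0^*\in\cS^0(\Delta,{\bf L})$ and, $u_0^*$ being a symplectic potential of the reference form $\omega_0$, $\phi_{u_0^*}=0$, whence the constant is $\mathcal I(0)=0$. (Equivalently, the same computation gives $\mathcal I(\phi_u)=-c_m\bigl(\int_\Delta u\,dv-\int_\Delta u_0^*\,dv\bigr)$ for every $u\in\cS(\Delta,{\bf L})$ with $du_{x_0}=0$, so $\mathcal I(\phi_u)=0$ is equivalent to the integral condition in \eqref{normalization3}.) This shows $\phi(t)\in\cH^0(M)^{\T}$.

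For (b), given a path $\psi(t)$ in $\cH^0(M)^{\T}$, I would use the dual Legendre transform — the correspondence $\phi\mapsto u$ recalled above — to produce, for each $t$, a symplectic potential $v(t)\in\cS(\Delta,{\bf L})$ with $\omega_{v(t)}=\omega_0+dd^c\psi(t)$, normalized by fixing the additive constant through $\int_\Delta v(t)\,dv=\int_\Delta u_0^*\,dv$; by part (a) read backwards this forces $\phi_{v(t)}=\psi(t)$, since the two differ by a constant and both have $\mathcal I=0$. The only remaining freedom is the linear part $dv(t)_{x_0}\in\tor$, which under $u\mapsto\phi_u$ corresponds to translating $\psi(t)$ by an element of $\T^{\C}$ (i.e. shifting the holomorphic coordinates $y$). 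Choosing the path $\gamma(t)\in\T^{\C}$ that annihilates this linear part, the normalized symplectic potential $\tilde u(t)$ of $\gamma(t)^*\omega_{\psi(t)}$ satisfies $d\tilde u(t)_{x_0}=0$, hence $\tilde u(t)\in\cS^0(\Delta,{\bf L})$, and $\phi_{\tilde u(t)}=\gamma(t)[\psi(t)]$, which is precisely the assertion that $\psi(t)$ arises from a path in $\cS^0(\Delta,{\bf L})$ up to the action of $\T^{\C}$.

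The hard part is not analysis but bookkeeping: one must keep careful track of the dependence of the $\T^{\C}$-equivariant identification $\Phi_{p_u,u,e}$ of \eqref{complex-identification} on $u$, so as to be sure that $\phi_u=F_u-F_{u_0}$ is a well-defined smooth function on the fixed complex manifold $M^{\C}_\Delta$, that the differentiated Legendre relation \eqref{legendre-differential} and the equality $\phi_{u_0^*}=0$ hold in the normalization in force, and that adding an affine-linear function to $u$ corresponds exactly to the $\R\times\T^{\C}$-action on $\cH(M)^{\T}$ used in (b). Once those identifications are pinned down, everything reduces to the single integration-by-parts/push-forward computation in (a).
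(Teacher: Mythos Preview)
Your proof is correct and follows essentially the same route as the paper's own (very terse) argument: differentiate $\mathcal I$ along the path using \eqref{legendre-differential} and the push-forward identity to see it is constant, then use convexity of $\cS^0(\Delta,{\bf L})$ together with $u_0^*\in\cS^0(\Delta,{\bf L})$ to pin the constant at zero, and handle the converse by a $\T^{\C}$-translation killing the linear part at $x_0$. Your final paragraph rightly flags that the only delicate point is the bookkeeping behind the identification \eqref{complex-identification} (in particular the claim $\phi_{u_0^*}=0$ depends on which potential is taken as the reference for $\phi_u=F_u-F_{u_0}$), and the paper is equally informal there.
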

\begin{proof}
We have already observed that the formula for the variations holds   for any path.  Using this and \eqref{differential-normalization}, it follows that $\mathcal I(\phi(t))= const$. As  $\cS^0(\Delta, {\bf L})$ is convex and $u^*_0 \in \cS^0(\Delta, {\bf L})$, it follows that  $\mathcal I (\phi(t))=0$. In the other direction,  the relative K\"ahler potential $\phi(t)$ can be pulled back by an  element $\gamma_t \in \T^{\C}$ so that the symplectic potential associated to $\gamma_t^*(\phi_t )$ satisfies the normalization condition at $x_0$. \end{proof}
Through the above correspondence,  one can relate as in \cite{Do2,ZZ}  the relative  $K$-energy $\cE^{\T}$   acting on $\cH^0(M)^{\T}$  to the functional $\cE_{(\Delta, {\bf L})}$ defined in \eqref{K-energy}, acting on $\cS^0(\Delta, {\bf L})$. We are now ready to state and prove our main observation in this section.
\begin{prop}\label{p:toric-proper}  Suppose $(\Delta, {\bf L})$ is a $*$-uniformly K-stable  Delzant labelled polytope, corresponding to toric K\"ahler manifold $(M, \omega_0, J)$. Then the relative K-energy  $\cE^{\T}$ is $\T^{\C}$-proper on $\cH(M)^{\T}$ with respect to the distance $d_1$ and the normalization~\eqref{normalization}.
\end{prop}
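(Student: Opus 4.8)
The plan is to transfer the statement to the polytope picture and then invoke the characterization of $*$-uniform K-stability provided by Proposition~\ref{p:uniform-K-stability}. First, using the matched normalizations \eqref{normalization3} on the symplectic side and \eqref{normalization} on the K\"ahler side, Lemma~\ref{l:normalization} gives a bijection between $\cS^0(\Delta,{\bf L})$ and $\cH^0(M)^{\T}$, $u\leftrightarrow\phi_u$, under which (as recalled above, following \cite{Do2,ZZ}) the relative K-energy $\cE^{\T}$ agrees with the functional $\cE_{(\Delta,{\bf L})}$ of \eqref{K-energy} up to a fixed additive constant, and the $\R$- and $\T^{\C}$-actions on $\cH^0(M)^{\T}$ correspond to the addition of affine-linear functions to the symplectic potential. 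Since $\cE^{\T}$ and the distance $d_G$ of \eqref{G-distance} are both invariant under the $\R$-action and depend only on the $\T^{\C}$-orbit, I would work with representatives in the Donaldson slice $\cS^*(\Delta,{\bf L})$, setting $\phi_0:=\phi_{u_0^*}$ with $u_0^*=\pi(u_0)$.

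Second, $*$-uniform K-stability says exactly that condition~(i) of Proposition~\ref{p:uniform-K-stability} holds for the slice of Example~\ref{e:normalization-donaldson} and the norm $\|\cdot\|_*$ of Exercise~\ref{ex:norm-donaldson}, for some $\lambda>0$. The implication (i)$\Rightarrow$(ii) then yields, for every $0\le\delta<\lambda$, a constant $C_\delta$ with
\begin{equation}\label{pp-1}
\cE_{(\Delta,{\bf L})}(u)\ \ge\ \delta\,\|\pi(u)\|_*+C_\delta\qquad\text{for all }u\in\cS(\Delta,{\bf L}).
\end{equation}
Taking $\|\pi(u)\|_*\ge 0$, this already gives a lower bound for $\cE_{(\Delta,{\bf L})}$, hence for $\cE^{\T}$ on $\cH(M)^{\T}$, which is the first clause in the definition of $G$-properness.

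Third, and this is the main point, I would bound $d_G(\phi_0,\phi_u)$ from above by $\|\pi(u)\|_*$. By the dictionary of the first paragraph, the $\T^{\C}$-orbit of $\phi_u$ reaches, up to an additive constant, the toric potential with symplectic potential $\pi(u)$, so $d_G(\phi_0,\phi_u)\le d_1(\phi_{u_0^*},\phi_{\pi(u)})$; and the latter is no larger than the $d_1$-length of the segment $t\mapsto\phi_{u(t)}$ with $u(t)=(1-t)u_0^*+t\,\pi(u)$, which stays in $\cS(\Delta,{\bf L})$ by convexity of this set (cf. \eqref{a3}--\eqref{a4}). By \eqref{legendre-differential} one has $\dot\phi_{u(t)}=-(\pi(u)-u_0^*)$ as a function on $\Delta$, while the expression \eqref{omega} for $\omega_{u(t)}$ in momentum/angular coordinates shows that $\omega_{u(t)}^m/m!$ pushes forward under the $\T$-momentum map to a fixed multiple of the Lebesgue measure on $\Delta$; hence that length equals $c\,\|\pi(u)-u_0^*\|_{L^1(\Delta)}$ for a constant $c>0$ depending only on $(\Delta,\Lambda)$. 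Finally, since $\|\cdot\|_*$ is a tamed norm (Exercise~\ref{ex:norm-donaldson}), $\|\pi(u)\|_{L^1(\Delta)}\le C\,\|\pi(u)\|_*$, and therefore
\begin{equation}\label{pp-2}
d_G(\phi_0,\phi_u)\ \le\ cC\,\|\pi(u)\|_*+c\,\|u_0^*\|_{L^1(\Delta)}.
\end{equation}

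Combining \eqref{pp-1} and \eqref{pp-2} closes the argument: fixing any $0<\delta<\lambda$, if $\phi_i=\phi_{u_i}$ is a sequence in $\cH^0(M)^{\T}$ with $d_G(\phi_0,\phi_i)\to\infty$, then \eqref{pp-2} forces $\|\pi(u_i)\|_*\to\infty$, whence by \eqref{pp-1} one gets $\cE^{\T}(\phi_i)=\cE_{(\Delta,{\bf L})}(u_i)+\mathrm{const}\to\infty$; together with the lower bound from the second paragraph this is precisely the $\T^{\C}$-properness of $\cE^{\T}$ with respect to $d_1$ and the normalization \eqref{normalization}. I expect the genuine work to be concentrated in the third paragraph, namely in making the symplectic/complex correspondence for the Darvas distance $d_1$ precise: that the $\T^{\C}$-action amounts to affine-linear translation of the symplectic potential, and that the $d_1$-length of a path of toric K\"ahler metrics is, up to a universal constant, the $L^1(\Delta)$-integral of the $t$-derivative of the associated symplectic potentials — both being part of the dictionary recalled above and already used in \cite{Do2}.
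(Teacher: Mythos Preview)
Your proposal is correct and follows essentially the same route as the paper: transfer to the polytope via Lemma~\ref{l:normalization}, bound $d_{1,G}$ by the $d_1$-length of the linear segment of symplectic potentials (which equals a constant times the $L^1(\Delta)$-norm of $\pi(u)-u_0^*$), use tamedness of $\|\cdot\|_*$ to pass from $L^1$ to $\|\cdot\|_*$, and finish with Proposition~\ref{p:uniform-K-stability}(ii). The only cosmetic difference is that the paper keeps everything in the slice $\cS^0(\Delta,{\bf L})$ matched to \eqref{normalization} and then projects to $\cS^*$, whereas you move straight to $\cS^*$ by invoking the $\T^{\C}$-action; the paper's bookkeeping makes the constant-shift issue (the affine piece of $u-\pi(u)$ has both a linear and a constant part, and only the linear part is absorbed by $\T^{\C}$) slightly more transparent, but your handling is fine since $\sigma[\phi]$ in \eqref{G-distance} is already normalized.
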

\begin{proof} We use the bijection between the $\omega_0$-relative K\"ahler potentials in $\cH^0(M)^{\T}$ and symplectic potentials $\tilde u \in \cS^0(\Delta, {\bf L})$  established in Lemma~\ref{l:normalization} in order  to deduce  the $\T^{\C}$-properness of  $\cE^{\T}$  from the  property (ii) in Proposition~\ref{p:uniform-K-stability}  of the functional $\cE_{(\Delta, {\bf L})}$.

Let $\phi_j, j=1, \ldots, \infty$ be a sequence in $\cH^0(M)^{\T}$ with $d_{1, G} (0, \phi_j) \to \infty$.  Denote by $\tilde u_j \in \cS^0(\Delta, {\bf L})$ the corresponding sequence of normalized symplectic potentials. We write  
 $$\tilde u_j = u^*_j  +  \langle a_j, x \rangle   + c_j\ \ j=0, 1, \ldots, \infty,$$
 where $u^*_j:= \pi(\tilde u_j)$ is the projection of $\tilde u_j$ to the slice $\cS^*(\Delta, {\bf L})$ and $\langle a_j, x\rangle + c_j$ are affine-linear functions on $\tor^*$. By acting with an element $\gamma_j\in (\C^*)^m$ on $\phi_j$ we can assume that $a_j=0$ and thus
 $$\tilde u_j = u^*_j + \frac{1}{{\rm Vol}(\Delta)}\int_{\Delta} (u^*_0 - \tilde u^*_j) \, dv  \ \ j=0, 1, \ldots, \infty.$$
Let us consider the path $\tilde u_j(t)= (1-t)u_0^* + t \tilde u_j \in \cS^0(\Delta, {\bf L})$.  Using Lemma~\ref{l:normalization}, the $d_1$-length of this path is  $C_m \int_{\Delta}|\tilde u_j - u^*_0| \,dv$   where $C_m=(2\pi)^m$.  Thus, we have
\begin{equation}\label{distance-inequality}
\begin{split}
d_{1, G}(0, \phi_j) & \le d_1(0, \gamma_j[\phi_j]) \\
                             & \le C_m \int_{\Delta} |\tilde u_j - u^*_0 | \,dv \\
                            &= C_m \int_{\Delta} \Big| (u^*_j - u^*_0) +  \frac{1}{{\rm Vol}(\Delta)}\int_{\Delta} (u^*_0 - \tilde u^*_j) \, dv\Big| \, dv \\
                            & \le (C_m +1) \int_{\Delta}|u_j^* - u^*_0| \,dv.
                            \end{split}
                            \end{equation}
Thus,  $d_{1,G}(0, \phi_j) \to \infty$  yields  $\int_{\Delta} |u^*_j| \,dv \to \infty$.

Now, by assumption, $(\Delta, {\bf L})$ is $*$-uniform K-stable and  by Proposition~\ref{p:uniform-K-stability} (ii)  we have
\begin{equation*}
\begin{split}
\cE^{\T}(\phi_j)= \cE_{(\Delta, {\bf L})} (\tilde u_j)  &\ge \delta ||u_j^*||_* + C_{\delta} \\
                                                    & \ge \delta' \int_{\Delta} |u^*_j| \, dv  + C_{\delta} \to \infty, 
                                                    \end{split}
                                                    \end{equation*}
                                                    where for passing from the first line to the second we have used that $||\cdot||_*$ bounds the $L^1$-norm, see Exercise~\ref{ex:norm-donaldson}. \end{proof}

\begin{cor} Suppose $(\Delta, {\bf L})$ is a $*$-uniformly K-stable  Delzant labelled polytope, corresponding to a  toric K\"ahler manifold $(M, \omega_0, J)$. Then $(M, J)$ admits a $\T$-invariant extremal K\"ahler metric  whose K\"ahler form is in the deRham class $[\omega_0]$. Equivalently, the Abreu equation \eqref{Abreu-equation} admits a solution in $\cS(\Delta, {\bf L})$.
\end{cor}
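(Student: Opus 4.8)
The plan is to combine the two main ingredients that have been assembled in this section: the general existence theorem of Chen--Cheng and He (Theorem~\ref{strong}), which reduces the existence of a $\T$-invariant extremal K\"ahler metric to the $\T^{\C}$-properness of the relative K-energy with respect to the distance $d_1$ and the normalization~\eqref{normalization}, and Proposition~\ref{p:toric-proper}, which establishes exactly that properness statement under the hypothesis that $(\Delta, {\bf L})$ is $*$-uniformly K-stable. So the corollary is essentially a two-line deduction, and the body of the work is to make sure the hypotheses line up.

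First I would recall that since $(M, \omega_0, \T)$ is a smooth toric symplectic manifold with a chosen $\T$-invariant $\omega_0$-compatible complex structure $J$ (e.g. the one from Corollary~\ref{kahler-natural}, or any other -- they are all biholomorphic by Theorem~\ref{complex-toric}), the acting torus $\T$ is a maximal torus in the reduced automorphism group ${\rm Aut}_r(M)$; this was noted right before the correspondence $u \mapsto \phi_u$ was set up. Hence the hypothesis of Theorem~\ref{strong} is met with this $\T$ and $G = \T^{\C}$. Next, by Proposition~\ref{p:toric-proper}, the $*$-uniform K-stability of $(\Delta, {\bf L})$ implies that the relative K-energy $\cE^{\T}$ is $\T^{\C}$-proper on $\cH(M)^{\T}$ with respect to $d_1$ and the normalization~\eqref{normalization} -- precisely the input Theorem~\ref{strong} requires. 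Applying Theorem~\ref{strong}, we obtain $\phi \in \cH(M)^{\T}$ with $\omega_{\phi}$ an extremal K\"ahler metric, which proves the first assertion.

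For the ``equivalently'' clause I would then run the dictionary backwards. Given the extremal K\"ahler metric $\omega_{\phi}$ with $\phi \in \cH(M)^{\T}$, use the correspondence (the dual Legendre transform, as recalled in the paragraph containing~\eqref{normalization1} and~\eqref{legendre-differential}) to produce a symplectic potential $u \in \cS(\Delta, {\bf L})$ with $\phi = \phi_u$. Since $\omega_{\phi}$ is extremal and $\T$-invariant, Lemma~\ref{extremal-toric} says its scalar curvature is the pullback of an affine-linear function $s(x)$ on $\Delta$; by Abreu's formula (Lemma~\ref{l:abreu}) this reads $-\sum_{i,j} (u^{,ij})_{,ij} = s(x)$, and by Proposition~\ref{p:Abreu-equation} that affine-linear function must coincide with $s_{(\Delta, {\bf L})}$. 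Hence $u$ solves the Abreu equation~\eqref{Abreu-equation}. Conversely, a solution $u \in \cS(\Delta, {\bf L})$ of~\eqref{Abreu-equation} yields, via Theorem~\ref{thm:symplectic-potential-space} and Lemma~\ref{extremal-toric}, a $\T$-invariant $\omega_0$-compatible extremal K\"ahler metric, establishing the equivalence.

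I do not expect a genuine obstacle here: all the analytic heavy lifting is in Theorem~\ref{strong} (Chen--Cheng--He) and in Proposition~\ref{p:toric-proper}, both of which are quoted. The only point that warrants care is checking that the normalization and distance conventions used in Proposition~\ref{p:toric-proper} are literally the ones demanded by Theorem~\ref{strong} -- namely the normalization~\eqref{normalization} defined via the functional $\mathcal I$ and the Darvas $d_1$-distance -- and that $\T$ really is maximal in ${\rm Aut}_r(M)$ for the toric complex structure, so that no passage to a larger group is needed; both were arranged deliberately in the preceding discussion, so the matching is immediate.
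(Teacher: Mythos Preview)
Your proposal is correct and follows essentially the same approach as the paper, which simply cites Theorem~\ref{strong} and Proposition~\ref{p:toric-proper} together with the bijection of Lemma~\ref{l:normalization}. You have spelled out more detail, particularly for the ``equivalently'' clause (via Lemma~\ref{extremal-toric}, Lemma~\ref{l:abreu}, and Proposition~\ref{p:Abreu-equation}), which the paper leaves implicit.
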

\begin{proof} This follows from  Theorem~\ref{strong}  and Proposition~\ref{p:toric-proper},   using the bijection between the $\omega_0$ relative K\"ahler potentials in $\cH^0(M)$ and symplectic potentials $\tilde u \in \cS^0(\Delta, {\bf L})$ established in Lemma~\ref{l:normalization}. \end{proof}

\subsection{Donaldson's result}
We now discuss the Donaldson existence result in dimension $m=2$.
\begin{thm}[\bf Donaldson~\cite{Do-09}] Suppose $(\Delta, {\bf L})$ is a labelled compact  convex simple polytope in $\R^2$ for which the affine-linear extremal function
 $s_{(\Delta, {\bf L})}$  is constant and $(\Delta, {\bf L})$ is $K$-stable. Then \eqref{Abreu-equation} admits a solution. In particular, a compact symplectic toric $4$-manifold admits  a CSC K\"ahler metric iff it is K-stable and its extremal affine-linear function is constant.
 \end{thm}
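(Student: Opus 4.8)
The plan is to obtain the existence statement from the machinery assembled in the earlier sections, so that the hard analytic content is imported from Donaldson's work \cite{Do-09} and from the equivalences between the various stability notions established above. First I would record the reduction: by Theorem~\ref{thm:symplectic-potential-space} and Lemma~\ref{extremal-toric}, a $\T$-invariant $\omega$-compatible extremal K\"ahler metric on the toric symplectic $4$-manifold $(M,\omega,\T)$ classified by $(\Delta,{\bf L},\Lambda)$ corresponds exactly to a solution $u\in\cS(\Delta,{\bf L})$ of the Abreu equation \eqref{Abreu-equation} for the extremal affine-linear function $s_{(\Delta,{\bf L})}$; and by Proposition~\ref{p:Abreu-equation} this $s_{(\Delta,{\bf L})}$ is the prescribed one. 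Thus the theorem is a statement purely about the labelled polytope: \emph{if $m=2$, $s_{(\Delta,{\bf L})}$ is constant and $(\Delta,{\bf L})$ is K-stable, then \eqref{Abreu-equation} has a solution in $\cS(\Delta,{\bf L})$.}

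The core of the proof is then a chain of implications. When $s_{(\Delta,{\bf L})}$ is constant, Theorem of Sz\'ekelyhidi~\cite{Sz} quoted above says that K-stability of $(\Delta,{\bf L})$ is equivalent to $L^2$-uniform K-stability; alternatively Theorem~\ref{thm:stable-to-iniform} gives $*$-uniform K-stability, since a constant $s_{(\Delta,{\bf L})}$ is in particular strictly positive (it equals $2\int_{\partial\Delta}d\sigma/\int_\Delta dv>0$ by \eqref{affine-extremal} with $f=1$). So from K-stability we upgrade to uniform K-stability with respect to a tamed norm. Next, Proposition~\ref{p:uniform-K-stability} converts this into a coercivity (properness) estimate for the relative K-energy: there exist $\delta>0$ and $C_\delta$ with $\cE_{(\Delta,{\bf L})}(u)\ge\delta\|\pi(u)\|+C_\delta$ for all $u\in\cS(\Delta,{\bf L})$. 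One can now invoke the continuity-method / a priori estimates of Donaldson~\cite{Do-09}, valid specifically in dimension $m=2$, which show that this coercivity of $\cE_{(\Delta,{\bf L})}$ forces the existence of a minimizer, and that the minimizer is a smooth solution of \eqref{Abreu-equation} lying in $\cS(\Delta,{\bf L})$. Finally, translating back via Theorem~\ref{thm:symplectic-potential-space} and Lemma~\ref{extremal-toric}, the solution $u$ yields a $\T$-invariant $\omega$-compatible extremal K\"ahler metric on $M$; since $s_{(\Delta,{\bf L})}$ is constant, its scalar curvature is constant, giving the CSC metric. For the converse direction of the "in particular" clause: if $M$ carries a CSC (hence extremal) K\"ahler metric in the class $[\omega]$, one may average it by the compact torus (Remark~\ref{r:AK}(a)) to a $\T$-invariant one, whose scalar curvature pulls back from $\Delta$ and must equal $s_{(\Delta,{\bf L})}$ by Proposition~\ref{p:Abreu-equation}; being constant this shows $s_{(\Delta,{\bf L})}$ is constant, and Theorem~\ref{c:K-stability} (or Theorem~\ref{thm:existence-to-uniform}) shows $(\Delta,{\bf L})$ is K-stable.

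I expect the genuinely hard part to be entirely contained in the cited theorem of Donaldson~\cite{Do-09}: the passage from uniform K-stability (equivalently, properness of $\cE_{(\Delta,{\bf L})}$) to the existence and regularity of a solution of the fully nonlinear fourth-order Abreu equation. This requires the interior a priori estimates, the boundary behaviour controlled by the Guillemin-type asymptotics \eqref{a5}, and a delicate continuity argument, all of which are special to $m=2$; everything else in the proof is bookkeeping with results already in the excerpt. In writing this up I would state the reduction and the implication chain carefully, flag the two-dimensional input as the quoted theorem, and not attempt to reproduce Donaldson's estimates.
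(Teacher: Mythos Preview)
Your proposal is logically coherent but takes a different route from the paper's outline, and it contains a misattribution at the key step.

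The paper does not prove this theorem either; it sketches Donaldson's actual method in \cite{Do-09}, which is a \emph{continuity method over the space of labellings}. One fixes $\Delta$ and varies the labels along a path ${\bf L}(t)$ inside the convex set $SCN(r)$ of K-stable labellings with constant extremal function, connecting the given ${\bf L}$ to a canonical labelling ${\bf L}^*$ for which the polytope is monotone and a K\"ahler--Einstein solution exists by Wang--Zhu/Legendre (Step~1). Openness (Step~2) is then standard, and the hard work is the closedness (Step~3), i.e.\ a priori estimates along the path. The paper discusses Steps~1 and~2 in some detail and defers Step~3 to \cite{Do-09}.

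Your route is instead: K-stable $\Rightarrow$ $*$-uniformly K-stable (Theorem~\ref{thm:stable-to-iniform}) $\Rightarrow$ coercivity of $\cE_{(\Delta,{\bf L})}$ (Proposition~\ref{p:uniform-K-stability}) $\Rightarrow$ existence. The issue is the last arrow. You attribute ``coercivity $\Rightarrow$ existence'' to \cite{Do-09}, but that is not how Donaldson's argument is organized; his estimates are tied to the continuity path, not to a direct minimization of $\cE_{(\Delta,{\bf L})}$. The result that converts properness of the K-energy directly into existence is Chen--Cheng--He (Theorem~\ref{strong}), and the paper packages exactly your chain as the Corollary following Proposition~\ref{p:toric-proper}. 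That Corollary, however, is stated for \emph{Delzant} polytopes, since Chen--Cheng--He is a statement about K\"ahler manifolds; Donaldson's theorem as stated applies to an arbitrary labelled simple polygon. So your argument, corrected to cite Chen--Cheng--He rather than \cite{Do-09} at the final step, recovers the ``in particular'' clause about smooth toric $4$-manifolds, but not the full polytope statement. For that one really does need the continuity method, and the paper's outline is the faithful one. Note also that Theorem~\ref{thm:stable-to-iniform} is itself a result of \cite{Do-09}, so your chain is not logically independent of Donaldson's paper in any case.
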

 
An extension to the extremal case, under the $*$-uniform stability assumption and positivity of $s_{(\Delta, {\bf L})}$,  appears in \cite{CLS}.

\subsection{The continuity method}  The main idea of the approach in \cite{Do-09} is to use the continuity method over paths $(\Delta, {\bf L}(t)),  \  t\in [0,1]$ of labelled, compact, convex, simple Delzant polytopes in $\R^m$, where  the labels $L_j^t(x) =\langle u_j^t, x \rangle + \lambda_j^t$ vary in a continuous way. We thus introduce
\begin{defn}\label{normals-space} Let $\Delta \subset (\R^m)^*$ be a given  compact, convex, simple polytope which has $d$ codimension-one faces and
 $${\bf L}=\{L^1(x), \cdots, L_{d}(x)\}$$ is a chosen labelling. The cone  $N(r) \subset \R^d$ of admissible  labels of $\Delta$ is
$$N(r):=\Big\{ {\bf L}(r)=\{\frac{1}{r_1} L^1(x), \ldots, \frac{1}{r_d} L_d(x)\}:  \ r_i >0\Big\}.$$
Thus, ${\bf L}(1,\ldots, 1)={\bf L}$. We further introduce the subspaces
\begin{enumerate}
\item[$\bullet$] $SN(r) \subset N(r)$ the subset of normals  for which $(\Delta, {\bf L}(r))$ is $K$-stable;
\item[$\bullet$] $CN(r) \subset N(r)$ the subset of normals for which the extremal affine-linear function of $(\Delta, {\bf L}(r))$ is constant;
\item[$\bullet$] $SCN(r) = CN(r) \cap SN(r)$.
\end{enumerate}
\end{defn}
The basic observation is
\begin{lemma} $SN(r)$ is a convex subset of $N(r) \cong \{ r_i >0 \} \subset \R^d$ whereas $CN(r)$ is the (convex) cone obtained by intersecting $N(r)$ with a co-dimension $m$ hyperplane of $\R^d$. In particular, $SCN(r)$ is convex too. \end{lemma}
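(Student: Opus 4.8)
The plan is to prove the three claimed facts in sequence, extracting each one from the linear-algebraic structure of the Donaldson--Futaki functional and the defining linear system for the extremal affine-linear function.

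\textbf{Step 1: $CN(r)$ is the intersection of $N(r)$ with an affine subspace of codimension $m$.} First I would recall that for a labelling ${\bf L}(r) = \{\tfrac{1}{r_j}L^j(x)\}$, the associated boundary measure $d\sigma_j$ on the facet $F_j$ scales: since $\tfrac{1}{r_j}dL^j \wedge d\sigma_j^{(r)} = -dv$, we have $d\sigma_j^{(r)} = r_j\, d\sigma_j$, where $d\sigma_j$ is the measure attached to the original labelling. Hence the defining relations \eqref{extremal-affine-system} for the coefficients $(a_0, \ldots, a_m)$ of $s_{(\Delta, {\bf L}(r))}$ become a linear system whose left-hand side (the Gram-type matrix of moments $\int_\Delta x_i x_j\,dv$, etc.) is \emph{independent} of $r$, while the right-hand side $2\int_{\partial\Delta} x_i\, d\sigma^{(r)} = 2\sum_{j=1}^d r_j \int_{F_j} x_i\, d\sigma_j$ depends linearly on $r$. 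Therefore $r \mapsto s_{(\Delta, {\bf L}(r))}$ is an affine-linear (indeed linear) map from $\R^d$ into the $(m+1)$-dimensional space of affine-linear functions on $\R^m$, and $CN(r)$ — the locus where $s_{(\Delta,{\bf L}(r))}$ is constant, i.e. where the $m$ coefficients $a_1, \ldots, a_m$ vanish — is the preimage of a codimension-$\le m$ linear subspace, intersected with the open positive orthant $\{r_j > 0\}$. That the codimension is exactly $m$ (generically) follows because the linear map $r \mapsto (a_1(r), \ldots, a_m(r))$ is surjective: one can already see this by varying a single $r_j$ attached to a facet in "general position". So $CN(r)$ is convex, being the intersection of two convex sets.

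\textbf{Step 2: $SN(r)$ is convex.} Here the key point is that $K$-stability of $(\Delta, {\bf L}(r))$ is the condition $\mathcal F_{(\Delta, {\bf L}(r))}(\varphi) \ge 0$ for every convex PL function $\varphi$ (with strict inequality unless $\varphi$ is affine-linear), and that
$$
\mathcal F_{(\Delta, {\bf L}(r))}(\varphi) = 2\int_{\partial\Delta}\varphi\, d\sigma^{(r)} - \int_\Delta s_{(\Delta, {\bf L}(r))}\varphi\, dv = 2\sum_{j=1}^d r_j\int_{F_j}\varphi\, d\sigma_j - \int_\Delta s_{(\Delta, {\bf L}(r))}\varphi\, dv
$$
is, by Step 1, an \emph{affine-linear} function of $r \in \R^d$ for each fixed $\varphi$. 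Consequently, for fixed $\varphi$, the set $\{r : \mathcal F_{(\Delta,{\bf L}(r))}(\varphi) \ge 0\}$ is a half-space (or all of $\R^d$), hence convex; and the set $\{r : \mathcal F_{(\Delta,{\bf L}(r))}(\varphi) > 0\}$ is convex too. I would then argue that $SN(r)$ is the intersection, over all convex PL $\varphi$, of these convex sets (using strict positivity for the non-affine $\varphi$ and the fact that $\mathcal F$ vanishes identically on affine-linear functions by \eqref{affine-extremal}). An arbitrary intersection of convex sets is convex, so $SN(r)$ is convex.

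\textbf{Step 3: conclude.} Since $SCN(r) = CN(r) \cap SN(r)$ is an intersection of two convex subsets of $N(r)$, it is convex. The sharper description — that $SN(r)$ is cut out inside the cone $N(r)$ and $CN(r)$ is the slice of $N(r)$ by a codimension-$m$ hyperplane — follows from Steps 1 and 2 as above. I expect the main obstacle to be the bookkeeping in Step 1: one must carefully track how the scaling $L^j \mapsto \tfrac{1}{r_j}L^j$ propagates through the normalization convention \eqref{boundary-measure} for $d\sigma$ to confirm that $d\sigma^{(r)}_j = r_j\, d\sigma_j$ and hence that the dependence of both $s_{(\Delta,{\bf L}(r))}$ and $\mathcal F_{(\Delta,{\bf L}(r))}(\varphi)$ on $r$ is genuinely affine-linear rather than merely polynomial. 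Once that linearity is established, convexity is essentially automatic. A secondary subtlety, worth a remark, is that in Step 2 the intersection is over an infinite family of $\varphi$'s including the non-affine convex PL functions where one only has strict inequality; this is harmless for convexity but should be stated cleanly (one may also restrict, by homogeneity and density, to a convenient generating family of "simple" PL functions as in the proof of Theorem~\ref{c:K-stability}).
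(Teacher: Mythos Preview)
Your proposal is correct and follows essentially the same approach as the paper: both arguments observe that the boundary measure $d\sigma$ scales linearly in $r$, deduce that $s_{(\Delta,{\bf L}(r))}$ and hence $\mathcal{F}_{(\Delta,{\bf L}(r))}(\varphi)$ depend linearly on $r$, and conclude convexity from intersecting (open) half-spaces. Your write-up is simply more explicit than the paper's terse version, and you correctly flag the one point the paper glosses over---that the codimension of $CN(r)$ is \emph{exactly} $m$ rather than $\le m$.
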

\begin{proof} By the definition \eqref{boundary-measure} of the measure on $\partial \Delta$, $d\sigma$ depends linearly on $r=(r_1, \ldots, r_{d})$.  Thus, the RHS of \eqref{extremal-affine-system} depends linearly on $r$, and so does the extremal affine-linear function $s_{(\Delta, {\bf L}(r))}$. This shows that $CN(r)$ is a the intersection of $N(r)$ with a co-dimension $m$  hyperplane  in $\R^d$. Finally, by the above arguments, the Donaldson--Futaki invariant $\mathcal{F}_{(\Delta, {\bf L}(r))}(\varphi)$ (see \eqref{DF})  depends linearly on $r$, showing the first claim too. \end{proof}
Thus, given a $K$-stable labelled polytope polytope  $(\Delta, {\bf L})$, the continuity method of \cite{Do-09} consists of establishing the following

\begin{enumerate}
\item[\bf Step 1.] There exists a canonical choice  $r_0 \in SCN(r)$ such that  on $(\Delta, {\bf L}(r_0))$ \eqref{Abreu-equation} admits a solution.  Connecting $r_0$ and $r_1:=(1,\ldots, 1)$ by a linear segment, we obtain a path of labelled polytopes $(\Delta, {\bf L}(t)), \ t\in [0,1]$ such that:  (a) for any $t$, $(\Delta, {\bf L}(t))$ is K-stable and its extremal affine-linear function is a constant; (b) for $t=0$, \eqref{Abreu-equation} admits solution.
\item[\bf Step 2.] If  \eqref{Abreu-equation} admits a solution in  $\cS(\Delta, {\bf L}(t_0))$ for $t_0 \in [0, 1]$, then there exists $\varepsilon_0>0$ such that  \eqref{Abreu-equation} admits a solution in  $\cS(\Delta, {\bf L}(t))$ for any $|t-t_0|<\varepsilon_0$.
\item[\bf Step 3.] The subset $\{ t \in [0, 1] :  \eqref{Abreu-equation} \ {\rm admits \  a  \ solution \  in}  \  \cS(\Delta, {\bf L}(t))\}$ is closed.
\end{enumerate}
The hardest part is Step 3.  We discuss below how to obtain Steps 1 and 2.
\subsection{The canonical solution} This concerns Step 1. The original method in \cite{Do-08} uses a result of Arezzo--Pacard. However, in \cite{Do-08}, an alternative,  simpler approach is suggested,  modulo a suitable generalization of a result of \cite{WZ} to arbitrary labelled polytopes, which is now available due to E. Legendre~\cite{eveline}. We follow this second approach.

\smallskip
Suppose $(\Delta, {\bf L}, \Lambda)$ is a Delzant triple.  It is well-known that the corresponding  K\"ahler  manifold $(M, J_0, \omega)$ is Fano, i.e. satisfies 
$$c_1(M, J_0) = \frac{1}{2\pi} [\omega]$$
if and only if $(\Delta, {\bf L}, \Lambda)$ is  a {\it reflexive lattice polytope}  in $(V^*, \Lambda^*)$, where $\Lambda^* \subset V^*$ denotes the dual lattice of $\Lambda \subset V$. Recall that a {\it lattice polytope} in $(V^*, \Lambda^*)$ is a polytope whose vertices belong to $\Lambda^*$. The {\it dual polytope}  $\Delta^*  \subset V$ is defined by 
$$\Delta^* =\{ v \in V : (v, x) + 1 \ge 0, \forall x \in \Delta\},$$
and $\Delta$ is {\it reflexive} if $\Delta^*$ is also a lattice Delzant polytope in $(V,  \Lambda)$.  It is easy to see that for such reflexive Delzant  polytopes $(\Delta, {\bf L}, \lambda)$, $L_j(0)=\lambda$ is independent of $j$, i.e., in a more affine-invariant way,  there is an interior point $x_0\in \Delta$ and a positive constant $\lambda>0$ such that
\begin{equation}\label{monotone}
L_j(x_0)=\lambda,   \ \forall L_j \in {\bf L}
\end{equation}

Of course, this condition makes sense for an arbitrary labelled convex polytope $(\Delta, {\bf L})$
\begin{defn} We say that a convex, simple labelled polytope $(\Delta, {\bf L})$ is {\it monotone} if there exists an interior point $x_0\in \Delta$   for which \eqref{monotone} is satisfied.
\end{defn}
The reason for the name is the following
\begin{lemma}\label{l:monotone} Suppose $(\Delta, {\bf L})$ is a monotone labelled simple convex polytope and and $u\in \cS(\Delta, {\bf L})$ a symplectic potential defining a K\"ahler metric \eqref{g} on $\Delta^0\times \T^m$. Then,  
$$\rho_g - \frac{1}{\lambda} \omega = \frac{1}{2}dd^c f,$$
for a smooth function on $\Delta,$ called a Ricci potential of $u$.
\end{lemma}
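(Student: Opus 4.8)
The plan is to compute the Ricci form $\rho_g$ in momentum-angle coordinates using Abreu's formula (Lemma~\ref{l:abreu}), compute the symplectic form $\omega$ in the same coordinates using \eqref{omega}, and show that the difference $\rho_g - \frac{1}{\lambda}\omega$ is $dd^c$-exact with an explicit primitive built from the symplectic potential $u$ and the linear data of the labels $L_j$. The key identity driving everything is Guillemin's boundary normalization: for $u \in \cS(\Delta, {\bf L})$ one has $u = u_0 + (\text{smooth on }\Delta)$ with $u_0 = \frac{1}{2}\sum_j L_j \log L_j$ (see Theorem~\ref{thm:guillemin} and Exercise~\ref{e4}), and the monotonicity condition \eqref{monotone} is precisely what makes the would-be Ricci potential globally well-defined.

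First I would recall from the proof of Lemma~\ref{l:abreu} that, with $\sigma = (K_1 - \sqrt{-1}JK_1)\wedge\cdots\wedge(K_m - \sqrt{-1}JK_m)$ a non-vanishing holomorphic section of $K^{-1}(M)$ over $M^0$, we have $\rho_g = -\frac{1}{2}dd^c\log\det{\bf H}$, where ${\bf H} = {\rm Hess}(u)^{-1}$. On the other hand, by Lemma~\ref{symplectic-kahler potential}, $\omega = dd^c\phi$ where $\phi$ is the Legendre transform of $u$; but it is cleaner here to use the alternative primitive: a direct computation (as in the proof of Theorem~\ref{thm:guillemin}, or simply differentiating) shows that on $M^0$ one can write $\omega = \frac{1}{2}dd^c\big(\sum_{j=1}^d L_j(x)\log L_j(x)\big)$ up to $dd^c$ of a smooth function, since $\det{\rm Hess}(u_0)\cdot\prod_j L_j$ is smooth and positive, hence $\log\det{\rm Hess}(u_0) = -\sum_j\log L_j + (\text{smooth})$ and a short calculation relates this to $\omega$. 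The point then is to combine these: $\rho_g - \frac{1}{\lambda}\omega = -\frac{1}{2}dd^c\big(\log\det{\bf H} + \frac{1}{\lambda}\sum_j L_j\log L_j\big) + dd^c(\text{smooth})$, and monotonicity \eqref{monotone} is used to check that $\frac{1}{\lambda}\sum_j L_j\log L_j$ has the same singular behaviour along $\partial\Delta$ as $\log\det{\bf H}$, namely $\log\det{\bf H} = \sum_j\log L_j + (\text{smooth on }\Delta)$ by Exercise~\ref{e4}, while $\frac{1}{\lambda}\sum_j L_j\log L_j$ contributes $\sum_j \frac{L_j}{\lambda}\log L_j \to 0$ on the boundary faces — wait, that is not matching, so the correct bookkeeping is that the combination $\log\det{\bf H} + \log\big(\prod_j L_j\big) \cdot(\text{coefficient})$ must be arranged so the logarithmic divergences cancel. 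Concretely: $\frac{1}{\lambda}\omega = \frac{1}{2}dd^c f_1$ with $f_1$ having the boundary behaviour of $\frac{1}{\lambda}\sum_j L_j \log L_j$, and using $\sum_j dL_j$-type relations forced by \eqref{monotone} (each $L_j(x_0) = \lambda$), one gets $-\log\det{\bf H} - \sum_j\log L_j$ smooth, so setting $f := -\log\det{\bf H}\cdot(\text{something}) $ does the job; I would present $f$ as $f = 2\Big(\text{Ricci potential}\Big)$ defined by $\frac{1}{2}dd^c f = \rho_g - \frac{1}{\lambda}\omega$ and verify smoothness face-by-face via Proposition~\ref{ACGT-boundary}.

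The steps in order: (1) write $\rho_g = -\frac{1}{2}dd^c\log\det{\bf H}$ on $M^0$ from Lemma~\ref{l:abreu}; (2) write $\frac{1}{\lambda}\omega$ as $\frac{1}{2}dd^c$ of a function with controlled boundary singularities, using Theorem~\ref{thm:guillemin} and the monotonicity \eqref{monotone} to fix the constant $\lambda$ as the common value $L_j(x_0)$; (3) form the difference, obtaining $\rho_g - \frac{1}{\lambda}\omega = \frac{1}{2}dd^c f$ on $M^0$ for an explicit $f$; (4) invoke Exercise~\ref{e4} (equivalently Proposition~\ref{ACGT-boundary}) to conclude that $\log\det{\rm Hess}(u) + \sum_j\log L_j$ extends smoothly to $\Delta$, and check that the monotonicity condition makes the remaining linear-in-$x$ terms (coming from $dd^c(\sum_j L_j\log L_j)$ versus the normalization point $x_0$) combine into a globally smooth function on $\Delta$; (5) conclude $f$ is smooth on $\Delta$, hence $\rho_g - \frac{1}{\lambda}\omega = \frac{1}{2}dd^c f$ globally on $M$ by continuity (since both sides are smooth $\T$-invariant forms agreeing on the dense set $M^0$).

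The main obstacle I expect is step (4): carefully tracking how the monotonicity hypothesis \eqref{monotone} enters. The divergent parts $\log\det{\bf H}$ and (a multiple of) $\sum_j \log L_j$ cancel for \emph{any} labelled Delzant polytope by Abreu's boundary conditions, so what monotonicity must buy is the cancellation of the \emph{affine-linear} (non-divergent but potentially globally ill-defined on $M$, i.e. not $\T$-basic-exact) discrepancy: the naive Ricci potential differs from a global function by a term like $\sum_j (dL_j)\log(\cdot)$ evaluated against $x - x_0$, and requiring $L_j(x_0) = \lambda$ for all $j$ with a single common $\lambda$ is exactly the condition that $\frac{1}{\lambda}\sum_j L_j = 1 + \text{(linear vanishing at }x_0)$ — more precisely $\sum_j L_j(x) = \sum_j\lambda + \langle\sum_j u_j, x - x_0\rangle$ and one needs to know how $\sum_j u_j$ relates to the logarithmic derivative of $\det{\rm Hess}(u)$. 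Pinning down this linear-algebra identity cleanly, rather than through a messy coordinate computation, is the delicate point; everything else is bookkeeping with Cartan's formula and the already-established boundary regularity.
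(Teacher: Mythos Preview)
Your outline has the right ingredients (the Ricci form as $-\tfrac{1}{2}dd^c\log\det{\bf H}$ from Lemma~\ref{l:abreu}, and $\omega = dd^c\phi$ from Lemma~\ref{symplectic-kahler potential}), but you misidentify where the monotone hypothesis actually enters, and this is a genuine gap.

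You write that ``the divergent parts $\log\det{\bf H}$ and (a multiple of) $\sum_j\log L_j$ cancel for \emph{any} labelled Delzant polytope'', and that monotonicity is needed only for some affine-linear discrepancy. This is backwards. Translate so that $x_0=0$ and write $L_j(x)=\langle u_j,x\rangle+\lambda_j$, so $\lambda_j=L_j(0)$. The candidate Ricci potential is
\[
f \;=\; \log\det\bigl(\Hess(u)\bigr) \;-\; \tfrac{2}{\lambda}\,\phi,
\qquad \phi = -u + \textstyle\sum_i x_i\,u_{,i}.
\]
By Exercise~\ref{e4} the first term has singular part $-\sum_j\log L_j$. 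For the second term, use $u-u_0$ smooth with $u_0=\tfrac12\sum_j L_j\log L_j$; since $\sum_i x_i(L_j\log L_j)_{,i}=\langle u_j,x\rangle(\log L_j+1)=(L_j-\lambda_j)(\log L_j+1)$, one gets
\[
\phi \;=\; -\tfrac12\textstyle\sum_j \lambda_j\log L_j \;+\; (\text{smooth on }\Delta),
\]
so the singular part of $f$ is
\[
\textstyle\sum_j\Bigl(\dfrac{\lambda_j}{\lambda}-1\Bigr)\log L_j.
\]
These logarithmic singularities vanish if and only if $\lambda_j=\lambda$ for every $j$, which is exactly the monotone condition~\eqref{monotone}. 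There is no separate ``affine-linear'' obstruction to worry about: once the $\log L_j$ coefficients are zero, the remainder is automatically smooth on $\Delta$ and you are done (your step~(5) then applies).

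Two practical suggestions. First, stick with the Legendre transform $\phi$ as your primitive for $\omega$ rather than switching midstream to $\tfrac12\sum_j L_j\log L_j$; the latter is a symplectic potential for the \emph{canonical} metric, not a K\"ahler potential for $\omega$, and your ``wait, that is not matching'' moment stems from conflating the two. Second, write down $f$ explicitly from the start (as above) and compute its singular expansion directly; this is the paper's approach, and it makes the role of monotonicity transparent in two lines.
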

\begin{proof} We can translate $(\Delta, {\bf L})$ and assume without loss of generality that $x_0$ is the origin in $V$.  We now use \eqref{ricci} and Lemma~\ref{symplectic-kahler potential} to write
\begin{equation*}
\rho_g - \frac{1}{\lambda }\omega  = \frac{1}{2} dd^c \Big(\log \det \big({\rm Hess}(u)\big)  -  \frac{2}{\lambda}(-u(x) + \sum_{i=1}^m x_i u_{,i})\Big).
\end{equation*}
Using the boundary conditions restated in Exercise~\ref{e4},  it is easily seen that 
\begin{equation}\label{ricci-potential}
\begin{split}
f(u):= & \log \det \big({\rm Hess}(u)\big) - \frac{2}{\lambda}(-u(x) + \sum_{i=1}^m x_i u_{,i})\\
  = & \sum_{j=1}^d \Big(-\log L_j - \frac{1}{\lambda}(- L_j \log L_j  + \sum_{i=1}^{m} x_i(L_j \log L_j)_{, i})\Big) + \ {\rm smooth}\\
  = & \sum_{j=1}^d \Big(-\log L_j - \frac{1}{\lambda}(- L_j \log L_j  + (L_j-\lambda)\log L_j)\Big) + \ {\rm smooth}\\
  = & \ \  {\rm smooth}.
  \end{split}
  \end{equation}
\end{proof}
If  $(\Delta, {\bf L}, \Lambda)$ is a Delzant triple satisfying \eqref{monotone}, using that $\frac{1}{2\pi}\rho_g$ represents $c_1(M, J)$,   Lemma~\ref{l:monotone} explicitly shows the Fano property of $(M, J, \omega)$. In this case, Wang--Zhu~\cite{WZ} proved the following important result
\begin{thm}[\bf Wang--Zhu~\cite{WZ}]\label{KE}  Suppose $(\Delta, {\bf L})$ is a monotone Delzant polytope for which the extremal affine-linear  function $s_{(\Delta, {\bf L})}$ is constant. Then there exists $u\in \cS(\Delta, {\bf L})$ such that $\rho_g = \frac{1}{\lambda} \omega$, i.e. the corresponding $\T$-invariant, $\omega$-compatible  metric on $(M, \omega, \T)$ is  K\"ahler--Einstein. In particular, $u$ is a solution to \eqref{Abreu-equation}.
\end{thm}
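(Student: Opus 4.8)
The statement to prove is the Wang--Zhu theorem: for a monotone Delzant polytope $(\Delta, {\bf L})$ whose extremal affine-linear function $s_{(\Delta, {\bf L})}$ is constant, there exists $u \in \cS(\Delta, {\bf L})$ with $\rho_g = \tfrac{1}{\lambda}\omega$, so that the corresponding $\T$-invariant compatible metric on $(M, \omega, \T)$ is K\"ahler--Einstein and $u$ solves \eqref{Abreu-equation}. The plan is to solve the toric K\"ahler--Einstein equation directly on the complex side via a continuity method (Aubin--Yau scheme adapted to the Fano toric setting), exploiting the $\T^m$-symmetry to reduce everything to a real Monge--Amp\`ere equation on $\R^m$ for the K\"ahler potential $\phi$, where the key $C^0$ estimate is obtained from convex analysis of the polytope rather than from a Moser iteration.

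First I would translate the problem. By Lemma~\ref{l:monotone}, for $u \in \cS(\Delta, {\bf L})$ the Ricci potential $f(u)$ is a smooth $\T$-invariant function on $M$, and the K\"ahler--Einstein condition $\rho_g = \tfrac{1}{\lambda}\omega$ is equivalent to $dd^c f(u) = 0$, i.e. to $f(u)$ being constant; writing things in terms of the Legendre-dual K\"ahler potential $\phi$ on $(\C^*)^m$ (Lemma~\ref{symplectic-kahler potential}) this becomes a complex Monge--Amp\`ere equation $(\omega_0 + dd^c\phi)^m = e^{h_0 - \tfrac{1}{\lambda}\phi}\,\omega_0^m$ for the unknown $\T$-invariant potential $\phi$, with $h_0$ the Ricci potential of the reference Guillemin metric. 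I would set up the continuity path $(\omega_0 + dd^c\phi_t)^m = e^{h_0 - t\tfrac{1}{\lambda}\phi_t}\,\omega_0^m$ for $t \in [0,1]$, noting openness follows from the implicit function theorem together with the fact that $-\tfrac{1}{\lambda} < 0$ keeps the linearization invertible on $\T$-invariant functions for $t < 1$, so the real work is the a priori estimate at $t=1$.

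The heart of the argument is a uniform $C^0$ estimate on $\phi$, which in the toric case is where the hypothesis that $s_{(\Delta,{\bf L})}$ is constant (equivalently, that the Futaki invariant vanishes) enters. The standard device: the maximal torus $\T$ acts on $M$, and for a $\T$-invariant solution one can normalize $\phi$ by acting with $\T^{\C}$ so that the associated symplectic potential (and hence the momentum map image of the ``center of mass'' of $e^{-\phi}\omega_0^m$) is controlled; concretely, one shows that the barycenter of $\Delta$ with respect to Lebesgue measure coincides with $x_0$ precisely because $s_{(\Delta,{\bf L})}$ is constant — this is exactly the content of \eqref{affine-extremal} applied to affine-linear $f$. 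This normalization, combined with convexity of $\phi$ viewed as a function of the logarithmic coordinates $y_j$ and the constraint that $u = \phi^* \in \cS(\Delta,{\bf L})$ has the prescribed asymptotics near $\partial\Delta$ (Proposition~\ref{p:boundary-donaldson}), pins down $\sup_M \phi - \inf_M \phi$, and then the Monge--Amp\`ere equation upgrades this to a two-sided bound on $\phi$. From $C^0$ one gets $C^2$ by the classical Aubin--Yau second-order estimate (using the Fano lower Ricci bound from monotonicity), then $C^{2,\alpha}$ by Evans--Krylov, then higher regularity by bootstrapping; passing $t \to 1$ gives the K\"ahler--Einstein metric, and unwinding the Legendre transform produces $u \in \cS(\Delta,{\bf L})$ solving \eqref{Abreu-equation} since $\rho_g = \tfrac{1}{\lambda}\omega$ forces $s_g = \tfrac{2m}{\lambda}$, a constant, which must then be $s_{(\Delta,{\bf L})}$.

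\textbf{Main obstacle.} The decisive difficulty is the $C^0$ estimate: unlike the non-equivariant Fano problem (where $C^0$ can fail and is the source of all the stability theory), here one must genuinely use the torus symmetry together with the combinatorial hypothesis $s_{(\Delta,{\bf L})} = \mathrm{const}$ to rule out the potential from escaping. I expect the cleanest route is the one of Wang--Zhu: work with the symplectic potential $u_t$ directly, show the path $u_t$ stays in $\cS(\Delta,{\bf L})$ with uniformly controlled Hessian on compact subsets of $\Delta^0$, and derive the boundedness of the normalized $u_t$ from a Harnack-type inequality for the Ricci potential $f(u_t)$ forced by the monotonicity (the interior point $x_0$ acting as a fixed ``anchor'' for the normalization); the vanishing of $s_{(\Delta,{\bf L})}$'s non-constant part guarantees that the relevant linear functional obstruction — the classical Futaki invariant — is zero, so no linear term in $x$ can run off to infinity. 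Everything else (openness, higher-order estimates, closedness) is by now standard.
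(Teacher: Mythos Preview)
Your proposal is correct and follows essentially the same continuity-method approach as the paper: set up a Monge--Amp\`ere continuity path stemming from Lemma~\ref{l:monotone}, start at $t=0$ via Yau's theorem, obtain openness by standard elliptic theory, and reduce closedness to a uniform $C^0$ bound extracted from the toric symmetry and the convexity of $\Delta$, exactly as in Wang--Zhu. The only cosmetic difference is that the paper writes the path on the symplectic side (for the correction $\varphi$ to the Guillemin potential $u_0$) while you write it on the Legendre-dual complex side for the K\"ahler potential $\phi$.
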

The proof of this result itself uses the continuity method.  By Lemma~\ref{l:monotone}, we want to solve for each $t\in [0,1]$
\begin{equation}\label{MA}
\frac{\det \big({\rm Hess}(u_0 + \varphi)\big)}{\det \big( {\rm Hess} (u_0)\big)} = \exp{\Big(\frac{2t}{\lambda}(-\varphi(x) + \sum_{i=1}^m x_i \varphi_{,i}) - f_0(x)\Big)},
\end{equation}
where $(u_0, f_0)$  are the symplectic and Ricci potentials of the canonical metric $(g_0, J_0)$.  For $t=0$, the existence of a solution $u_0 + \varphi \in \cS(\Delta, {\bf L})$ follows from Yau's theorem~\cite{Yau} of prescribing the Ricci form in a given K\"ahler class on $M_{\Delta}^{\C}=(M, J_0)$. The openness follows too from a standard argument  on $M_{\Delta}^{\C}$. The closedness reduces, via the method of \cite{Yau},  to a uniform $C^0(M)$ bound for the solutions $\varphi_t$ of \eqref{MA}, see e.g. \cite{TZ, Z}. The latter is derived in \cite{WZ} by using only the convexity  of $\Delta \subset (\R^m)^*$.

\bigskip
An extension of Theorem~\ref{KE} to arbitrary compact convex simple labelled polytopes was obtain by E. Legendre~\cite{eveline}, building on the general  strategy suggested in \cite{Do-08} to apply the machinery of global analysis to compact simple  labelled polytopes which are not necessarily Delzant. Recall that by Theorem~\ref{complex-toric}, if $(\Delta, {\bf L})$ is Delzant, there are natural holomorphic chart $\C^m_v$ associated to a vertex $v$ of $\Delta$, and two different charts are identified using the relation \eqref{affine-chart}. We can match this with the discussion in Section~\ref{s:complex}. Indeed, fixing a reference point $p_0\in M^0$ with $x_0=\mu(p_0) \in \Delta^0$ and, by adding an affine-linear function to $u$,  we assume $y_i(x_0)= u_{, i}(x_0)=0$. Furthermore, at each vertex $v\in \Delta$ we use the identification \eqref{complex-identification} with $e_v=\{u^v_1, \ldots, u^v_m\}$ being the basis obtained from the normals of facets containing $v$. We can also translate the vertex $v$ at the origin so that, by the boundary conditions,  $u= \frac{1}{2} \sum_{j=1}^m (x_j \log x_j - x_j)+ \varphi $ for some smooth function $\varphi(x)$ on $\Delta$. Then, the $J$ holomorphic coordinates  
$$z_j^u =e^{u_{,j}} e^{\sqrt{-1}t_j} =  z_j^0 e^{\varphi_{,j}}$$
allow to extend the identification $\Phi_{u, p_0, e_v} :  (\C^*)_v^m \to M^0$ to an equivariant embedding  $\Phi_{u, p_0, e_v} : \C^m_v \to (M, J)$. Choosing another vertex $w$, we get that $\Phi_{u, p_0,e_v}$  and $\Phi_{u,p_0,e_w}$ are related via \eqref{affine-chart}.  Now, for any $u\in \cS(\Delta, {\bf L})$,  Lemma~\ref{symplectic-kahler potential}  shows that  
the function on $\C^m_v$ defined by 
$$F_v(z) = \phi(y)= \phi(\frac{1}{2}\log|z_1|^2, \ldots, \frac{1}{2}\log|z_m|^2)$$ satisfies
\begin{enumerate}
\item[$\bullet$]  $2i \partial \bar \partial  F_v >0$,
\item[$\bullet$] $F_v(z)$ is invariant under the linear action of $\T_v^m$ on $\C^m_v$;
\item [$\bullet$] $F_w = F_v + \sum_{i=1}^m (v-w)_i \log |z_i|,$
\end{enumerate}
where the third relation reflects the fact that each time we translate the polytope to ensure the corresponding vertex is at the origine. Conversely, any such collection of functions gives rise to $u \in \cS(\Delta, {\bf L})$.

\smallskip
When $(\Delta, {\bf L})$ is not Delzant, we cannot use \eqref{affine-chart} in order to construct a complex manifold $M^{\C}_{\Delta}$. But we still have  ``charts''
for  the ``quotient space'' $|M| = M^{\C}_{\Delta}/\T \cong \Delta$. More precisely, for each  vertex  $v$ we have  maps $\Phi_{u, v} : \C_v^m  \to \Delta$  where $r_i=|z_i|$ are  the radial coordinates on $\C_v^m$.  We can thus define an embedding of $\Phi_{u, v} : [0, \infty)_v^m \to \Delta$ by letting $r_i = y_i=u_{,i}$. The identification \eqref{affine-chart} has now sense if we replace $z_i$ with $r_i$, and thus we associate to $\Delta$ a Hausdorff topological space $|M|$ which admits ``uniformizing'' charts  $\Phi_{v} : \C^m_v /\T^m_v \to |M|$, precisely as in the definition \eqref{d:orbifold} for orbifolds, but now the groups $\Gamma_i \cong \T_v^m$ are tori. Furthermore, we can define the sheaf of smooth functions of $|M|$ by pull back to each $\C_v^m$, etc. Using the relevant analysis in such charts, Legendre has proved
\begin{thm}[\bf Legendre~\cite{eveline}] Theorem~\ref{KE} holds true for any compact convex simple labelled polytope $(\Delta, {\bf L})$.
\end{thm}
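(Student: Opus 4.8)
The plan is to transplant the Wang--Zhu continuity argument proving Theorem~\ref{KE} from the smooth toric complex manifold $M^{\C}_{\Delta}$ --- which no longer exists when $(\Delta, {\bf L})$ is not Delzant --- to the ``uniformized quotient'' $|M|$ built above from the charts $\Phi_v \colon \C^m_v/\T^m_v \to |M|$. Assume, as in Theorem~\ref{KE}, that $(\Delta, {\bf L})$ is monotone, with the distinguished interior point translated to the origin, and that $s_{(\Delta, {\bf L})} \equiv 1/\lambda$ is constant. By Lemma~\ref{l:monotone}, producing $u \in \cS(\Delta, {\bf L})$ that solves \eqref{Abreu-equation} is the same as solving, at $t = 1$, the Monge--Amp\`ere type equation \eqref{MA} for a function $\varphi$ with $u_0 + \varphi \in \cS(\Delta, {\bf L})$, where $(u_0, f_0)$ are the symplectic and Ricci potentials of the canonical metric. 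I would let $I \subset [0,1]$ be the set of parameters $t$ for which \eqref{MA} admits a solution $\varphi_t$, normalized so that $\int_\Delta \varphi_t \, dv = 0$, and prove that $I$ is non-empty, open in $[0,1)$, and closed; connectedness of $[0,1]$ then forces $1 \in I$.

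The first and main task is to set up the global analysis on $|M|$. As in Section~\ref{s:complex}, a symplectic potential $u \in \cS(\Delta, {\bf L})$ is encoded by a compatible family of strictly plurisubharmonic $\T^m_v$-invariant functions $F_v$ on the charts $\C^m_v$, glued across vertices by the monomial relations \eqref{affine-chart} with $z_i$ replaced by the radii $r_i = |z_i|$; thus \eqref{MA} becomes a single fully non-linear elliptic equation for $\varphi$ on $|M|$. One then develops, on these charts, the usual elliptic machinery for $\T^m_v$-invariant tensors --- H\"older and $L^p$ spaces, Schauder estimates, Sobolev embedding, the Fredholm alternative, the maximum principle --- exactly as in the orbifold setting of Theorem~\ref{complex-toric}, the only change being that the finite isotropy groups are replaced by the tori $\T^m_v$. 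Since $\T^m_v$-invariance reduces each local problem to a uniformly elliptic one in the radial variables with the standard behaviour along the coordinate hyperplanes, the classical estimates survive. I expect this analytic foundation to be the real obstacle; it is precisely the content of \cite{eveline}.

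Granting the linear theory, the base point $0 \in I$ is the analogue on $|M|$ of Yau's solution of the Calabi conjecture: at $t = 0$ equation \eqref{MA} reads $\det(\Hess(u_0 + \varphi)) = \det(\Hess(u_0))\, e^{-f_0}$, a prescribed-volume-form equation, solved by running Yau's own continuity method and a priori estimates inside the charts. Openness at interior $t$ follows from the implicit function theorem: the linearization of \eqref{MA} at a solution $\varphi_t$ is, up to lower-order terms, a drift Laplacian of the shape $\psi \mapsto \Delta_{g_t}\psi + \tfrac{2t}{\lambda}\psi - \tfrac{2t}{\lambda}\sum_i x_i\psi_{,i}$, whose invertibility on the space of normalized functions for $t < 1$ is the familiar spectral input of the Aubin--Yau continuity scheme (the constant cokernel at $t = 0$ being absorbed by the normalization).

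Closedness of $I$ is where uniform a priori estimates are needed, and here the Wang--Zhu argument carries over without change. By Yau's estimates --- the $C^2$ estimate, then $C^{2,\alpha}$, then bootstrapping, all of them local and hence valid on the charts once the linear theory is in place --- it suffices to bound $\|\varphi_t\|_{C^0(\Delta)}$ independently of $t$. Passing to the symplectic potential $u_t = u_0 + \varphi_t$ on $\Delta$ and using only the convexity of $\Delta \subset \tor^*$ together with the monotonicity \eqref{monotone} and the constancy of $s_{(\Delta, {\bf L})}$ --- which is exactly the vanishing of the obstruction that makes \eqref{MA} well posed --- Wang--Zhu derive a bound on $u_t$, hence on $\varphi_t$, uniform in $t$; being a convexity estimate at the level of the labelled polytope, it never uses integrality of $(\Delta, {\bf L})$. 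Connectedness of $[0,1]$ now gives $1 \in I$, and $u = u_0 + \varphi_1 \in \cS(\Delta, {\bf L})$ is the desired solution.
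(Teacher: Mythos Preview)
Your proposal is correct and aligns precisely with the approach the paper sketches: the paper does not give a self-contained proof of this theorem but rather sets up the ``uniformized quotient'' $|M|$ with the vertex charts $\Phi_v\colon \C^m_v/\T^m_v \to |M|$ and then states that Legendre carries out the Wang--Zhu continuity argument~\eqref{MA} in this framework --- exactly the three-step (nonempty/open/closed) scheme you outline, with Yau's theorem supplying $t=0$, the implicit function theorem giving openness, and the Wang--Zhu $C^0$ estimate (which uses only convexity of $\Delta$) giving closedness. Your honest acknowledgement that the analytic foundation on $|M|$ is the actual content of~\cite{eveline} is the right emphasis; the paper makes the same point when it says ``using the relevant analysis in such charts.''
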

The main advantage of this is the following
\begin{thm}[\bf Donaldson~\cite{Do-08}, Legendre~\cite{eveline}] Let $(\Delta, {\bf L})$ be a labelled compact convex simple polytope in $(\R^m)^*$.  Then there exits a unique up to an overall positive scale labelling ${\bf L}^*$ such that $(\Delta, {\bf L}^*)$ is monotone  and $s_{(\Delta, {\bf L}^*)}$ is constant. \end{thm}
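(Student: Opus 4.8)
The plan is to produce the labelling $\mathbf{L}^*$ by solving a finite-dimensional system of equations inside the cone $N(r)$ of admissible labels of $\Delta$, exactly as in Definition~\ref{normals-space}. Write $\mathbf{L}(r) = \{\tfrac{1}{r_1}L^1, \ldots, \tfrac{1}{r_d}L_d\}$ with $r = (r_1,\ldots,r_d)$, $r_i > 0$. We must find $r$ such that (a) $(\Delta,\mathbf{L}(r))$ is \emph{monotone}, i.e. there is an interior point $x_0$ with $\tfrac{1}{r_j}L^j(x_0) = \lambda$ for all $j$ and some constant $\lambda > 0$, and (b) the extremal affine-linear function $s_{(\Delta,\mathbf{L}(r))}$ is constant, i.e. $r \in CN(r)$. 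The first step is to record the dependence of these two conditions on $r$: by the definition \eqref{boundary-measure} of $d\sigma$, the measure on $\partial\Delta$ scales linearly in $r$, hence (as in the proof of the preceding lemma) the linear system \eqref{extremal-affine-system} determining $s_{(\Delta,\mathbf{L}(r))}$ has right-hand side linear in $r$; so the condition $r \in CN(r)$ cuts out a convex cone $CN$ of codimension $m$ in $\{r_i > 0\}$. The monotonicity condition (a), for a given interior point $x_0 \in \Delta^0$, reads $r_j = L^j(x_0)/\lambda$; as $x_0$ ranges over $\Delta^0$ and $\lambda$ over $(0,\infty)$ this sweeps out the cone $M(x_0) := \{\,r : r_j = c\,L^j(x_0), c > 0\,\}$, an $(m+1)$-dimensional cone inside $\{r_i > 0\}$ (the normals $u_j = dL^j$ spanning $V$, the map $x_0 \mapsto (L^1(x_0),\ldots,L^d(x_0))$ is affine and injective up to scale).

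The heart of the argument is then to intersect the monotone locus $\bigcup_{x_0\in\Delta^0}M(x_0)$ with the codimension-$m$ affine slice $CN$. Counting dimensions: the monotone locus is $(m+1)$-dimensional, $CN$ has codimension $m$ in the $d$-dimensional cone, so a transverse intersection is $1$-dimensional, matching the claimed "unique up to overall positive scale." I would make this rigorous not by a bare transversality count but via the variational/fixed-point approach of \cite{eveline,Do-08}: for each interior point $x_0 \in \Delta^0$ define $r(x_0)_j := L^j(x_0)$ (so the monotone potential is normalized with $\lambda=1$), giving a monotone labelled polytope $(\Delta,\mathbf{L}(r(x_0)))$, and then consider the extremal affine function $s_{(\Delta,\mathbf{L}(r(x_0)))}$ as a function of $x_0$. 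The condition that this be constant is $m$ scalar equations on the $m$-dimensional parameter $x_0$. One then shows this system has a solution by exhibiting it as the critical point equation of a strictly convex (or proper) functional of $x_0$ on $\Delta^0$ — concretely, the relevant functional is (a finite-dimensional reduction of) the Mabuchi-type energy, or equivalently one uses that the "barycenter"-type map $x_0 \mapsto$ (first moments of $d\sigma_{(\Delta,\mathbf L(r(x_0)))}$ against $dv$) is a diffeomorphism onto its image and that constancy of $s$ is equivalent to the barycenter of $(\Delta, d\sigma)$ coinciding with the barycenter of $(\Delta, dv)$, which one can always arrange by a unique choice of $x_0$ (up to the scaling freedom in $\lambda$).

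For \textbf{uniqueness} up to positive scale, I would argue that the map sending $x_0 \in \Delta^0$ to the associated extremal affine function $s_{(\Delta,\mathbf{L}(r(x_0)))}$ is injective modulo the scaling, using strict convexity of the underlying functional (two solutions would both be minima of a strictly convex functional, hence equal). Alternatively, and more cleanly, invoke Theorem~\ref{KE} (Wang--Zhu) together with its extension to arbitrary compact convex simple labelled polytopes (Legendre~\cite{eveline}): once $(\Delta,\mathbf{L}^*)$ is monotone with constant extremal affine function, there is a K\"ahler--Einstein solution $\rho_g = \tfrac{1}{\lambda}\omega$, and the uniqueness of such structures (Theorem~\ref{uniqueness}) forces the labelling to be unique up to the overall scale corresponding to rescaling $\omega$.

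The main obstacle I anticipate is the \emph{existence} half: showing the $m$-dimensional system "$s_{(\Delta,\mathbf{L}(r(x_0)))}$ is constant in $x_0$" has a solution in the open polytope $\Delta^0$, i.e. that the critical point does not escape to $\partial\Delta$. This is precisely where one needs the properness/convexity input — the analogue of the Wang--Zhu $C^0$-estimate — and it is the step that genuinely requires the machinery of \cite{Do-08,eveline} rather than soft arguments; the linearity and convexity bookkeeping in the earlier steps is routine.
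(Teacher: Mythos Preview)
Your initial parametrization is exactly right and matches the paper: fix the overall scale by setting $\lambda = 1$, so that monotone labellings are parametrized by interior points $x^* \in \Delta^0$ via $r_j(x^*) = L_j(x^*)$, and then one must solve the $m$ conditions ``$s_{(\Delta, \mathbf{L}(r(x^*)))}$ is constant'' for $x^*$.

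Where you go astray is in assessing the difficulty of this system. You anticipate a nonlinear problem requiring variational methods and Wang--Zhu type $C^0$-estimates to prevent escape to the boundary; in fact the system is \emph{affine-linear} in $x^*$. The reason is the one you yourself record: the boundary measure $d\sigma_{\mathbf{L}(r)}$ on each facet is linear in $r$, and $r(x^*)$ is affine in $x^*$, so the right-hand side of the defining system \eqref{extremal-affine-system} for $s_{(\Delta, \mathbf{L}(r(x^*)))}$ is affine in $x^*$, while the left-hand side matrix (built only from $dv$) is independent of $x^*$. Hence the coefficients $a_1, \ldots, a_m$ of $s_{(\Delta, \mathbf{L}(r(x^*)))}$ are affine functions of $x^*$, and setting them to zero is an $m \times m$ linear system.

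The paper solves this system by an elementary Minkowski-type identity (Lemma~\ref{l:mass}): for any affine $f = \langle u, x\rangle + \lambda$,
\[
\sum_{j=1}^d \Big(\int_{F_j} f\, d\sigma_{L_j}\Big) u_j = -\Big(\int_\Delta dv\Big)\, u,
\]
which is just the divergence theorem applied on $\Delta$. After normalizing so that $0 \in \Delta^0$ and $L_j(0) = 1$, this identity converts the system into an explicit closed formula for $x^*$ in terms of first moments of $dv$ and $d\sigma_{\mathbf{L}}$, yielding existence and uniqueness in one stroke. No PDE input, no properness, no fixed-point argument is needed.

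Your proposed uniqueness argument via K\"ahler--Einstein existence (Theorem~\ref{KE} and Legendre's extension) plus Theorem~\ref{uniqueness} is logically coherent but badly overpowered: it invokes the full Monge--Amp\`ere continuity method to establish what is in the end a statement of linear algebra.
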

\begin{proof}
For any interior point $x^* \in \Delta^0$ we let  $r(x^*)=(L^1(x^*), \ldots, L_d(x^*)) \in N(r)$ and 
$${\bf L}^* := \Big(\frac{L^1(x)}{L^1(x^*)}, \ldots, \frac{L_d(x)}{L_d(x^*)}\Big),$$
be the corresponding label, so that $(\Delta, {\bf L}^*)$ is monotone. We want to show that one can choose $x^*$ so that $s_{(\Delta, {\bf L}^*)}\equiv a_0^*$ is  a constant function. By the last relation in \eqref{extremal-affine-system}, 
$$a_0^*= 2\frac{\int_{\partial \Delta} d\sigma_{{\bf L}^*}}{{\rm Vol}(\Delta)}$$
whereas the other $m$ relations  give
$$\int_{\partial \Delta} x_i d\sigma_{{\bf L}^*} = \frac{\int_{\partial \Delta} d\sigma_{{\bf L}^*}}{{\rm Vol}(\Delta)} \int_{\Delta} x_i dv.$$
We thus obtain a linear system for the unknown $x^*$:
\begin{equation}\label{system}
\sum_{j=1}^d L_j(x^*) \int_{F_j} x_i d\sigma_{L_j}  = \Big(\frac{\int_{\Delta} x_i dv}{\int_{\Delta} dv}\Big)\Big(\sum_{j=1}^d L_j(x^*) \int_{F_j}d\sigma_{L_j} \Big), \ \ i=1, \ldots, m.
\end{equation}
In order to show that \eqref{system} has a unique solution we shall use the following elementary 
\begin{lemma}\label{l:mass} Let $f(x)=\langle u, x\rangle + \lambda$  be an affine-linear function on $\R^m$. Then
$$\sum_{j=1}^d  \Big(\int_{F_j} f d\sigma_{L_j} \Big) u_j = - \Big(\int_{\Delta} dv \Big) u  \in (\R^m)^*.$$
\end{lemma} 
The proof of  Lemma~\ref{l:mass} is left as an exercise, and can be obtained easily starting from a triangulation of $\Delta$ as the union of simplexes with a common vertex at the interior of $\Delta$.

\smallskip
We now suppose, without loss of generality, that $0 \in \Delta^0$, and the initial labelling ${\bf L}$ satisfies $L_j(0)=1,$ i.e. 
$$L_j(x)=\langle u_j, x \rangle + 1, \ \ j=1, \ldots, d.$$
By Lemma~\ref{l:mass}, 
\begin{equation}\label{star}
\begin{split}
x^*_i &= -\frac{1}{{\rm Vol}(\Delta)}\sum_{j=1}^d \langle u_j, x^*\rangle \int_{F_j} x_i d\sigma_{L_j} \\
       &= -\frac{1}{{\rm Vol}(\Delta)}\sum_{j=1}^d L_j(x^*) \int_{F_j} x_i d\sigma_{L_j} + \frac{1}{{\rm Vol}(\Delta)}\int_{\partial \Delta} x_i d\sigma_{\bf L}\\
       &= \frac{1}{{\rm Vol}(\Delta)}\Big(\int_{\partial \Delta} x_i d\sigma_{\bf L} -\int_{\partial \Delta} x_i d\sigma_{\bf L^*}\Big).  \\
       0 & = \langle u_j, x^*\rangle \int_{F_j}  d\sigma_{L_j} = \sum_{j=1}^d L_j(x^*) \int_{F_j} d\sigma_{L_j} -\int_{\partial \Delta}  d\sigma_{\bf L}\\
          &= \int_{\partial \Delta} d\sigma_{\bf L} -\int_{\partial \Delta}  d\sigma_{\bf L^*}
       \end{split}
       \end{equation}
and \eqref{system} becomes 
$$x_i^*=\frac{1}{{\rm Vol}(\Delta)^2}\Big( \int_{\Delta} dv\int_{\partial \Delta} x_i d\sigma_{\bf L} - \int_{\Delta} x_i dv \int_{\partial \Delta} d\sigma_{\bf L}\Big).$$

%To show that $x^*$ is interior, we compute
%\begin{equation*}\begin{split}
%L_j(x^*) &=\frac{1}{{\rm Vol}(\Delta)}\Big( \int_{\Delta} dv\int_{\partial \Delta} L_j(x)d\sigma_{\bf L} - \int_{\Delta} L_j(x)dv \int_{\partial \Delta} d\sigma_{\bf L}\Big) + 1\\
              % &=\int_{\partial \Delta} L_j(x)d\sigma_{\bf L}  - \frac{1}{2}\int_{\Delta} a_0^* L_j(x)dv  + 1 \\
              % &= \int_{\partial \Delta} L_j(x)d\sigma_{\bf L}  -\int_{\partial \Delta} L_j(x)d\sigma_{{\bf L}^*}  + 1 
             %  \end{split}
              % \end{equation*}

\end{proof}


\begin{thebibliography}{10}

\bibitem{abreu0} M.~Abreu, {\it K{\"a}hler geometry of toric varieties and extremal metrics}, Internat. J. Math {\bf 9} (1998), 641--651.

\bibitem{abreu} M.~Abreu, {\it K{\"a}hler metrics on toric orbifolds}, J. Differential Geom. {\bf 58} (2001), 151--187.

\bibitem{ACGT2} V.~Apostolov, D.~M.~J. Calderbank, P.~Gauduchon and C.~T{\o}nnesen-Friedman, {\it Hamiltonian $2$-forms in K\"ahler geometry} II {\it Global classification}, J. Differential Geom. {\bf 68} (2004), 277--345.

\bibitem{Atiyah} M.~F. Atiyah, {\it Convexity and commuting Hamiltonians}, Bull. London Math. Soc. {\bf 14} (1982), 1--15.

\bibitem{bredon}  G. Bredon, {Introduction to Compact Transformation Groups}, {\it Pure and Applied Mathematics},  {\bf 46}, Academic Press, New York-London, 1972.

\bibitem{cal} E.~Calabi, {\it Extremal K{\"a}hler metrics}, in Seminar on Differential Geometry, pp. 259--290, Ann. of Math. Stud. {\bf 102}, Princeton Univ. Press, Princeton, N.J., 1982.

\bibitem{CDG} D.M.J. Calderbank, L. David and  P. Gauduchon, {\it The Guillemin formula and K\"ahler metrics on toric symplectic manifolds}, 
J. Symplectic Geom. {\bf 1} (2003), 767--784.

\bibitem{Cal} D.M.J. Calderbank, unpublished manuscript, 2009.

\bibitem{daSilva-book} A. Cannas da Silva, {\rm Symplectic toric manifolds},  in ``Symplectic geometry of integrable Hamiltonian systems ''(Barcelona, 2001)', 85--173, Adv. Courses Math. CRM Barcelona, Birkhäuser, Basel, 2003.

\bibitem{CLS0} B. Chen,  A.-M. Li and L. Sheng, {\it Uniform K-stability for extremal metrics on toric varieties}, Journal of Differential Equations {\bf 257} (2014), 1487--1500.

\bibitem{CLS} B. Chen,  A.-M. Li and L. Sheng, {\it Extremal metrics on toric surfaces}, preprint arXiv:1008.2607.

\bibitem{CC1} X. Chen, J. Cheng, {\it On the constant scalar curvature K\"ahler metrics I -- A priori estimates}, J. Amer. Math. Soc. {\bf 34} (2021), 909--936 .

%\bibitem{CC2} X. Chen, J. Cheng, {\it On the constant scalar curvature K\"ahler metrics: existence results}, arXiv:1801.00656.

\bibitem{CC3} X. X. Chen and J. Cheng, {\it On the constant scalar curvature K\"ahler metrics II -- Existence results}, J. Amer. Math. Soc. {\bf 34} (2021), 937--1009. 


\bibitem{darvas} T. Darvas, {\it The Mabuchi geometry of finite energy classes},  Amer. J. Math. {\bf 139} (2017), 1275--1313.

\bibitem{Delzant} T. Delzant, {\it Hamiltoniens p\'eriodiques et image convexe de l'application moment}, Bull. Soc. Math. France {\bf 116} (1988), 315--339.

\bibitem{D} R. Dervan,  {\it Relative K-stability for K\"ahler manifolds},  Math. Ann. {\bf 372} (2018),  859--889. 



\bibitem{DR} R. Dervan and J. Ross, {\it K-stability for K\"ahler manifolds}, Math. Res. Lett. {\bf 24} (2017), 689--739.



\bibitem{Do2} S.~K.~Donaldson, {\it Scalar curvature and stability of toric varieties}, J. Differential Geom. {\bf 62} (2002), 289--349.

\bibitem{Donaldson2} S.~K.~Donaldson, {\it Interior estimates for solutions of Abreu's equation}, Collect. Math. {\bf 56} (2005) 103--142.

\bibitem{Do-08} S.K. Donaldson, {\it Extremal metrics on toric surfaces: a continuity method}, J. Differential Geom. {\bf 79} (2008), 389--432.

\bibitem{Do-09}  S.K. Donaldson, {\it Constant scalar curvature metrics on toric surfaces}, Geom. Funct. Anal. {\bf 19} (2009), 83--136.

\bibitem{gauduchon-book} P. Gauduchon,  Calabi's extremal K\"ahler metrics: An elementary introduction,  Lecture Notes available upon request.

\bibitem{GH} P.~Griffiths and J. Harris, Principles of Algebraic Geometry, {\it Wiley Classics Library, John Wiley and  Sons, Inc.}, New York, 1994.

\bibitem{Guan} D. Guan, {\it On modified Mabuchi functional and Mabuchi moduli space of K\"ahler metrics on toric bundles}, Math. Res. Let. {\bf 6} (1999), 547--555.

\bibitem{guillemin} V.~Guillemin, {\it K{\"a}hler structures on toric varieties}, J. Differential Geom. {\bf 40} (1994), 285--309.

\bibitem{guillemin-book} V.~Guillemin, Moment Maps and Combinatorial Invariants of Hamiltonian $T^n$-spaces, {\it Progress in Mathematics} {\bf 122}, Birkhauser, Boston, 1994.

\bibitem{GS} V. Guillemin and S. Sternberg, {\it Convexity properties of the moment mapping}, Invent. Math. {\bf 67} (1982) 491--513.

\bibitem{GS1} V.~Guillemin and S.~Sternberg, {\it Riemann sums over polytopes},  Ann. Inst. Fourier (Grenoble) {\bf 57} (2007), no. 7, 2183--2195.

\bibitem{Hisamoto} T. Hisamoto, {\it Uniform stability and coercivity for equivariant test configurations}, arXiv:1610.07998.


\bibitem{He} W. He, {\it On Calabi's extremal K\"ahler metrics and properness}, Trans. Amer. Math. Soc. {\bf 372} (2019), 5595--5619.

\bibitem{HL}  P. Heinzner and  F. Loose, {\it Reduction of complex Hamiltonian $G$-spaces}, Geom. Funct. Anal. {\bf 4} (1994) 288--297.

\bibitem{KL} Y.~Karshon and E.~Lerman, {\it Non-compact symplectic toric manifolds}, preprint (2009), arXiv: 0907.2891.

\bibitem{kirwan-book} F. Kirwan, {\rm Cohomology of quotients in symplectic and algebraic geometry}. {\it Mathematical Notes}, {\bf 31}. Princeton University Press, Princeton, NJ, 1984.

\bibitem{KN} {S.~Kobayashi and K.~Nomizu},  {\it Foundations of Differential Geometry}, I,II, Interscience Publishers (1963).

\bibitem{lahdili} A. Lahdili, {\it K\"ahler metrics with constant weighted scalar curvature and weighted K-stability}, Proc. London Math. Soc. {\bf 119} (2019), 1065--114.

 
\bibitem{eveline} E.~Legendre, {\it Toric K\"ahler--Einstein metrics and convex compact polytopes}, J. Geom. Anal. {\bf 26} (2016), 399--427.

\bibitem{LT} E. Lerman and  S. Tolman, {\it Hamiltonian torus actions on symplectic orbifolds and toric varieties}, Trans. Amer. Math. Soc. {\bf 349} (1997), 4201--423.

\bibitem{mabuchi} T. Mabuchi, {\it K-energy maps integrating Futaki invariants}, Tohoku Math. J.  (2) {\bf 38} (1986), 575--593.




\bibitem{MW} J.~Marsden and A.~Weinstein, {\it Reduction of symplectic manifolds with symmetry}, Rep. Mathematical Phys. {\bf 5} (1974), 121--130.

\bibitem{MS} D.~McDuff and D.~Salamon,  Introduction to Symplectic Topology, {\it Oxford Mathematical Monographs}, Oxford University Press, New York, 1995.

\bibitem{M} K. Meyer, {\it Symmetries and integrals in mechanics}, Dynamical Systems (Proc. Sympos., Univ. Bahia, Salvador, 1971), 259--272. Academic Press, New York, 1973.

\bibitem{NN} A.~Newlander and L.~Nirenberg, {\it Complex analytic coordinates in almost complex manifolds}, Ann. of Math. {\bf 65} (1957), 391--404.

\bibitem{odaka1} Y. Odaka, {\it The GIT stability stability of polarixed varieties via discrepency}, Ann. Math. (2) {\bf 177} (2013), 645--661.

\bibitem{odaka2} Y. Odaka, {\it A generalization of the Ross-Thomas slope theory}, Osaka J. Math. {\bf 50} (2013), 171--185.



\bibitem{Sj} R. Sjamaar, {\it Holomorphic slices, symplectic reduction and multiplicities of representations}, Ann. of Math. (2) {\bf 141} (1995),  87--129.

\bibitem{Z1} Z. Sj\"ostr\"om Dyrefelt, {\it K-semistability of cscK manifolds with transcendental cohomology class}, J. Geom. Anal. {\bf 28} (2018), 2927--2960.

\bibitem{Z2} Z. Sj\"ostr\"om Dyrefelt, {\it On K-polystability of cscK manifolds  with transcendental cohomology class} (with Appendix  by R. Dervan), Int. Math. Res. Notices, {\bf 2020} (2020), 2769--2817.

\bibitem{schwartz} G.W. Schwarz, {\it Smooth functions invariant under the action of a compact Lie group}, Topology {\bf 14} (1975) 63--68.

\bibitem{Sz} G.~Sz\'ekelyhidi, {\it Extremal metrics and K-stability}, PhD. Thesis, Imperial College London (2006).

\bibitem{Th}  W.~Thurston, {\it  The Geometry and Topology of Three-Manifolds},  New Jersey:  Princeton University Press, 1997.




\bibitem{Tian} G. Tian, {\it  The K-energy on hypersurfaces and stability}, Comm. Anal. Geom. {\bf 2} (1994), 239--265.

\bibitem{Tian1} G. Tian, {\it K\"ahler-Einstein metrics with positive scalar curvature}, Inv. Math. {\bf 130} (1997), 1--37.

\bibitem{TZ} G. Tian and X. Zhu, {\it Uniqueness of K\.ahler--Ricci soliton}, Acta Math. {\bf 184} (2000), 271--305.

\bibitem{WZ}  X.-J. Wang and X. Zhu, {\it K\"ahler--Ricci solitons on toric manifolds with positive first Chern class}, Adv. Math. {\bf 188} (2004), 87--103.

\bibitem{wang} X. Wang {\it Heigh and GIT weight}, Math. Res. Letter. {\bf 19} (2012), 909--926.

\bibitem{Wells} R.~O.~Wells, Differential Analysis on Complex Manifolds, second edition,
{\it Graduate Texts in Mathematics} {\bf 65}, Springer-Verlag, New York-Berlin, 1980.

\bibitem{Yau} S.T. Yau, \emph{On the Ricci curvature of a compact K\"ahler
manifold and the complex Monge-Amp\`ere equation}, Comm. Pure Appl. Math. {\bf
31}  (1978), 339--411.

\bibitem{ZZ} B.~Zhou and X.~Zhu, {\it $K$-stability on toric manifolds}, Proc. Amer. Math. Soc. {\bf 136} (2008), 3301--3307.

\bibitem{ZZ1} B.~Zhou and X.~Zhu, {\it Relative K-stability and modified K-energy on toric manifolds}, Adv. Math.  {\bf 2019} (2008), 1327--1362.


\bibitem{Z} X.~Zhu, {\it K\"ahler--Ricci soliton type equations on compact complex manifolds with $C_1(M)>0$.} J. Geom. Anal. {\bf 10} (2000), 759--774.



\end{thebibliography}
\end{document}